\newtheorem{theorem}{Theorem}[section]
\newtheorem{corollary}[theorem]{Corollary}
\newtheorem{lemma}[theorem]{Lemma}
\newtheorem{proposition}[theorem]{Proposition}
\newtheorem{problem}[theorem]{Problem}
\theoremstyle{definition}
\newtheorem{definition}[theorem]{Definition}
\newtheorem{example}[theorem]{Example}
\theoremstyle{remark}
\newtheorem{remark}[theorem]{Remark}
\numberwithin{equation}{section}
\numberwithin{table}{section}
\DeclareMathOperator*{\ric}{Ric}
\DeclareMathOperator*{\dist}{dist}
\DeclareMathOperator*{\diam}{diam}
\renewcommand{\phi}{\varphi}
\renewcommand{\epsilon}{\varepsilon}
\newcommand{\N}{\mathbb{N}}
\newcommand{\Z}{\mathbb{Z}}
\newcommand{\R}{\mathbb{R}}
\newcommand{\C}{\mathbb{C}}
\renewcommand{\P}{\mathbb{P}}
\newcommand{\vol}{\textup{vol}}
\newcommand{\av}{\declareslashed{}{-}{-0.1}{0}{\int_B}\slashed{\int_B}}
\begin{document}

\title{Complete Calabi-Yau metrics from $\P^2 \# \,9 \bar{\P}^2$}
\author{Hans-Joachim Hein}
\address{Department of Mathematics, Princeton University, Princeton, New Jersey 08544}
\email{hhein@math.princeton.edu}
\date{August 27, 2010}

\begin{abstract}
Let $X$ denote the complex projective plane, blown up at the nine base points of a pencil of cubics,
and let $D$ be any fiber of the resulting elliptic fibration on $X$.
Using ansatz metrics inspired by work of Gross-Wilson 
and a PDE method due to Tian-Yau, we prove
that $X \setminus D$ admits complete Ricci-flat K{\"a}hler metrics in most de Rham
cohomology classes.
If $D$ is smooth, the metrics converge to split flat 
cylinders $\R^+ \times S^1 \times D$ at an exponential rate.
In this case, we also 
obtain a partial uniqueness result and a local description of the Einstein moduli space, 
which contains cylindrical metrics whose cross-section does not split off a circle.
If $D$ is singular but
of finite monodromy, they converge at least quadratically to flat $T^2$-submersions over flat $2$-dimensional 
cones which need not be quotients of $\R^2$.
If $D$ is singular of infinite monodromy,
their volume growth rates are $4/3$ and $2$ for the 
Kodaira types ${\rm I}_b$ and ${{\rm I}_b}^*$,
their injectivity radii decay like
$r^{-1/3}$ and $(\log r)^{-1/2}$, and their curvature tensors decay like $r^{-2}$ and $r^{-2}(\log r)^{-1}$.
In particular, the ${\rm I}_b$ examples show that the curvature estimate from Cheeger-Tian \cite{ct-einstein} 
cannot be improved in general.
\end{abstract}

\maketitle

\tableofcontents

\section{Introduction}

\noindent \emph{Background.} This paper is motivated by questions of compactness and singularity formation 
in sequences of Einstein metrics on $4$-manifolds. A favorite and classical toy model which has inspired much of the 
research in this area and related fields is
the Kummer construction of K3 surfaces: Take the standard lattice
$\Z^4 \subset \C^2$, divide $\C^2/\Z^4$ by the involution $(z,w) \mapsto -(z,w)$
in standard coordinates,
and resolve the 16 orbifold singularities of the quotient by realizing them via $(z,w) \mapsto (z^2,zw,w^2)$ as 
a neighborhood of the origin in the affine
surface $Y^2 - XZ = 0$ and then blowing up the origin in $\C^3$. 
What results is a K3, i.e.~a simply-connected
smooth complex surface with a nowhere vanishing holomorphic $(2,0)$-form $\Omega$, lifted from $dz \wedge dw$ in this case.
From \cite{k3}, the moduli space of K3 surfaces is a connected
$20$-dimensional complex analytic space, 
and every K3 has a $20$-dimensional cone of K{\"a}hler classes,
each of which contains a unique Ricci-flat K{\"a}hler ($=$ Calabi-Yau $=$ CY)
metric \cite{yau3}. All Ricci-flat metrics on the underlying smooth manifold
arise in this way \cite{hit1}, with a small
ambiguity from hyperk{\"a}hler rotation,
and hence the moduli space of Ricci-flat metrics on the K3 manifold up to scaling
and diffeomorphism is $57$-dimensional.
Using the Kummer picture, one can give a fairly accurate approximate description of these metrics in 
certain extremal limits, cf.~\cite{bm, dona,pope}:
The flat orbifold metric 
on the Kummer surface is obviously CY
with volume form $\Omega \wedge \bar{\Omega}$.
On the other hand, the minimal resolution of
$Y^2 - XZ = 0$ supports an explicit family of CY metrics (Eguchi-Hanson) with volume form $\Omega \wedge \bar{\Omega}$
which are uniformly asymptotic to the flat cone $\C^2/\Z_2$, and moreover converge to that 
cone in the Gromov-Hausdorff sense as the family parameter tends to zero. 
Thus, replacing neighborhoods of the orbifold points by truncated Eguchi-Hanson spaces produces
increasingly Ricci-flat metrics on K3.
It then turns out that there
exist honest CY metrics nearby which
converge to the flat Kummer orbifold. Viewing $|{\rm Rm}|^2$ as an energy density whose total mass
is fixed by the Chern-Gau{\ss}-Bonnet formula $\int |{\rm Rm}|^2 = 8\pi^2 \chi$ for closed Einstein
$4$-manifolds, the energy of these metrics forms Dirac masses 
around the singularities,
and zooming in
at the local maxima of $|{\rm Rm}|$ recovers the Eguchi-Hanson spaces as \textquotedblleft bubbles\textquotedblright$\;$in the limit,
each extracting $1/16$ of
the scale-invariant total energy.

This picture lies at the foundation of a vast body of research on
the structure 
of Gromov-Hausdorff limits of Einstein manifolds
and other classes of metrics solving
canonical elliptic equations, and on the bubbles that may form in the process. 
See \cite{clw, joyce1, tian}
for some fairly spectacular applications of this line of work 
to existence questions.
The results are most complete in the $4$-dimensional Einstein case 
with a uniform lower volume bound, cf.~\cite{and}.
The bubbles are then complete and Ricci-flat of maximal volume growth, and such spaces
have been almost completely classified. More precisely, from  
\cite{cheesurv, kron1, kron2, cct}, a complete, simply-connected, K{\"a}hler, Ricci-flat $4$-manifold
with $|B(x_0,r)| > \epsilon r^4$
for some $x_0 \in M$ and $\epsilon > 0$ and all $r > 0$ 
must be asymptotic to a flat cone $\C^2/\Gamma$, $\Gamma < {\rm SU}(2)$, at a rate of $r^{-4}$
and thus belong to one of a number of 
finite-dimensional families of \textquotedblleft ALE\textquotedblright$\;$spaces, all of which can be constructed
algebraically
as resolutions or deformations of the singularities
$\C^2/\Gamma$. The K{\"a}hler condition may well turn out to be unnecessary here.

The striking analogy of all this with the more classical study of Yang-Mills moduli spaces
is largely due to the assumed lower volume bound. The stronger
nonlinearity of the Einstein equations compared to Yang-Mills really only makes itself felt when this bound
is dropped, thus allowing for the possibility of \emph{collapsing}.
In the world of bounded 
sectional rather
than Ricci curvature, this phenomenon is 
understood
to be caused by a controlled sort of contraction
along
the orbits of
local actions of nilpotent Lie groups \cite{cfg}. As for a first impression in the 
Einstein case, we can again look at the Kummer picture.
In \cite{hitchin}, Hitchin suggested 
resolving the
flat orbifolds
$(\R^k \times T^{4-k})/\Z_2$, $k = 1,2,3$,
to construct new complete
CY metrics 
with $r^k$ volume growth.
With a little good will, one can then imagine these hypothetical spaces to form as bubbles in
degenerating Ricci-flat metrics on K3
when collapsing the flat orbifold in the Kummer construction along $4-k$ directions 
onto $T^k/\Z_2$. A related folklore conjecture 
stipulates that \textquotedblleft most\textquotedblright$\;$complete CY surfaces, or at least those
that actually do occur as bubbles, 
are locally asymptotic to a product $\R^k \times T^{4-k}$. The cases with $k = 3,2,1$ have been
dubbed ALF, ALG, and ALH.

Crepant resolution and other constructions have produced examples for each of these types 
\cite{atiyah-hitchin, bm, cherkis-hitchin, hitchin, lebrun, santoro, ty1},
and also there is work of Biquard-Minerbe \cite{bm, minerbe}
\cite{minerbe2} towards
a possible classification of ALF spaces, the basic conjecture
being that such metrics must live on resolutions or deformations
of $\C^2/\Gamma$ with $\Gamma < {\rm SU}(2)$ cyclic or dihedral, except for 
finitely many sporadic examples of a slightly different nature. However, one major issue with the ALF, G, H picture
is that, unlike the ALE case, there seems to be no clear understanding as to what tameness conditions 
have to be imposed in order for a complete Ricci-flat $4$-manifold of less than maximal volume growth
to belong to one of these categories. In terms of counterexamples, 
K{\"a}hler is not immediate \cite{riem-schw}, K{\"a}hler does not imply finite topology \cite{kron0},
and it is possible to have K{\"a}hler and finite topology,
but ${\rm inj} = 0$ and fractional volume growth \cite{ty1},
although these last examples are not of finite energy, by Santoro \cite{santoro1}. On a related note,
there does not seem to be a single known case of collapsing of Einstein metrics in any dimension
where the bubbles have been identified rigorously.

In this paper, we produce new families of complete
CY surfaces which may help to shed light on some of these issues.
Removing anticanonical curves from rational elliptic surfaces, 
we construct sets of ALG and ALH spaces
which we conjecture to be exhaustive up to certain deformations,
but also two series of CY manifolds
with $r^{4/3}$ and $r^2$ volume growth and ${\rm inj}$ decaying like $r^{-1/3}$ and $(\log r)^{-1/2}$, respectively,
yet with $|{\rm Rm}|$ comparable to 
$r^{-2}$ and $r^{-2}(\log r)^{-1}$, and hence of finite energy. 
See Theorem \ref{main} for existence and Theorem \ref{main1} for the asymptotics.
In the ALH case, we prove a partial uniqueness result (Theorem \ref{alh}),
and we find and integrate
all infinitesimal ALH Ricci-flat deformations (Theorem \ref{alh2}). In this last part,
 we rely heavily on Kovalev \cite{kovalev2}, but we aim to clarify certain issues.  \medskip\

\noindent \emph{Statement of results.} We need two bits of terminology for the existence result. 

\begin{definition}\label{ratell}
A \emph{rational elliptic surface} is the blow-up of $\P^2$ in the base points of a pencil of cubics, i.e.~a
family $sF + t G = 0$, $(s:t) \in \P^1$, where $F,G$ are smooth cubics intersecting in $9$ points with 
multiplicity. Blowing up these points, if needed repeatedly, produces 
an elliptic fibration $f: X \to \P^1$, $X = \P^2 \#\,9\bar{\P}^2$.
\end{definition}

In fact, $f = |{-K_X}|$, so for each fiber $D = f^{-1}(p)$, to be understood as a divisor, there exists precisely
one meromorphic $2$-form $\Omega$ up to
scale with ${\rm div}(\Omega) = -D$, and all meromorphic $2$-forms on $X$ arise in this way.
Cf.~Section 4.1 for a more thorough discussion of the complex geometry of such surfaces, 
in particular, of the structure of the singular fibers of $f$. Put $M := X \setminus D$ and 
fix a small disk $\Delta = \{|z| < 1\} \subset \P^1$ with $z(p) = 0$ such that all fibers of $f$ over $\Delta^*$ are smooth.

\begin{definition}\label{bad} A \emph{bad 2-cycle} in $M$ is one that arises from the following process 
up to orientation and isotopy. Consider the topological monodromy representation of $\pi_1(\Delta^*) = \Z$ 
in the mapping class group of any fiber $F$ over $\Delta^*$.
Take a simple loop $\gamma \subset F$ such that $[\gamma] \in H_1(F,\Z)$
is indivisible and invariant under the monodromy, and 
move $\gamma$ around the puncture by lifting a simple 
loop $\gamma' \subset \Delta^*$ up to every point in $\gamma$ such that the union of the 
translates of $\gamma$ is a $T^2$ imbedded in $f^{-1}(\gamma')$. \end{definition}

While proving Proposition \ref{extend}, we will see that $H_2(X|_{\Delta^*},\Z)$ is generated by $[F]$ and the classes
of the bad cycles. Up to orientation, there are two bad cycles if the monodromy is trivial, 
one if the generator of the monodromy is conjugate to 
$(\begin{smallmatrix} 1 & b \\ 0 & 1\end{smallmatrix})$, $b \in \N$, and none at all otherwise. Moreover,
bad cycles are boundaries in $X$.

\begin{theorem}[Existence]\label{main} Let $\omega$ be any K{\"a}hler metric on $M$
such that $\int_M \omega^2 < \infty$ and $\int_C \omega = 0$ for all bad $2$-cycles $C$. For instance,
$\omega$ could be the restriction to $M$
of a K{\"a}hler metric on $X$.
Then there exists $\alpha_0 > 0$ possibly depending on $X$, $\Omega$, $\omega$
such that for all $\alpha > \alpha_0$ there exists a complete
Calabi-Yau metric $\omega_{\rm CY}$ on $M$ with
$\omega_{\rm CY}^2 = \alpha\Omega \wedge \bar{\Omega}$ and 
 such that $\omega_{\rm CY} - \omega$ is $d$-exact on $M$.
\end{theorem}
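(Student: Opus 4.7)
The strategy is the standard Tian--Yau approach: construct an explicit Kähler background metric $\omega_0$ on $M$ cohomologous to $\omega$ whose volume form already approximates $\alpha\,\Omega\wedge\bar\Omega$ at infinity, and then solve the complex Monge--Ampère equation
\[
(\omega_0+i\partial\bar\partial\varphi)^2 = \alpha\,\Omega\wedge\bar\Omega
\]
for $\varphi$ decaying at infinity on the non-compact complete manifold $(M,\omega_0)$. The desired $\omega_{\mathrm{CY}}$ will then be $\omega_0+i\partial\bar\partial\varphi$, and its difference with $\omega$ will automatically be $i\partial\bar\partial$-exact.

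First, I would construct $\omega_0$ on the end $f^{-1}(\Delta^*)$, which is topologically a $T^2$-bundle over the punctured disk with monodromy determined by the Kodaira type of $D$. Inspired by Gross--Wilson, I would write down a \emph{semi-flat} Kähler ansatz $\omega_{\mathrm{sf}}$ whose restriction to each fiber is the flat metric of area $\alpha\,\int_F\Omega\wedge\bar\Omega/(\mathrm{something})$ and whose horizontal part is controlled by the period map of the family. A direct computation then shows $\omega_{\mathrm{sf}}^2=\alpha\,\Omega\wedge\bar\Omega$ exactly for $D$ smooth, and modulo an error with a definite decay rate (depending on fiber type) in the singular cases. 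I would then glue $\omega_{\mathrm{sf}}$ to the given $\omega$ across an annulus in $\Delta^*$ by adding an $i\partial\bar\partial$-potential; here the hypothesis that $[\omega]$ has finite $\omega^2$-volume and integrates to zero on every bad 2-cycle is exactly what makes the classes $[\omega_{\mathrm{sf}}]$ and $[\omega]$ agree on $H^2(f^{-1}(\Delta^*),\R)$, so that the difference is globally $i\partial\bar\partial$ rather than just $d$-closed. The threshold $\alpha_0$ appears here: the semi-flat fiber area must be small relative to the base scale for $\omega_{\mathrm{sf}}>0$ and for the gluing potential to be dominated by $\omega_{\mathrm{sf}}$.

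With $\omega_0$ in hand, the equation reduces to $(\omega_0+i\partial\bar\partial\varphi)^2=e^F\omega_0^2$ where $F=\log(\alpha\,\Omega\wedge\bar\Omega/\omega_0^2)$ has compact support away from infinity in the smooth-fiber case and decays at a computable polynomial/exponential rate otherwise. I would solve this via the continuity method with parameter $t\in[0,1]$ multiplying $F$. Openness rests on the Fredholm theory of $\Delta_{\omega_t}$ on weighted Hölder spaces adapted to the volume growth and injectivity-radius behavior of $(M,\omega_0)$ at infinity (respectively ALH, ALG, and the fractional-growth profiles advertised in the abstract). The higher-order a priori estimates---Yau's $C^2$-estimate, Calabi's $C^3$-estimate, and then Schauder---are formally identical to the compact case, since $\omega_0$ has bounded geometry.

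The main obstacle is the $C^0$ a priori estimate, where the compact maximum principle is unavailable. Following Tian--Yau, I would prove a Sobolev (or Nash--Moser) inequality on $(M,\omega_0)$ calibrated to the volume growth at infinity, and then run a Moser iteration whose driving term is controlled by $\|e^F-1\|_{L^p}$ for a suitable $p$. The decay of $F$ built into the ansatz together with $\int_M\omega^2<\infty$ is exactly what guarantees this integrability. This is the step that depends most sensitively on the Kodaira type of $D$, because the volume growth rate, the Sobolev exponent, and the decay of $F$ all vary with the type; the unified statement of Theorem \ref{main} presumably requires verifying this dichotomy of weighted estimates case by case. Once $C^0$ is in place, closedness of the continuity method yields $\varphi$ at $t=1$ and hence $\omega_{\mathrm{CY}}$, and $\omega_{\mathrm{CY}}-\omega=i\partial\bar\partial(u+\varphi)$ is $d$-exact by construction.
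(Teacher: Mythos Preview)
Your overall architecture matches the paper's, but you miss two genuinely non-trivial mechanisms that make the construction go through.

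First, the semi-flat metric $\omega_{\rm sf}$ satisfies $\omega_{\rm sf}^2 = \alpha\,\Omega\wedge\bar\Omega$ \emph{exactly} in every Kodaira case, not just when $D$ is smooth; this is a direct computation from the period data (Corollary~\ref{sfcor}). So after gluing, $F = \log(\alpha\,\Omega\wedge\bar\Omega/\omega_0^2)$ is compactly supported in all cases. The issue with the $C^0$ estimate is then not decay or $L^p$ integrability of $e^F-1$ but the \emph{vanishing} of $\int_M(e^F-1)\,\omega_0^2$: all the asymptotic models here have volume growth $\leq 2$ (parabolic), and Tian--Yau's Moser iteration produces a bounded solution only under this zero-mass condition. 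It does not come for free from the gluing. The paper arranges it by inserting a bump term $t\,f^*\beta$ with $\beta\geq 0$ compactly supported on $\P^1$, and showing that the resulting mass functional $I(\alpha,t)=\int_M(\omega_0(\alpha,t)^2-\alpha\,\Omega\wedge\bar\Omega)$ is linear in $\alpha,t$ with the right signs (Claim~4 in Proposition~\ref{extend}). The threshold $\alpha_0$ arises from this interplay---one needs $\alpha$ large enough for the unique $t$ solving $I=0$ to also exceed the $t$ required for positivity of the glued form---and has nothing to do with positivity of $\omega_{\rm sf}$ itself, which is automatic.

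Second, matching $[\omega_{\rm sf}]$ with $[\omega]$ in $H^2(X|_{\Delta^*},\R)$ via the bad-cycle hypothesis gives $\omega_{\rm sf}-\omega=d\zeta$, but not $i\partial\bar\partial u$: the residual obstruction lives in $H^{0,1}(X|_{\Delta^*})$. The paper kills it by \emph{translating} $\omega_{\rm sf}$ along the fibers by a suitable holomorphic section $\tilde\sigma$ (Claim~1 in Proposition~\ref{extend}); the semi-flat construction depends on a choice of zero section, and changing it shifts exactly this $(0,1)$-class. Only after translation does a global potential $u_\alpha$ exist. Relatedly, positivity of the glued form is not automatic: cutting off $u_\alpha$ over the gluing annulus can produce large negative $dz\wedge d\bar z$ contributions, and the same bump term $t\,f^*\beta$ with $t\gg 1$ is what compensates for them (Claim~3). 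Your sentence ``the gluing potential to be dominated by $\omega_{\rm sf}$'' does not capture this; without $f^*\beta$ there is no free parameter to restore positivity or to hit the integrability constraint.
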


The existence of $\omega_{\rm CY}$ follows from a method due to Tian-Yau \cite{ty1, ty2} for solving
the Monge-Amp{\`e}re equation $(\omega_0 + i\partial\bar{\partial}u)^2 = \alpha\Omega \wedge \bar{\Omega}$
with a complete background metric $\omega_0$ on $M$, which has to satisfy
$\omega_0^2 \approx \alpha\Omega\wedge\bar{\Omega}$ in a certain sense.
We construct $\omega_0$ by exploiting the elliptic fibration structure in a neighborhood of
$D$, modifying an idea from Gross-Wilson \cite{gw}. We refer to the end of this introduction for details.
From a more careful analysis of $\omega_0$ and the PDE, one can then deduce asymptotics for $\omega_{\rm CY} = \omega_0 + i\partial\bar{\partial}u$.
To state the result, we need some more terminology.
  
\begin{definition}\label{alhdef} Let 
$\epsilon, \delta, \ell > 0$, $\theta \in (0,1]$, $\tau \in \mathfrak{H}/{\rm PSl}(2,\Z)$, where $\mathfrak{H}$ is the upper half plane. 
Let
$g_{\epsilon,\tau}$ denote the unique flat metric of area $\epsilon$ and modulus
$\tau$ on $T^2$. Let $g$ be a complete Riemannian metric on an open $4$-manifold $N$.

(i) The metric $g$ is called ALG($\delta,[\theta,\epsilon,\tau]$) if there exist $r_0 > 0$, a compact subset $K \subset N$, 
and an imbedding $\Phi: S(\theta,r_0) \times T^2 \hookrightarrow N \setminus K$ with a dense image, where
$S(\theta,r_0) := \{z \in \C: |z| > r_0, \, 0 < {\rm arg}\, z < 2\pi \theta\}$, such that 
\begin{equation}|\nabla_{g_{\rm flat}}^k(\Phi^*g - g_{\rm flat})|_{g_{\rm flat}} \leq 
C(k) |z|^{-\delta - k}\end{equation} for all $k \in \N_0$, where $g_{\rm flat} := g_\C \oplus g_{\epsilon,\tau}$.

(ii) The metric $g$ is called ALH if there exist $\delta > 0$, a compact subset $K \subset N$,
and a diffeomorphism
$\Phi: \R^+ \times T^3 \to N \setminus K$ such that
\begin{equation}|\nabla_{g_{\rm flat}}^k(\Phi^*g - g_{\rm flat})|_{g_{\rm flat}} \leq C(k)e^{-\delta t}\end{equation}
for all $k \in \N_0$, where $g_{\rm flat} := dt^2 \oplus h$ for some flat metric $h$ on $T^3$. 
More specifically, we say that $g$ is ALH$(\ell,\epsilon,\tau)$ 
if $g$ is ALH with $h = \ell^2 d\phi^2 \oplus g_{\epsilon,\tau}$ with respect to some topological splitting $T^3 = S^1 \times T^2$,
with $\phi \in S^1 = \R/2\pi \Z$ and $g_{\epsilon,\tau}$ as above.
\end{definition}

Thus an ALG space converges to a twisted product of a flat $2$-cone and a flat $2$-torus, where 
the cone is \emph{not} restricted to be a quotient of $\R^2$.
As for ALH spaces,
the ones coming from Theorem \ref{main} all have an isometrically split cross-section $S^1 \times T^2$,
the length $2\pi\ell$ of the circle being an intrinsic parameter of the ALH metric. On the other hand, 
we will see in Theorem \ref{alh2}(ii) that such spaces admit small Ricci-flat ALH deformations whose 
cross-sections do no longer split.

The \emph{monodromy} referred to in the next theorem is the topological monodromy as in Definition \ref{bad},
viewed as a conjugacy class in ${\rm Sl}(2,\Z)$ and labelled according to the so-called 
\emph{Kodaira types}, which in turn are in one-to-one correspondence with the possible 
topological types of the deleted fibers $D$, cf.~Section 4.1.

\begin{theorem}[Asymptotics]\label{main1} Let $X$, $f$, $D$, $M$, $\Omega$, $\Delta$, $\omega$, $\alpha$, $\omega_{\rm CY}$ be 
as above and let $\epsilon$ denote the area of the fibers of $f$ with respect to either $\omega$
or $\omega_{\rm CY}$.

\textup{(i)} If $D$ is
smooth, then $\omega_{\rm CY}$ is \textup{ALH}$(\ell,\epsilon,\tau)$, $\ell^2 \epsilon = i \alpha \int_D R \wedge \bar{R}$,
where $R$ is the residue of $\Omega$ along $D$, normalized as $z \Omega = dz \wedge R$, and $\tau$ is the modulus of $D$.

\textup{(ii)} If $D$ is singular with finite order monodromy, then $\omega_{\rm CY}$ is \textup{ALG}$(2,[\theta,\epsilon,\tau])$ 
with $(\theta,\tau)$ depending on the Kodaira monodromy type as follows, 
$\zeta_3 = \exp(\frac{\textup{\begin{tiny}$2\pi i$\end{tiny}}}{\textup{\begin{tiny}$3$\end{tiny}}})$:\medskip\

\begin{center}
\begin{tabular}{c c c c c c c c}
\hline
\textup{type} & ${\rm I}_0^*$ & ${\rm II}$ & ${\rm II}^*$ & ${\rm III}$ & ${\rm III}^*$ & ${\rm IV}$ & ${\rm IV}^*$ \medskip\ \\ 
$\theta$ & $\frac{1}{2}$ & $\frac{1}{6}$ & $\frac{5}{6}$ & $\frac{1}{4}$ & $\frac{3}{4}$ & $\frac{1}{3}$ & $\frac{2}{3}$ \medskip\ \\ 
$\tau$ & \textup{any} & $\zeta_3$ & $\zeta_3$ & $i$ & $i$ & $\zeta_3$ & $\zeta_3$ \\ \hline 
\end{tabular}
\end{center}\smallskip\

\textup{(iii)} If $D$ is singular with infinite monodromy, then there are two possibilities.

$\bullet$ Kodaira type ${\rm I}_b$, monodromy conjugate to $(\begin{smallmatrix} 1 & b \\ 0 & 1\end{smallmatrix})$, $1 \leq b \leq 9$:
For any fixed $x_0$, $\omega_{\rm CY}$ satisfies $|B(x_0,s)| \sim s^{4/3}$
for $s \gg 1$, ${\rm inj}(x) \sim |B(x,1)| \sim r(x)^{-1/3}$, $r(x)$ $:=$ $1 + {\rm dist}(x_0,x)$, $|{\rm Rm}| \sim r^{-2}$,
and $|\nabla^k{\rm Rm}|\lesssim_k r^{-2-k}$ for all $k \in \N$. Moreover, $\omega_{\rm CY}$ has
a unique tangent cone at infinity given by the half-line $\R^+$.

$\bullet$ Kodaira type ${\rm I}_b^*$, monodromy conjugate to $-(\begin{smallmatrix} 1 & b \\ 0 & 1 \end{smallmatrix})$, $1 \leq b \leq 4$:
$\omega_{\rm CY}$ satisfies 
$|B(x_0,s)| \sim s^{2}$ for $s \gg 1$,
${\rm inj}(x) \sim |B(x,1)| \sim (\log r(x))^{-1/2}$, $|{\rm Rm}| \sim r^{-2}(\log r)^{-1}$,
and $|\nabla^k{\rm Rm}| \lesssim_k r^{-2-k}(\log r)^{-1}$ for all $k \in \N$. Moreover, $\omega_{\rm CY}$ has
a unique tangent cone at infinity given by $\R^2/\Z_2$.
\end{theorem}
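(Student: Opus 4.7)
\emph{Proof proposal.} My strategy mirrors the one sketched for Theorem \ref{main}: for each Kodaira type of $D$, build a reference K\"ahler metric $\omega_0$ on a punctured neighborhood of $D$ in $M$ whose asymptotic geometry already has the advertised form and whose square $\omega_0^2$ differs from $\alpha\Omega\wedge\bar\Omega$ by a rapidly decaying error, then apply the Tian--Yau existence scheme of Theorem \ref{main} and establish sharp weighted decay for the potential $u$ in $\omega_{\rm CY}=\omega_0+i\partial\bar\partial u$, so that the asymptotics of $\omega_0$ are inherited by $\omega_{\rm CY}$. For case (i), the fibration near a smooth $D$ is holomorphically a product via the residue trivialization, and the natural choice is $\omega_0=\frac{i\alpha}{\ell^{2}}|z|^{-2}\,dz\wedge d\bar z$ plus the flat fiber metric of area $\epsilon$ and modulus $\tau$; the substitution $t=-\log|z|$ turns $\omega_0$ into a split cylinder, and the normalization $\ell^2\epsilon=i\alpha\int_D R\wedge\bar R$ drops out of $\omega_0^2=\alpha\Omega\wedge\bar\Omega$ using $z\Omega=dz\wedge R$. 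Translation invariance reduces the linearized Monge--Amp\`ere operator to Fourier analysis on $S^1\times D$; the only potential obstruction is the zero mode, which is killed because $\int_C\omega=0$ on bad cycles and $\int_M\omega^2<\infty$ match the periods and total volume of $\omega_0$. Standard weighted Schauder theory on cylinders then gives exponential decay of $u$ and all its derivatives, i.e.~ALH$(\ell,\epsilon,\tau)$.

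For (ii) I would work on the normalized cyclic cover $w\mapsto w^n=z$, where $n\in\{2,3,4,6\}$ is the monodromy order. The pulled-back fibration has a smooth elliptic fiber over $w=0$ whose modulus is the unique fixed point of the order-$n$ rotation, i.e.~the value $\tau$ from the table ($\tau$ arbitrary when $n=2$ acts trivially, which is precisely $\mathrm{I}_0^*$), so case (i) applies upstairs and yields a flat semiflat model on $\{|w|>r_0\}\times T^2$. Descending through the $\Z/n\Z$ action makes the base the cone sector $S(\theta,r_0)$ with $\theta$ from the table. The leading deviation of $\omega_0^2$ from $\alpha\Omega\wedge\bar\Omega$ is quadratic in $|w|$ because the only correction to the semiflat ansatz comes from the $|w|^2$ expansion of the fiber area along the section; weighted polynomial decay estimates for $u$ then yield $|\nabla^k(\omega_{\rm CY}-\omega_0)|_{g_{\rm flat}}=O(|w|^{-2-k})$, i.e.~ALG$(2,[\theta,\epsilon,\tau])$.

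For (iii) I would take as $\omega_0$ the Ooguri--Vafa/Gross--Wilson ansatz, i.e.~the natural semiflat metric on the local Jacobian fibration corrected by explicit instanton terms that extend $\omega_0$ smoothly across $D$. In the $\mathrm{I}_b$ case a scaling analysis of this local model reproduces $|B(x_0,s)|\sim s^{4/3}$, $\mathrm{inj}(x)\sim r^{-1/3}$, $|\mathrm{Rm}|\sim r^{-2}$, and the $\R^+$ tangent cone. For $\mathrm{I}_b^*$, whose monodromy equals $-I$ times that of $\mathrm{I}_{2b}$, I would pass to the smooth double cover on which the fibration is $\mathrm{I}_{2b}$, apply the $\mathrm{I}_b$ construction there, then quotient by the free $\Z_2$ at infinity; this produces the $\R^2/\Z_2$ tangent cone and the logarithmic corrections in $\mathrm{inj}$ and $|\mathrm{Rm}|$ from the extra reflection.

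The hardest step, and the reason (iii) is not a routine application of the Tian--Yau machinery, is that the Ooguri--Vafa background is genuinely collapsing and the standard weighted Schauder and Moser iteration estimates for $u$ do not apply directly. I would carry the linear estimates out on the non-collapsing semiflat cover, track how the weighted norms transform under the nonisometric monodromy action, and bootstrap the higher-derivative curvature bounds $|\nabla^k\mathrm{Rm}|\lesssim_k r^{-2-k}$ (respectively with an extra $(\log r)^{-1}$ factor for $\mathrm{I}_b^*$) off the Monge--Amp\`ere equation for $\omega_{\rm CY}$, since the background curvature is only polynomially small and leaves no margin for naive derivative losses. The tangent cone statements then follow by a standard pointed Gromov--Hausdorff rescaling argument from these pointwise bounds.
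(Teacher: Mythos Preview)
Your outline for (i) and (ii) is close to the paper's approach, though the paper works more explicitly: it writes down the semi-flat metric $\omega_{\rm sf,\epsilon}$ directly from Kodaira's period data (Table 4.1, Corollary \ref{sfcor}) rather than first passing to the cyclic cover and invoking case (i). The $O(|u|^{-2})$ error you cite is exactly the paper's (\ref{algeqn}).

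There is a genuine gap in your treatment of (iii), and it stems from conflating the two settings ${\rm div}(\Omega)=0$ (Gross--Wilson, K3) and ${\rm div}(\Omega)=-D$ (this paper, rational elliptic). The Ooguri--Vafa instanton corrections are designed to extend the semi-flat metric \emph{across} an ${\rm I}_b$ fiber that stays in the manifold; here $D$ is removed and sits at infinite distance, so the pure semi-flat metric (\ref{ib}) on $U\setminus D$ is already complete and exactly Calabi--Yau. No instanton terms are needed, and the paper uses none: it glues $\omega_{\rm sf}(\alpha)$ to $\omega$ over a compact region via Proposition \ref{extend}, so that $e^f-1$ is compactly supported and Propositions \ref{solve} and \ref{decay} apply directly.

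Your ${\rm I}_b^*$ reduction is also off. On the double cover the fibration is indeed ${\rm I}_{2b}$, but the pulled-back $\Omega$ has a \emph{double} pole in $u$ (because $N=1$ for ${\rm I}_b^*$, cf.~Table 4.1), not a simple one. Hence the base metric on the cover is $|u|^{-4}|\log|u||\,|du|^2$, not the $|u|^{-2}|\log|u||\,|du|^2$ of the standard ${\rm I}_b$ case; this is what produces $r\sim |u|^{-1}|\log|u||^{1/2}$, volume growth $s^2$, and the $(\log r)^{-1}$ curvature factor. A $\Z_2$ quotient of the ${\rm I}_b$ geometry would still have volume growth $s^{4/3}$ and tangent cone $\R^+$, not $\R^2/\Z_2$. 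So the logarithmic corrections do not come ``from the extra reflection'' but from the changed pole order, and the asymptotics must be computed afresh from (\ref{ibstar}), as the paper does.

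Finally, the paper does not carry the decay estimates out on any non-collapsing cover. Proposition \ref{decay} works directly on the collapsed space: for ${\rm I}_b$ it verifies the condition ${\rm CYL}(4/3,1/3)$, for ${\rm I}_b^*$ the condition ${\rm SOB}(2)$, and then uses annular Neumann--Poincar\'e inequalities (Lemma \ref{segment}, (\ref{ann-neu-poinc})) plus Moser iteration to get pointwise decay of $u$, followed by scaled Schauder estimates (Proposition \ref{decay}(ii), Lemma \ref{hmgcrit}). The injectivity radius bounds and $|\nabla^k{\rm Rm}|$ are then read off $\omega_{\rm sf}$ and transferred to $\omega_{\rm CY}$ via \cite{cgt} and local Shi-type estimates, essentially as you suggest at the end.
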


\begin{remark}\label{sect1rem} (i) In the ALG and ${\rm I}_b^*$ cases, the parameter $\alpha$ from Theorem \ref{main},
while corresponding to an infinitesimal
Einstein deformation, has very likely no geometric significance and can be changed by flowing along the 
scaling vector field $r \partial_r$ on the
tangent cone at infinity. On the other hand, in the ALH and 
${\rm I}_b$ cases, $\alpha$ corresponds to an intrinsic length scale of $\omega_{\rm CY}$.

(ii) Elliptic fibrations are called \emph{isotrivial} if all smooth fibers are isomorphic, 
cf. Example \ref{isoex}
for a careful discussion of isotrivial rational elliptic surfaces. Isotrivial ALG spaces with a type I$_0^*$, II, III, or IV fiber at infinity
can also be obtained from crepant resolution
constructions, as in \cite{bm, santoro}, or heuristically in \cite{stringy, hitchin}. 
The cones on the base are then honest quotients of $\R^2$ by cyclic group actions.
 
(iii) The I$_b$ manifolds appear to be the first examples of complete Ricci-flat four-manifolds to 
saturate the $r^{-2}$ curvature bound from Cheeger-Tian \cite{ct-einstein}.

(iv) For certain $(X,D)$ with $D$ smooth, Biquard-Minerbe \cite{bm} found cyclic groups
of (anti-)automorphisms of $X$ acting freely on $M$ and isometrically with respect to
$g_{\rm CY}$ for proper choices of $\omega$. This yields asymptotically cylindrical Ricci-flat spaces
with any one of the five non-trivial orientable quotients of $T^3$ as cross-section.\hfill $\Box$\end{remark}
 
\begin{remark} Recall that $\omega_{\rm CY} = \omega_0 + i\partial\bar{\partial}u$ with a complete background metric $\omega_0$
and a well-behaved potential $u$, where $\omega_0$ coincides with a Gross-Wilson style \cite{gw} semi-flat metric at infinity.
Thus, $\omega_0$ is Ricci-flat with $\omega_0^2 = \alpha\Omega\wedge\bar{\Omega}$ outside some compact subset of $M$,
and exactly flat when restricted to smooth fibers there.

(i) Cherkis-Kapustin \cite{cherkis} 
gave arguments
to the effect that certain moduli spaces of instantons appearing in physics are rational elliptic surfaces with an I$_b^*$ 
removed, $0 \leq b \leq 4$, the natural $L^2$ metric being hyperk{\"a}hler and semi-flat at infinity.
 
(ii) In the ALG and ALH cases, the deviation of $\omega_0$ from being flat is typically at least as large as
$\omega_{\rm CY} - \omega_0$. However, if the elliptic fibration is isotrivial, then $\omega_0$ is indeed flat and
we can replace ALG$(2)$ by ALG$(2 + \frac{1}{\theta} -\delta)$ for any $\delta > 0$.\hfill $\Box$
\end{remark}

We now discuss our partial uniqueness result for ALH metrics. What we do here 
is similar
to work of Joyce \cite{joyce} on crepant resolutions of $\C^m/\Gamma$, see also \cite{vanc, cvc2, cvc3}.
In these papers uniqueness is proved assuming that the coordinate system at infinity is \emph{fixed},
i.e.~that the two complete CY metrics on a given complex manifold 
which one would like to prove to be equal are already asymptotic \emph{as tensor fields}. We make 
a small elementary improvement on this if the cross-section splits as a product.

\begin{definition}\label{ed} Let $(N,g)$ be a complete Riemannian four-manifold which is ALH as in
Definition \ref{alhdef}(ii). We say that a tensor field $T$ on $N$ is \emph{exponentially decaying},
or ED for short,
if $|\nabla^k T| \leq C(k) e^{-\epsilon t}$ for some $\epsilon > 0$ and all $k \in \N_0$.
\end{definition}

\begin{definition}\label{cpt} Let $(N,g,J)$ be complete K{\"a}hler and ALH$(\ell,\epsilon,\tau)$
with respect to $\Phi: \R^+ \times S^1 \times T^2 \rightarrow N \setminus K$.
We say that $(N,J)$ is $\Phi$-\emph{compactifiable} if there exist a compact manifold $Y$, a complex structure $\bar{J}$ on $Y$, a $\bar{J}$-holomorphic $2$-torus $E \subset Y$, and a diffeomorphism
$\iota: N \to Y\setminus E$ with $\iota^*\bar{J} = J$ such that $\iota \circ \Phi \circ \mu$ extends
as an imbedding of $\Delta \times T^2$ into $Y$, where $\mu(\exp(-\frac{t}{\ell} + i\phi),x) := (t,\phi,x)$.
\end{definition}

Nordstr{\"o}m \cite{no2} has proved that every simply-connected
ALH$(\ell,\epsilon,\tau)$ Calabi-Yau surface
becomes $\Phi$-compactifiable after hyperk{\"a}hler rotation if necessary.

\begin{theorem}[ALH Partial Uniqueness]\label{alh} 
Let $(M_i, g_i)$, $i = 1,2$, be Ricci-flat ${\rm ALH}$ Riemannian $4$-manifolds with
parallel orthogonal almost-complex structures $J_i$ and corresponding K{\"a}hler forms $\omega_i$.
Let $\Psi: M_1 \to M_2$ be a diffeomorphism 
such that $\Psi^*J_2 = J_1$ and $\Psi^*[\omega_2] = [\omega_1]$ in the sense of de Rham cohomology.

{\rm (i)} If the $M_i$ are diffeomorphic to the complement of a smooth fiber in a rational elliptic surface 
and if $\Psi^*g_2 - g_1 = {\rm ED}$, then $\Psi^*g_2 = g_1$.

{\rm (ii)} If the $(M_i, g_i)$ are ${\rm ALH}(\ell,\epsilon,\tau)$, 
the $(M_i, J_i)$ are $\Phi_i$-compactifiable with respect to coordinate systems $\Phi_i$, and if $\mu^{-1} \circ 
\Phi_2^{-1} \circ \Psi \circ \Phi_1 \circ \mu$ 
extends as a diffeomorphism from $\Delta \times T^2$ to itself with $\mu$ as in Definition \ref{cpt}, then $\Psi^* g_2 - g_1 = {\rm ED}$.
\end{theorem}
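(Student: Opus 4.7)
The plan for both parts is to reduce to a single homogeneous complex Monge--Amp\`ere equation
\begin{equation*}
(\omega_1 + i\partial\bar\partial u)^2 = \omega_1^2
\end{equation*}
on $(M_1, J_1)$, and then exploit it via either the maximum principle (part (i)) or Fourier decomposition on the asymptotic cross-section (part (ii)). Setting $\omega := \Psi^*\omega_2$ and using $\Psi^*J_2 = J_1$, I view $(M_1,J_1)$ as a complex manifold carrying two Ricci-flat K\"ahler forms $\omega_1, \omega$ in the same de Rham class. Ricci-flatness gives $\omega_1^2 = \alpha_1 \Omega_1 \wedge \bar\Omega_1$ and $\omega^2 = \alpha_2 |c|^2 \Omega_1 \wedge \bar\Omega_1$, where $\Omega_1$ is the parallel holomorphic $2$-form on $M_1$ (unique up to scale since any two parallel holomorphic $2$-forms have constant ratio of constant modulus, hence are proportional) and $\Psi^*\Omega_2 = c\,\Omega_1$. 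In (i) the ED hypothesis forces $\omega^2/\omega_1^2 \to 1$ at infinity, giving $\alpha_1 = \alpha_2|c|^2$; in (ii) the induced biholomorphism $\tilde\Psi: E_1 \to E_2$ of compactifying elliptic curves combined with the residue formula $\ell^2 \epsilon = i\alpha \int_D R\wedge \bar R$ of Theorem \ref{main1}(i), applied to both sides using the matching ALH parameters $(\ell,\epsilon,\tau)$, forces the same equality. Either way $\omega^2 = \omega_1^2$.

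For part (i), a $\partial\bar\partial$-lemma for $d$-exact ED $(1,1)$-forms on ALH manifolds — obtained by combining standard Hodge theory on a compact exhaustion with Fourier expansion on the $S^1 \times T^2$ cross-section, where the flat Laplacian has a positive spectral gap above the kernel of constants — produces an ED real function $u$ with $\omega - \omega_1 = i\partial\bar\partial u$. Factoring the equation gives
\begin{equation*}
0 = \omega^2 - \omega_1^2 = (\omega + \omega_1) \wedge (\omega - \omega_1) = (\omega + \omega_1) \wedge i\partial\bar\partial u,
\end{equation*}
and since $\omega + \omega_1$ is a K\"ahler form, this means $u$ is harmonic for the elliptic Laplacian associated to $\tfrac12(\omega+\omega_1)$. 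Because $u$ is ED and hence vanishes at infinity, the maximum principle forces $u \equiv 0$, whence $\omega = \omega_1$ and $\Psi^*g_2 = g_1$.

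For part (ii), the compactifiability and extension hypothesis let me transport $\omega$ by the diffeomorphism of $\Delta \times T^2$ so that $\omega_1$ and $\omega$ live as K\"ahler forms on a common punctured neighborhood of $E$ in $Y$, both with the same leading cusp behavior $\sim i\partial\bar\partial(\log|z|)^2$ dictated by the matching ALH parameters. Their difference therefore extends smoothly across $E$ as a closed $(1,1)$-form on $Y$ whose de Rham class must be a multiple of $[E]$, since $\ker(H^2(Y,\R) \to H^2(M_1,\R)) = \R[E]$. The ordinary $\partial\bar\partial$-lemma on the compact K\"ahler surface $Y$ then provides a constant $c'$ and a smooth function $v$ on $Y$ with $\omega - \omega_1 = c'\eta_E + i\partial\bar\partial v$, where $\eta_E$ represents $[E]$; absorbing $c'\eta_E$ into a local potential proportional to $\log|z|^2$ yields a single $u$ on $M_1$ of at most logarithmic growth near $E$ satisfying the homogeneous Monge--Amp\`ere equation. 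Expanding $u$ in Fourier modes on $S^1 \times T^2$, the non-constant modes decay exponentially by the positive spectral gap of the flat Laplacian, while the constant mode is pinned down up to an irrelevant additive constant by the matching of ALH parameters at infinity. Iterated Schauder estimates promote this to decay of all covariant derivatives, giving $\Psi^*g_2 - g_1 = {\rm ED}$.

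The main technical obstacle lies in part (ii): one must verify carefully that the quasilinear elliptic equation satisfied by $u$, which linearizes at infinity to the flat Laplacian on $\R^+ \times S^1 \times T^2$, admits no nontrivial solutions growing faster than logarithmically, and that the absorption of the fiber-class piece into a logarithmic gauge shift does not introduce uncontrolled terms in the expansion. This is in the spirit of Joyce's uniqueness arguments for crepant resolutions of $\C^m/\Gamma$ and of the Kovalev--Nordstr\"om framework for asymptotically cylindrical Calabi--Yau manifolds, the key simplification here being that the asymptotic cross-section is the flat $3$-torus, whose Laplacian spectrum is completely explicit and whose only zero mode is the constants.
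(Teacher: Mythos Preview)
Your Part (i) follows the paper's route, but there is a genuine gap in the $\partial\bar\partial$-lemma step. The spectral gap on $T^3$ controls only the nonzero Fourier modes; the zero mode $u_0(t)$ of the potential satisfies $u_0'' = {\rm ED}$, so a priori $u_0 = at + b + {\rm ED}$, and nothing in your argument forces $a = 0$. There is no global pluriharmonic function on $M$ asymptotic to $at$ that you could subtract (holomorphic functions on $M$ are pulled back from $\mathbb{A}^1$ and grow exponentially in $t$), so the potential genuinely has linear growth in general. The paper absorbs this by allowing $u$ of linear growth from the start: it builds $u$ via Corollary~\ref{alhsolve}(i)--(ii), then uses that a harmonic function of at most linear growth on an ALH manifold must be constant. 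Your maximum principle step, as written, needs $u$ to vanish at infinity and therefore does not close.

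Your Part (ii) takes a completely different approach from the paper, and the key step is circular. You assert that $\omega$ and $\omega_1$ ``have the same leading cusp behaviour'' near $E$ and hence that $\omega-\omega_1$ extends smoothly across $E$; but in the $\Phi_1$-coordinates the pulled-back form $\omega = \Psi^*\omega_2$ equals $\Pi^*(\mu^*g_{\rm flat} + {\rm ED})$, and whether $\Pi^*(\mu^*g_{\rm flat}) - \mu^*g_{\rm flat}$ is ${\rm ED}$ is exactly the conclusion you are trying to prove. A general smooth self-diffeomorphism of $\Delta\times T^2$ fixing $\{0\}\times T^2$ does \emph{not} pull back $\ell^2|d\log z|^2 \oplus g_{\epsilon,\tau}$ to itself modulo $O(|z|)$; you must use that $\Psi$ is a biholomorphism. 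The paper's argument is entirely elementary and does precisely this: since $\Psi^*J_2 = J_1$ and both $\mu^*\Phi_i^*J_i$ extend across $z=0$ as $\pm J_{\mathbb C}\oplus\pm J_\tau$, the derivative of $\Pi$ along $\{0\}\times T^2$ is $\mathbb{C}$-linear, forcing $\Pi(z,w) = (az, bw+c) + (O(|z|^2),O(|z|))$ with $|b|=1$ because $w\mapsto bw+c$ must descend to $T^2$. A one-line Taylor expansion then gives $\Pi^*(\mu^*g_{\rm flat}) = \mu^*g_{\rm flat}\,(1+O(|z|))$. No Monge--Amp\`ere equation, no compact $\partial\bar\partial$-lemma, and no Fourier analysis are needed. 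Your appeal to the residue identity of Theorem~\ref{main1}(i) is also misplaced: that formula is a property of the specific metrics constructed there, not of arbitrary ALH hyperk\"ahler metrics, and in any case Part (ii) asks only for ${\rm ED}$ decay of the difference, so $\omega^2=\omega_1^2$ is irrelevant here.
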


Part (i) is a corollary of cylindrical Hodge theory while Part (ii) 
simply expresses the fact that, under the given conditions, $\Psi$ must converge
to a tame automorphism of $\C^* \times \C/(\Z + \Z\tau)$, hence 
to an isometry of the flat metric 
$\ell^2 |d\log z|^2 \oplus g_{\epsilon,\tau}$.

\begin{theorem}[ALH Moduli]\label{alh2} Let $M$ be diffeomorphic to the complement of some smooth fiber in a
rational elliptic surface, let $g$ be ${\rm ALH}$ hyperk{\"a}hler on $M$, and define
$\mathcal{H}$ $:=$ $\{h \in C^\infty({\rm Sym}^2T^*M): h$ $=$ parallel $+$ {\rm ED} outside a compact set$\}$.

{\rm (i)} The kernel inside $\mathcal{H}$ of the linearization at $g$ of the Ricci curvature operator
is of dimension $29$
modulo arbitrary Lie derivatives and multiples of $g$. The subspace of ${\rm ED}$ bilinear forms has dimension $24$,
and the quotient is naturally isomorphic to the deformation space $\mathfrak{sl}(3,\R)/\mathfrak{so}(3)$ of the flat metric on the cross-section.

{\rm (ii)} If $(X,J)$ is a generic rational elliptic surface and if $(M,g,J)$ with $M = X \setminus D$
is one of the {\rm ALH} Calabi-Yau spaces with an isometrically split cross-section $T^3 = S^1 \times T^2$ constructed in Tian-Yau \cite[Theorem 5.2]{ty1}, 
then all kernel elements from Part {\rm (i)} are tangent to some
curve of {\rm ALH} hyperk{\"a}hler metrics on $M$.
\end{theorem}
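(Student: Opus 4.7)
The plan for Part (i) is a Fredholm kernel calculation for the linearized Ricci operator on function spaces adapted to the cylindrical asymptotics of $g$. First I would impose the Bianchi / transverse-traceless gauge, which identifies $\mathcal{H}/(\mathrm{Lie~derivatives} + \R \cdot g)$ with a space of TT tensors; on a Ricci-flat background the linearization of Ricci on TT tensors is $\tfrac12 \Delta_L = \tfrac12 (\nabla^*\nabla - 2\mathring{R})$. Since $g$ is ALH, I would set up Lockhart--McOwen / Melrose Fredholm theory for $\Delta_L$ between weighted H\"older spaces $e^{\delta t} C^{k,\alpha}$ on the cylindrical end $\R^+ \times T^3$. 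The indicial operator at the critical weight $\delta = 0$ is the rough Laplacian on symmetric $2$-tensors on flat $T^3$, whose kernel is $\mathrm{Sym}^2 \R^3$. This yields a short exact sequence
\[ 0 \longrightarrow K_{\rm ED} \longrightarrow K_{\mathcal{H}} \longrightarrow P \longrightarrow 0 \]
with $P$ the subspace of asymptotic parallel tensors that extend to elements of $K_{\mathcal{H}}$. An asymptotic-matching argument in the spirit of \cite{kovalev2} shows every element of $\mathrm{Sym}^2 \R^3$ does extend, and modding out by the trace and by Lie derivatives of asymptotic parallel vector fields (which act as $\mathfrak{so}(3) \subset \mathfrak{gl}(3,\R)$ on an identification $T^3 \cong \R^3/\Lambda$) identifies $P$ with $\mathfrak{sl}(3,\R)/\mathfrak{so}(3)$, of dimension $5$.

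The number $\dim K_{\rm ED} = 24$ is then obtained by cylindrical Hodge theory of Kovalev--Nordstr\"om type: identify the gauged ED kernel with a distinguished subspace of $L^2$ harmonic $2$-forms on $(M,g)$, cut out by the action of the hyperk\"ahler triple, and compute via an APS-type long exact sequence relating it to cohomology of $X$ relative to the fiber $D$. For Part (ii) I would split along the same short exact sequence. The $5$ asymptotic directions integrate by hand: a tangent vector in $\mathfrak{sl}(3,\R)/\mathfrak{so}(3)$ is tangent to a curve of flat metrics on $T^3$, which lifts to a curve of Gross--Wilson-style semi-flat backgrounds $\omega_0(s)$ on $M$ by varying the ALH parameters $(\ell, \epsilon, \tau)$ and the topological splitting $T^3 = S^1 \times T^2$; Theorem \ref{main} along this curve then produces a curve of ALH Calabi--Yau metrics whose tangent at $s = 0$ has the prescribed asymptotic-parallel part. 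The $24$ ED directions integrate by Kuranishi theory on weighted spaces, or equivalently by applying Theorem \ref{main} after perturbing $\alpha$, the K\"ahler class $[\omega]$, and the complex structure $J$ itself within the moduli of rational elliptic surfaces keeping $D$ a smooth fiber; genericity of $(X,J)$ is used to ensure these parameters span all $24$ ED directions, and hyperk\"ahler rotation if necessary accounts for the three-fold symmetry among the $\omega_I, \omega_J, \omega_K$.

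The main obstacle is the identification $\dim K_{\rm ED} = 24$. Kovalev's machinery \cite{kovalev2} is formulated for asymptotically cylindrical Calabi--Yau $3$-folds, and adapting it to the hyperk\"ahler $4$-fold setting demands careful bookkeeping of the $\mathrm{SO}(3)$-worth of parallel complex structures and the resulting action on $L^2$ harmonic $2$-forms, together with the correct boundary correction from an APS-type formula on the flat $T^3$ cross-section. A secondary difficulty is in Part (ii): matching Kuranishi / Theorem \ref{main} deformations to the $24$ abstract kernel directions requires differentiating the Tian--Yau Monge--Amp\`ere solution with respect to all complex-structure and K\"ahler parameters and showing the resulting derivative map is surjective onto $K_{\rm ED}$, which is exactly where the ``generic $(X,J)$'' hypothesis bites.
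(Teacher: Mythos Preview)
Your overall strategy---gauge-fix, Fredholm theory for the Lichnerowicz Laplacian, short exact sequence $K_{\rm ED} \to K_{\mathcal{H}} \to P$---is the paper's, but two concrete points in Part (i) are wrong as written, and the mechanism for $24$ is not the one you describe.

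First, the indicial kernel at weight zero is $\mathrm{Sym}^2\R^4$, not $\mathrm{Sym}^2\R^3$: the bundle is $\mathrm{Sym}^2T^*M$, and parallel sections on the end $\R \times T^3$ carry $dt^2$ and $dt\,dx^i$ components too. The paper (Step 2 of Section 5.3) obtains a harmonic space $\mathcal{D}$ with $\dim\mathcal{D} = 34$ and $10$-dimensional quotient by the ED part.

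Second, Lie derivatives by \emph{parallel} vector fields are zero; they do not act as $\mathfrak{so}(3)$ or as anything. What actually cuts the parallel quotient down is a $4$-dimensional space $\mathcal{L}_{\rm lin} = \{L_X g : X \text{ linearly growing harmonic on } M\}$, whose asymptotic leading terms $L_{tX_0}g_{\rm flat}$ for $X_0 \in \R^4$ kill exactly the $dt^2$ and $dt\,dx^i$ components. This leaves $\mathrm{Sym}^2\R^3$, and modding out the trace gives the $5$-dimensional $\mathfrak{sl}(3,\R)/\mathfrak{so}(3)$. Proving that $\mathcal{L}_{\rm lin}$ exhausts the Lie derivatives inside $\mathcal{D}$ is the content of the paper's Step 3, which rests on a hands-on ODE estimate (if $L_X g \in \mathcal{D}$ is ED then $|\nabla X|$ stays bounded, forcing $X$ to be at most linearly growing); this is not a formal consequence of Lockhart--McOwen.

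For $\dim K_{\rm ED} = 24$, no APS formula is used. The paper fixes one parallel $J$, splits $\mathcal{D} = \mathcal{D}^+ \oplus \mathcal{D}^-$, and identifies $\mathfrak{I}(\mathcal{D}^+)$ with bounded harmonic real $(1,1)$-forms while $\Omega$-duality sends $\mathfrak{I}(\mathcal{D}^-)$ into bounded harmonic complex $(1,1)$-forms. By Bochner, every $L^2$ harmonic $2$-form on a hyperk\"ahler ALH manifold is primitive $(1,1)$, and the space of these has dimension $\dim\mathrm{im}(H^2_c(M) \to H^2(M)) = 8$, computed by Mayer--Vietoris for $(X,D)$. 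Hence $8 + 2\cdot 8 = 24$.

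For Part (ii) the paper does not split along ED versus non-ED. It keeps the hermitian/skew-hermitian decomposition, builds an explicit $\mathcal{S} \subset \mathcal{D}^-$ with $\dim_\C\mathcal{S} = 9$ of integrable complex deformations (eight from varying the pencil, one from moving $D$ along the base) such that $\mathcal{D}^- = \mathcal{S} \oplus \mathcal{L}_{\rm lin}^-$, and then solves a family of Monge--Amp\`ere equations on the deformed complex manifolds by the implicit function theorem, checking afterwards that the derivative map $\mathcal{D}^+ \oplus \mathcal{S} \to \mathcal{E}$ is injective modulo Lie derivatives. No hyperk\"ahler rotation is invoked. Your plan of generating the five non-ED moduli by re-running Theorem \ref{main} with varied cross-section parameters is reasonable in spirit, but smoothness of the resulting family in the parameter would need its own argument, which the implicit function theorem route supplies directly.
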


Koiso \cite{koiso} showed that CY metrics have unobstructed deformations on compact manifolds.
Kovalev \cite{kovalev2} sketched
an extension of this theory to the asymptotically cylindrical case, but there are certain difficulties in applying this work unmodified, largely due to the fact that
$D$ is flat and correspondingly $\pi_1(D) \neq 0$. For example, it may not
be clear from the outset that the complex structure stays compactifiable up to hyperk{\"a}hler rotation, although this does turn out to be the case. On the other hand, 
the cross-section \emph{can} be deformed so as to break the isometric splitting.

In the setting of Part (ii), the deformation space can be understood as follows:
There are complex moduli from varying the pencil ($\dim_\C = 8$, ED) or $D$ ($\dim_\C = 1$, not ED). Among the remaining moduli of the metric, those that keep
the splitting of the cross-section correspond
to varying the K{\"a}hler class ($\dim_\R = 8$, ED) or the length of the $S^1$ factor inside a fixed K{\"a}hler class ($\dim_\R = 1$, not ED). The non-split moduli
($\dim_\R = 2$, not ED) are still realized inside each fixed K{\"a}hler class.

The restriction to Tian-Yau type metrics in Part (ii) is for technical convenience only. In fact, there are various ways of proving 
that ALH spaces have unobstructed deformations in general, e.g.~by reducing the problem to
Nordstr{\"o}m's work \cite{nordstrom} on asymptotically cylindrical $G_2$-manifolds by taking products with a flat $3$-torus.\medskip\

\noindent \emph{Plan of the paper.} As mentioned above, we will apply 
a non-compact version due
to Tian-Yau \cite{ty1, ty2} of Yau's solution \cite{yau3} of the Calabi conjecture.
The set-up
needed for this is an open complex manifold $M$
together with a 
complete K{\"a}hler metric $\omega_0$ whose 
Ricci
form satisfies $\rho(\omega_0) =  i\partial\bar{\partial} f$
with $f \to 0$ at infinity. 
A convenient way 
to fulfill this condition is to look for complex manifolds $M$ 
with global holomorphic 
volume forms $\Omega$ and complete K{\"a}hler metrics $\omega_0$ 
such that $e^f := (i\textup{\begin{footnotesize}$^{m^2}$\end{footnotesize}}\Omega \wedge \bar{\Omega})/\omega_0^m$  
converges to $1$ at infinity, where $m = \dim_\C M$.
If this is the case, then $\Omega$
obviously has some sort of a pole at infinity.
We may then hope to find a well-behaved solution 
$u$ for the Monge-Amp{\`e}re equation $(\omega_0 + i\partial\bar{\partial}u)^m = e^f \omega_0^m$,
so that $\omega_0 + i\partial\bar{\partial}u$ would be complete and Calabi-Yau 
with asymptotic geometry comparable to $\omega_0$.

From a PDE point of view, the difficult part in extending Yau's work is to provide 
Sobolev inequalities for the $L^\infty$ estimate,
and sometimes (in practice: if the volume growth of $\omega_0$ is $r^2$
or less), 
bounded solutions may not even exist unless one imposes an integrability
condition, $\int (e^f - 1)\omega_0^m = 0$.
Strictly speaking, the 
existence result
from Tian-Yau \cite{ty1} suffices
for our applications; however,
we offer a self-contained proof 
of some new, sharp weighted Sobolev inequalities (Proposition \ref{weight-sob}),
hence a sharp existence theorem (Proposition \ref{solve}),
which hold in some generality and may be of independent interest.
Also, while the solutions produced by this 
method are bounded in all $C^k$ norms,
we need some decay for the asymptotics in Theorem \ref{main1}. 
This will be proved in Section 3.4 by elementary arguments.

This leaves us with writing down background metrics $\omega_0$ on the complement of 
a given
fiber in a rational elliptic fibration. To understand the nature of this problem,
it is helpful to recall the general approach in Tian-Yau \cite{ty1,ty2}.
Thus, $M = X \setminus D$, where $X$ is a compact
K{\"a}hler manifold and $D$ is a reduced divisor such that there exists a meromorphic volume form 
$\Omega$ on $X$ with ${\rm div}(\Omega) = -\alpha D$ for some pole order $\alpha \in \N$. 
Our goal is 
to find a complete K{\"a}hler metric $\omega_0$ such
that 
$\omega_0^m$ 
converges to $i\textup{\begin{footnotesize}$^{m^2}$\end{footnotesize}}\Omega \wedge \bar{\Omega}$ when approaching $D$.
Note that there can be no \emph{canonical} choice for $\omega_0$, not even if $X = \P^2$, $D = \P^1$
($\alpha = 3$), as both the Euclidean metric
and Taub-NUT \cite{lebrun} are CY on $\C^2$ with 
identical volume forms. The strategy in Tian-Yau then is to assume $D$ is \emph{smooth} and
build $\omega_0$ by separation of variables, $\omega_0 = i\partial\bar{\partial}F(|\Omega|^2) + \omega$,
where $\omega$ is a K{\"a}hler metric on $X$, 
$F: \R^+ \to \R^+$, and $|\Omega|^2 = h(\Omega,\Omega)$ for a hermitian metric $h$
on $K_X$, all three to be determined from the equation.

One finds that if $D$ is ample, then $\omega$ is irrelevant
to leading order and $\omega_0$ will be the complex cone over a positive K{\"a}hler-Einstein metric 
on $D$ if $\alpha > 1$ and if such a metric exists,
and a sort of degenerate cone over a CY metric on $D$ if $\alpha = 1$, with the circles normal to $D$ 
collapsing and the directions tangent to $D$ blowing up.
On complex surfaces, the $\alpha > 1$ case only occurs for $D = \P^1$ in $X = \P^2$, 
producing the flat metric on $\C^2$
(to obtain nontrivial ALE spaces, one has to consider orbifolds), while $\alpha = 1$ occurs
for every del Pezzo surface $X = \P^2 \# \,k\bar{\P}^2$, $0 \leq k \leq 8$, 
and $D$ the strict transform of a smooth cubic in $\P^2$.
On the other hand, if $D$ is not ample but still moves in a pencil, then $\alpha = 1$,
$\omega$ does play a role to leading order,
and $\omega_0$ limits
to a Ricci-flat cylinder $\R^+ \times S^1 \times (D,\omega|_D)$.
If $m = 2$, this 
happens precisely when $X$ is rational elliptic, 
so then the Tian-Yau ansatz yields an ALH space.

In this paper, we drop the separation of variables idea altogether, and focus on the elliptic fibration structure
rather than the smoothness of $D$. Our background metrics $\omega_0$
will at first only be defined in a neighborhood of $D$, and
will be obtained as Riemannian submersions with flat metrics
on the fibers, as in Gross-Wilson \cite{gw}. 
It is interesting to note that even though the neighborhood boundaries of an I$_b$ 
fiber
and of a smooth cubic in $\P^2 \#\, (9-b)\bar{\P}^2$
are homeomorphic, the Tian-Yau ansatz
in that case looks quite different:
Here, each $T^2$ fiber carries a monodromy invariant $S^1$ which collapses, 
while there, the normal $S^1$ bundle collapses onto $T^2$.

We construct our background metrics $\omega_0$, study their geometric properties, and thus 
conclude the proof of Theorems \ref{main} and \ref{main1}, in Section 4.
Section 4.1 provides a detailed review of rational elliptic surfaces,
with examples for the isotrivial case, and of Kodaira's theory of singular fibers.
Section 4.2 constructs semi-flat metrics
explicitly in terms of the periods. We treat fibers of any dimension, and 
show that the induced metric $h$ on the base satisfies
${\rm Ric}(h) = g_{\rm WP}$, the Weil-Petersson metric being pulled back from the Siegel upper half plane in this case.
Section 4.3 studies the geometry of the semi-flat metrics in the surface case,
extracting formulas for the periods from Kodaira's work.
Section 4.4 shows how to glue the semi-flat metrics, which live in a neighborhood of the deleted fiber, 
with given metrics on $M$ so that the Monge-Amp{\`e}re integrability condition
$\int (\omega_0^2 - \alpha\Omega \wedge \bar{\Omega}) = 0$ is satisfied.

Section 5 gives the proofs of Theorems \ref{alh} and \ref{alh2}, after briefly reviewing the necessary Hodge theoretic
machinery on asymptotically cylindrical manifolds, and in Section 6 we discuss some open problems 
and a potential application of the new Calabi-Yau metrics to the collapsing/bubbling problem on K3 (Problem \ref{bubble}). \medskip\

\noindent \emph{Notation.} Generic constants are $\geq 1$ and $\sim$ means comparable
up to generic factors. $A = A(x,r,s) := B(x,s) - \bar{B}(x,r)$, $\mu A := A(x,\mu^{-1}r, \mu s)$, $\mu > 1$. 
For $\delta \geq 0$, $v_{\textup{\begin{tiny}$\delta$\end{tiny}}}(r)$ [$a_{\textup{\begin{tiny}$\delta$\end{tiny}}}(r)$] denotes the volume [area] of a ball [sphere] of radius $r$ in the 
model space of constant curvature $-\delta^{\textup{\begin{tiny}$2$\end{tiny}}}$.
For functions $\phi > 0$ and $u$ on a set $X$,
$u_{\textup{\begin{tiny}$X,\phi$\end{tiny}}}$ and $\|u\|_{{\textup{\begin{tiny}$X,\phi,p$\end{tiny}}}}$
denote the average and $L^{\textup{\begin{tiny}$p$\end{tiny}}}$ norm of $u$ over $X$
with respect to $\phi\,d\vol$. If $1 \leq p < n$, then $\alpha_{\textup{\begin{tiny}$np$\end{tiny}}} := n/(n-p)$. 
Our notation for K{\"a}hler manifolds is $(M,\omega)$, $n = {\rm dim}_{\textup{\begin{tiny}$\R$\end{tiny}}} M = 2m$,
$\omega(X,Y) = \langle JX,Y\rangle$, $\omega = i g_{\textup{\begin{tiny}$j\bar{k}$\end{tiny}}}
dz^{\textup{\begin{tiny}$j$\end{tiny}}} \wedge d\bar{z}^{\textup{\begin{tiny}$k$\end{tiny}}}$, hence 
$g_{\textup{\begin{tiny}$j\bar{k}$\end{tiny}}} = g(\partial_{z^j},\partial_{\bar{z}^k})
= \frac{\textup{\begin{tiny}$1$\end{tiny}}}{\textup{\begin{tiny}$2$\end{tiny}}}\delta_{\textup{\begin{tiny}$jk$\end{tiny}}}$ on $\C^{\textup{\begin{tiny}$m$\end{tiny}}}$, 
and we agree that if $g$ is a $J$-invariant inner product on an $\R$-vector space $V$, then it induces
a hermitian metric $h(X,Y) := g(\bar{X},Y)$ on $V^{1,0}$, so identifying $V^{1,0} = V$ as usual, 
$h(X,Y) = g(\frac{\textup{\begin{tiny}$1$\end{tiny}}}{\textup{\begin{tiny}$2$\end{tiny}}}(X + iJX), \frac{\textup{\begin{tiny}$1$\end{tiny}}}{\textup{\begin{tiny}$2$\end{tiny}}}(Y - iJY)) 
= \frac{\textup{\begin{tiny}$1$\end{tiny}}}{\textup{\begin{tiny}$2$\end{tiny}}}(g(X,Y) + i \omega(X,Y))$ if $X,Y \in V$.
The Ricci form, $\rho = \rho(\omega)$, $\rho(X,Y) = {\rm Ric}(JX,Y)$, $[\rho] = 2\pi c_{\textup{\begin{tiny}$1$\end{tiny}}}(M)$, 
and the K{\"a}hler differential operators, $d^{\textup{\begin{tiny}$c$\end{tiny}}} := i(\bar{\partial} - \partial)$,
$dd^{\textup{\begin{tiny}$c$\end{tiny}}} = 2i\partial\bar{\partial}$,
$\Delta := 2g^{\textup{\begin{tiny}$j\bar{k}$\end{tiny}}} \partial_{z^j}\partial_{\bar{z}^k}$.\medskip\

\noindent \emph{Acknowledgments.} I am deeply grateful to my advisor, Gang Tian, for introducing me to this circle of questions
and for his guidance. I thank Richard Bamler, Ronan Conlon, Aaron Naber, Johannes Nordstr{\"o}m and Yikuan Yan
for many illuminating discussions, John Lott 
for conversations leading to the formulation of Problem \ref{bubble}, an anonymous referee for indicating a shortcut which made Section 2.2 much more readable, and the 
BICMR at Peking University for its support and great hospitality during completion of a preliminary version of this paper in Spring 2009.\newpage

\section{Weighted Sobolev inequalities}

\subsection{Overview} Our goal here is to prove Proposition \ref{weight-sob}, which provides
weighted Sobolev inequalities with almost sharp weights on a class of complete Riemannian manifolds 
$(M^n,g)$ 
with
$\ric \geq -Cr^{-2}$ at infinity and homogeneous volume growth 
of some order $\beta > 0$. More precisely, we are assuming the following condition:

\begin{definition}\label{sob-beta} $(M^n,g)$ is called SOB$(\beta)$, $\beta > 0$, if there exist
$x_0 \in M$ and $C \geq 1$ such that $A(x_0, r, r + s)$ is connected for all 
$r \geq 100$ and $s > 0$,  $|B(x_0, s)| \leq C s^\beta$ for all $s \geq 1$, 
and $|B(x,\frac{1}{2}r(x))| \geq \frac{1}{C}r(x)^\beta$ and ${\rm Ric}(x) \geq -Cr(x)^{-2}$ if $r(x) \geq 1$.
\end{definition}

The connectedness hypothesis implies that all far-out annuli satisfy
a Neumann-type Poincar{\'e} inequality, and follows from $M$ having one end and $b_1(M) < \infty$. We claim that if SOB$(\beta)$ 
holds for some $\beta > 0$, then the following is true:

\begin{proposition}\label{weight-sob} \textup{(i)} For all $\epsilon > 0$ there exists a step function $\psi_\epsilon: M \to \R^+$,
with $\psi_\epsilon \sim (1+r)^{-\max\{\beta,2\} - \epsilon}$,
such that for all $\alpha \in [1,\alpha_{n2}]$ and all $u \in C^\infty_0(M)$,
\begin{equation}\label{ns}\left(\int_M |u - u_\epsilon|^{2\alpha} \, (1 + r)^{\alpha(\min\{\beta-2,0\} - \epsilon) - \beta} \, d\vol\right)^{\frac{1}{\alpha}}
\leq C(\epsilon) \int_M |\nabla u|^2 \, d\vol,\end{equation}
where $u_\epsilon$ denotes the average of $u$ with respect to the finite measure $\psi_\epsilon \, d\vol$.

\textup{(ii)} If $\beta > 2$, then, for all $\alpha \in [1,\alpha_{n2}]$ and all $u \in C^\infty_0(M)$,
\begin{equation}\label{ds} \left(\int_M |u|^{2\alpha} \, (1 + r)^{\alpha(\beta - 2) - \beta} d\vol\right)^{\frac{1}{\alpha}}
\leq C \int_M |\nabla u|^2 \, d\vol.\end{equation}
\end{proposition}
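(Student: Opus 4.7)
The plan is to reduce both inequalities to local Sobolev and Poincaré inequalities on a dyadic family of annuli, and then sum against the prescribed weights. Set $A_k := A(x_0, 2^k, 2^{k+1})$, $r_k := 2^k$, $V_k := \abs{A_k} \sim r_k^\beta$ for $k \geq k_0$ large. By SOB$(\beta)$, $\ric \geq -Cr_k^{-2}$ on $A_k$, and $A_k$ (and $A_k \cup A_{k+1}$) is connected. Rescaling $g \mapsto r_k^{-2}g$ brings $A_k$ to unit scale with uniform Ricci lower bound and uniform volume bounds; Buser's theorem then yields a uniform Neumann--Poincaré inequality (which upgrades to a Neumann--Sobolev inequality by Saloff-Coste's theory), and interpolation against the $\alpha = 1$ Poincaré endpoint gives, for every $\alpha \in [1,\alpha_{n2}]$,
\begin{equation}\label{locsob}
\left(\int_{A_k}\abs{u - u_{A_k}}^{2\alpha}\right)^{\!1/\alpha} \leq C\,r_k^{2-\beta(1-1/\alpha)}\int_{A_k}\abs{\nabla u}^2,
\end{equation}
along with the telescoping bound $\abs{u_{A_k} - u_{A_{k+1}}}^2 \leq C r_k^{2-\beta}\int_{A_k \cup A_{k+1}}\abs{\nabla u}^2$ (applying the same machinery to the connected double annulus).

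For Part (ii), with $\beta > 2$ and $u \in C_0^\infty(M)$, the average $u_{A_k}$ vanishes for $k \gg 0$, so telescoping gives $\abs{u_{A_k}} \leq C\sum_{j \geq k} r_j V_j^{-1/2}\|\nabla u\|_{L^2(A_j \cup A_{j+1})}$. Writing $\abs{u}^{2\alpha} \lesssim \abs{u - u_{A_k}}^{2\alpha} + \abs{u_{A_k}}^{2\alpha}$, multiplying by $(1+r)^{\alpha(\beta-2)-\beta}$, integrating on $A_k$, and using $(\sum a_k)^{1/\alpha} \leq \sum a_k^{1/\alpha}$ to split across $k$, the $\abs{u-u_{A_k}}^{2\alpha}$ contribution is bounded by $C\int_M\abs{\nabla u}^2$ because the weight precisely cancels the scaling factor in \eqref{locsob}. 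For the $\abs{u_{A_k}}^{2\alpha}$ contribution, the surviving summand on $A_k$ is $r_k^{\beta-2}\abs{u_{A_k}}^2$; a weighted Cauchy--Schwarz on the telescoped sum with weight $r_j^{\gamma}$ for some $\gamma \in (0,\beta-2)$, followed by a Fubini rearrangement, collapses the resulting double sum into $\sum_j\|\nabla u\|_{L^2(A_j \cup A_{j+1})}^2 \leq C\int_M\abs{\nabla u}^2$. The admissibility $\gamma < \beta - 2$ is precisely where the hypothesis $\beta > 2$ enters.

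For Part (i), I would take $\psi_\epsilon$ to be the step function equal on $A_k$ to a constant $c_k \sim r_k^{-\max\{\beta,2\}-\epsilon}$, so that $\sum_k c_k V_k \sim \sum_k r_k^{\beta-\max\{\beta,2\}-\epsilon} < \infty$; the $\max$ and the $\epsilon$ together cover the $\beta > 2$ regime and the borderline $\beta = 2$. Then $u_\epsilon = \sum_k c_k V_k u_{A_k}/\sum_k c_k V_k$ is a weighted combination of the annular averages, and $\abs{u_{A_k} - u_\epsilon}$ is controlled by the same telescoping/Cauchy--Schwarz machinery as in Part (ii), with the $c_j V_j$ playing the role of the Fubini weight and the $\epsilon$ providing the needed geometric decay. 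Splitting $\abs{u - u_\epsilon}^{2\alpha} \lesssim \abs{u - u_{A_k}}^{2\alpha} + \abs{u_{A_k} - u_\epsilon}^{2\alpha}$ and summing against $(1+r)^{\alpha(\min\{\beta-2,0\}-\epsilon)-\beta}$ then reduces \eqref{ns} to $C(\epsilon)\int_M\abs{\nabla u}^2$, since the weight now produces a summable factor $r_k^{-\epsilon}$ after cancellation with the Sobolev scaling. The main obstacle is Part (i) in the borderline regime $\beta \leq 2$: the natural unweighted double sum from the Fubini step is only logarithmically divergent, and it is the interplay of the two $\epsilon$'s, one in $\psi_\epsilon$ (controlling $\abs{u_{A_k} - u_\epsilon}$) and one in the Sobolev weight (controlling the annular contribution), that must be arranged to produce an honest geometric series, at the cost of constants that blow up as $\epsilon \to 0$.
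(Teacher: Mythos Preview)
Your overall strategy---geometric annular decomposition, a scale-invariant local Neumann--Sobolev inequality on each annulus, and discrete control of the sequence of annular averages---is exactly the paper's, and your weight arithmetic checks out. But one step is glossed over in a way that matters: your inequality \eqref{locsob} does \emph{not} follow from Buser's theorem, which gives Neumann--Poincar\'e only on geodesic \emph{balls}. After rescaling, $A_k$ is still an annulus, and without further input its Neumann--Poincar\'e constant could be arbitrarily bad (think of a long thin neck). This is precisely where the connectedness hypothesis in SOB$(\beta)$ enters, and the paper spends its entire Step~0 on it: cover $\eta A_k$ by a bounded number of balls of radius $\sim r_k$, apply the Cheeger--Colding segment inequality on each, and patch via a discrete Poincar\'e inequality on the nerve graph of the cover (the paper's Lemma~2.11). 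Your parenthetical ``applying the same machinery to the connected double annulus'' suggests you sense this, but the chaining argument has to be made explicit, and it is not covered by the name ``Buser''.

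Two further implementation differences are worth recording. First, the paper does not outsource the local Sobolev step: it derives a \emph{Dirichlet}-type Sobolev inequality on annuli directly from a Gromov--Anderson isoperimetric estimate (its Corollary~2.7), then combines this with the Neumann--Poincar\'e from Step~0 using cutoffs. Second, for Part~(ii) the paper replaces your telescoping-to-infinity plus weighted Cauchy--Schwarz by an absorption trick: split $|u_{A_i}|^2 \leq 2|u_{A_i} - u_{A_{i+K}}|^2 + 2|u_{A_{i+K}}|^2$ for a fixed large $K$, bound the first piece by $K$ successive one-step telescoping estimates, and absorb the second piece back into the left-hand side using $(\phi_i|A_i|)^{1/\alpha}/(\phi_{i+K}|A_{i+K}|)^{1/\alpha} \sim \eta^{(2-\beta)K} \ll 1$. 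Your Cauchy--Schwarz/Fubini route with $0 < \gamma < \beta-2$ is equally valid and perhaps more familiar; the absorption trick is slightly slicker and avoids introducing the auxiliary exponent $\gamma$.
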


\begin{corollary}
Let $(M^n,g)$ be such that there exist a compact set $K \subset M$, a closed manifold
$(N^{n-1},h)$, a diffeomorphism $\Phi: (1,\infty) \times N \to M \setminus K$, and $C \geq 1$
such that $\frac{1}{C} g_{\rm cone} \leq \Phi^*g \leq C g_{\rm cone}$ for the metric $g_{\rm cone} = dt^2 \oplus t^2 h$
on $\R^+ \times N$. Then
\begin{equation}\label{conesob} \int_M |u|^{2\alpha} (1 + r)^{\alpha(n-2)-n} \, d\vol \leq C \int_M |\nabla u|^2 \, d\vol\end{equation}
for all $u \in C^\infty_0(M)$ and $\alpha \in [1,\alpha_{n2}]$. \hfill $\Box$\end{corollary}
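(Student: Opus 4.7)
The plan is to reduce to Proposition \ref{weight-sob}(ii) applied with $\beta = n$, assuming $n \geq 3$ so that $\beta > 2$ (for $n = 2$ the range of $\alpha$ collapses to $\{1\}$ and the residual weighted Hardy estimate admits a direct proof). The obstruction to a direct application is that a bilipschitz control on $g$ does not transmit a pointwise Ricci lower bound, so the hypotheses of SOB$(n)$ are not immediate for $g$ itself. I would therefore introduce a smooth auxiliary metric $\tilde g$ that honestly equals the cone metric at infinity, apply Proposition \ref{weight-sob}(ii) to $(M,\tilde g)$, and transfer the resulting inequality back to $g$ by bilipschitz invariance.

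Concretely, fix a cutoff $\chi \in C^\infty(M,[0,1])$ equal to $1$ on a neighborhood of $K$ and supported in a slightly larger compact set, and define $\tilde g := \chi g + (1-\chi)\,\Phi_* g_{\rm cone}$. Then $\tilde g$ is smooth and complete, coincides with $\Phi_* g_{\rm cone}$ outside a compact set, and is globally bilipschitz to $g$ with constants depending only on the $C$ from the hypothesis (a convex combination of two pointwise-bilipschitz metrics is bilipschitz to each). Under any such change, $|\nabla u|^2$, $d\vol$, and the distance function $r(\cdot)$ are all preserved up to constants, hence so are both sides of \eqref{conesob}.

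It remains to verify SOB$(n)$ for $\tilde g$. The cone structure gives the upper volume bound $|B_{\tilde g}(x_0,s)| \lesssim s^n$ for $s \geq 1$, the non-collapsing $|B_{\tilde g}(x, r(x)/2)| \gtrsim r(x)^n$ for $r(x) \gg 1$ (by radial rescaling, a ball of radius comparable to $t$ centered at a point at cone parameter $t$ is bilipschitz to a fixed model piece of $\R \times N$ rescaled by $t$), and the connectedness of the annuli $A(x_0,r,r+s)$ for $r \geq 100$ (the cone over $N$ has connected spherical slices, and $N$ is connected because the existence of the diffeomorphism $\Phi$ forces $M$ to have a single end). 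The Ricci clause is the reason the detour through $\tilde g$ is needed: a standard warped-product computation for $dt^2 + t^2 h$ gives
\begin{equation*}
\ric_{g_{\rm cone}} \geq -C(h)\, t^{-2} g_{\rm cone}
\end{equation*}
with $C(h)$ depending only on a Ricci lower bound for the compact manifold $(N,h)$, so $\ric_{\tilde g} \geq -C\, r_{\tilde g}^{-2}\,\tilde g$ wherever $\tilde g = \Phi_* g_{\rm cone}$, while on the compact core $\tilde g$ is smooth with bounded Ricci. Hence $(M,\tilde g)$ is SOB$(n)$ and Proposition \ref{weight-sob}(ii) with $\beta = n$ produces the weighted Sobolev inequality for $\tilde g$, which the bilipschitz equivalence $\tilde g \sim g$ converts into \eqref{conesob} with an adjusted constant.

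The main obstacle in this plan is precisely the Ricci condition in SOB$(n)$, which is not bilipschitz invariant and therefore is not free from the hypothesis; once one agrees to replace $g$ by the smooth cone-model $\tilde g$, the remaining steps are classical cone geometry (volume growth, connectedness, warped-product curvature) combined with the bookkeeping of bilipschitz transfer.
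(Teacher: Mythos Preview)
Your proposal is correct and is precisely the natural way to justify the corollary; the paper itself gives no proof beyond the $\Box$, treating the statement as immediate from Proposition~\ref{weight-sob}(ii) with $\beta = n$, and your auxiliary-metric argument is the standard way to fill in that gap given that the Ricci clause in SOB$(\beta)$ is not bilipschitz-stable.

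One small point: your justification that $N$ is connected (``the existence of the diffeomorphism $\Phi$ forces $M$ to have a single end'') is not actually implied by the hypotheses as stated---nothing rules out a disconnected cross-section. This does not affect the result, since the Dirichlet-type inequality \eqref{conesob} is additive over ends and can be established component by component, but the sentence as written is not quite right.
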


\begin{remark}\label{rem2} (i) Tian-Yau \cite{ty1} prove (\ref{ns}) in a
rather more general setting but with a weaker weight, and (\ref{ds})
intersects with results of Minerbe \cite{min} for ${\rm Ric} \geq 0$, some of whose methods, originally from
Grigor{'}yan and Saloff-Coste \cite{gsc},
we borrow. In \cite{ty2}, using fairly special methods, Tian-Yau prove (\ref{conesob})
for $\alpha = \alpha_{n2}$ if $N$ is a finite cover of a minimal submanifold
of a round sphere. Very recent, independent work of van Coevering \cite{cvc3} contains 
a general proof of (\ref{conesob}) along similar lines.

(ii) SOB$(\beta)$ and (\ref{ds}) are closely related to non-parabolicity, i.e.~the existence of a 
positive Green's function: SOB$(\beta)$ implies Li-Tam's condition (VC) from \cite{li-tam}, so
according to their results, $M$ is non-parabolic if and only if $\beta > 2$, in which case all Green's functions are 
$\sim r^{2-\beta}$ at infinity. Also, by Carron \cite{carron}, a complete $M$ 
is non-parabolic if and only if there exists $\phi \in C^\infty(M)$, $\phi > 0$, with
$\int u^2\phi \leq C \int |\nabla u|^2$ for all $u$ $\in$ $C^\infty_0(M)$, which corresponds to
(\ref{ds}) for $\alpha = 1$. Third, assuming SOB$(\beta)$ with $\beta > 2$ and a mild extra condition, 
we will apply (\ref{ds}) in Section 3.4 to solve
$\Delta u = f$ if $|f| \leq C r^{-\mu}$, $\mu \in (2,\beta)$, with $|u| \leq C(\delta)r^{2-\mu + \delta}$ for all $\delta > 0$. \hfill $\Box$ \end{remark}

The key to the proof is a
volume comparison method introduced by Gromov \cite{gro1}.
Buser \cite{bus}, Maheux and Saloff-Coste \cite{msc},
and Cheeger-Colding \cite{cc} have applied this method to 
derive various Neumann-type $L^p$ Poincar{\'e} and Sobolev inequalities, 
for the most part on geodesic balls, and there is a Dirichlet-type $L^1$ Sobolev inequality
for geodesic balls due to Anderson \cite{and} based on the same principle.
In Sections 2.2 and 2.3, we show that the same idea
also yields Dirichlet-type inequalities on other domains than balls, especially
on annuli (Corollary \ref{anncor}). This already gives a global Sobolev inequality
under lower Ricci bounds
(Corollary \ref{varo}) which implies Gallot's inequality \cite{gallot} in the compact case.
In Section 2.4, we then combine Corollary \ref{anncor} with patching methods
from Grigor'yan and Saloff-Coste \cite{gsc} and Minerbe \cite{minerbe}, and with the
Cheeger-Colding segment inequality \cite{cc}, to prove Proposition \ref{weight-sob}.

\subsection{Isoperimetric estimates \`a la Gromov and Anderson}
Let $M$ denote an $n$-dimensional Riemannian manifold without boundary, which may be incomplete.
The goal of this section is to prove a general estimate (Lemma \ref{cover})
for the volume
 of a domain in $M$ in terms of the area of its boundary, 
based on an idea of Gromov
 \cite[\S 6.C, Appendix C]{gro1} and Anderson
\cite[Section 4]{and}. 

\begin{definition}\label{good}
Let $X$ be a metric-measure space, $Y \subset X$. Let 
$\mathcal{B}$ be a covering of $Y$ by metric balls.
We say $\mathcal{B}$ is \emph{$(\epsilon,r_0)$-good}, $0 < \epsilon \leq \frac{1}{2}$, $r_0 > 0$,
if ${\rm center}(B) \in Y$, $\min\{|B \setminus Y|,|B \cap Y|\} \geq \epsilon |B|$, and ${\rm radius}(B) \leq r_0$ for all $B \in \mathcal{B}$.
\end{definition}

\begin{lemma}\label{cover}
Let $\Omega \subset M$ be open with a smooth boundary.
Let $\mathcal{B}$ be an $(\epsilon,r_0)$-good covering of $\Omega$.
Let $N$ be the open $5r_0$-neighborhood of $\Omega$, and assume $\bar{N}$ is compact. If ${\rm Ric} \geq -(n-1)\delta^2$ on $N$ with $0 \leq r_0\delta \leq \Lambda$, and if $\alpha \geq 1$,
then 
\begin{equation}\label{iso}\frac{|\Omega|^{\frac{1}{\alpha}}}{|\partial \Omega|}
\leq C(n, \epsilon, \Lambda)
\sup_{B \in \mathcal{B}}|B|^{\frac{1}{\alpha} - 1} {\rm radius}(B).\end{equation}
\end{lemma}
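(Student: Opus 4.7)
The plan is to run the Gromov--Anderson segment-through-the-boundary argument ball by ball in the good covering, then sum via a Vitali subcover. The inputs from comparison geometry are standard: since ${\rm Ric} \geq -(n-1)\delta^2$ on $N$ and $r_0\delta \leq \Lambda$, Bishop--Gromov yields a doubling constant $D = D(n,\Lambda)$ for all balls of radius $\leq 5r_0$ centered in $N$, the two-sided volume comparison $c(n,\Lambda)\, r^n \leq |B(q,r)| \leq C(n,\Lambda)\, r^n$ in this range, and the pointwise Jacobian bound $J(t,\theta) \leq C(n,\Lambda)\, t^{n-1}$ on $\exp_q$ in polar coordinates at any $q \in N$ for $t \leq 5r_0$.

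The heart of the argument is a per-ball estimate. Fix $B \in \mathcal{B}$ of radius $r$ centered at $p \in \Omega$. For almost every pair $(x,y) \in (B \cap \Omega) \times (B \setminus \Omega)$, the minimizing segment from $x$ to $y$ has length $\leq 2r$ and meets $\partial\Omega$ transversally at a first crossing point $q = q(x,y) \in \partial\Omega \cap 2B \subset N$. Parametrize such pairs by $(q, \theta, t_1, t_2) \in \partial\Omega \times S^{n-1} \times (0,2r)^2$ via $x = \exp_q(-t_1\theta)$, $y = \exp_q(t_2\theta)$; the pushforward of $dV(x)\, dV(y)$ factors into two independent polar Jacobians at $q$, each bounded by Step 1. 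Hence
\begin{equation*}
|B \cap \Omega| \cdot |B \setminus \Omega| \,\leq\, C(n,\Lambda)\, r^{2n}\, |\partial\Omega \cap 2B|,
\end{equation*}
and dividing by $|B \setminus \Omega| \geq \epsilon |B| \geq c(n,\Lambda)\epsilon\, r^n$ gives the per-ball inequality
\begin{equation*}
|B \cap \Omega| \,\leq\, C(n,\epsilon,\Lambda)\, r\, |\partial\Omega \cap 2B|.
\end{equation*}

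To globalize, extract by the standard Vitali $5r$-covering lemma a disjoint subfamily $\mathcal{B}' \subset \mathcal{B}$ so that $\{5B : B \in \mathcal{B}'\}$ still covers $\Omega$; doubling gives bounded overlap $\sum_{\mathcal{B}'} \chi_{5B} \leq D$. Combining $|\Omega| \leq \sum_{\mathcal{B}'} |5B \cap \Omega|$ with $|5B| \leq D|B| \leq (D/\epsilon)|B\cap\Omega|$ and the per-ball inequality yields
\begin{equation*}
|\Omega| \,\leq\, C(n,\epsilon,\Lambda) \sum_{B \in \mathcal{B}'} r(B)\, |\partial\Omega \cap 2B|.
\end{equation*}
To insert the weight, split $r(B) = |B|^{1-1/\alpha}\cdot|B|^{1/\alpha-1}\, r(B)$; since $\alpha \geq 1$ and $|B| \leq |B\cap\Omega|/\epsilon \leq |\Omega|/\epsilon$, one has $|B|^{1-1/\alpha} \leq C(\epsilon)|\Omega|^{1-1/\alpha}$. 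Pulling the supremum outside and using bounded overlap to bound $\sum_{\mathcal{B}'} |\partial\Omega \cap 2B| \leq D|\partial\Omega|$, I arrive at $|\Omega| \leq C(n,\epsilon,\Lambda)\,(\sup_{\mathcal{B}} |B|^{1/\alpha-1} r(B))\, |\Omega|^{1-1/\alpha}\, |\partial\Omega|$, which rearranges to the claim.

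The main obstacle is Step 2: justifying the change of variables $(x,y) \mapsto (q, \theta, t_1, t_2)$ and bounding the resulting Jacobian. One must verify a.e.-transversality of minimizing segments with $\partial\Omega$ (via Sard applied to $\exp$ restricted off $\partial\Omega$), well-definedness of the first crossing point, and the factorization of the pushforward measure into two polar Jacobians at $q$ (a trivial $|\cos\angle(\theta,\nu_q)| \leq 1$ factor is simply dropped). One also needs $B(q, 2r) \subset N$ for each relevant $q$, which is exactly why the hypothesis specifies the $5r_0$-neighborhood. Everything after that is covering-theoretic bookkeeping.
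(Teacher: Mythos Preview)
Your argument breaks at the step ``dividing by $|B\setminus\Omega| \geq \epsilon|B| \geq c(n,\Lambda)\epsilon\, r^n$.'' The hypothesis is only a Ricci \emph{lower} bound on $N$, and Bishop--Gromov then gives the \emph{upper} bound $|B(q,r)| \leq C(n,\Lambda) r^n$ but never the lower bound $|B(q,r)| \geq c(n,\Lambda) r^n$; that requires a non-collapsing assumption which is simply not there. Without it, your product inequality cannot be converted into the per-ball estimate $|B\cap\Omega| \leq C r\,|\partial\Omega\cap 2B|$. This is not a technicality to be patched: the entire point of the lemma (and of Anderson's improvement over Croke) is that the isoperimetric bound holds with no collapsedness factor $r^n/|B|$ on the right.

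The paper avoids this by never comparing $|B|$ to $r^n$. Rather than parametrizing pairs $(x,y)$ by their crossing point $q\in\partial\Omega$, one fixes a \emph{single} vantage point $z$ (in $B\cap\Omega$ or in $B\setminus\Omega$, chosen by a Fubini/symmetrization trick so that the first crossing is at least halfway along each relevant segment) and projects the opposite half of $B$ onto $\partial\Omega$ along geodesics emanating from $z$. The bound then comes from \emph{relative} Bishop--Gromov, $J(tv)/J(sv) \leq a_\delta(t)/a_\delta(s)$, whose integral over the admissible range is $\leq v_\delta(4r)/a_\delta(2r) \leq C(n,\Lambda)\,r$ regardless of how collapsed $B$ is. As a secondary matter, your Jacobian does not factor into two independent polar Jacobians at $q$: a Santal\'o-type computation gives $|\det D\Psi| = |\cos\angle(\theta,\nu_q)|\cdot J_x(t_1+t_2,\,\cdot\,)$, a \emph{single} polar Jacobian based at $x$, so the correct product bound is $|B\cap\Omega|\cdot|B\setminus\Omega| \leq C(n,\Lambda)\, r^{n+1}|\partial\Omega\cap 2B|$ rather than $r^{2n}$ (check $n=2$, flat hyperplane: the Jacobian is $(t_1+t_2)|\sin\phi|$, not $t_1 t_2$). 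But even with the right exponent you still need $|B\setminus\Omega|\gtrsim r^n$ to finish, which you do not have.
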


\begin{proof}
Choose finitely many balls $B_i = B(x_i, r_i) \in \mathcal{B}$,
$i = 1, ..., k$, such that the $2B_i$ are pairwise disjoint but still $\Omega \subset \bigcup 5B_i$. This can be
achieved through a standard Vitali type procedure:
Since $\bar{\Omega}$ is compact and $B \setminus \Omega \neq \emptyset$ for all $B \in \mathcal{B}$, 
there exists a finite sub-collection $\mathcal{B}'$ $\subset$ $\mathcal{B}$ that still covers $\Omega$.
We choose a $B_1 \in \mathcal{B}'$ of maximal radius, and we take $B_{i+1} \in \mathcal{B}'$
to be of maximal radius among all those $B \in \mathcal{B}'$ for which
$2B$ is disjoint from $2B_1, ..., 2B_i$. Then, indeed, $\bigcup 5B_i \supset \bigcup \mathcal{B}' \supset \Omega$.\smallskip\

\noindent \emph{Key estimate:} We have $|B_i| \leq C(n,\epsilon,\Lambda)r_i|\partial \Omega \cap 2 B_i|$ for all $i = 1, ..., k$.\smallskip\

If this is true, then (\ref{iso}) follows immediately, by noting that
$$|\Omega|^{\frac{1}{\alpha}} \leq C(n,\Lambda)\sum |B_i|^{\frac{1}{\alpha}}
\leq C(n,\epsilon,\Lambda) \sum |B_i|^{\frac{1}{\alpha} - 1} r_i |\partial\Omega \cap 2 B_i|.$$
 
\noindent \emph{Proof of the key estimate.} The basic idea is as follows. Fix $z \in B_i \cap \Omega$ and project $B_i \setminus \Omega$ onto $\partial\Omega$ along minimal geodesics emanating from $z$. Define $\Sigma^{\rm first}$ $\subset$ $\partial\Omega \cap 2B_i$ to consist of the first points of entry into $B_i \setminus \Omega$ of the \textquotedblleft light rays\textquotedblright$\;$involved in
this projection. By integrating the infinitesimal version of the Bishop-Gromov volume comparison inequality
along the maximal sub-segment with endpoints in $\bar{B}_i\setminus \Omega$ of 
each such light ray, and then integrating across $\Sigma^{\rm first}$, one eventually finds that
$$\epsilon |B_i| \leq |B_i\setminus\Omega| \leq \int_{\Sigma^{\rm first}} 
\frac{v_\delta(2r_i) - v_\delta({\rm dist}(z,y))}{a_\delta({\rm dist}(z,y))} \; d{\rm area}(y).$$
This proves the key estimate if $r_i \leq C(n,\epsilon,\Lambda)\, {\rm dist}(z, \Sigma^{\rm first})$. But if this inequality \emph{fails} for \emph{all} $z \in B_i \cap \Omega$, then intuitively one should be able to find points $z' \in B_i \setminus \Omega$ such that the argument does go through with $z,B_i \setminus \Omega$ replaced by $z',B_i \cap \Omega$.

We now work out the details. Specifically, we show that $\min\{|B_i \setminus \Omega|, |B_i \cap \Omega|\} \leq 2 (v_\delta(4r_i)/a_\delta(2r_i))
|\partial \Omega \cap 2B_i|$, and this then suffices by $(\epsilon,r_0)$-goodness.

Define $X_1, X_2 \subset (B_i \cap \Omega) \times (B_i \setminus \Omega)$ as follows: $X_1 := \{(z, z'):$
\emph{there is a unique minimal geodesic} $\gamma$ \emph{from} $z$ \emph{to} $z'$, \emph{and this has the following
properties: it intersects} $\partial\Omega$ \emph{only transversely, and if} $y$ \emph{denotes
the first point along} $\gamma$, \emph{counted from} $z$, \emph{where} $\gamma$ \emph{intersects} $\partial\Omega$,
\emph{then} ${\rm dist}(z,y) \geq {\rm dist}(y,z')\}$, and almost verbatim for $X_2$, with the only difference that the inequality is now reversed. 
Then $X_1 \cup X_2$
has full measure in
 $(B_i \cap \Omega) \times (B_i \setminus \Omega)$, and so one of $X_1, X_2$ must
have at least half measure.

$\bullet$ $X_1$ \emph{has at least half measure.} By Fubini, there must be a $z \in B_i \cap \Omega$ such
that $Z' := \{z' \in B_i \setminus \Omega: (z, z') \in X_1\}$ has at least half measure in $B_i \setminus \Omega$.
We now bound $|Z'|$ above by projecting onto $\partial \Omega$ along
minimal geodesics from $z$, and integrating the infinitesimal Bishop-Gromov inequality along these geodesics.

Let $\Sigma^{\rm first}$ be the set of all $y \in \partial \Omega \cap 2B_i$ which occur as the first intersection with $\partial\Omega$ of the geodesic $\gamma$ from $z$ to some $z' \in Z'$ as in the definition of $X_1$. Thus, for all $y \in \Sigma^{\rm first}$ there exists a unique minimal geodesic $\gamma_y$ from $z$ to $y$, 
and we can write $\gamma_y(t) = \exp_{z}(v_yt)$, where $v_y \in T_z M$ is a uniquely determined unit vector.

Define $d_1, d_2: \Sigma^{\rm first} \to \R^+$ by $d_1(y) := {\rm dist}(z,y)$ and
$d_2(y) := \min\{\sup\{t > 0: \gamma_y(d_1(y) + t) \in Z'\}$, $\sup\{t > 0: \gamma_y$ \emph{is minimal on} 
$[0, d_1(y) + t]\}\}$. 

For every $z' \in Z'$ then, there exists
a unique $y \in \Sigma^{\rm first}$ such that $z' = \gamma_y(t)$ for some $d_1(y) < t \leq d_1(y) + d_2(y)$. 
Thus, if we define an imbedding $\Phi: U \hookrightarrow M$ with
$U$ $:=$ $\{(y,t) \in \Sigma^{\rm first} \times \R^+: 
d_1(y) < t < d_1(y) + d_2(y)\}$ and $\Phi(y,t) := \gamma_y(t)$, then ${\rm clos}(\Phi(U))$
contains $Z'$. On the other hand, a fairly standard calculation yields
$$\Phi^*(d{\rm vol}_M)|_{(y,t)}
= \frac{J(tv_y)}{J(d_1(y)v_y)} \cos \alpha_y\, d{\rm area}_{\partial\Omega} \wedge dt,$$
where $J(w) := |w|^{n-1} \det d\,{\exp_z}|_w$ for all $w \in T_{z}M$, 
and where $\alpha_y$ denotes the angle between $\dot{\gamma}_y(d_1(y))$ and the
exterior unit normal to $\partial\Omega$
at $y$.

We integrate over $U$ and apply relative volume comparison:
\begin{align*} 
|\Phi(U)| &\leq \int_{\Sigma^{\rm first}} \int_{d_1(y)}^{d_1(y) + d_2(y)} \frac{J(tv_y)}{J(d_1(y)v_y)} \; 
dt\; d{\rm area}(y)\\
&\leq \int_{\Sigma^{\rm first}} \int_{d_1(y)}^{d_1(y) + d_2(y)} \frac{a_\delta(t)}{a_\delta(d_1(y))} \; dt \; d{\rm area}
(y)\\
&= \int_{\Sigma^{\rm first}} \frac{v_\delta(d_1(y) + d_2(y)) - v_\delta(d_1(y))}{a_\delta(d_1(y))} \; d{\rm area}(y).
\end{align*}
We now estimate the integrand as follows:
$$\frac{v_\delta(d_1 + d_2) - v_\delta(d_1)}{a_\delta(d_1)} \leq \frac{v_\delta(d_1+d_2)}{a_\delta(d_1)} \leq  \frac{v_\delta(2d_1)}{a_\delta(d_1)} \leq \frac{v_\delta(4r_i)}{a_\delta(2r_i)},$$ 
because $d_2 \leq d_1 \leq 2r_i$ and because $s \mapsto v_\delta(2s)/a_\delta(s)$ is non-decreasing.
Altogether then, $|B_i \setminus \Omega| \leq 2|Z'| \leq 2|\Phi(U)| \leq 2(v_\delta(4r_i)/a_\delta(2r_i))|\Sigma^{\rm first}|$, as needed.

$\bullet$ $X_2$ \emph{has at least half measure.} By Fubini again, there now exists a $z' \in B_i \setminus \Omega$ such that $Z := \{z \in B_i \cap \Omega: (z, z') \in X_2\}$ has at least half measure in $B_i \cap \Omega$. Let $\Sigma^{\rm last}$ denote the set of all $y \in \partial\Omega \cap 2B_i$ which occur 
as the \emph{last} intersection of $\gamma^{-1}$ with $\partial\Omega$, where
$\gamma$ is the geodesic from some $z \in Z$ to $z'$ as in the definition of $X_2$.
For each $y \in \Sigma^{\rm last}$, there then exists a unique minimal geodesic $\gamma_y(t) = \exp_{z'}(tv_y)$ from $z'$ to $y$, and the rest of the argument will be the same as above up to replacing 
$z,Z',\Sigma^{\rm first}$ by $z',Z,\Sigma^{\rm last}$ and switching the interior and exterior normals of $\Omega$.
\end{proof}

\begin{remark} A similar sort of reasoning yields the following result: If
$B = B(x,r) \subset M$ is such that $3r < {\rm diam}(M)$, $4\bar{B}$ is compact,
and ${\rm Ric} \geq -\Lambda r^{-2}$ on $4B$ with $\Lambda \geq 0$, then $\frac{1}{C}r|\partial B| \leq |B| \leq Cr|\partial B|$ with a uniform
$C = C(n,\Lambda)$ which in particular does not depend on the collapsedness of $B$.
Here, the upper bound follows as before, by projecting $B$ onto $\partial B$ along minimal geodesics
issuing from a point on $\partial(3B)$. For
the lower bound, we sweep out a subset $B^* \subset B$ by joining $x$ to all smooth points of $\partial B$, 
express $|B^*|$ in polar coordinates, and then use
Bishop-Gromov in the form $J(tv)/J(rv) \geq a_\delta(t)/a_\delta(r)$, $v \in T_xM$, $|v| = 1$, $t \leq r$, to estimate
 from below.
\end{remark}

\subsection{Dirichlet-type Sobolev inequalities on balls and annuli} 
Assuming lower Ricci bounds, subsets of geodesic balls or annuli admit $(\epsilon,r_0)$-good ball coverings
for controlled values of 
$\epsilon,r_0$ (Lemma \ref{annlem}). By Lemma \ref{cover}, this implies a 
Dirichlet-isoperimetric, hence a Dirichlet-Sobolev inequality (Corollary \ref{anncor}). As a corollary, 
we obtain a global Gallot- or Varopoulos-type inequality (Corollary \ref{varo}).

Note that $M$ is still not required to be complete. In the following lemma,
we fix a point $x_0 \in M$ and put $B(r) := B(x_0,r)$, $A(r_1, r_2) := A(x_0, r_1, r_2)$.

\begin{lemma}\label{annlem}
{\rm (i)} If $s < \frac{1}{4}\diam(M)$, $B := B(s)$, $9\bar{B}$ is compact, $\ric$ $\geq$ $-(n-1)\delta^2$
on $9B$,
then for all $\Lambda \geq 0$ there exists $\epsilon = \epsilon(n,\Lambda) > 0$ such that 
if $0 \leq \delta s \leq \Lambda$, then for all $x \in \Omega \subset B$
there exists $0 < r_{x,\Omega} \leq 4s$ with $|B(x,r_{x,\Omega}) \setminus \Omega| = \epsilon |B(x,r_{x,\Omega})|$.

{\rm (ii)} If $r > 6t$, $A$ $:=$ $A(r, r + s)$,
$\bar{B}(r + 2s + 2t)$ is compact, $\ric$ $\geq$ $-(n-1)\delta^2$ on 
$A(r-6t, r + 2s + 2t)$, then for all $\Lambda, N \geq 0$ there exists an $\epsilon = \epsilon(n,\Lambda,N) > 0$ such
that if $0$ $\leq$ $\delta t$ $\leq$ $\Lambda$, $s \leq Nt$,
then for all $x \in \Omega \subset A$ there exists $0 < r_{x,\Omega} \leq r_x^* := \dist(x_0,x) - r + 2t$ 
with $|B(x,r_{x,\Omega}) \setminus \Omega| = \epsilon|B(x,r_{x,\Omega})|$. 
\end{lemma}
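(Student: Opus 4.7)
The overall strategy in both parts is a continuity argument. I apply the intermediate value theorem to
$\phi_x(\rho) := |B(x,\rho) \setminus \Omega|/|B(x,\rho)|$, which is continuous in $\rho$ (in the intended application via Lemma~\ref{cover} the set $\Omega$ is open with smooth boundary, so $\partial\Omega$ is negligible) and vanishes for $\rho$ small because then $B(x,\rho) \subset \Omega$. Writing $\rho_x^* := 4s$ in (i) and $\rho_x^* := r_x^*$ in (ii), it therefore suffices to produce a constant $\epsilon = \epsilon(n,\Lambda)$, respectively $\epsilon(n,\Lambda,N)$, such that $\phi_x(\rho_x^*) \geq \epsilon$; the IVT then delivers $r_{x,\Omega} \in (0,\rho_x^*]$ with $\phi_x(r_{x,\Omega}) = \epsilon$.

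The key input is that $B(x,\rho_x^*)$ contains a definite-sized ball $B(q,\rho_0)$ disjoint from $\Omega$. In (i), the hypothesis $4s < \diam(M)$ combined with compactness of $9\bar{B}$ gives a minimizing geodesic $\gamma$ from $x_0$ of length at least $4s$; setting $q := \gamma(2s)$, the ball $B(q,s/2) \subset A(x_0, 3s/2, 5s/2)$ is disjoint from $B \supset \Omega$, while $\dist(x,q) \leq 3s$ puts $B(q,s/2) \subset B(x,4s)$. In (ii), I take the minimizing geodesic $\gamma$ from $x_0$ to $x$ (which exists because $\bar B(r+2s+2t)$ is compact) and set $q := \gamma(r-t)$: then $B(q,t/2) \subset A(x_0, r-3t/2, r-t/2)$ is disjoint from $A$, while $\dist(x,q) = r_x^* - 3t/2$ puts $B(q,t/2) \subset B(x,r_x^*)$.

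To convert this into the desired volume ratio, I bound $|B(x,\rho_x^*)|$ from above in terms of $|B(q,\rho_0)|$ with $\rho_0 := s/2$ in (i) and $\rho_0 := t/2$ in (ii). A direct check shows $B(x,\rho_x^*) \subset 9B$ in (i) and $B(x,\rho_x^*) \subset A(x_0, r-2t, r+2s+2t)$ in (ii), so Bishop--Gromov centred at $x$ yields $|B(x,\rho_x^*)| \leq (v_\delta(\rho_x^*)/v_\delta(\rho_0))|B(x,\rho_0)|$. To then pass from $|B(x,\rho_0)|$ to $|B(q,\rho_0)|$, I chain Bishop--Gromov along the geodesic from $x$ to $q$: choose points $q_0 = q, q_1, \ldots, q_k = x$ on $\gamma$ at consecutive distance $\rho_0$, so $k = O(\dist(x,q)/\rho_0)$ is bounded in terms of the data, and at each step use the inclusion $B(q_i,\rho_0) \subset B(q_{i+1}, 2\rho_0)$ together with Bishop--Gromov at $q_{i+1}$ to get $|B(q_i,\rho_0)| \leq (v_\delta(2\rho_0)/v_\delta(\rho_0))|B(q_{i+1},\rho_0)|$. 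Iterating yields $|B(x,\rho_x^*)| \leq C(n,\Lambda,N)\,|B(q,\rho_0)| \leq C(n,\Lambda,N)\,|B(x,\rho_x^*) \setminus \Omega|$.

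The main technical point, and the only place where the generously inflated hypotheses $9B$ in (i) and $A(r-6t, r+2s+2t)$ in (ii) are really used, is the verification that every auxiliary ball $B(q_{i+1}, 2\rho_0)$ in the chain indeed lies in the region where $\ric \geq -(n-1)\delta^2$, which is required at each Bishop--Gromov step. In (ii), for example, every $q_i$ lies on $\gamma$, hence has $\dist(x_0, q_i) \in [r-t, r+s]$; with $2\rho_0 = t$ this forces $B(q_{i+1}, 2\rho_0) \subset A(x_0, r-2t, r+s+t)$, comfortably inside the Ricci-controlled annulus. A parallel (and easier) check in (i) shows that all $q_i$ lie within distance $4s$ of $x_0$, so $B(q_{i+1}, s) \subset B(x_0, 5s) \subset 9B$. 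Once this bookkeeping is in place, the chain of Bishop--Gromov inequalities produces the required $\epsilon$, and the IVT finishes the proof.
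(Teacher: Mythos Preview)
Your argument is correct and follows essentially the same route as the paper: find a ball of definite radius inside $B(x,\rho_x^*)$ but outside $\Omega$, compare its volume to $|B(x,\rho_x^*)|$ via Bishop--Gromov chained along a geodesic, and conclude by continuity. Two cosmetic slips are worth fixing: your chain inequality $|B(q_i,\rho_0)| \leq C|B(q_{i+1},\rho_0)|$ runs the wrong way for what you need (you want $|B(x,\rho_0)|\leq C^k|B(q,\rho_0)|$, so apply Bishop--Gromov at $q_i$ instead of $q_{i+1}$), and $\diam(M)>4s$ only guarantees a point at distance $>2s$ from $x_0$, not $4s$ --- but since you only use $q=\gamma(2s)$ this is harmless. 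The paper's version of (i) is marginally slicker: it centers the geodesic at $x$ rather than $x_0$, placing $x' = \gamma(3s)$ so that a \emph{single} Bishop--Gromov comparison (at $x'$) suffices with no chaining.
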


\begin{proof} (i) Fix a minimal geodesic $\gamma$ from $x$ to some point $x'' \in \partial B(x,4s)$
and put $x' := \gamma(3s)$. Then, by volume comparison, 
$|B(x,4s) \setminus B| \geq |B(x',s)| \geq \epsilon |B(x,4s)|$
for some definite $\epsilon = \epsilon(n,\Lambda) > 0$. The claim then follows by continuity.

(ii) Fix a minimal geodesic $\gamma$ from $x_0$ to $x$.
Let $x_i := \gamma(r + (2i - 3)t)$,
$B_i$ $:=$ $B(x_i, t)$ for $i = 1, ..., k$, where $k$ is maximal with $r + (2k - 3)t \leq \dist(x_0,x)$;
notice $k \leq \frac{1}{2}(N + 3)$. Then $B(x, r_x^*) \setminus A \supset B_1$ and $3B_i \supset B_{i+1}$ for 
$i = 1, ..., k - 1$.
Volume comparison shows $|B_i| \geq \epsilon |3B_i|$ for all $i$,
so $|B(x, r_x^*) \setminus A| \geq  \epsilon |B_k|$ by induction. By
volume comparison again,
$|B_k| \geq \epsilon |B(x_k, 2t + r_x^*)|$,
and $B(x_k, 2t + r_x^*) \supset B(x,r_x^*)$ by maximality of $k$.
Thus, $|B(x,r_x^*) \setminus A| \geq \epsilon|B(x,r_x^*)|$, so we conclude as in (i). \end{proof}

Thus, for all $\Omega \subset B, A$ in ${\rm (i)},{\rm (ii)}$, the covering $\{B(x,r_{x,\Omega}): x \in \Omega\}$ is $(\epsilon,r_0)$-good
with $\epsilon = \epsilon(n,\Lambda)$, $r_0 = 4s$, and $\epsilon = \epsilon(n,\Lambda,N)$, 
$r_0 = s + 2t$, respectively, so 
then Lemma \ref{cover} provides a uniform isoperimetric estimate for all such $\Omega$,
which
by the following well-known result implies a Dirichlet-type Sobolev inequality for $B,A$.
 
\begin{lemma}[{\cite[Theorem 9.1]{li}}]\label{fed}
For all $\Omega_0 \subset M$ open and precompact, $\alpha \geq 1$,
$$\sup\left\{\frac{\|u\|_\alpha}{\|\nabla u\|_1}: 
u \in C^\infty_0(\Omega_0), \; u \neq 0\right\}
= \sup\left\{\frac{|\Omega|^{\frac{1}{\alpha}}}{|\partial\Omega|}: \Omega
\subset \Omega_0 \;\text{open},\;\partial\Omega\;\text{smooth}\right\}.$$
\end{lemma}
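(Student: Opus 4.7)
The plan is to prove the two inequalities separately by classical arguments going back to Federer–Fleming. Write $K_{\rm iso}$ for the right-hand supremum and $K_{\rm Sob}$ for the left-hand supremum. I will show $K_{\rm Sob}\le K_{\rm iso}$ via the coarea formula combined with Minkowski's integral inequality, and $K_{\rm iso}\le K_{\rm Sob}$ by approximating characteristic functions of smoothly bounded domains by Lipschitz tent functions.

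For $K_{\rm Sob}\le K_{\rm iso}$: take $u\in C^\infty_0(\Omega_0)$ and assume $u\ge 0$ (otherwise replace $u$ by $|u|$, noting $|\nabla|u||\le|\nabla u|$ a.e.). Write $u(x)=\int_0^\infty \chi_{U_t}(x)\,dt$ where $U_t:=\{u>t\}\subset\Omega_0$. By Sard's theorem, for a.e. $t>0$ the set $U_t$ is open with smooth boundary, so $|U_t|^{1/\alpha}\le K_{\rm iso}\,|\partial U_t|$. Minkowski's integral inequality in $L^\alpha$ applied to the above layer-cake representation yields
$$\|u\|_\alpha \le \int_0^\infty \|\chi_{U_t}\|_\alpha\,dt = \int_0^\infty |U_t|^{1/\alpha}\,dt \le K_{\rm iso}\int_0^\infty |\partial U_t|\,dt,$$
and the smooth coarea formula $\int_0^\infty |\partial U_t|\,dt = \int_M |\nabla u|$ finishes this direction.

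For $K_{\rm iso}\le K_{\rm Sob}$: fix an open $\Omega\Subset\Omega_0$ with smooth boundary and, for small $\epsilon>0$, define the Lipschitz tent function $u_\epsilon(x):=\min\{1,\epsilon^{-1}\dist(x,M\setminus\Omega)\}$, which is compactly supported in $\Omega$. A standard mollification gives functions $\tilde u_\epsilon\in C^\infty_0(\Omega_0)$ with the same asymptotic behaviour as $\epsilon\to 0$. Since $u_\epsilon\to\chi_\Omega$ pointwise and boundedly, $\|u_\epsilon\|_\alpha\to|\Omega|^{1/\alpha}$ by dominated convergence. On the other hand $|\nabla u_\epsilon|=\epsilon^{-1}$ a.e. on the tubular strip $T_\epsilon:=\{x\in\Omega:\dist(x,M\setminus\Omega)<\epsilon\}$ and zero elsewhere, so $\|\nabla u_\epsilon\|_1 = \epsilon^{-1}|T_\epsilon|$; smoothness of $\partial\Omega$ (or equivalently the coarea formula applied to $\dist(\cdot,M\setminus\Omega)$) gives $\epsilon^{-1}|T_\epsilon|\to|\partial\Omega|$. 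Taking the ratio and passing to the limit produces $|\Omega|^{1/\alpha}/|\partial\Omega|\le K_{\rm Sob}$, and taking the supremum over admissible $\Omega$ concludes the argument.

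The mild technical points are the Sard-type step (ensuring that a.e. level set of $u$ is a smooth hypersurface with finite area, so the coarea identity applies and the isoperimetric bound is available), and the mollification in the reverse direction (verifying that $\|\tilde u_\epsilon\|_\alpha$ and $\|\nabla\tilde u_\epsilon\|_1$ have the same limits as for $u_\epsilon$, which follows from the fact that mollification on scale $o(\epsilon)$ alters these quantities by $o(1)$ relative to the leading terms). Both are routine; the substantive content of the lemma is simply Minkowski's inequality plus coarea, packaged as a duality between Sobolev- and isoperimetric-type estimates.
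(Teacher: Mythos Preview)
Your argument is correct and is the standard Federer--Fleming proof of this duality. Note, however, that the paper does not give its own proof of this lemma: it is simply quoted as \cite[Theorem 9.1]{li} and used as a black box, so there is no proof in the paper to compare against. Your write-up is exactly what one would find in the cited reference.
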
 

\noindent Also, recall that by H{\"o}lder's inequality, for all $p \geq 1$ and $\alpha \geq 1$,
\begin{equation}\label{lp}
{\rm DS}(\Omega_0, p, \alpha) := \sup \frac{\|u\|_{\alpha p}}{\|\nabla u\|_p} \leq \frac{\alpha p}{\alpha'} \,
\sup \frac{\|u\|_{\alpha'}}{\|\nabla u\|_1}
\;\;\text{if}\;\,1 - \frac{1}{\alpha'} = \frac{1}{p}\left(1 - \frac{1}{\alpha}\right),
\end{equation}
where the suprema are over all $u \in C^\infty_0(\Omega_0)$ with $u \neq 0$.

\begin{corollary}\label{anncor} {\rm (i)} If $20 s < {\rm diam}(M)$, $B(20s)$ is precompact, ${\rm Ric} \geq -(n-1)\delta^2$ on $B(20s)$
with $0 \leq \delta s \leq \Lambda$, then, for all $p \in [1,n)$ and $\alpha \in [1,\alpha_{np}]$,
\begin{equation}\label{andball}
{\rm DS}(B(s), p, \alpha) \leq C(n,p,\Lambda) v_\delta(s)^{\frac{1}{n}}|B(s)|^{\frac{1}{p}(\frac{1}{\alpha}-1)}.
\end{equation}

{\rm (ii)} If $20s < \min\{r, {\rm diam}(M)\}$, $B(r + 20s)$ is precompact,
and $\ric$ $\geq$ $-(n-1)\delta^2$ on $A(r - 20s, r + 20s)$ with
$0 \leq \delta s \leq \Lambda$, then, for all $p \in [1,n)$ and $\alpha \in [1,\alpha_{np}]$,
\begin{equation}\label{sob-p}
{\rm DS}(A(r,r+s), p, \alpha) \leq C(n,p,\Lambda) v_\delta(s)^{\frac{1}{n}} \sup_{x \in A(r,r+s)} |B(x,s)|^{\frac{1}{p}(\frac{1}{\alpha}-1)}.
\end{equation}
\end{corollary}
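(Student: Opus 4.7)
The plan is to combine the $(\epsilon,r_0)$-good coverings produced by Lemma \ref{annlem} with the isoperimetric estimate of Lemma \ref{cover}, derive the endpoint Sobolev inequality ${\rm DS}(\cdot, 1, \alpha_{n1})$ from Federer--Fleming (Lemma \ref{fed}), and finally obtain the full range of $(p,\alpha)$ by H\"older interpolation combined with (\ref{lp}).

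For part (i), fix an open $\Omega \subset B(s)$ with smooth boundary. Lemma \ref{annlem}(i) gives an $(\epsilon, 4s)$-good covering $\mathcal{B}_\Omega := \{B(x, r_{x,\Omega}) : x \in \Omega\}$ with $\epsilon = \epsilon(n,\Lambda)$, and the precompactness of $B(20s)$ together with the stated Ricci bound secure the compactness and curvature hypotheses of both Lemma \ref{annlem} and Lemma \ref{cover}. Applying Lemma \ref{cover} with $\alpha = \alpha_{n1} = n/(n-1)$ produces
\begin{equation*}
\frac{|\Omega|^{1/\alpha_{n1}}}{|\partial\Omega|} \leq C(n,\Lambda)\, \sup_{x \in \Omega} r_{x,\Omega}\, |B(x, r_{x,\Omega})|^{-1/n}.
\end{equation*}
Since $x \in B(s)$ forces $B(s) \subset B(x, 2s)$, Bishop--Gromov yields $|B(x, r_{x,\Omega})| \geq v_\delta(r_{x,\Omega})\, v_\delta(2s)^{-1}\, |B(s)|$, and the elementary estimate $r \leq C(n,\Lambda)\, v_\delta(r)^{1/n}$ on $[0, 4s]$ converts the supremum into $C(n,\Lambda)\, v_\delta(s)^{1/n}\, |B(s)|^{-1/n}$; by Lemma \ref{fed} this is exactly the endpoint ${\rm DS}(B(s), 1, \alpha_{n1})$ bound. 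The trivial estimate ${\rm DS}(B(s), 1, 1) \leq C(n,\Lambda)\, s$ comes similarly from Lemma \ref{cover} with $\alpha = 1$, and standard H\"older interpolation between these two $L^1$ endpoints yields ${\rm DS}(B(s), 1, \alpha') \leq C(n,\Lambda)\, v_\delta(s)^{1/n}\, |B(s)|^{1/\alpha' - 1}$ for all $\alpha' \in [1, \alpha_{n1}]$. Feeding this into (\ref{lp}) via the matching condition $1 - 1/\alpha' = p^{-1}(1 - 1/\alpha)$ produces (\ref{andball}) throughout $p \in [1, n)$ and $\alpha \in [1, \alpha_{np}]$.

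Part (ii) runs identically after replacing Lemma \ref{annlem}(i) by Lemma \ref{annlem}(ii) with $t := s$ (so $N = 1$): the covering now has $r_{x,\Omega} \leq 3s$, and the relevant curvature region $A(r - 15s, r + 16s)$ is contained in the $A(r-20s, r+20s)$ of the hypothesis. The only substantive change is that Bishop--Gromov can no longer compare $|B(x, r_{x,\Omega})|$ to $|B(s)|$, since $x_0 \notin A$; instead one compares to $|B(x,s)|$, handled by splitting on the size of $r_{x,\Omega}$: for $r_{x,\Omega} \leq s$ directly by Bishop--Gromov, and for $r_{x,\Omega} \in (s, 3s]$ from $|B(x, r_{x,\Omega})| \geq |B(x,s)|$ together with $r_{x,\Omega} \leq C(n,\Lambda)\, v_\delta(s)^{1/n}$. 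This substitutes $\sup_{x \in A(r,r+s)} |B(x,s)|^{-1/n}$ for $|B(s)|^{-1/n}$ in the isoperimetric bound, and the H\"older/Federer--Fleming step is unchanged, giving (\ref{sob-p}). The only delicate point throughout is tracking this volume factor --- which records the collapsedness of $M$ at scale $s$ --- cleanly through Bishop--Gromov applied at the two different scales $r_{x,\Omega}$ and $s$; the rest is routine assembly of the three preceding lemmas.
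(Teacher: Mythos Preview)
Your argument is correct and follows essentially the same route as the paper: combine the good coverings from Lemma~\ref{annlem} with the isoperimetric bound of Lemma~\ref{cover}, pass to Sobolev via Lemma~\ref{fed} and (\ref{lp}), and control the resulting volume factors by Bishop--Gromov. The only organizational difference is that you apply Lemma~\ref{cover} at the two endpoints $\alpha'=1$ and $\alpha'=\alpha_{n1}$ and then H\"older-interpolate, whereas the paper applies Lemma~\ref{cover} directly for every $\alpha'\in[1,\alpha_{n1}]$ (it holds for all $\alpha\ge 1$), obtaining
\[
{\rm DS}(B(s),p,\alpha)\le C(n,p,\alpha,\Lambda)\sup_{x\in B(s),\;r\in(0,4s]} v_\delta(r)^{1/n}\,|B(x,r)|^{\frac{1}{p}(\frac{1}{\alpha}-1)},
\]
and then estimating the supremum by volume comparison; this saves the interpolation step but is otherwise identical.
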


\begin{proof} (i) Combining Lemmas \ref{cover}, \ref{annlem} with Lemma \ref{fed}, (\ref{lp}),
\begin{align*}{\rm DS}(B(s), p, \alpha) \leq C(n,p,\alpha,\Lambda)\sup
\left\{v_\delta(r)^{\frac{1}{n}} |B(x,r)|^{\frac{1}{p}(\frac{1}{\alpha}-1)} : x \in B(s), r \in (0,4s]\right\}
\end{align*}
for all $p \geq 1$, $\alpha \geq 1$. The right-hand side is finite iff either $p \in [1,n)$ and $\alpha \in 
[1,\alpha_{np}]$, or $p \geq n$. The second case is covered by the first, and in the first case, (\ref{andball})
follows by volume comparison. The proof of (ii) is similar.\end{proof}

\begin{remark}\label{rem1}{\rm
Corollary \ref{anncor}(i) recovers Anderson \cite[Theorem 4.1]{and}, which has its roots in Gromov \cite{gro1}. We have streamlined
Anderson's proof, and tweaked it so as to apply to more general domains. Note that (\ref{andball}) simply means that if
$B = B(s)$, then, for all $\alpha \in [1,\alpha_{np}]$ and all $u \in C^\infty_0(B)$,
\begin{equation}\label{andsobsimp}\left(\av\, |u|^{\alpha p} \right)^{\frac{1}{\alpha p}} \leq C(n,p,\Lambda) s \left(\av\, |\nabla u|^p\right)^{\frac{1}{p}}.\end{equation}
Croke's sharp isoperimetric inequality in \cite{croke} would imply (\ref{andsobsimp}) with an additional collapsedness factor of $s^n|B|^{-1}$ on the right; the improvement afforded 
by (\ref{andsobsimp}) was crucial for the main $\epsilon$-regularity theorem proved in \cite{and}.
By Maheux and Saloff-Coste 
\cite[Th{\'e}or{\`e}me 1.1]{msc}, (\ref{andsobsimp}) holds as well for all 
$u \in C^\infty(B)$ with mean value zero.}
\end{remark}

We conclude with a global Sobolev inequality
similar to the ones given in Hebey \cite[Theorem 3.14, Proposition 3.22]{heb} 
which follows immediately from Corollary \ref{anncor}.
In the compact case, this implies
a familiar result of Gallot \cite[Th{\'e}or{\`e}me 6.16]{gallot}.

\begin{corollary}\label{varo}
Let $M^n$ be complete without boundary.
If $|B(x,r)| \geq \epsilon > 0$ for 
all $x \in M$, $100r < \diam(M)$, 
$\ric \geq -\Lambda r^{-2}$, $\Lambda \geq 0$, and $p \in [1,n)$, $\alpha \in [1,\alpha_{np}]$,
\begin{equation}\|u\|_{\alpha p} \leq C(n,p,\Lambda)\epsilon^{\frac{1}{p}(\frac{1}{\alpha}-1)}(r\|\nabla u\|_p + 
\|u\|_p),\end{equation}
for all $u \in C^\infty_0(M)$ if $M$ is open, and for all $u \in C^\infty(M)$ if $M$ is closed.
\end{corollary}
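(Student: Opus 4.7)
The plan is to reduce the global bound to the local Sobolev inequality on balls from Corollary \ref{anncor}(i) via a bounded-multiplicity covering argument with cutoffs, and then sum up.

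First I would select a maximal collection of points $\{x_i\} \subset M$ such that the balls $B(x_i, r/2)$ are pairwise disjoint; by maximality $\{B(x_i, r)\}$ covers $M$. The assumptions $\ric \geq -\Lambda r^{-2}$ and $|B(x,r)| \geq \epsilon$ together with Bishop--Gromov volume comparison force this covering to have multiplicity bounded by some $N = N(n,\Lambda)$: any point $x$ covered by $k$ of the $B(x_i,r)$ has $k$ of the disjoint half-balls sitting inside $B(x,3r/2)$, and volume comparison between $B(x_{i_j}, r/2)$ and the enclosing ball $B(x_{i_j}, 3r) \supset B(x, 3r/2)$ forces each half-ball to occupy a definite fraction of $|B(x,3r/2)|$. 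I would then pick cutoffs $\phi_i \in C^\infty_0(B(x_i,r))$ with $\phi_i \equiv 1$ on $B(x_i,r/2)$ and $|\nabla\phi_i| \leq C/r$.

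Next I would apply Corollary \ref{anncor}(i) to each $u\phi_i$ on $B(x_i,r)$: the hypothesis $100r < \diam(M)$ gives $20r < \diam(M)$, and with $(n-1)\delta^2 = \Lambda r^{-2}$ the quantity $\delta\cdot 20r$ is bounded in terms of $n,\Lambda$, so the hypotheses of the local inequality are satisfied. Using $v_\delta(r)^{1/n} \sim r$ and the lower volume bound $|B(x_i,r)| \geq \epsilon$ (whose exponent $\tfrac{1}{p}(\tfrac{1}{\alpha}-1)$ is nonpositive since $\alpha \geq 1$), this produces
\begin{equation*}
\|u\phi_i\|_{\alpha p} \leq C(n,p,\Lambda)\, r\, \epsilon^{\frac{1}{p}(\frac{1}{\alpha}-1)}\, \|\nabla(u\phi_i)\|_p.
\end{equation*}

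Finally I would sum: since the half-balls cover $M$, $\|u\|_{\alpha p}^{\alpha p} \leq \sum_i \|u\phi_i\|_{\alpha p}^{\alpha p}$. Raising the local inequality to the $\alpha p$-th power and applying the elementary bound $\sum_i a_i^\alpha \leq (\sum_i a_i)^\alpha$ (valid since $\alpha \geq 1$) with $a_i := \|\nabla(u\phi_i)\|_p^p$, together with the product rule $|\nabla(u\phi_i)| \leq |\phi_i||\nabla u| + |u||\nabla\phi_i|$ and the multiplicity bound $N$, yields $\sum_i a_i \leq C(n,p,\Lambda)(\|\nabla u\|_p^p + r^{-p}\|u\|_p^p)$. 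Extracting the $\alpha p$-th root and using $(a^p+b^p)^{1/p} \leq C(p)(a+b)$ produces the claimed inequality. The one step that needs care is the direction of $\sum a_i^\alpha \leq (\sum a_i)^\alpha$, which is precisely why the restriction $\alpha \geq 1$ (already in the hypothesis) combines cleanly with the upper restriction $\alpha \leq \alpha_{np}$ demanded by the local inequality. For closed $M$ the argument is unchanged, since then $C^\infty(M) = C^\infty_0(M)$ and the covering construction is automatic.
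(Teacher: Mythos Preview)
Your approach is sound and genuinely different from the paper's. The paper fixes a single basepoint $x_0$, decomposes $M$ into thin concentric annuli $A_m = A(x_0,r_m,r_{m+1})$ with $r_m = (1+m/100)r$, chooses cutoffs $\chi_m$ supported in $A_{m-1}\cup \bar A_m\cup A_{m+1}$ with $\chi_m\equiv 1$ on $A_m$, applies both parts of Corollary~\ref{anncor} to the pieces $u\chi_m$, and sums $p$-th powers. The built-in advantage of the annular decomposition is that each point lies in at most three of the supports, so no separate multiplicity argument is required. Your ball-covering route uses only Corollary~\ref{anncor}(i) together with a Bishop--Gromov packing bound; this is closer to the textbook treatment (e.g.\ Hebey) and works equally well. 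Incidentally, the lower volume bound $|B(x,r)|\geq\epsilon$ plays no role in bounding the multiplicity; Bishop--Gromov alone does that job.

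There is one slip to repair. You write ``since the half-balls cover $M$'', but by construction the half-balls $B(x_i,r/2)$ are pairwise \emph{disjoint}; it is the full balls $B(x_i,r)$ that cover. With your cutoffs $\phi_i\equiv 1$ only on $B(x_i,r/2)$, the pointwise bound $\sum_i\phi_i^{\alpha p}\geq 1$ can fail, so $\|u\|_{\alpha p}^{\alpha p}\leq \sum_i\|u\phi_i\|_{\alpha p}^{\alpha p}$ is not justified. The fix is to shift radii: take $\phi_i\in C^\infty_0(B(x_i,2r))$ with $\phi_i\equiv 1$ on $B(x_i,r)$ and $|\nabla\phi_i|\leq C/r$, and apply Corollary~\ref{anncor}(i) with $s=2r$ (permissible since $40r<100r<\diam M$). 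The same Bishop--Gromov argument bounds the multiplicity of the cover $\{B(x_i,2r)\}$ by some $N(n,\Lambda)$, and the remainder of your proof goes through verbatim.
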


\begin{proof} Fix $x_0 \in M$ and define $r_m := (1 + \frac{m}{100})r$, $A_m := A(x_0, r_m, r_{m+1})$ for $m \in \N_0$.
Take $\chi_0 \in C^\infty_0(B(x_0, r_2))$ with $0 \leq \chi_0 \leq 1$, 
$\chi_0 \equiv 1$ on $B(x_0, r_1)$, $|\nabla \chi_0| \leq 200 r^{-1}$. 
For $m \geq 1$ take $\chi_m$ $\in$ $C^\infty_0(A_{m-1} \cup \bar{A}_m \cup A_{m+1})$
with $0 \leq \chi_m \leq 1$, $\chi_m \equiv 1$ on $A_m$, $|\nabla \chi_m| \leq 200 r^{-1}$.
By Corollary \ref{anncor}, for all $m \in \N_0$,
\begin{align*}\begin{split}
\|u\chi_m\|_{\alpha p} \leq C(n,p,\Lambda) r \epsilon^{\frac{1}{p}(\frac{1}{\alpha}-1)}\|\nabla(u\chi_m)\|_p.
\end{split}\end{align*}
Take $p$-th powers, sum over $m$, and take $p$-th roots. \end{proof}

\subsection{Proof of the weighted Sobolev inequalities} This section concludes the proof of 
Proposition \ref{weight-sob} by combining Corollary \ref{anncor} with some analysis
on graphs, similar to what was developed in \cite{gsc, minerbe} in much greater generality.
Assume that $M^n$ is complete noncompact without boundary and
SOB$(\beta)$ holds for some $\beta > 0$. 
Generic constants are allowed to depend on $n$, $\beta$, and the $C$ from SOB$(\beta)$.

The key step in passing from the Dirichlet-Sobolev bounds in Corollary \ref{anncor} 
to the global Sobolev bounds in Proposition \ref{weight-sob} is a certain Neumann-type Poincar{\'e}
inequality. We need a continuous and a discrete version. The continuous
one follows from a special case of the Cheeger-Colding segment inequality \cite[Theorem 2.11]{cc}:

\begin{lemma}[Cheeger-Colding]\label{segment} Let $B = B(x_0,r) \subset (M^n,g)$. If $2B$ is precompact and
${\rm Ric} \geq -\Lambda r^{-2}$ on $2B$ with $\Lambda \geq 0$, then, for all $u \in C^\infty(B)$,
\begin{equation}\label{seg}\int_B |u-u_B|^2 \leq C(n,\Lambda)r^2\int_{2B}|\nabla u|^2.\end{equation}
\end{lemma}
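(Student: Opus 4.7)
The plan is to deduce the inequality from the Cheeger-Colding segment inequality by the standard double-integration trick, so essentially no new ideas are needed beyond a careful bookkeeping.

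First I would reduce the statement to an $L^2$-estimate on a double integral. Since
\[ |u(x) - u_B|^2 = \Bigl|\frac{1}{|B|}\int_B (u(x) - u(y))\, dy\Bigr|^2 \leq \frac{1}{|B|}\int_B |u(x)-u(y)|^2\, dy \]
by Cauchy-Schwarz (or Jensen), integrating in $x$ gives
\[ \int_B |u - u_B|^2 \leq \frac{1}{|B|} \int_{B \times B} |u(x)-u(y)|^2\, dx\, dy. \]
So it suffices to control the right-hand side by $C(n,\Lambda) r |B| \int_{2B}|\nabla u|^2$.

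Next I would estimate the integrand along minimizing geodesics. For almost every pair $(x,y) \in B \times B$ there is a unique minimizing geodesic $\gamma_{x,y}$, parametrized on $[0,d(x,y)]$, and $u(x) - u(y) = \int_0^{d(x,y)} \langle \nabla u, \dot\gamma_{x,y}\rangle\, ds$. Another Cauchy-Schwarz, together with $d(x,y) \leq 2r$, yields
\[ |u(x)-u(y)|^2 \leq 2r \int_0^{d(x,y)} |\nabla u|^2(\gamma_{x,y}(s))\, ds. \]
Combining with the previous step,
\[ \int_B |u - u_B|^2 \leq \frac{2r}{|B|} \int_{B \times B}\int_0^{d(x,y)} |\nabla u|^2(\gamma_{x,y}(s))\, ds\, dx\, dy. \]

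Now I would apply the Cheeger-Colding segment inequality \cite[Theorem 2.11]{cc} to the non-negative function $f = |\nabla u|^2$ on the set $B$. Under the hypothesis $\ric \geq -\Lambda r^{-2}$ on $2B$, that theorem provides
\[ \int_{B \times B}\int_0^{d(x,y)} f(\gamma_{x,y}(s))\, ds\, dx\, dy \leq C(n,\Lambda)\, r\, |B| \int_{2B} f, \]
the constant arising from the Bishop-Gromov Jacobian comparison applied in the (rescaled) space form of curvature $-\Lambda r^{-2}$. Plugging this in produces the desired bound
\[ \int_B |u-u_B|^2 \leq C(n,\Lambda)\, r^2 \int_{2B} |\nabla u|^2. \]

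The only non-routine step is the segment inequality itself, which we are taking as a black box; everything else is just Cauchy-Schwarz and the fundamental theorem of calculus. The main potential pitfall is the measure-zero set of pairs $(x,y)$ where the minimizing geodesic is non-unique or leaves $2B$, but these are handled by the usual cut-locus arguments built into the statement of \cite[Theorem 2.11]{cc}, and the bound $d(x,y) \leq 2r$ ensures the segments stay inside $2B$ as required.
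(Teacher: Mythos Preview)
Your proposal is correct and matches the paper's approach: the paper does not give a self-contained proof but simply states that the lemma follows from the Cheeger-Colding segment inequality \cite[Theorem 2.11]{cc}, and you have spelled out the standard deduction (Cauchy--Schwarz plus the fundamental theorem of calculus along minimizing segments) that makes this precise.
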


Passing from $B$ to $2B$ is inevitable in their proof because a segment
between two points in $B$ will usually only be contained in $2B$.
Buser \cite[Lemma 5.1]{bus} gives an $L^p$ Neumann-type Poincar{\'e} inequality for every $p \geq 1$
which does not require doubling the radius, but (\ref{seg}) is all we really need, and surprisingly simple to show. Also, (\ref{seg}) has a useful discrete counterpart with a closely related proof.

If $V$ is a countable set and $E$ is a set of $2$-element subsets of $V$, we call $G = (V,E)$ a \emph{graph}.
We say $G$ is \emph{connected} if any two vertices $x,y \in V$, $x \neq y$,
can be joined by a \emph{path},
i.e.~a set $\gamma \subset V$ which can be listed as $\gamma = \{\gamma_0, ..., \gamma_m\}$ such that
$\{\gamma_0, \gamma_m\} = \{x,y\}$ 
and $\{\gamma_{i-1},\gamma_i\} \in E$ for $i = 1, ..., m$. 
If $u: V \to \C$, then we write $u_x := u(x)$,
and we define $|\nabla u|^2: V \to \R$ by setting $|\nabla u|^2_x := \sum_{y \in V: \{x,y\} \in E} |u_x-u_y|^2$.

\begin{lemma}\label{disc} Let $G$ be connected and let $w: V \to \R^+$ satisfy $\sum_{x \in V} w_x = 1$. For all 
$x,y \in V$, $x \neq y$, fix a path $\gamma = \gamma_{xy} = \gamma_{yx}$ as above 
and write $m = m_{xy} = m_{yx}$.  Define $\bar{w}: V \to \R^+$ by 
$\bar{w}_z := \sum_{\{x,y\}: z \in \gamma_{xy}} m_{xy} w_{x} w_{y}$.
Then, for all $u: V \to \C$,
\begin{equation}\label{discp}\sum_{x \in V} w_x u_x = 0 \Longrightarrow \sum_{x \in V} w_x |u_x|^2 \leq \sum_{x \in V} \bar{w}_x |\nabla u|_x^2. \end{equation}
\end{lemma}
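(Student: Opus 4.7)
The plan is entirely elementary: combine the zero-mean condition with two applications of Cauchy--Schwarz (first over the probability measure $w$, then along the chosen paths), and then reorganize the double sum over pairs $\{x,y\}$ into a single sum over edges $\{a,b\}\in E$.

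First, since $\sum_{y\in V} w_y = 1$ and $\sum_y w_y u_y = 0$, for any $x\in V$ I can write
$u_x = \sum_{y\in V} w_y(u_x - u_y)$. Cauchy--Schwarz with respect to the probability measure $w$ then gives $|u_x|^2 \le \sum_{y\in V} w_y |u_x-u_y|^2$, so multiplying by $w_x$ and summing,
\[
\sum_{x\in V} w_x|u_x|^2 \;\le\; \sum_{x,y\in V} w_x w_y |u_x-u_y|^2
\;=\; 2\sum_{\{x,y\}} w_x w_y |u_x-u_y|^2,
\]
where the last sum is over unordered pairs. Next, for each such pair I telescope along the chosen path $\gamma_{xy} = \{\gamma_0,\dots,\gamma_{m_{xy}}\}$, so by Cauchy--Schwarz on $\{1,\dots,m_{xy}\}$,
\[
|u_x-u_y|^2 \;\le\; m_{xy}\sum_{i=1}^{m_{xy}} |u_{\gamma_{i-1}}-u_{\gamma_i}|^2.
\]

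Now comes the reorganization, which is the only slightly subtle step. Substituting and exchanging the order of summation,
\[
\sum_{x}w_x|u_x|^2 \;\le\; 2\sum_{\{a,b\}\in E} |u_a-u_b|^2 \,\cdot\, S_{ab}, \qquad S_{ab}:=\!\!\sum_{\{x,y\}:\,\{a,b\}\subset \gamma_{xy}} m_{xy}w_xw_y.
\]
Since $\{a,b\}\subset \gamma_{xy}$ forces both $a\in\gamma_{xy}$ and $b\in\gamma_{xy}$, the definition of $\bar w$ gives $S_{ab}\le \bar w_a$ and $S_{ab}\le \bar w_b$, hence $S_{ab}\le \tfrac12(\bar w_a+\bar w_b)$. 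Therefore
\[
\sum_{x}w_x|u_x|^2 \;\le\; \sum_{\{a,b\}\in E}(\bar w_a+\bar w_b)|u_a-u_b|^2,
\]
and this last expression is precisely $\sum_{x\in V}\bar w_x|\nabla u|^2_x$ by the definition of $|\nabla u|^2$ (each edge $\{a,b\}$ contributes $|u_a-u_b|^2$ once with weight $\bar w_a$ from the vertex $a$ and once with weight $\bar w_b$ from the vertex $b$).

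There is no real obstacle; the only point that requires care is the combinatorial bookkeeping that keeps the factor of $2$ from the unordered-pair rewriting balanced against the factor of $\tfrac12$ produced by averaging the two bounds $S_{ab}\le \bar w_a,\bar w_b$. Connectedness of $G$ is what guarantees that the paths $\gamma_{xy}$ exist, and the finite-measure assumption $\sum w_x=1$ is what makes the initial identity $u_x=\sum_y w_y(u_x-u_y)$ available.
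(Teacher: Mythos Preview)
Your proof is correct and follows essentially the same route as the paper: use the zero-mean condition to pass from $\sum w_x|u_x|^2$ to a sum over pairs of $w_xw_y|u_x-u_y|^2$, telescope along the chosen paths with Cauchy--Schwarz, and reorganize by edges. The one minor difference is that the paper uses the exact variance identity $\sum_x w_x|u_x|^2 = \sum_{\{x,y\}} w_xw_y|u_x-u_y|^2$ (no factor of $2$), while you use Jensen's inequality and pick up an extra factor of $2$, which you then absorb by the sharper bound $S_{ab}\le \tfrac12(\bar w_a+\bar w_b)$; the paper simply uses $S_{ab}\le \bar w_a \le \bar w_a+\bar w_b$. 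The net result is the same inequality.
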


\noindent To prove this, multiply the lhs by $\sum w_x = 1$ and use $\sum w_xu_x = 0$ to obtain 
\begin{align*}\sum_{x} w_x|u_x|^2 &= \sum_{\{x,y\}} w_xw_y|u_x - u_y|^2 
\leq \sum_{\{x,y\}} m_{xy}w_xw_y\sum_{i = 1}^{m_{xy}} |u_{\gamma_{{xy},i}} - u_{\gamma_{xy,i-1}}|^2. \end{align*}

\noindent \emph{Proof of Proposition \ref{weight-sob}.} 
Fix $\alpha \in [1,\alpha_{n2}]$ and $\eta := 1.001$. Corollary \ref{anncor} yields
\begin{eqnarray}\label{ballsob}&&B := B(x_0,1000) \Longrightarrow {\rm DS}(\eta B,2,\alpha) \leq C,\\
\label{annsob}&&A := A(x_0,r,\eta r),\, r \geq 1000 \Longrightarrow {\rm DS}(\eta A,2,\alpha) \leq C r^{1 + \frac{\beta}{2}(\frac{1}{\alpha}-1)},\end{eqnarray}
where we recall that $\mu A(x_0,r,s) := A(x_0,\mu^{-1}r, \mu s)$ if $\mu \geq 1$, $r < s$.
The remainder of the proof is in three steps. 
In Step 0, we use Lemmas \ref{segment}, \ref{disc} 
to establish weak Neumann-Poincar{\'e} inequalities, (\ref{ball-neu-poinc}), (\ref{ann-neu-poinc}),
on slightly larger domains $\eta B_\kappa$, $\eta A_\kappa$, which
together with (\ref{ballsob}), (\ref{annsob}) imply weak Neumann-Sobolev
inequalities (\ref{ball-neu-sob}), (\ref{ann-neu-sob}) for $\eta B$, $\eta A$.
In Steps 1 and 2, we apply these four Neumann-type inequalities from Step 0, 
and Lemma \ref{disc} again, to prove (\ref{ns}) and (\ref{ds}), respectively.\medskip\

\noindent \emph{Step 0: Weak Neumann-type Poincar{\'e} and Sobolev on certain balls and annuli.} 
For $\kappa \geq 1$ define $B_\kappa := B(x_0,1000\kappa)$ and $A_\kappa := A(x_0,r,\eta\kappa r)$.
We first show that
\begin{eqnarray}\label{ball-neu-poinc}&&\|u-u_{\eta B_\kappa}\|_{\eta B_\kappa,2} \leq C(\kappa)\|\nabla u\|_{\eta^2  B_\kappa, 2},\\
\label{ann-neu-poinc}&&\|u-u_{\eta A_\kappa}\|_{\eta A_\kappa, 2}\leq C(\kappa) r\|\nabla u\|_{\eta^2 A_\kappa, 2}.\end{eqnarray}
We write out a detailed argument for the annulus case (\ref{ann-neu-poinc}) only. 
Pick a maximal $r/2000C_0$-separated set $x_1,...,x_m$ in $\eta A_\kappa$, so that the
$B_i := B(x_i, r/1000C_0)$ cover
$\eta A_\kappa$, but the $\frac{1}{2}B_i$ are disjoint.
Notice
$|B_i|$ $\sim$ $r^\beta$ from SOB$(\beta)$, so $m \leq C(\kappa)$.
Then, from SOB$(\beta)$ and the segment inequality (\ref{seg}), for all $i$ and all $1 \leq \lambda \leq 10$,
\begin{equation}\label{ball-poinc}\int_{\lambda B_i} |u-u_{\lambda B_i}|^2 \leq C r^2 \int_{2\lambda B_i} |\nabla u|^2.\end{equation}
For any $\mu \in \R$ then,
\begin{equation}\label{dagger}\int_{\eta A_\kappa} |u - u_{\eta A_\kappa}|^2 \leq \int_{\eta A_\kappa} |u - \mu|^2 \leq 2\sum \int_{B_i} |u - u_{B_i}|^2 +
2\sum |B_i| |u_{B_i} - \mu|^2.\end{equation}
The first sum can be bounded by using (\ref{ball-poinc}) with $\lambda = 1$. For the second, we 
apply Lemma \ref{disc}, 
as follows. Construct a graph $G = (V,E)$ by taking
$V := \{1,...,m\}$ and for $i \neq j$, $\{i,j\} \in E$ if and only if $B_i \cap B_j \neq \emptyset$.
Then $G$ is connected because $\eta A_\kappa$ is.
Let $w \equiv \frac{\textup{\begin{tiny}$1$\end{tiny}}}{\textup{\begin{tiny}$m$\end{tiny}}}$, and for $i \neq j$, let $\gamma_{ij} = \gamma_{ji}$ be any path joining $i$ and $j$.
Thus, by Lemma \ref{disc}, 
if $\mu = \frac{\textup{\begin{tiny}$1$\end{tiny}}}{\textup{\begin{tiny}$m$\end{tiny}}}\sum u_{B_i}$, then the second sum in (\ref{dagger}) is bounded by
$$\sum |B_i| |u_{B_i} - \mu|^2 \leq C r^\beta \sum_i \sum_{j: B_i \cap B_j \neq \emptyset} |u_{B_i} - u_{B_j}|^2. $$
Next, for any constant $\nu \in \R$, by Cauchy-Schwarz,
\begin{align*} |u_{B_i} - u_{B_j}|^2 &\leq \frac{1}{|B_i||B_j|} \int_{B_i \times B_j} |u(x) - u(y)|^2 \, dx \, dy 
\\&\leq 4\frac{|B_i \cup B_j|}{|B_i||B_j|} \int_{B_i \cup B_j} |u(x) - \nu|^2 \, dx. \end{align*}
Now $B_i \cup B_j \subset 3B_i$ since $B_i \cap B_j \neq \emptyset$, so put $\nu = u_{3B_i}$ and apply
(\ref{ball-poinc}), $\lambda = 3$:
$$|u_{B_i} - u_{B_j}|^2 \leq C r^{2-\beta} \int_{6B_i} |\nabla u|^2.$$ 
This bounds the second sum in (\ref{dagger}), concluding the proof of (\ref{ann-neu-poinc}). The proof
of (\ref{ball-neu-poinc}) is entirely similar, dropping the $r$-dependence everywhere. 
Notice one could apply the segment inequality to $\eta B_\kappa$
directly in that case, but it will be convenient later to be integrating over $\eta^2 B_\kappa$ on the rhs of 
(\ref{ball-neu-poinc}) rather than $2\eta B_\kappa$.

To conclude, we deduce weak Neumann-type Sobolev inequalities for $\eta B$ and $\eta A$. 
Construct cut-off functions $\chi_B \in C^\infty_0(\eta B)$, $0 \leq \chi_B \leq 1$, $\chi_B \equiv 1$ on $B$,
$|\nabla \chi_B| \leq C$, and $\chi_A \in C^\infty_0(\eta A)$, 
$0 \leq \chi_A \leq 1$, $\chi_A \equiv 1$ on $A$, $|\nabla \chi_A| \leq C r^{-1}$. 
Then, setting $\kappa = 1$, (\ref{ballsob}), (\ref{ball-neu-poinc}) and (\ref{annsob}), (\ref{ann-neu-poinc}) easily imply
\begin{eqnarray}\label{ball-neu-sob}&&\|u-u_{\eta B}\|_{B,2\alpha} \leq \|\chi_B(u-u_{\eta B})\|_{\eta B, 2\alpha}
\leq C \| \nabla u\|_{\eta^2B,2} ,\\
\label{ann-neu-sob}&&\|u-u_{\eta A}\|_{A,2\alpha} \leq \|\chi_A(u - u_{\eta A})\|_{\eta A, 2\alpha} 
\leq C r^{1 + \frac{\beta}{2}(\frac{1}{\alpha} - 1)} \|\nabla u\|_{\eta^2 A, 2}.\end{eqnarray}
Together with (\ref{ball-neu-poinc}), (\ref{ann-neu-poinc}), these are what we need for Steps 1 and 2 below.  \hfill $\Box$\medskip\

Put $r_i := 1000\eta^{i}$ ($i \in \N_0$), $A_0 := B = B(x_0,r_0)$, 
$A_i$ $:=$ $A(x_0, r_{i-1}, r_{i})$ ($i \in \N$). 
Fix $\phi \in C^\infty(M)$, $\phi > 0$, to be determined, and
recall that $\|..\|_{X,\phi,p}$ denotes the $L^p$ norm on $X$ with respect to $\phi\,d\vol$. Let
$u_i := u_{\eta A_i}$ and $\phi_i$ $:=$ $\sup_{A_i} \phi$ ($i \in \N_0$).\medskip\

\noindent \emph{Step 1: Proof of \textup{(\ref{ns})}.}
For $\mu \in \R$ to be determined, consider (all sums over $\N_0$)
\begin{equation}\label{daggerdagger} 
\|u - \mu\|_{M,\phi,2\alpha}^2 \leq 2\sum \|u - u_{i}\|_{A_i,\phi,2\alpha}^2
+ 2\sum (\phi_i |A_i|)^{\frac{1}{\alpha}} |u_{i} - \mu|^2.\end{equation}
The first sum can be bounded by (\ref{ball-neu-sob}) for $i = 0$ and (\ref{ann-neu-sob}), $A = A_i$, for 
$i \geq 1$:
$$\sum \|u - u_{i}\|_{A_i,\phi,2\alpha}^2 \leq C \sum \phi_i^{\frac{1}{\alpha}} r_i^{2 + \beta(\frac{1}{\alpha} - 1)}\|\nabla u\|_{\eta^2A_i, 2}^2.$$
For the second sum in (\ref{daggerdagger}), we apply Lemma \ref{disc} to the graph
$A_0-A_1-A_2-\;\cdots$ with weights
$w_i$ $:=$ $\tilde{w}_i/\tilde{w}$, where $\tilde{w}_i := (\phi_i |A_i|)^{\textup{\begin{tiny}$1/\alpha$\end{tiny}}}$ and
$\tilde{w} := \sum \tilde{w}_i$, assuming this series converges. Thus, if 
we choose
\begin{equation}\label{average}\mu := \sum w_i u_{i}  = \int_M u\psi\, d\vol, \; \psi := \sum \frac{w_i}{|\eta A_i|}\chi_{\eta A_i}, \int_M \psi \, d\vol = 1,\end{equation}
then we can continue to estimate the second sum in (\ref{daggerdagger}) as follows:
$$\sum (\phi_i |A_i|)^{\frac{1}{\alpha}} |u_{i} - \mu|^2
\leq \frac{C}{\tilde{w}} \sum (\bar{w}_k + \bar{w}_{k+1}) |u_{k} - u_{k+1}|^2, \;\bar{w}_k := \sum_{i \leq k \leq j} (j-i) \tilde{w}_i \tilde{w}_j.$$
As before, for all $k \geq 0$ and any $\nu \in \R$, by Cauchy-Schwarz,
\begin{equation}\label{intermed}\begin{split}|u_k - u_{k+1}|^2 &\leq \frac{1}{|\eta A_k||\eta A_{k+1}|} \int_{\eta A_k \times \eta A_{k+1}} |u(x) - u(y)|^2 \, dx \, dy \\
&\leq 4 \frac{|\eta A_k \cup \eta A_{k+1}|}{|\eta A_k| |\eta A_{k+1}|}\int_{\eta A_k \cup \,\eta A_{k+1}} |u(x) - \nu|^2 \, dx. \end{split}\end{equation}
For $k = 0$, apply (\ref{ball-neu-poinc}) with $\kappa = \eta$, and 
for $k \geq 1$, (\ref{ann-neu-poinc}) with $r = r_{k-1}$, $\kappa = \eta^2$, choosing $\nu$ to be the average of $u$ over 
the appropriate domain in each case. Thus, $\|u - \mu\|_{M,\phi,2\alpha} \leq C\|\nabla u\|_{M,2}$ with $\mu$ as in (\ref{average}), provided
$\phi: M \to \R^+$ satisfies
$$\sup_{i \in \N_0} \phi_i^{\frac{1}{\alpha}} r_i^{2 + \beta(\frac{1}{\alpha} - 1)} \leq C, \; \frac{1}{C} \leq \tilde{w} \leq C, \; \sup_{k \in \N_0} (\bar{w}_k + \bar{w}_{k+1}) r_k^{2-\beta} \leq C.$$
The first condition checks if $\phi \leq C(1 + r)^{\alpha(\beta - 2) -\beta}$, the second,
if $\phi|_{A_0} \geq C^{-1}$ and $\phi \leq C(1 + r)^{-\beta - \epsilon}$ $(\epsilon > 0)$, and the third,
if $\phi \leq C(1 + r)^{\alpha(\beta-2)-\beta - \epsilon}$ $(\epsilon > 0)$. \hfill $\Box$\medskip\

\noindent \emph{Step 2: Proof of \textup{(\ref{ds})}.} We begin as in Step 1, splitting (and summing over $\N_0$)
$$\|u\|_{M,\phi,2\alpha}^2 \leq 2\sum \|u - u_i\|_{A_i,\phi,2\alpha}^2 +
2\sum (\phi_i |A_i|)^{\frac{1}{\alpha}}|u_i|^2.$$
The first sum can be bounded by (\ref{ball-neu-sob}) for $i = 0$ and (\ref{ann-neu-sob}), $A = A_i$, for $i \geq 1$:
\begin{equation}\label{ddagger}\sum \|u - u_i\|_{A_i,\phi,2\alpha}^2 \leq C\sum \phi_i^{\frac{1}{\alpha}} r_i^{2 + \beta(\frac{1}{\alpha}-1)}
\|\nabla u\|_{\eta^2A_i, 2}^2.\end{equation}
To estimate the second sum, fix $K \in \N$ and consider
\begin{equation}\label{intermed2}\sum (\phi_i |A_i|)^{\frac{1}{\alpha}}|u_i|^2 \leq
2 \sum (\phi_i |A_i|)^{\frac{1}{\alpha}}|u_i - u_{i+K}|^2 + 2\sum (\phi_i |A_i|)^{\frac{1}{\alpha}} |u_{i+K}|^2.\end{equation}
The first term here can again be bounded in terms of the Dirichlet energy:
\begin{equation}\label{ddaggerddagger}\sum (\phi_i |A_i|)^{\frac{1}{\alpha}}|u_i - u_{i+K}|^2 \leq CK \sum (\phi_i |A_i|)^{\frac{1}{\alpha}} r_i^{2-\beta} \int_{\eta^2 A_i \cup\,...\cup\, \eta^2 A_{i+K}} |\nabla u|^2.\end{equation}
To see this, write $u_i - u_{i+K}$ as a telescope sum, use Cauchy-Schwarz, and estimate each term as in (\ref{intermed}).
As a result, if $\phi \sim (1+r)^{\textup{\begin{tiny}$\alpha(\beta$\end{tiny}$-$\begin{tiny}$2)$\end{tiny}$-$\begin{tiny}$\beta$\end{tiny}}}$, then both (\ref{ddagger}), (\ref{ddaggerddagger}) 
are bounded by $CK^2 \int |\nabla u|^2$. The remaining term in (\ref{intermed2}) can be absorbed:
\begin{align*}\sum (\phi_i |A_i|)^{\frac{1}{\alpha}} |u_{i+K}|^2 &= \sum \frac{(\phi_i |A_i|)^{\frac{1}{\alpha}}}{(\phi_{i+K} |A_{i+K}|)^{\frac{1}{\alpha}}} (\phi_{i+K} |A_{i+K}|)^{\frac{1}{\alpha}} |u_{i+K}|^2 \\
&\leq C\eta^{(2-\beta)K} \sum (\phi_i |A_i|)^{\frac{1}{\alpha}}|u_i|^2,\end{align*}
so it suffices to make $K$ sufficiently large, depending only on $\eta$, $\beta$, and $C$. \hfill $\Box$

\section{A complex Monge-Amp{\`e}re equation}

We construct bounded solutions for the complex Monge-Amp{\`e}re equation  ($\C$MA) $(\omega_0 + i \partial\bar{\partial} u)^m
= e^f \omega_0^m$, under suitable decay assumptions for $f$, on certain complete K{\"a}hler manifolds $(M,\omega_0)$,
and then look at methods for improving the decay of $u$.
From the point of view of geometry,
we care about boundedness or decay for $i\partial\bar{\partial} u$ rather than $u$, but
Yau's method can only produce potentials $u$ which are bounded themselves, although
$i\partial\bar{\partial}u$ may then be expected to decay particularly fast. 

If $(M,\omega_0)$ is \emph{parabolic}, this means that we must impose an integrability condition,
$\int (e^f - 1)\omega_0^m = 0$, to kill off unbounded contributions from the leading term
in the asymptotic expansion of the Green's function,
even though geometrically reasonable solutions $u$ 
may well exist for \emph{arbitrary} $f$ of sufficient decay.
In our main application, integrability will come from replacing
$\omega_0$ by $\omega_0 + \beta$ if necessary, where $\beta \in C^\infty_0$ and
$\beta = i\partial\bar{\partial}\phi$ with $\phi$ asymptotic to a Green's function, cf.~Remark \ref{growsol}(i).

To state the existence result, let $(M,\omega_0)$ denote a complete noncompact K{\"a}hler manifold 
with a $C^{3,\alpha}$ quasi-atlas (cf.~Definition \ref{quasi} and also (\ref{holder}) for the associated H{\"o}lder spaces)
which satisfies ${\rm SOB}(\beta)$ from Definition \ref{sob-beta} for some $\beta > 0$.
 
\begin{proposition}\label{solve}
Let $f \in C^{2,\alpha}(M)$ satisfy $|f| \leq C r^{-\mu}$ for some $\mu > 2$, 
and in addition $\int (e^f - 1)\omega_0^m = 0$ if $\beta \leq 2$.
Then there exists $u \in C^{4,\bar{\alpha}}(M)$, $\bar{\alpha} \in (0,\alpha]$,
with $\int |\nabla u|^2 < \infty$ if $\beta \leq 2$,
such that $(\omega_0 + i\partial\bar{\partial} u)^m = e^f \omega_0^m$.
If moreover $f \in C^{\textup{\begin{tiny}$k,\bar{\alpha}$\end{tiny}}}_{\textup{\begin{tiny}loc\end{tiny}}}(M)$ for some $k \geq 3$,
then every such $u$ is in $C^{\textup{\begin{tiny}$k$\end{tiny}$+$\begin{tiny}$2,\bar{\alpha}$\end{tiny}}}_{\textup{\begin{tiny}loc\end{tiny}}}(M)$.
\end{proposition}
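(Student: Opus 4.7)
The plan is to apply Yau's continuity method, embedded into the noncompact setting as in Tian-Yau \cite{ty1,ty2} but using the sharper weighted Sobolev inequalities from Proposition \ref{weight-sob}. Consider the one-parameter family
\begin{equation*}
\CMA_t:\quad (\omega_0 + i\partial\bar\partial u)^m = e^{tf + c_t}\omega_0^m, \quad t \in [0,1],
\end{equation*}
where the constants $c_t \in \R$ are chosen so that $\int(e^{tf + c_t}-1)\omega_0^m = 0$ if $\beta \leq 2$ (hence $c_1 = 0$ by the assumption on $f$) and $c_t \equiv 0$ otherwise. At $t = 0$, $u \equiv 0$ is a solution. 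I would show that the set of $t$ for which $\CMA_t$ admits a solution $u_t \in C^{4,\bar{\alpha}}(M)$ -- with finite Dirichlet energy when $\beta \leq 2$ -- is both open and closed in $[0,1]$.

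Openness is the standard application of the implicit function theorem in weighted Hölder spaces built on the quasi-atlas of $M$. The linearization of $\CMA_t$ at $u_t$ is a nonzero constant multiple of $\Delta_{\omega_{u_t}}$, which in the non-parabolic case $\beta > 2$ is an isomorphism between appropriate weighted spaces via \eqref{ds}, and in the parabolic case is Fredholm with cokernel equal to the constants. The normalization by $c_t$ arranges precisely for the right-hand side of the linearized equation to lie in the range of the Laplacian at each step.

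Closedness reduces to a priori estimates; the crucial one is the $C^0$ bound, which is where Proposition \ref{weight-sob} enters decisively. From $\CMA_t$ one has
\begin{equation*}
(e^{tf+c_t}-1)\omega_0^m = \omega_{u_t}^m - \omega_0^m = i\partial\bar\partial u_t \wedge T_t, \quad T_t := \sum_{k=0}^{m-1}\omega_{u_t}^{k}\wedge\omega_0^{m-1-k} > 0,
\end{equation*}
and testing this identity against $|u_t|^{p-1}u_t$ cut off in space and integrating by parts converts the source into a Dirichlet-type integral involving $|\nabla u_t|^2 T_t$. Moser iteration via the weighted inequality from Proposition \ref{weight-sob} then promotes weighted $L^2$ control to uniform $L^\infty$ control. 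In the non-parabolic case $\beta > 2$, the sharp weight $(1+r)^{\alpha(\beta-2)-\beta}$ in \eqref{ds} exactly absorbs the $r^{-\mu}$ decay of $f$ for any $\mu > 2$. In the parabolic case $\beta \leq 2$, the integrability condition on $e^{tf+c_t}-1$ together with \eqref{ns} first produces a finite-energy solution $u_t$ whose additive constant is pinned down by the normalization; Moser iteration based on \eqref{ns} then upgrades this to $u_t \in L^\infty$. With $\|u_t\|_\infty$ in hand, the $C^2$ estimate proceeds by Yau's classical computation on $\log \mathrm{tr}_{\omega_0}\omega_{u_t} - A u_t$ for $A$ large, the maximum principle being replaced by a cutoff argument made possible by the bisectional curvature bounds furnished by the $C^{3,\alpha}$ quasi-atlas. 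This gives $C^{-1}\omega_0 \leq \omega_{u_t} \leq C\omega_0$, after which Evans-Krylov in quasi-coordinate charts of uniform size yields uniform $C^{2,\bar{\alpha}}$ control and closes the continuity method. The higher-order regularity claim follows from interior Schauder bootstrapping.

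The main obstacle is the $C^0$ estimate in the parabolic regime $\beta \leq 2$: the Poincaré-type inequality \eqref{ns} only controls $u_t$ minus a weighted average, so one must carefully propagate the normalization $c_t$ through every step of the Moser iteration and track the $r^{-\mu}$ decay of $f$ without losing powers. The sharpness of the weights in Proposition \ref{weight-sob} is essential here; a weaker weight would either fail to close the iteration or force a strictly stronger decay assumption on $f$ than $\mu > 2$, and would in particular rule out the borderline cases needed for the ALH and $\mathrm{I}_b$ geometries of Theorem \ref{main1}.
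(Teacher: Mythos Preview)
Your approach omits the device that the paper (following Tian--Yau) singles out as the key idea: the $\epsilon$-perturbed equation $(\omega_0+i\partial\bar\partial u_\epsilon)^m=e^{f+\epsilon u_\epsilon}\omega_0^m$. The direct continuity path you propose runs into a circularity in the $C^0$ estimate. Testing the Monge--Amp\`ere identity against $\zeta u_t|u_t|^{p-2}$ with a compactly supported cutoff $\zeta$ and integrating by parts produces a boundary term $\int u_t|u_t|^{p-2}\,d\zeta\wedge d^c u_t\wedge T_t$; letting $\zeta\to 1$ requires $u_t\in L^p$ and $|\nabla u_t|\in L^2$ on all of $M$, which is exactly what Moser iteration is supposed to establish. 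The paper states this obstacle explicitly in Section~3.1 and explains that the $\epsilon$-term provides a trivial maximum-principle bound $\|u_\epsilon\|_\infty\leq\epsilon^{-1}\|f\|_\infty$, from which one bootstraps the qualitative high integrability (\ref{qualit}) needed to justify the integration by parts; only then is Moser iteration run to obtain an $L^\infty$ bound \emph{uniform in $\epsilon$}. Your openness step has a gap of the same flavor: inequality (\ref{ds}) is a Sobolev inequality, not a Fredholm or isomorphism statement for $\Delta_{\omega_{u_t}}$ between weighted H\"older spaces on a general SOB$(\beta)$ manifold, and no such theory is developed here. The paper bypasses this entirely by solving the linearization of the $\epsilon$-equation, $(\tfrac{1}{2}\Delta_\omega-\epsilon)u=h$, via Dirichlet exhaustion and Arzel\`a--Ascoli (Claim~1 in Section~3.2), where the $\epsilon$-term again supplies the missing coercivity.

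There is also a concrete obstruction in the parabolic case $\beta\leq 2$: your normalizing constant $c_t$ cannot be chosen. Since $M$ has infinite volume, $\int(e^{tf+c_t}-1)\omega_0^m = e^{c_t}\int(e^{tf}-1)\omega_0^m+(e^{c_t}-1)\int\omega_0^m$ is finite only if $c_t=0$, but then $\int(e^{tf}-1)\omega_0^m$ vanishes only at $t\in\{0,1\}$ and the integrability hypothesis you need for (\ref{ns}) fails along the interior of the path. In the paper's scheme the integrability condition is not used along the continuity path at all---the $\epsilon$-term substitutes for it---and is invoked only at the very end, when passing to $\epsilon\to 0$ and removing the dependence of the final estimates on the support of $f$; see the last paragraphs of Section~3.3.
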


We will prove this in Sections 3.1--3.3 by applying Tian-Yau's method from \cite{ty1}, but using our
sharp Sobolev inequalities from Section 2.
In fact, the existence result from \cite{ty1} would already be sufficient for all applications in this paper
because in our setting, $f \in C^\infty_0(M)$ and $\beta \leq 2$. However,
Proposition \ref{solve} appears to be essentially sharp under the given conditions
and may thus be of independent interest.

Theorem \ref{main1} requires some amount of decay for $u$ rather than just boundedness. The analysis for this
will be carried out in Section 3.4 and is self-contained in that we do not need pseudodifferential methods
(we stay \textquotedblleft above the first indicial root\textquotedblright). To get pointwise decay, we 
employ an energy argument if $\beta \leq 2$ and Moser iteration with weights if $\beta > 2$. Derivative 
decay then follows by bootstrapping. We refer to Section 3.4 for the precise statements, which are a little
unwieldy.

\subsection{Set-up, structure of the proof}

\begin{definition}\label{quasi} Let $(M, \omega_0)$ be a complete K{\"a}hler manifold. A $C^{k,\alpha}$ 
\emph{quasi-atlas} for $(M,\omega_0)$ is a collection $\{\Phi_x: x \in A\}$, $A \subset M$,
of holomorphic local diffeomorphisms $\Phi_x: B \to M$, $\Phi_x(0) = x$, from $B = B(0,1) \subset \C^m$
into $M$ which extend
smoothly to the closure $\bar{B}$, and such that there exists a constant $C \geq 1$ such that
${\rm inj}(\Phi_x^*g_0) \geq \frac{\textup{\begin{tiny}$1$\end{tiny}}}{\textup{\begin{tiny}$C$\end{tiny}}}$, 
$\frac{\textup{\begin{tiny}$1$\end{tiny}}}{\textup{\begin{tiny}$C$\end{tiny}}}g_{\C^m} \leq \Phi_x^*g_0 \leq Cg_{\C^m}$, 
and $\|\Phi_x^*g_0\|_{C^{k,\alpha}(B)} \leq C$ for all $x \in A$, and such that for all $y \in M$ 
there exists $x \in A$ with $y \in \Phi_x(B)$ and ${\rm dist}_{g_0}(y,\partial\Phi_x(B)) \geq 
\frac{\textup{\begin{tiny}$1$\end{tiny}}}{\textup{\begin{tiny}$C$\end{tiny}}}$.
\end{definition}

Given a $C^{k,\alpha}$ quasi-atlas, we can define \emph{global} H{\"o}lder spaces of functions
$C^{l,\gamma}(M)$ for all $l \in \N_0$ and $0 \leq \gamma < 1$ by setting
\begin{equation}\label{holder}\|u\|_{C^{l,\gamma}(M)} := \sup \{\|u \circ \Phi_x\|_{C^{l,\gamma}(B)}: x \in A\}.
\end{equation}
There is an analogous definition of $C^{l,\gamma}(U)$ for any open set $U \subset M$.
We also work with \emph{local} H{\"o}lder spaces, whose definition only requires an ordinary atlas
and is in fact independent of such a choice. For a second order elliptic differential operator $L$
with coefficients in $C^{l,\gamma}(M)$, we then have
$\|u\|_{C^{l+2,\gamma}(M)} \leq C(\|Lu\|_{C^{l,\gamma}(M)} + \|u\|_\infty)$ for all $u \in C^\infty(M)$,
provided that $l \leq k-1$ and $0 < \gamma \leq \alpha$. This follows from elliptic estimates, 
together with the fact that the euclidean $C^{l,\gamma}$ norm on $B$
is comparable with the intrinsic $C^{l,\gamma}$ norm associated to $\Phi_x^*g_0$ if $l \leq k+1$ and $\gamma \leq \alpha$.
    
Tian-Yau \cite[Proposition 1.2]{ty1} provide a simple criterion for $(M,\omega_0)$ to admit a $C^{k,\alpha}$ quasi-atlas.
We restate it here and write out a slightly more detailed proof.

\begin{lemma}\label{ty} If $|{\rm Rm}| \leq C$, then there exists a quasi-atlas
which is $C^{1,\alpha}$ for all $\alpha$. If moreover
$|\nabla {\rm Scal}| + ... + |\nabla^k{\rm Scal}| \leq C$, then this quasi-atlas is even $C^{k+1,\alpha}$. 
\end{lemma}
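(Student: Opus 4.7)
My plan is to build each chart $\Phi_x$ as a composition of a Riemannian exponential map with a small biholomorphic correction, and then to bootstrap regularity through the K{\"a}hler-Ricci identities.

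For Part (i), I would, for each $x\in M$, fix a $J$-compatible orthonormal frame at $x$, identify $T_xM\cong\C^m$, and set $\Phi_x^0(z):=\exp_x(r_0 z)$ for $z\in B$, where $r_0$ is chosen below $\pi/(2\sqrt C)$. The bound $|{\rm Rm}|\leq C$ controls sectional curvatures, so $\Phi_x^0$ is a local diffeomorphism with uniform bi-Lipschitz constants, though typically not holomorphic. The pulled-back metric $(\Phi_x^0)^*g_0$ on $B$ has bounded curvature and trivial topology, so Jost-Karcher furnishes harmonic coordinates of definite size in which $(\Phi_x^0)^*g_0$ is controlled in $C^{1,\alpha}$ for every $\alpha<1$. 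I would then upgrade to a holomorphic chart: the pulled-back almost-complex structure $J^0:=(\Phi_x^0)^*J$ equals $J_{\rm std}$ at $0$ and, since $J$ is $\nabla$-parallel on $M$, satisfies $J^0-J_{\rm std}=O(|z|)$ in $C^{1,\alpha}$. Being the pullback of an integrable structure, $J^0$ is itself integrable, so a quantitative Newlander-Nirenberg argument---seeking $J^0$-holomorphic coordinates of the form $w^j(z)=z^j+u^j(z)$ and running a $\bar\partial$ fixed-point iteration in $C^{2,\alpha}$ on a sub-ball---produces a diffeomorphism $\Psi$, close to the identity, with $\Psi^*J^0=J_{\rm std}$. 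Setting $\Phi_x:=\Phi_x^0\circ\Psi^{-1}$ then yields the required holomorphic chart; the uniform estimates of Definition \ref{quasi} are inherited from the harmonic-coordinate bounds.

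For Part (ii), I would bootstrap using the K{\"a}hler-Ricci identities, which in any holomorphic chart read
\begin{equation*}
R_{j\bar k}=-\partial_j\partial_{\bar k}\log\det(g_{l\bar m}),\qquad \Delta_g\log\det(g_{l\bar m})=-{\rm Scal}.
\end{equation*}
Starting from the $C^{1,\alpha}$ atlas of Part (i), I would alternate between Schauder applied to the scalar equation above---upgrading the potential $\log\det g$ from the known regularity of its Laplacian ${\rm Scal}$---and Schauder in harmonic coordinates applied to the classical system $-\Delta_g g_{ij}=2R_{ij}+Q(g,\partial g)$, which then promotes $g$ itself. Each round of this double Schauder loop gains one derivative of $g$, and the bounds on covariant derivatives of ${\rm Scal}$ up to order $k$ are precisely enough to drive the loop to $g\in C^{k+1,\alpha}$.

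The main obstacle is this bootstrap: the hypothesis controls only the trace of Ricci, whereas the harmonic-coordinate system demands the full Ricci tensor. What rescues the argument is the K{\"a}hler identity $R_{j\bar k}=-\partial_j\partial_{\bar k}\log\det g$, which exhibits every component of Ricci as the complex Hessian of a single scalar potential controlled by ${\rm Scal}$ through a second-order elliptic equation---the mechanism that converts scalar curvature information into genuine metric control.
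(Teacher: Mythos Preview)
Your proposal is correct, and the overall architecture matches the paper's: exponential map plus Jost--Karcher harmonic coordinates for the $C^{1,\alpha}$ geometry, then a holomorphic correction, then the K\"ahler--Ricci bootstrap for higher regularity.

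The genuine difference lies in how the holomorphic correction is produced. You invoke a quantitative Newlander--Nirenberg theorem as a black box, running a $\bar\partial$ fixed-point iteration in $C^{2,\alpha}$. The paper instead makes a more elementary and self-contained move: starting from the harmonic coordinates $\tilde z_j = \tilde x_j + i\tilde y_j$, it solves $\bar\partial w_j = \bar\partial\tilde z_j$ directly via H\"ormander's $L^2$ estimate (with weight $\rho^2$, which is smooth and uniformly plurisubharmonic on a small enough geodesic ball). The key observation is that since $\tilde z_j$ is harmonic, so is the correction $w_j$; interior elliptic estimates for the \emph{scalar} Laplacian then upgrade the $L^2$ bound on $w_j$ to $C^{2,\alpha}$ control, and $\tilde z_j - w_j$ are the desired holomorphic coordinates. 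This sidesteps the nonlinear iteration entirely. What your approach buys is modularity---you can cite a single theorem---whereas the paper's approach buys transparency and avoids any dependence on the Newlander--Nirenberg machinery, at the cost of one clever trick (harmonicity of the correction). For Part (ii) the two accounts coincide; your description of the double Schauder loop is exactly the mechanism the paper alludes to with ``the local formulas for the Ricci and scalar curvatures of a K\"ahler metric.''
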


\noindent \emph{Sketch of proof.} Fix $x_0 \in M$. 
By pulling back the K{\"a}hler structure under $\exp_{x_0}$ and rescaling,
we may assume that
${\rm inj}(x_0) \geq \frac{\pi}{2}$. Fix an orthonormal frame
$\{X_j, Y_j\}$ at $x_0$, $Y_j = JX_j$,
and extend it to $B(x_0,1)$ by parallel transport along radial geodesics.
From Jost-Karcher \cite[Satz 2.1]{joka},
there exist coordinates $\{x_j, y_j\}$ on $B(x_0,1)$ 
with $|\nabla x_j - X_j| \leq C\rho$ and $|\nabla y_j - Y_j| \leq C\rho$,
where 
$\rho := {\rm dist}(x_0,-)$.
By \cite[Satz 5.1]{joka}, there then exists $r_0 > 0$ small but definite such that
for all $0 < r \leq r_0$ there exist harmonic coordinates $\{\tilde{x}_j, \tilde{y}_j\}$ on $B(x_0,r)$ 
with $|\nabla \tilde{x}_j - \nabla x_j| \leq C r^2$. These satisfy the $C^{1,\alpha}$ requirement, but may not be holomorphic.

However, if $\tilde{z}_j := \tilde{x}_j + i {\tilde{y}}_j$, then $|\bar{\partial}\tilde{z}_j| \leq Cr$.
Solving the $\bar{\partial}$-Neumann 
problem on $B(x_0,r)$ with $r$ small but still definite,
we can construct a function $w_j$ on $B(x_0,r)$ such that
$\bar{\partial}w_j = \bar{\partial}\tilde{z}_j$ and 
$\|w_j\|_2 \leq Cr \|\bar{\partial} w_j\|_2 \leq C r^{m + 2}$.
To see this, we apply the version of H{\"o}rmander's $L^2$ estimate in Demailly \cite[Corollary 8.10]{dem},
using that $\rho^2$ is smooth and uniformly convex on $B(x_0,r)$ when $r$ is small enough.

Now $\Delta w_j = \Delta \tilde{z}_j = 0$. With respect to $\{\tilde{x}_j, \tilde{y}_j\}$ 
(even with respect to $\{x_j, y_j\}$), $\Delta$ is uniformly elliptic with bounded coefficients, so
\cite[Theorem 8.15]{gt} applies and yields
$\|w_j\|_{L^\infty(B(x_0,r/2))} \leq C r^{-m}\|w_j\|_{L^2(B(x_0,r))} \leq C r^2$. Finally, since $\Delta$ even has $C^{0,\alpha}$
coefficients in $\{\tilde{x}_j, \tilde{y}_j\}$,
we obtain $\|\nabla w_j\|_{\infty} \leq Cr$, $\|w_j\|_{C^{2,\alpha}} \leq Cr^{-\alpha}$.
Thus, making $r$ smaller if needed, the functions $\tilde{z}_j - w_j$ define 
holomorphic coordinates on $B(x_0,r)$ because their gradients are still linearly independent,
and moreover still satisfy the $C^{1,\alpha}$ condition. For $C^{k+1,\alpha}$ one
uses the local formulas for the Ricci and scalar curvatures of a K{\"a}hler metric, as in the proof of \cite[Proposition 1.2]{ty1}.
\hfill $\Box$\medskip\

We now outline the proof of Proposition \ref{solve}. Some preliminaries: If
\begin{equation}\label{ma}(\omega_0 + i\partial\bar{\partial} u)^m = e^f \omega_0^m\end{equation}
with $u \in C^4(M)$ and $f \in C^2(M)$, 
then $\omega := \omega_0 + i \partial
\bar{\partial} u$ is again positive, thus even uniformly equivalent to $\omega_0$ because
$\det_{\omega_0}\omega = e^f$ and ${\rm tr}_{\omega_0}\omega = m + \frac{1}{2}\Delta_{\omega_0}u$.
To see this, it suffices to prove $\omega > 0$ at \emph{one point} because $\det_{\omega_0}\omega$ never vanishes.
Now $u$ need not attain its infimum but we can use the following version of Yau's maximum principle,
which is a simple modification of Yau \cite[Theorem 1]{yau2}:

\begin{lemma}[Yau]\label{yaumax}
Let $(M, g)$ be complete with sectional
curvature bounded below, and let $x_0 \in M$.
If $u \in C^2_{{\rm loc}}(M)$ with $\sup |u| + \sup |\nabla u| + \sup |\nabla^2 u| < \infty$ 
does not attain its supremum on $M$,
then there exists $\{x_k\}\subset M$ such that
$\dist(x_0,x_k) \to \infty$ and
$u(x_k) \to \sup u$, $|(\nabla u)(x_k)| \to 0$, and $\limsup_k \max\{{\rm spec}((\nabla^2 u)(x_k))\} \leq 0$. \hfill $\Box$
\end{lemma}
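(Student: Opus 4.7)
This is the standard Omori--Yau maximum principle: replace the non-attained supremum by an attained one after subtracting a slowly growing exhaustion function, then read off the claimed gradient and Hessian conclusions from the first- and second-order conditions at the maximizer. The hypothesis that sectional curvature is bounded below will enter only through the Hessian comparison theorem, which is what lets us build an exhaustion function with a uniform upper bound on its Hessian.

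\textbf{Step 1 (exhaustion function with bounded Hessian).} Set $r(x) := \dist(x_0,x)$ and $\rho(x) := \sqrt{1 + r(x)^2}$. Then $\rho$ is smooth near $x_0$, $\rho(x) \to \infty$ as $\dist(x_0,x) \to \infty$, and $|\nabla \rho| \leq 1$ wherever $r$ is smooth. On the complement of the cut locus of $x_0$, the Hessian comparison theorem applied to the assumed lower sectional curvature bound $-K^2$ gives $\nabla^2 r \leq K \coth(Kr)\,(g - dr \otimes dr)$ as symmetric bilinear forms, and a direct computation then yields $\nabla^2 \rho \leq C \cdot g$ uniformly in $x$, for some $C$ depending only on $K$. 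At points in the cut locus of $x_0$, Calabi's trick supplies a smooth upper barrier: for a minimizing geodesic $\gamma$ from $x_0$ to a given $y$ and small $\delta > 0$, the function $\tilde\rho(x) := \sqrt{1 + (\delta + \dist(\gamma(\delta),x))^2}$ is smooth near $y$, satisfies $\tilde\rho \geq \rho$ with equality at $y$, and obeys the same estimates (up to error $o(1)$ as $\delta \to 0$).

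\textbf{Step 2 (perturbation and conclusion).} Since $u$ does not attain its supremum, there is a sequence $z_k \in M$ with $u(z_k) \to \sup u$. Choose $\epsilon_k \to 0^+$ so small that $\epsilon_k\, \rho(z_k) \to 0$. Because $u$ is bounded above while $\rho(x) \to \infty$, the function $v_k := u - \epsilon_k \rho$ attains its supremum at some $y_k \in M$. Working with the smooth upper barrier $\tilde\rho \geq \rho$ from Step 1 if $y_k$ lies in the cut locus of $x_0$ (the comparison function $u - \epsilon_k \tilde\rho$ then also attains its global supremum at $y_k$), the first- and second-order conditions at $y_k$ give
\[
\nabla u(y_k) = \epsilon_k \nabla \tilde\rho(y_k), \qquad \nabla^2 u(y_k) \leq \epsilon_k \nabla^2 \tilde\rho(y_k) \leq C\epsilon_k \cdot g,
\]
so $|\nabla u|(y_k) \to 0$ and $\limsup_k \max\{{\rm spec}(\nabla^2 u(y_k))\} \leq 0$. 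Moreover,
\[
u(y_k) \geq v_k(y_k) \geq v_k(z_k) = u(z_k) - \epsilon_k \rho(z_k) \longrightarrow \sup u,
\]
so $u(y_k) \to \sup u$. Since $u$ does not attain its supremum, no subsequence of $\{y_k\}$ can stay in a compact set, hence $\dist(x_0, y_k) \to \infty$, as required.

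\textbf{Main obstacle.} The only substantive issue is Step 1: producing a $C^2$ exhaustion with a pointwise upper bound on both $|\nabla \rho|$ and $\nabla^2 \rho$ under only a lower sectional-curvature bound. The gradient bound is elementary for $\sqrt{1+r^2}$, but the Hessian bound relies on Hessian comparison and a careful treatment of the non-smoothness of $r$ on the cut locus, handled here via the Calabi barrier device (alternatively, one could invoke a Greene--Wu smoothing of $r$ to obtain a genuinely smooth $\rho$ satisfying the same estimates). The hypotheses $\sup|\nabla u|, \sup|\nabla^2 u| < \infty$ in the statement are not needed in the existence argument itself, but are used in the intended application.
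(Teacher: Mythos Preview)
The paper does not actually prove this lemma: it is stated with a closing $\Box$ and attributed as ``a simple modification of Yau \cite[Theorem 1]{yau2}.'' Your argument is the standard Omori--Yau proof and is correct; in particular, the Hessian-comparison construction of $\rho = \sqrt{1+r^2}$ with $\nabla^2\rho \leq C g$ and the Calabi barrier at cut points are exactly the ingredients Yau's original proof uses, so your write-up is faithful to the cited source even though the present paper supplies no details.

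One minor remark: as you observe, your proof only uses $\sup u < \infty$, not the full $C^2$ bounds in the hypothesis. The paper's extra assumptions $\sup|\nabla u| + \sup|\nabla^2 u| < \infty$ are indeed redundant for the existence of the sequence $\{x_k\}$; they are there because the lemma is invoked downstream in contexts (the $C^2$ estimate \eqref{c2}, the barrier argument in Section 3.4) where those bounds are already in hand and the statement is simply tailored to its applications.
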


As usual, the following identity is all-important when studying (\ref{ma}):
\begin{equation}\label{binomial}\omega^m - \omega_0^m = \frac{1}{2}dd^c u \wedge \sum_{k = 0}^{m-1} (\omega^k \wedge \omega_0^{m-1-k}). \end{equation}
This first of all shows that the linearization of (\ref{ma}) at $\omega_0$ is a Poisson equation, 
\begin{equation}\label{pois} \dot{f} \omega_0^m = \frac{m}{2}dd^c \dot{u} \wedge \omega_0^{m-1} = \frac{1}{2}(\Delta_{\omega_0}\dot{u})\omega_0^m,
\end{equation}
but more substantially, it expresses the \emph{divergence structure} of $\C$MA, allowing one to treat (\ref{ma}) much in analogy
with (\ref{pois}) even when $u$ is not infinitesimal.

In particular, $\omega^m - \omega_0^m$ is exact, so
$\int (e^f - 1) \omega_0^m = 0$ if $M$ is compact. By Yau's famous theorem \cite{yau3}, this is then the only 
constraint that a given $f$ must satisfy in order for (\ref{ma}) to admit a solution $u$, which then is unique
up to a constant.
 
If $M$ is noncompact, then (\ref{ma}) does \emph{not} necessarily 
imply $\int (e^f - 1) \omega_0^m = 0$, not even if $f \in C^2_0(M)$, because $|\nabla u|$
may not decay fast enough to permit integration by parts. Indeed, 
if $\beta > 2$, we will prove existence of bounded solutions $u$ for any $f \in C^{\infty}_0(M)$.
On the other hand, if $\beta \leq 2$, then presumably $\int (e^f - 1)\omega_0^m = 0$ is not just technically necessary
to produce a bounded $u$, but we may expect solutions for general $f \in C^\infty_0(M)$
to grow like a Green's function, cf.~Remark \ref{growsol}(i).\medskip\

As for details, if one attempts to generalize Yau's proof in the compact case,
one must eventually prove
a-priori estimates for \emph{both} (\ref{ma}) and its linearization.
Given Yau's work, the remaining difficulties for the linear and the non-linear
equation are fairly similar.
The derivative estimates are either local anyway, or can be salvaged using Lemma \ref{yaumax}. The $L^\infty$ estimate
relies on iteration, thus on Sobolev inequalities. 
For $\beta > 2$, the iteration formally works just like in the compact case,
and for $\beta \leq 2$, one exploits $\int (e^f - 1)\omega_0^m = 0$ to compensate for the \textquotedblleft subtraction\textquotedblright$\;$in (\ref{ns}).

The \emph{only issue} with this strategy is that
we need to be able to integrate by parts in order to perform Moser iteration, but $|u|$ and $|\nabla u|$ might not decay fast enough.
For the linear problem, one could resort to solving
Dirichlet problems on larger and larger domains, but this is not an option for $\C$MA.

The key idea in Tian-Yau \cite{ty1} is to introduce a small parameter $\epsilon > 0$:
\begin{equation}\label{pert-ma-1} (\omega_0 + i\partial\bar{\partial}u_\epsilon)^m = e^{f + \epsilon u_\epsilon}\omega_0^m.\end{equation}
This is then fairly simple to solve by the continuity method
because both (\ref{pert-ma-1}) and its linearization enjoy a \emph{trivial} $L^\infty$ bound from the maximum principle.
It remains to establish a \emph{uniform} $L^\infty$ estimate for (\ref{pert-ma-1}) as $\epsilon \to 0$. For intuition,
observe that the Green's function of $\Delta - \epsilon$ in $\R^\beta$ decays exponentially even if $\beta = 2$. Indeed, it turns
out that solutions to (\ref{pert-ma-1}) are highly integrable,
so Moser iteration does go through now, although the extra $\epsilon$ term is not easy to deal with when $\beta \leq 2$.

In Section 3.2, we recall how to solve (\ref{pert-ma-1}). This
is due to Cheng-Yau \cite{cy}. We mainly wish to clarify what assumptions on $(M,\omega_0)$ and $f$ are really used.

In Section 3.3, we then establish high integrability of the solutions of (\ref{pert-ma-1}), and show 
how to prove an $L^\infty$ estimate for (\ref{pert-ma-1}) that is uniform in $\epsilon$. Both parts of the argument
are based on iteration, thus on the divergence structure in (\ref{binomial}).

\subsection{Solution of the $\epsilon$-perturbed equation} Here we only require that 
$(M,\omega_0)$ is a complete K{\"a}hler manifold which admits a $C^{3,\alpha}$ quasi-atlas 
for some $\alpha \in (0,1)$, and $f \in C^{2,\alpha}(M)$ in the sense of (\ref{holder}).
For a fixed $\epsilon > 0$, we will use the continuity method to prove that (\ref{pert-ma-1}) 
has a solution $u_\epsilon \in C^{4,\bar{\alpha}}(M)$ for some $\bar{\alpha} \in (0,\alpha]$ 
which may possibly depend on $(M,\omega_0)$, $\alpha$, $f$, but not on $\epsilon$ if $\epsilon \leq 1$.

Replacing $f$ by $tf$, $t \in [0,1]$,
in (\ref{pert-ma-1}), we obtain an equation (\ref{pert-ma-1}.$t$). 
For $\gamma \in (0,\alpha]$ define
$\mathcal{T}_{\gamma}$ $:=$ $\{t \in [0,1]:$ (\ref{pert-ma-1}.$t$) has a solution $u_{\epsilon,t} \in C^{4,\gamma}(M)\}$. 
Trivially, $0 \in \mathcal{T}_{\gamma}$ for all $\gamma$, and the gist of the proof is to establish
existence of at least one $\bar{\alpha} \in (0,\alpha]$
such that $\mathcal{T}_{\bar{\alpha}}$ is both open and closed in $[0,1]$.\medskip\

\noindent {\bf Claim 1.} For all $\gamma \in (0,\alpha]$ and  $t \in \mathcal{T}_\gamma$ there
exists a bounded linear operator $G:$ $C^{2,\gamma}(M) \to C^{4,\gamma}(M)$
with $(\frac{1}{2}\Delta_\omega - \epsilon) \circ G = {\rm id}$,
where $\omega := \omega_0 + i\partial\bar{\partial}u_{\epsilon,t}$.  \medskip\

Claim 1 implies openness of $\mathcal{T}_{\gamma}$ in $[0,1]$. Notice
that $\omega_0 + i\partial\bar{\partial} u > 0$ is indeed 
an open condition on $u \in C^{4,\gamma}(M)$ because of the definition, (\ref{holder}).\medskip\ 

\noindent \emph{Proof of Claim 1.} By Schauder theory and because $\epsilon > 0$,
for every bounded and sufficiently smooth domain $\Omega \subset M$ 
there exists a unique solution $u = u_\Omega \in C^{4,\gamma}(\Omega)$
for the Dirichlet problem $(\frac{1}{2}\Delta_\omega - \epsilon)u = f|_\Omega$ in $\Omega$, $u = 0$ on $\partial \Omega$,
and there are local estimates (with respect to the charts of the chosen quasi-atlas) 
for $\|u\|_{C^{4,\gamma}}$ in terms of 
$\|f\|_{C^{2,\gamma}}$ and $\|u\|_\infty$.
By the maximum principle, $\|u\|_\infty \leq \frac{1}{\epsilon}\|f\|_\infty$.
Thus we can apply Arzel{\`a}-Ascoli to the sequence $\{u_{\Omega_k}\}$ for an
exhaustion $M = \bigcup \Omega_k$.\hfill $\Box$\medskip\

\noindent {\bf Claim 2.} There exists $\bar{\alpha} \in (0,\alpha]$ 
such that if $u = u_{\epsilon,t}\in C^4(M)$ solves (\ref{pert-ma-1}.$t$) for $t \in [0,1]$, $\epsilon \in (0,1]$, then 
$\|u\|_{C^{2,\bar{\alpha}}(M)} \leq C(M,\omega_0,\|f\|_{C^{2,\alpha}(M)})$.\medskip\

Then, writing out (\ref{pert-ma-1}.$t$) in a local quasi-chart and taking partials
with respect to coordinates, we find that $v := D_iu$ satisfies an 
elliptic equation ${\rm tr}(A \cdot D^2v) = B\ast D^2u + t D_if + \epsilon v$,
where $A$ is controlled in $C^{2,\bar{\alpha}}$ and $B$ is controlled in $C^{1,\bar{\alpha}}$,
independent of the chart, by Claim 2. 
Thus, $u$ is controlled in the global H{\"o}lder space $C^{4,\bar{\alpha}}(M)$.
Altogether this yields closedness of $\mathcal{T}_{\bar{\alpha}}$ in $[0,1]$.\medskip\

\noindent \emph{Proof of Claim 2.} First, by Lemma \ref{yaumax},
$\|u\|_\infty \leq \frac{1}{\epsilon}\|f\|_\infty$. Next, 
\begin{equation}\label{c2}0 \leq {\rm tr}_{\omega_0}\omega = m + \frac{1}{2}\Delta_{\omega_0}u 
\leq C \exp(C(u - \inf u)), \end{equation}
with $C = C(M,\omega_0, \|f\|_{C^2(M)})$ if $\epsilon \leq 1$. This is a simple variant,
using Lemma \ref{yaumax}, of Yau's classical $C^{\textup{\begin{tiny}$2$\end{tiny}}}$ estimate. We recommend B{\l}ocki \cite{blocki} for a readable exposition.
(\ref{c2}) has two important implications:
First, $\|dd^c u\|_\infty \leq C$ since $\det_{\omega_0} \omega$ $=$ $e^{f + \epsilon u}$, so in particular
(\ref{pert-ma-1}) is uniformly elliptic at $u$. Second,
$\|u\|_{C^{1,\gamma}(M)}$ $\leq$ $C(\gamma)$ for all $\gamma \in (0,1)$ by linear theory. 
Finally, exploiting $\|dd^c u\|_\infty \leq C$ and uniform ellipticity, well-known
local estimates yield the desired $C^{2,\bar{\alpha}}$ bound, see again \cite{blocki}. \hfill $\Box$

\subsection{Passage to the limit as $\epsilon \to 0$} From our previous discussion,
only the $L^\infty$ estimate needs to be made uniform in $\epsilon$, and this involves two steps:
(1) Solutions of (\ref{pert-ma-1}) are highly integrable, although the \emph{values} of their $L^p$ norms may blow
up as $\epsilon \to 0$, and (2) Moser iteration, which crucially relies on being able to integrate by parts,
which in turn is justified by (1). Both steps involve multiplying (\ref{binomial}) by a host of test functions
and integrating by parts. We impose conditions on $(M,\omega_0)$, $u$, and $f$ as they become necessary.

Here is the basic calculation. For $u = u_\epsilon \in C^2_{\rm loc}(M)$ solving (\ref{pert-ma-1}),
$\zeta \in C^\infty_0(M)$, $p > 1$, multiply both sides of (\ref{binomial}) by
$\zeta u|u|^{p-2}$ to obtain
\begin{equation}\zeta u|u|^{p-2}(e^{f + \epsilon u} - 1)\omega_0^m = \frac{1}{2}\zeta u|u|^{p-2} dd^c u \wedge T \end{equation}
with $T := \omega_0^{m-1} + \omega_0^{m-2} \wedge \omega + ... + \omega^{m-1}$. Then integrate by parts, which yields
\begin{equation}\label{ibp}\begin{split}&\int \zeta |\nabla|u|^{\frac{p}{2}}|^2 \omega_0^m
+ \frac{mp^2}{2(p-1)}\int \zeta u|u|^{p-2}(e^{\epsilon u} - 1) e^f \omega_0^m 
\leq \\
&{-\frac{mp^2}{2(p-1)}}\left[\int \zeta u|u|^{p-2}(e^f-1)\omega_0^m +
\frac{1}{2}\int u|u|^{p-2}d\zeta \wedge d^cu \wedge T\right].\end{split}\end{equation}
Notice the structure of the good term with $\epsilon$ 
on the left: 
For all $C < \infty$ there exists $\delta = \delta(C) > 0$ such that if $|x| \leq C$, then $x(e^x - 1) \geq \delta x^2$,
so $u(e^{\epsilon u} - 1) \geq \delta \epsilon u^2$ 
for a constant $\delta > 0$ that only depends on an upper bound for $\|f\|_\infty$.\medskip\

(1) \emph{High integrability.} By \cite[Theorem I.4.2]{sy}, if Ric$(\omega_0)$ is bounded below, then
there exists a smooth function $\rho: M \to \R$
such that $\rho \sim 1 + {\rm dist}(x_0,-)$ for every fixed $x_0 \in M$.
Fix a smooth cut-off function $\chi: \R^+ \to \R^+$ with $\chi(t) = 1$ for $t < 1$, $\chi(t) = 0$ for $t > 2$,
apply (\ref{ibp}) with $\zeta = (\chi \circ \frac{\rho}{R})\rho^k$, $k \in \R$, $R > 0$, and let $R \to \infty$.
Assuming $\|u\|_{C^2(M)} < \infty$ (so in particular $\omega$ and $\omega_0$ are uniformly equivalent)
and $\|f\|_{\infty} < \infty$, this readily implies
\begin{equation}\label{ibp2}\int \rho^k|u|^{p-2}|\nabla u|^2 + 
\epsilon \int \rho^k |u|^p \leq C(\int \rho^k |u|^{p-1}|e^f-1| + \int \rho^{k-2}|u|^p),\end{equation}
where all integrals are against $\omega_0^m$, and $C$ may depend in an unspecified 
way on $M$, $\omega_0$, $f$, $u$, $\rho$, $p$, and $k$.
Now assume $(M,\omega_0)$ has polynomial volume growth and\medskip\

\noindent {\bf Temporary Assumption:} $f \in C^{\infty}_0(M)$.\medskip\

\noindent Then fix $k_0 \in \Z$ sufficiently negative and iterate (\ref{ibp2}) from $k = k_0$ to obtain
\begin{equation}\label{qualit}\int \rho^k |u|^{p-2}|\nabla u|^2 + \int \rho^k |u|^p < \infty \;\;(k \in \N_0, p > 1). \end{equation}
As mentioned above, this qualitative high integrability (at least for $k = 0$) of $u = u_\epsilon$, $\epsilon > 0$, 
is what allows us to perform Moser iteration in Step (2) and thus derive an $L^\infty$ bound for $u$ which is uniform
in $\epsilon$. We also need (\ref{qualit}), but for \emph{all} $k \in \N_0$, 
in Section 3.4 to prove pointwise decay for $u$ if $M$ satisfies SOB$(\beta)$ with $\beta > 2$.

It may help to stress at this point that the Temporary Assumption
is stronger than needed for (\ref{qualit}), and the full strength of (\ref{qualit}) is not
needed for everything that follows. However, we will eventually find that the size of the support of $f$ does not
influence the final quantitative bounds
for $u$, which will then allow us to relax $f \in C^\infty_0(M)$ to the condition stated in Proposition \ref{solve}.\medskip\

(2) \emph{Uniform $L^\infty$ estimate.} Applying (\ref{ibp}) with 
$\zeta = \chi \circ \frac{\rho}{R}$, $R \to \infty$, any $p > 1$,
and using (\ref{qualit}) with $k = 0$ to get rid of the boundary term, we obtain   
\begin{equation}\label{moser}\int |\nabla|u|^{\frac{p}{2}}|^2
+ \frac{mp^2}{2(p-1)}\int u|u|^{p-2}(e^{\epsilon u} - 1) \leq -\frac{mp^2}{2(p-1)} \int u|u|^{p-2}(e^f-1).\end{equation}
We now assume all the hypotheses of Proposition \ref{solve}. If $\beta > 2$, then, by (\ref{ds}), 
\begin{equation}\label{ds-2}\left(\int |U|^{2\alpha} \rho^{\alpha(\beta - 2) - \beta} \right)^{\frac{1}{\alpha}}
\leq C\int |\nabla U|^2 \;\;(U \in C^\infty_0(M), \alpha \in [1, \alpha_{n2}]). \end{equation}
For $\mu > 2$ as in the hypothesis, let $C_0 := \sup \rho^\mu |f| < \infty$. 
Then from (\ref{moser}), (\ref{ds-2}), and H{\"o}lder's inequality,
we can deduce the following two facts: First, 
\begin{equation}\label{start}p > \frac{\beta-2}{\mu - 2} \Longrightarrow \left(\int |u|^{\alpha p} \rho^{\alpha(\beta-2)-\beta}\right)^{\frac{1}{\alpha}} \leq \frac{C_0 C p^2}{p-1} \left(\int |u|^{\alpha p} \rho^{\alpha(\beta - 2) - \beta} \right)^{\frac{p-1}{\alpha p}},\end{equation}
so the corresponding weighted $L^{\alpha p}$ norm of $u$ is \emph{finite} for all such $p$
and we may \emph{start} the Moser iteration process at $\alpha p$.
Second, if $q := \mu + \alpha(\beta-2)-\beta > 0$, which can be arranged for by a suitable choice of $\alpha > 1$,
then
$$p > \frac{\alpha}{q}(\beta-2)\Longrightarrow
\left(\int |u|^{\alpha p} \rho^{\alpha(\beta-2)-\beta}\right)^{\frac{1}{\alpha}}
\leq \frac{C_0 C p^2}{p-1}\left(\int |u|^p \rho^{\alpha(\beta-2)-\beta}\right)^{\frac{p-1}{p}},$$
so we can keep iterating from any such $p$. This proves Proposition \ref{solve} if $\beta > 2$;
in particular, we can now trivially get rid of the
Temporary Assumption above.

The parabolic case, $\beta \leq 2$, is considerably more delicate. By (\ref{ns}),
\begin{equation}\label{ns-2} \left(\int_M |U - U_\psi|^{2\alpha} \phi \right)^\frac{1}{\alpha} \leq C \int |\nabla U|^2 \;\;(U \in C^\infty_0(M), \alpha \in [1,\alpha_{n2}]),\end{equation}
where we can take $\phi = \rho^{\alpha(\beta-2-\delta)-\beta}$, $\psi \sim \rho^{-2-\delta}$
for any $\delta > 0$,
and $U_\psi$ denotes the weighted average of $U$ with respect to $\psi$. If $v := u - u_\psi$ \emph{was} still a solution of
(\ref{pert-ma-1}), then from (\ref{moser}) applied to $v$, and (\ref{ns-2}), we would get
\begin{eqnarray}\label{par1}\left(\int |v|^{2\alpha}\phi\right)^{\frac{1}{\alpha}} \leq C \int |v||e^f - 1|,\\
\label{par2}\left(\int \left||v|^{\frac{p}{2}} - (|v|^{\frac{p}{2}})_\psi\right|^{2\alpha}\phi\right)^{\frac{1}{\alpha}} \leq  \frac{Cp^2}{p-1} \int |v|^{p-1}|e^f-1|.\end{eqnarray}
Thus, if $|f| \leq C \rho^{-\mu}$ with $\mu > 1 + \frac{\beta}{2}$, then (\ref{par1}) would 
imply $\|v\|_{\phi,2\alpha} < \infty$, and using the general inequality
\begin{equation}\label{aux}\|(|U|^{\frac{p}{2}})_\psi\|_{\phi, 2\alpha} 
\leq \left(\int \psi\right)^{-1}\left(\int \frac{\psi^2}{\phi}\right)^{\frac{1}{2}} \left(\int \phi\right)^{\frac{1}{2\alpha}} \|U\|_{\phi,p}^{\frac{p}{2}}\end{equation}
(where all three factors on the right-hand side are finite provided $\alpha < 2$) we could then iterate (\ref{par2}),
assuming in addition that $\mu > 2$.

The integrability condition $\int (e^f - 1)\omega_0^m = 0$ will be used precisely to circumvent
the difficulty that $v := u - u_\psi$ does not in general solve (\ref{pert-ma-1}) anymore.
For this we now entirely follow Tian-Yau. The first thing to observe is that from (\ref{moser}) applied to $u$, (\ref{ns-2}), 
and $\int (e^f - 1)\omega_0^m = 0$, the inequality (\ref{par1}) \emph{still holds} for $v$ even though
$v$ itself may not solve (\ref{pert-ma-1}). However, (\ref{moser}) is not enough to iterate. 
For this one has to go back to (\ref{binomial}) and multiply by other test functions than $\zeta u|u|^{p-2}$,
e.g.~by $\zeta \max\{0, u - u_\psi\}^{p-1}$ if $u_\psi \geq 0$, cf. \cite[Lemma 3.5]{ty1}. 
As a result, $-C \leq u \leq u_\psi + C$ if $u_\psi \geq 0$, and $u_\psi - C \leq u \leq C$ if $u_\psi \leq 0$,
assuming $|f| \leq C \rho^{-\mu}$, $\mu > 2$.

Integrating by parts in (\ref{pert-ma-1}) gives $\int (e^{f + \epsilon u} - 1)\omega_0^m = 0$, so
$\int e^f(e^{\epsilon u} - 1)\omega_0^m = 0$ from integrability, thus
$\inf u < 0 < \sup u$. If $\sup u$ was attained at $x_{\rm max}$ $\notin$ ${\rm supp}(f)$ or $u(x_k) \to \sup u$,
$x_k \to \infty$, 
then $(\omega_0 + i \partial\bar{\partial} u)^m = e^{\epsilon u}\omega_0^m$ at $x_{\max}$ or at $x_k$,
$k \gg 0$, a contradiction. We conclude that $\inf u, \sup u$ are attained at
$x_{\rm min}, x_{\rm max} \in {\rm supp}(f)$. 
If $u_\psi \geq 0$, then $-C \leq u \leq u_\psi + C$ from before,
and $u(x_{\rm min}) < 0$. Let $B$ denote the connected component of $x_{\rm min}$ in $\{u < 1 + \min u\}$. Then
$\Delta u \leq C$ on $B$ from (\ref{c2}), which yields $\int_B \psi \geq \delta \int_M \psi$
for some definite $\delta > 0$, which may however depend on $\rho(x_{\rm min})$,
cf.~\cite[Lemma 3.7]{ty1}. Thus, 
$\max u \leq u_\psi$ $+$ $C$ $\leq$ $(1-\delta)(\max u) + C$, and so $\|u\|_\infty$ is bounded in terms of 
$\sup \rho^\mu |f|$, for some $\mu > 2$, and the size of ${\rm supp}(f)$.
A similar argument works if $u_\psi \leq 0$, with
$\Delta u \geq -n$ applied near $x_{\max}$. 

Taking the limit as $\epsilon \to 0$, we obtain $u \in C^{4,\bar{\alpha}}(M)$ with
$(\omega_0 + i\partial\bar{\partial} u)^m = e^f \omega_0^m$. However, 
unlike before, our $C^{4,\bar{\alpha}}$ bound for $u$ now depends on the size
of ${\rm supp}(f)$. To overcome this issue, we iterate again, but
with $v := u - u_{\psi}$, which \emph{does} 
satisfy the same equation as $u$. The only issue now is the boundary term in (\ref{moser}),
but we have $\int |\nabla u|^2 < \infty$ as a by-product of the Moser iterations used in constructing $u$,
which fortuitously suffices because the volume growth $\beta \leq 2$, cf.~\cite[p.596]{ty1}.
\hfill $\Box$

\subsection{Decay of the solutions} Our final goal here
is to exhibit improved decay for bounded solutions $u$ of the
$\C$MA (\ref{ma}) under various hypotheses on the decay of 
$f$ and on the geometry of the complete K{\"a}hler manifold $(M,\omega_0)$.

Proposition \ref{decay} collects all the results that we need for proving the asymptotics in Theorem \ref{main1}. The main underlying idea only works for quadratic volume growth or less, and under the additional assumption that $\int |\nabla u|^2 < \infty$, but is fairly simple and universal. We write 
$M$ as a union of annuli $A_i = A(x_0,r_i,r_{i+1})$ and try to use integration
by parts to conclude
$\int_{A_i} |\nabla u|^2 \leq Ce^{-\delta i}$ for some small $\delta = \delta(u,f) > 0$. 
This works whenever the $A_i$ have their first Neumann eigenvalues
bounded below by $\frac{1}{C}(r_{i+1} - r_i)^{-2}$, and there are basically two different settings where
we can make sure this is the case: (1) $A_i \subset B(x_i,\rho_i)$ with $x_i \in A_i$ and
$\rho_i \sim r_{i+1} - r_i = o(r_i)$,  and (2) $r_{i+1} - r_{i} \sim r_i$, assuming appropriate lower Ricci bounds in each case. The conditions needed for (1) are detailed in Definition \ref{asympt1},
and they lead to exponentially decaying bounds, in terms of $r(x)$, for the integral of $|\nabla u|^2$ over $B(x,1)$. For (2) it is enough to assume SOB$(\beta)$ (Definition \ref{sob-beta}), and we obtain polynomial energy decay. 

In each case, if the injectivity radius does not decay too fast, we can apply
Moser iteration to get pointwise decay. Higher derivative bounds then follow from (scaled) Schauder estimates, in the setting described in Definition \ref{asympt2} and Lemma \ref{hmgcrit}.

We make two remarks on this approach before going into details. First, the decay is qualitatively optimal in each case (cf.~Remark \ref{growsol}(ii)), but the actual decay \emph{rates} obtained will be unreasonably small. Second, we need to assume parabolic volume
growth when integrating by parts,
but non-parabolic manifolds ought to be \emph{better} behaved.
We will briefly address both issues after proving Proposition \ref{decay}.

\begin{definition}\label{asympt1} A complete Riemannian manifold $M$ with one end is CYL$(\beta,\gamma)$,
$\beta > 0$, $\gamma \in [0,1)$, if there exist $x_0 \in M$ and $C \geq 1$ such that 
$\frac{\textup{\begin{tiny}$1$\end{tiny}}}{\textup{\begin{tiny}$C$\end{tiny}}}s^\beta \leq |B(x_0,s)| \leq C s^\beta$ for all $s \geq C$, and such that $A(x_0, r(x)-\frac{\textup{\begin{tiny}$1$\end{tiny}}}{\textup{\begin{tiny}$C$\end{tiny}}}r(x)^\gamma, r(x)+\frac{\textup{\begin{tiny}$1$\end{tiny}}}{\textup{\begin{tiny}$C$\end{tiny}}}r(x)^\gamma) \subset B(x,Cr(x)^\gamma)$ 
and ${\rm Ric}(x) \geq -Cr(x)^{-2\gamma}$ for all $x \in M$ with $r(x) \geq C$.
\end{definition}

\begin{definition}\label{asympt2} We say that $(M^n,g)$ is HMG$(\lambda, k,\alpha)$, $\lambda \in [0,1]$, $k \in \N_0$, 
$\alpha \in (0,1)$, if there exist $x_0 \in M$ and $C \geq 1$ such that for all $x \in M$ with $r(x) \geq C$ 
there exists a local diffeomorphism $\Phi_x$ from the unit ball $B \subset \R^n$ into $M$ such that $\Phi_x(0) = x$,
$\Phi_x(B) \supset B(x,\frac{1}{C}r(x)^\lambda)$, ${\rm inj}(\Phi_x^*g) \geq \frac{1}{C}r(x)^\lambda$, $\frac{1}{C}g_{\rm euc} \leq r(x)^{-2\lambda}\Phi_x^*g \leq C g_{\rm euc}$,
and
\begin{equation}\|r(x)^{-2\lambda}\Phi_x^*g - g_{\rm euc}\|_{C^{k,\alpha}(B,\, g_{\rm euc})} \leq C.\end{equation}
Then, given any smooth weight function $\phi: [0,\infty) \to [1,\infty)$, we define
\begin{equation}\label{wt}\|u\|_{\phi,l,\gamma} := 
\|u\|_{C^{l,\gamma}(B(x_0,2C),g)} + \sup \{\phi(r(x))\|u \circ \Phi_x\|_{C^{l,\gamma}(B,\,g_{\rm euc})} : r(x) \geq C\}\end{equation}
for $l\in \N_0$, $0 \leq \gamma < 1$, $\epsilon \geq 0$. E.g., $\|u\|_{\phi,1,0} \sim \sup_M \phi(r)(|u| + (1+r)^\lambda |\nabla u|)$.
\end{definition}

The following criterion will show that all the manifolds that appear when proving Theorem \ref{main1} are in fact HMG$(1)$.
This is similar to Lemma \ref{ty}.

\begin{lemma}\label{hmgcrit} A complete K{\"a}hler manifold with $|{\rm Rm}| + \sum_{i = 1}^k r^{i\lambda}|\nabla^i{\rm Scal}| \leq Cr^{-2\lambda}$
for some $k \in \N_0$ and $\lambda \in [0,1]$ is ${\rm HMG}(\lambda,k+1,\alpha)$ for every $\alpha \in (0,1)$. \hfill $\Box$
\end{lemma}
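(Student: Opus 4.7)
The plan is to reduce to Lemma~\ref{ty} by rescaling at each basepoint. Fix $x \in M$ with $r(x)$ larger than a threshold to be chosen, and consider the rescaled metric $\tilde g := r(x)^{-2\lambda} g$. The $\tilde g$-unit ball at $x$ is the $g$-ball $B_g(x,r(x)^\lambda)$, and on this set the triangle inequality gives $r(y) \sim r(x)$ once the threshold is large enough. Since Christoffel symbols (and hence covariant differentiation) and the $(1,3)$-Riemann tensor are invariant under a constant conformal rescaling, the scale-covariant curvature bounds in the hypothesis translate into the scale-invariant estimate
\[
|{\rm Rm}_{\tilde g}|_{\tilde g} + \sum_{i=1}^{k}|\nabla^i_{\tilde g}{\rm Scal}_{\tilde g}|_{\tilde g} \leq C'
\]
on $B_{\tilde g}(x,1)$, with $C' = C'(C)$ independent of $x$.

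The next step is to apply the construction in the proof of Lemma~\ref{ty} to $(B_{\tilde g}(x,1),\tilde g)$ at the basepoint $x$. That argument is purely local, and each of its ingredients --- Jost--Karcher harmonic coordinates, the H\"ormander $L^2$-estimate used to produce the holomorphic correction via $\bar\partial$-Neumann, and the final Schauder bootstrap through the K\"ahler identity $R_{i\bar j} = -\partial_i\partial_{\bar j}\log\det(g_{k\bar\ell})$ --- depends only on a $C^0$ bound for the curvature tensor (for the first two) and on bounds for $\nabla^i{\rm Scal}$, $i \leq k$ (for the last). Since all of these data are controlled uniformly in $x$, this produces, for each such $x$, a holomorphic chart $\Phi_x : B = B_{\rm euc}(0,1) \to M$ with $\Phi_x(0) = x$ satisfying ${\rm inj}(\Phi_x^*\tilde g) \geq 1/C$, $\tfrac1C g_{\rm euc} \leq \Phi_x^*\tilde g \leq C g_{\rm euc}$, and $\|\Phi_x^*\tilde g - g_{\rm euc}\|_{C^{k+1,\alpha}(B,g_{\rm euc})} \leq C$, with $C$ independent of $x$.

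Undoing the rescaling, $r(x)^{-2\lambda}\Phi_x^* g = \Phi_x^*\tilde g$ is $C^{k+1,\alpha}$-close to $g_{\rm euc}$, the injectivity radius in the original metric is $\geq r(x)^\lambda/C$, and $\Phi_x(B) \supset B_g(x,r(x)^\lambda/C)$ --- precisely the HMG$(\lambda,k+1,\alpha)$ conditions at $x$. The finitely many basepoints with $r(x)$ below the threshold are handled by arbitrary holomorphic charts, absorbed into the $C^{k,\alpha}$-norm on $B(x_0,2C)$ appearing as the first summand of~\eqref{wt}. The main subtlety is the middle paragraph: verifying that all the constants appearing in the proof of Lemma~\ref{ty} genuinely depend only on the curvature data controlled uniformly by $C'$. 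This is a bookkeeping exercise on that proof rather than a new analytic argument, so no substantive obstacle is expected.
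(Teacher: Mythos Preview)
Your proposal is correct and matches the paper's intended approach: the paper gives no proof beyond the sentence ``This is similar to Lemma~\ref{ty}'' and the terminal $\Box$, and the rescaling $\tilde g = r(x)^{-2\lambda}g$ followed by a pointwise application of the local construction in Lemma~\ref{ty} is precisely the way to unpack that hint. Your bookkeeping of the curvature scaling and the observation that the proof of Lemma~\ref{ty} is local (so completeness of the ambient manifold is not needed once the relevant ball lies inside $M$) are exactly the points one has to check.
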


\begin{proposition}\label{decay}
Let $(M^n,\omega_0)$ be complete K{\"a}hler and let $u, f \in C^\infty(M)$ be such that
$\sup |\nabla^i u| +\sup |\nabla^i f| < \infty$ for all $i \in \N_0$, and 
$(\omega_0 + i\partial\bar{\partial}u)^m = e^f \omega_0^m$. 
   
{\rm (ia)} Assume {\rm CYL}$(\beta,\gamma)$, $\beta \in (0,2]$, $\gamma \in [0,1)$, and
$\exp(\kappa r(x)^{1-\gamma})|B(x,1)|\to\infty$ as $r(x) \to \infty$ for every fixed $\kappa > 0$. If $\int |\nabla u|^2 < \infty$ and 
$|f| \leq C r^{1-\beta - \gamma}\exp(-\epsilon r^{1-\gamma})$ for some $\epsilon > 0$, 
then $\sup_{B(x,1)} |u - u_{B(x,1)}| \leq C\exp(-\delta r(x)^{1-\gamma})$ for some $\delta > 0$.

{\rm (ib)} Assume ${\rm SOB}(\beta)$, $\beta \in (0,2]$, and for every fixed $\kappa > 0$, $r(x)^\kappa |B(x,1)| \to \infty$ as $r(x) \to \infty$.
If $\int |\nabla u|^2 < \infty$ and $|f| \leq C r^{-\epsilon}$ for some $\epsilon > 0$, then there exists a
$\delta > 0$ such that for all $x \in M$, $\sup_{B(x,1)} |u-u_{B(x,1)}| \leq C r(x)^{-\delta}$.

{\rm (ii)} Assume {\rm HMG}$(\lambda, k+1,\alpha)$, and that $M$ has at most linear diameter growth in the sense that for all $s \geq C$ and $x_1,x_2 \in M$ with $r(x_1) = r(x_2) = s$, there exists 
a path $\gamma$ from $x_1$ to $x_2$ with $s-\frac{1}{C}s^\lambda \leq r \circ \gamma \leq Cs$ and ${\rm length}(\gamma) \leq Cs$.
Let $\phi$ be a 
weight with $\phi(t)(1+t)^{-\textup{\begin{tiny}$\delta$\end{tiny}}}$ non-decreasing for some $\delta > 0$
and $\phi(t -\frac{\textup{\begin{tiny}$1$\end{tiny}}}{\textup{\begin{tiny}$2$\end{tiny}}}t^{\textup{\begin{tiny}$\lambda$\end{tiny}}})$
$\geq$ $\frac{\textup{\begin{tiny}$1$\end{tiny}}}{\textup{\begin{tiny}$C$\end{tiny}}}\phi(t)$.
Then, if $\sup_x \phi(r(x))r(x)^{\textup{\begin{tiny}$1$\end{tiny}}-\textup{\begin{tiny}$\lambda$\end{tiny}}}\sup_{B(x,1)} |u - u_{B(x,1)}| < \infty$ and $\|f\|_{\psi, k, \alpha} < \infty$, where $\psi(t)$ $:=$ $(1+t)^{\textup{\begin{tiny}$1$\end{tiny}}+\textup{\begin{tiny}$\lambda$\end{tiny}}}\phi(t)$, then
 $\|u-\bar{u}\|_{\phi,k+2,\alpha} < \infty$ for some constant $\bar{u} \in \R$.
\end{proposition}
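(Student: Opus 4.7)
The plan is to treat all three parts via the divergence-form structure of (\ref{binomial}). Since $u$ is bounded (Proposition \ref{solve}) and $\omega\sim\omega_0$ uniformly by (\ref{c2}), the identity
\[
2(e^f-1)\omega_0^m \;=\; d(d^cu\wedge T),\qquad T:=\sum_{k=0}^{m-1}\omega^k\wedge\omega_0^{m-1-k},
\]
is a linear uniformly elliptic divergence-form equation $Lu=2(e^f-1)$ with $L$ comparable to $\Delta_{\omega_0}$. Parts (ia) and (ib) will follow by first extracting decay of the tail energies $G_i:=\int_{\{r\geq r_i\}}|\nabla u|^2$ and then upgrading to pointwise decay via Moser iteration on unit balls; part (ii) is a separate bootstrap via rescaled Schauder.

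\emph{Energy decay.} Pick $r_0<r_1<\dots\to\infty$ with $r_{i+1}-r_i\sim r_i^\gamma$ in (ia) (so $r_i\sim i^{1/(1-\gamma)}$) and $r_{i+1}=2r_i$ in (ib); set $A_i:=A(x_0,r_{i-1},r_i)$, $E_i:=\int_{A_i}|\nabla u|^2$. The scale is calibrated so that a Neumann--Poincar\'e inequality $\int_{A_i}|u-u_{A_i}|^2\leq C(r_i-r_{i-1})^2E_i$ holds: in (ia) this is immediate from CYL (each $A_i$ sits inside one ball whose rescaled Ricci is uniformly bounded below), and in (ib) it is the weak annular inequality (\ref{ann-neu-poinc}) established inside the proof of Proposition \ref{weight-sob}. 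I then test the equation against $\phi_i(u-u_{A_i})$, where $\phi_i$ is a cutoff equal to $1$ on $\{r\geq r_i\}$ and $0$ on $\{r\leq r_{i-1}\}$ with $|d\phi_i|\lesssim(r_i-r_{i-1})^{-1}$; the hypothesis $\int|\nabla u|^2<\infty$ lets me discard the outer cutoff via a routine $\psi_R$-truncation at infinity. Cauchy--Schwarz combined with the Poincar\'e estimate yields a recurrence
\[
G_i\;\leq\;\theta\,G_{i-1}+C\,I_i,\qquad I_i:=\int_{\{r\geq r_{i-1}\}}|u-u_{A_i}|\,|e^f-1|,
\]
with $\theta\in(0,1)$ independent of $i$. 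Boundedness of $u$ gives $I_i\lesssim\int_{\{r\geq r_{i-1}\}}|f|$, and integrating this tail against the volume growth yields $I_i\lesssim e^{-\epsilon r_{i-1}^{1-\gamma}}$ in (ia) and $I_i\lesssim r_{i-1}^{-\delta_0}$ in (ib) for some $\delta_0>0$. Solving the recurrence produces the desired energy decay at rate $e^{-\delta r(x)^{1-\gamma}}$ (ia) or $r(x)^{-\delta}$ (ib), for some (possibly small) $\delta>0$.

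\emph{From energy to $L^\infty$.} On each unit ball $B(x,1)$, apply Moser iteration to $u-u_{B(x,1)}$ viewed as a solution of the uniformly elliptic equation $L(u-u_{B(x,1)})=2(e^f-1)$. This returns
\[
\sup_{B(x,1)}|u-u_{B(x,1)}|^2\;\lesssim\;|B(x,1)|^{-1}\!\int_{B(x,2)}|\nabla u|^2+(\text{source from }f),
\]
where the source inherits the rate of $I_i$. The volume hypotheses ($\exp(\kappa r^{1-\gamma})|B(x,1)|\to\infty$ in (ia) and $r^\kappa|B(x,1)|\to\infty$ in (ib)) are precisely what is needed to absorb the collapsedness factor $|B(x,1)|^{-1}$ into the energy decay with room to spare, yielding the stated pointwise bound (possibly with a smaller $\delta$).

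\emph{Part (ii).} Under HMG$(\lambda,k+1,\alpha)$, the rescaled metric $r(x)^{-2\lambda}\Phi_x^*g_0$ is uniformly $C^{k+1,\alpha}$-bounded on the Euclidean unit ball. Pulling back (\ref{ma}), I obtain on $B\subset\R^n$ a complex Monge--Amp\`ere equation with uniformly $C^{k,\alpha}$-bounded coefficients, uniform ellipticity inherited from (\ref{c2}), and right-hand side of size $\psi(r(x))^{-1}$. Interior scaled Schauder, bootstrapped in the usual way for complex Monge--Amp\`ere, upgrades the assumed $C^0$ control of $u-u_{B(x,1)}$ into the stated weighted $C^{k+2,\alpha}$-type control. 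To identify the asymptotic constant, I use linear diameter growth: any two far points can be connected by a path of length $\lesssim s$ staying where $\phi\gtrsim\phi(s)$, so integrating the already-established $C^0$ decay along such a path forces the ball averages $u_{B(x,1)}$ to form a Cauchy net compatible with $\phi$; $\bar u$ is defined as its limit. The principal obstacle in this whole plan is extracting a genuine $\delta_0>0$ in (ib) when $\epsilon$ is small: the crude bound $|u-u_{A_i}|\leq2\|u\|_\infty$ gives $I_i\lesssim\int_{r\geq r_{i-1}}|f|$, which only decays when $\epsilon>\beta$. For $\epsilon\leq\beta$ the resolution is to pair $|u-u_{A_i}|$ against $|f|$ via a H\"older inequality anchored by the weighted Sobolev integrability of $u-u_\psi$ coming from (\ref{ns}), trading a tiny power for positivity of $\delta_0$; this keeps the final decay rate $\delta$ small but strictly positive.
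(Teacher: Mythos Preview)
Your energy-decay scheme for (ia)/(ib) and the rescaled-Schauder plan for (ii) match the paper's approach closely, including the use of the annular Neumann--Poincar\'e inequality (\ref{ann-neu-poinc}) in (ib) and Moser iteration on unit balls to pass from energy to $L^\infty$. There is, however, one genuine gap in your sketch of (ii).

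You assert that after pulling back under $\Phi_x$ and rescaling by $r(x)^{-2\lambda}$, the complex Monge--Amp\`ere equation has ``uniformly $C^{k,\alpha}$-bounded coefficients.'' This is not automatic. The coefficients of the linearization $L_u(v)\,\omega_0^m := i\partial\bar\partial v\wedge\sum_j\omega_0^j\wedge(\omega_0+i\partial\bar\partial u)^{m-1-j}$ involve $i\partial\bar\partial u$, and while the hypothesis $\sup|\nabla^j u|<\infty$ gives uniform bounds in the \emph{unrescaled} metric, those translate to $|\nabla^j_{\mathrm{resc}}u|\lesssim r(x)^{j\lambda}$ in the $r(x)^{-2\lambda}$-rescaled chart, which blows up. The paper resolves this by an incremental bootstrap: it introduces a family of statements $S(\mu)$ for $\mu\in[0,\lambda]$, where $S(\mu)$ is the target weighted $C^{k+2,\alpha}$ estimate on the smaller ball $B(x,\tfrac{1}{C}r(x)^\mu)$ with the metric rescaled by $r(x)^{-2\mu}$. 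One verifies $S(0)$ directly (unit balls, no rescaling), and then proves $S(\mu)\Rightarrow S(\nu)$ for any $\nu\leq\min\{\lambda,\,\mu+\delta/(k+2+\alpha)\}$: the $S(\mu)$-control on $i\partial\bar\partial u$ at scale $r^\mu$ is just enough to make the coefficients of $L_u$ bounded in $C^{k,\alpha}$ at the slightly larger scale $r^\nu$, because $\phi(t)\geq(1+t)^\delta$ absorbs the loss $r^{(k+2+\alpha)(\nu-\mu)}$. Reaching $S(\lambda)$ then takes finitely many steps. This incremental passage from scale $1$ to scale $r^\lambda$ is the actual content of (ii); your phrase ``bootstrapped in the usual way'' hides it.

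Your flag on the source term in (ib) when $\epsilon\leq\beta$ is well taken; the paper is equally terse here, and in its applications $f$ has compact support so the tail $\int_{E_i}|f|$ vanishes identically for large $i$.
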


\begin{proof} (i) For any sequence $C \leq r_1 < r_2 < ...$, define
annuli $A_i := \{r_i < r < r_{i+1}\}$ and ends $E_i := \{r > r_i\}$.
Construct cut-off functions $\zeta_i$ on
$M$ with $\zeta_i \equiv 1$ on $E_{i+1}$, ${\rm supp}(\zeta_i) \subset E_i$, and $(r_{i+1}-r_i)|\nabla \zeta_i| \leq C$, uniformly in $i$.
Take a cut-off function $\chi$ 
with $\chi(t) = 1$ for $t < 1$, $\chi(t) = 0$ for $t > 2$,
multiply (\ref{binomial}) by $(\chi \circ \frac{r}{R}) \zeta_i u$, 
integrate by parts, and let $R \to \infty$. The boundary term with $\chi'$ goes away because $|u| \leq C$, $\int |\nabla u|^2< \infty$, 
and $\beta \leq 2$. Thus, for all $i$, and again because $|u| \leq C$, 
\begin{equation}\label{aux2}\begin{split}&\sum_{j = i+1}^\infty \|\nabla u\|_{L^2(A_j)}^2 \leq \int \zeta_i |\nabla u|^2 
\leq C \int |\nabla \zeta_i| |u| |\nabla u| + C \int \zeta_i |u| |f|\\
&\leq C(r_{i+1}-r_i)^{-1}\|u\|_{L^2(A_i)} \|\nabla u\|_{L^2(A_i)} + C\sum_{j = i}^\infty (r_{j+1}^\beta - r_{j}^\beta)\|f\|_{L^\infty(A_j)}.\end{split}\end{equation}
The key observation is that 
(\ref{aux2}) still holds if $u$ is replaced by $u - \mu$ for any $\mu \in \R$ with $|\mu| \leq C$.
If we can choose $\mu$ such that $\|u-\mu\|_{L^2(A_i)} \leq C(r_{i+1}-r_i)\|\nabla u\|_{L^2(A_i)}$
and if $f \in C^\infty_0(M)$, then (\ref{aux2}) implies 
$\|\nabla u\|_{L^2(A_i)} \leq C\exp(-\delta i)$ for some $\delta > 0$ by 
induction.
In the settings described in (ia) and (ib), such 
a choice for $\mu$ is essentially always possible, given an appropriately chosen sequence of radii.

(ia) Pick $x_i \in A_i$ such that $\rho_i := r(x_i) = \frac{1}{2}(r_i + r_{i+1})$. By assumption then, 
$$A_i' := A(x_0, \rho_i - \frac{1}{C}\rho_i^\gamma, \rho_i + \frac{1}{C}\rho_i^\gamma) \subset B_i := B(x_i, \rho_i^\gamma).$$
Suppose the $r_i$ are such that $A_i \subset A_i'$ and $2B_i \subset A_{i}'' := A_{i-K} \cup A_{i-K+1} \cup ... \cup A_{i+K}$ for some $K \in \N$, $K \leq C$, and all $i \geq C$.
Then, if $\mu := u_{B_i}$, Lemma \ref{segment} yields
\begin{equation}\label{aux3}\int_{A_i} |u - \mu|^2 \leq \int_{B_i} |u-\mu|^2 \leq C\rho_i^{2\gamma} \int_{2B_i} |\nabla u|^2 \leq C\rho_i^{2\gamma}\int_{A_{i}''} |\nabla u|^2.\end{equation}
The essentially unique choice of radii which makes this work is 
$r_i := \frac{1}{C}i^{\frac{1}{1-\gamma}}$.
Define $Q_i := \int_{E_i} |\nabla u|^2$. Then (\ref{aux2}) and (\ref{aux3}) imply that for all $i \geq C$,
$$Q_{i+K} \leq Q_{i+1} \leq C(Q_{i-K} - Q_{i+K}) + Ce^{-\epsilon i},$$
since $\rho_i^\gamma \leq C(r_{i+1}-r_i)$ and by the decay assumption for $f$.
Thus, $Q_{2Ki} \leq Ce^{-\delta i}$ for some $\delta > 0$ by induction, and in particular,
$$\int_{B(x,1)} |\nabla u|^2 \leq C\exp(-\delta r(x)^{1-\gamma})$$
for all $x \in M$. Using the Neumann-type Sobolev inequality from 
Maheux and Saloff-Coste \cite[Th{\'e}or{\`e}me 1.1]{msc}, and the assumed lower
bound on $|B(x,1)|$ as $r(x)\to\infty$, we can now apply Moser iteration
on $B(x,1)$ to conclude the proof.

(ib) The argument here is entirely analogous, replacing (\ref{aux3}) by the Neumann-Poincar{\'e} inequality for annuli from  (\ref{ann-neu-poinc}): If $r_i = 1000\eta^i$,
$\eta = 1.001$, $\mu = u_{\eta A_i}$, then $\|u-\mu\|_{L^2(A_i)} \leq C\eta^{i}\|\nabla u\|_{L^2(\eta^2A_i)}$, where we recall that $\eta A(r,s) := A(\eta^{-1}r, \eta s)$. 

(ii) As in Kovalev \cite{kovalev}, we define a linear differential operator $L_u$ by setting
$$L_u(v)\omega_0^m := i\partial\bar{\partial}v \wedge \sum_{j = 0}^{m-1} \omega_0^j \wedge (\omega_0 + i\partial\bar{\partial}u)^{m-1-j}.$$
For any $x \in M$, put $u_x := u - u_{B(x,1)}$. Then $\C$MA implies $L_u(u_x) = e^f-1$. For 
a given $\mu \in [0,1]$, consider the following statement $S(\mu)$:
\begin{align*}r(x) \geq C
\Longrightarrow \|u_x\|_{C^{k+2,\alpha}(B(x,\frac{1}{C}r(x)^{\mu}),\,r(x)^{-2\mu}\omega_0)} \leq C \phi(r(x))^{-1}r(x)^{\lambda-1}.\end{align*}
\emph{Claim:} $S(0)$ is true, 
and $S(\mu)$ implies $S(\nu)$ for all $\nu \in [\mu, \min\{\lambda,\mu + \frac{\delta}{k+2+\alpha}\}]$.\medskip\

Thus, $S(\lambda)$ is true, so $|\nabla u| \leq C\phi(r)^{-1}(1+r)^{-1}$ and thus
$|u - \bar{u}| \leq C\phi(r)^{-1}$ for some constant $\bar{u}$ since $\phi(t)(1 + t)^{-\delta}$ is non-decreasing and ${\rm diam}(\partial B(x_0,s)) \leq Cs$. Using 
$S(\lambda)$ again, this proves (ii), and in fact
slightly more if $\lambda < 1$.\medskip\

\noindent \emph{Proof of the claim.} To show $S(0)$, pull the equation $L_u(u_x) = e^f-1$ back under $\Phi_x$ and
apply Schauder theory on $\Phi_x^{-1}(B(x,\frac{1}{C}))$, using the $\Phi_x^*\omega_0$-H{\"o}lder norms. Note that the coefficients of $L_u$ 
are already bounded in this sense.

Now assume $S(\mu)$ and let $\nu$ be as given. We pull back under $\Phi_x$, aiming to
apply Schauder theory again, but on the larger ball
$B_x(\mu) := \Phi_x^{-1}(B(x,\frac{\textup{\begin{tiny}$1$\end{tiny}}}{\textup{\begin{tiny}$C$\end{tiny}}}r(x)^\mu))$,
and in the stronger norms associated with the metric 
$\omega_x(\nu) := r(x)^{\textup{\begin{tiny}$-2\nu$\end{tiny}}}\Phi_x^*\omega_0$. 
From $S(\mu)$, $\phi(t) \geq (1+t)^{\delta}$, and $(k + 2 + \alpha)\nu \leq (k + 2 + \alpha)\mu + \delta$ we conclude that the
coefficients of $L_u$ are bounded in $C^{k,\alpha}(B_x(\mu),\omega_x(\nu))$. Also,
$r(x)^{2\nu}\|f \circ \Phi_x\|_{C^{k,\alpha}(B_x(\mu),\,\omega_x(\nu))} \leq 
C \phi(r(x))^{\textup{\begin{tiny}$-1$\end{tiny}}}r(x)^{\textup{\begin{tiny}$\lambda$\end{tiny}$-$\begin{tiny}$1$\end{tiny}}}$ since $\|f\|_{\psi,k,\alpha} < \infty$ and $\nu \leq \lambda$. 
Thus, Schauder theory yields
$\|u_x \circ \Phi_x\|_{C^{k+2,\alpha}(B_x(\mu),\,\omega_x(\nu))} \leq C \phi(r(x))^{-1}r(x)^{\lambda-1}$.

It remains to prove that
$\sup\{|u_x(y)| : y \in B(x,\frac{\textup{\begin{tiny}$1$\end{tiny}}}{\textup{\begin{tiny}$C$\end{tiny}}}r(x)^\nu)\}\leq C \phi(r(x))^{-1}r(x)^{\lambda-1}$. 
By assumption, this bound holds for all
$y \in B(x,\frac{\textup{\begin{tiny}$1$\end{tiny}}}{\textup{\begin{tiny}$C$\end{tiny}}}r(x)^\mu)$. Also, since $\nabla u_x = \nabla u_y$,
the derivative estimates we just showed imply
$\sup\{ |\nabla u_x(y)|: y \in B(x,\frac{\textup{\begin{tiny}$1$\end{tiny}}}{\textup{\begin{tiny}$C$\end{tiny}}}r(x)^\nu)\} \leq C \phi(r(x))^{-1}r(x)^{\lambda-1-\nu}$
because $\phi(t-\frac{\textup{\begin{tiny}$1$\end{tiny}}}{\textup{\begin{tiny}$2$\end{tiny}}}t^\lambda) \geq \frac{\textup{\begin{tiny}$1$\end{tiny}}}{\textup{\begin{tiny}$C$\end{tiny}}}\phi(t)$.
Thus, the desired sup bound follows by integrating along radial $\omega_0$-geodesics in $M$.
\end{proof}

As we mentioned earlier, there are two obvious issues with Proposition \ref{decay}: (1) The decay rates are not quantitatively optimal. (2) 
We need to assume low volume growth, but non-parabolic manifolds ought to be rather \emph{better} behaved in terms of potential theory. 
We conclude here by briefly discussing these questions.\medskip\

(1) \emph{Optimal decay rates in Proposition \ref{decay} {\rm (see also Remark \ref{growsol}(ii))}.}

On an asymptotically cylindrical manifold, if $-\lambda^2$ $(\lambda > 0)$ is the first eigenvalue of the cross-section, then
separation of variables suggests
$|u| + |\nabla u| + ... + |\nabla^{k} u| \leq C e^{-\delta r}$ if $|f| + |\nabla f| + ... + |\nabla^k f| \leq C e^{-\epsilon r}$,
with $\delta = \epsilon$ if $\epsilon \in (0,\lambda)$, but no better than that (even if
$\epsilon \geq \lambda$)
unless we impose extra integrability conditions on $f$. Similarly, on a space asymptotic 
to $C \times T^{n-2}$  (or even a flat $T^{n-2}$-submersion over $C$)
for a flat $2$-dimensional cone $C$ of cone angle $\theta \in (0,1]$, we expect $|u| + r|\nabla u| + ... + r^{k} |\nabla^{k} u| \leq
C r^{-\delta}$ if $|f| + r|\nabla f| + ... + r^k |\nabla^k f| \leq C r^{-2-\epsilon}$, with
$\delta = \epsilon$ if $\epsilon \in (0, \frac{1}{\theta})$. In either case, Proposition \ref{decay} yields the right qualitative behavior, but 
not the best values for $\delta$. 
However, there is a relatively simple way of boosting the decay. The idea is that
if $u$ did \emph{not} decay fast enough compared to $f$, then after translating in the cylindrical,
or rescaling and unwrapping the fibers in the homogeneous setting, thanks to the 
bounds established in Proposition
\ref{decay}, $u$ would converge to a \emph{harmonic} function
on $\R^+$ $\times$ cross-section or $C^2 \times \R^{n-2}$ (constant in the $\R^{n-2}$ directions) with \emph{some} 
and hence \emph{optimal}
decay, which contradicts the assumption. This outline can be
made rigorous 
using \textquotedblleft 3-circles\textquotedblright$\;$type arguments, as in Cheeger-Tian \cite{ct-cones}. \hfill $\Box$\medskip\

(2) \emph{Non-parabolic manifolds $(\beta > 2)$.}

Suppose $(M,\omega_0)$ satisfies SOB$(\beta)$ with $\beta > 2$. Proposition \ref{solve} yields a bounded
solution $u$ if $|f| \leq C r^{-\mu}$ for some $\mu > 2$, however one
would not expect $u$ to have finite energy outside of a certain range for $\mu$. 
On the other hand, from the behavior of the Laplacian on $\R^\beta$ and since the Green's function
on $(M,\omega_0)$
decays like $r^{2-\beta}$, one might hope for $|u|$ $\leq$ $C r^{2-\mu}$
if $\mu \in (2,\beta)$. 
We now describe two settings where this can be proved, though with a small loss in the exponent.

$\bullet$ \emph{Barrier method} (Santoro \cite{santoro}). This is based on the observation that on $\R^\beta$, 
$$\Delta(r^{2-\mu}(\log r)^\epsilon) = r^{-\mu}(\log r)^{\epsilon}\left[(2-\mu)(\beta-\mu)+ \frac{\epsilon(2 + \beta -2\mu)}{ \log r} + \frac{\epsilon(\epsilon-1)}{(\log r)^2}\right],$$
which becomes negative for $r \gg 1$ if either $\mu \in (2,\beta)$, $\epsilon$ arbitrary,
or $\mu = \beta$, $\epsilon > 0$.
Thus, if $\Delta u = f$ with $|u| + |\nabla u| + |\nabla^2u| \leq C$ and $|f| \leq C r^{-\mu}(\log r)^\epsilon$, 
$\mu \in (2,\beta)$,
or $|f| \leq C r^{-\mu}(\log r)^{\epsilon - 1}$, $\mu = \beta$, $\epsilon > 0$, 
then $|u|$ $\leq$ $Cr^{2-\mu}(\log r)^\epsilon$ by Lemma \ref{yaumax}. 
This idea obviously
generalizes
to various sorts of asymptotically flat submersions over $\R^\beta$ with bounded fibers, and also works for 
$\C$MA on such spaces.

$\bullet$ \emph{Moser iteration with weights.} Let $(M,\omega_0)$ and $f$ be as in Proposition \ref{solve} and assume
there exists a smooth $\rho \sim 1 + r$ 
with $|d \rho| + \rho |dd^c \rho| \leq C$. By Section 3.2,
for all $\epsilon \in (0,1)$ there exists $u = u_\epsilon$, $\|u\|_{C^{4,\bar{\alpha}}(M)} \leq C$,
with $(\omega_0 + i\partial\bar{\partial}u)^m = e^{f + \epsilon u}\omega_0^m$. 
Moreover, assuming for the moment that $f \in C^\infty_0(M)$, by (\ref{qualit}),
\begin{equation}\label{int}\int \rho^k|u|^{p-2} |\nabla u|^2 + \int\rho^k |u|^p < \infty \;\;(k \in \N_0, p > 1).\end{equation} 
For a cut-off function $\chi$ with $\chi(t) = 1$ for $t < 1$, $\chi(t) = 0$ for $t > 2$,
multiply (\ref{binomial}) by $(\chi \circ \frac{\rho}{R})\zeta u|\zeta u|^{p-2}\zeta$, where 
$R > 0$, $\zeta := \rho^{l}$ for any $l \in \R$, and $p > 1$. Commute the spare factor of $\zeta$ past the $dd^c$,
integrate by parts, and let $R \to \infty$. The boundary term disappears by (\ref{int}).
Eventually, using
$|d \rho| + \rho|dd^c \rho| \leq C$, $\omega_0 + i\partial\bar{\partial} u \sim \omega_0$, and the sharp weighted
Sobolev inequality (\ref{ds}), 
$$\left(\int \rho^{\alpha(\beta-2)-\beta} |\zeta u|^{\alpha p}\right)^{\frac{1}{\alpha}} 
\leq \frac{C p^2}{p-1}\left(\int |\zeta u|^{p-1} \zeta |f| + 
\frac{|l|(|l|+1)p}{p-1}\int \rho^{-2} |\zeta u|^p\right),$$
for any $\alpha \in [1,\alpha_{n2}]$, with $C$ depending only on $\|f\|_{C^{2}(M)}$.
This yields
$$\left(\int \zeta_{k+1} |u|^{p_{k+1}}\right)^{\frac{1}{p_{k+1}}} \leq \left(\frac{C_0Cp_k^3}{(p_k-1)^2}\right)^{\frac{1}{p_k}}
\max\left\{1, \left(\int \zeta_k |u|^{p_k}\right)^{\frac{1}{p_k}}\right\}$$
for all $k \in \N$, where $p_k := \alpha^k p_0$ with a fixed $p_0 > 1$, and $\zeta_k := \rho^{\alpha^k(\beta-2) - \beta}$,
assuming that $|f| \leq C_0 \rho^{-\mu}$, $\mu > 2$, $p_0(\mu - 2) > \beta -2$, and $\alpha$ is close enough to $1$
depending on $p_0,\beta,\mu$. The basic step, $k = 1$, can be estimated from 
(\ref{start}), so altogether
$$p_0 > \frac{\beta-2}{\mu - 2} \Longrightarrow |u| = |u_\epsilon| \leq C(p_0, \beta,\mu, \|f\|_{C^{2}(M)}, \sup \rho^\mu |f|) \rho^{\frac{2-\beta}{p_0}}.$$
Therefore, $|u_\epsilon| \leq C(\delta) \rho^{2-\mu + \delta}$ for all $\delta > 0$ if $\mu \in (2,\beta)$, uniformly as $\epsilon \to 0$, 
and the assumption $f \in C^\infty_0(M)$ can be relaxed to $\|f\|_{C^{2,\alpha}(M)} + \sup \rho^\mu |f| < \infty$.\hfill $\Box$

\section{The new Calabi-Yau metrics}

This section leads up to the proof of Theorems \ref{main} and \ref{main1} in Section 4.4.

Section 4.1 reviews the basic properties of rational elliptic surfaces and works out some isotrivial examples,
which we expect to occur as bubbles in the collapsing of Ricci-flat metrics on K3, cf.~Problem \ref{bubble}.
We also summarize Kodaira's results from \cite{kodaira} on the structure of singular fibers in elliptic fibrations in general.

Section 4.2 describes a general construction of CY metrics on the total spaces of
certain polarized
families of complex $m$-tori. Given $\epsilon > 0$, a section of the family, and
a holomorphic volume form $\Omega$
on the total space, we obtain a semi-flat (i.e.~flat when restricted to fibers) 
CY metric $\omega_{{\rm sf},\epsilon}$, with top power proportional to $\Omega \wedge \bar{\Omega}$ 
and such that the tori have volume $\epsilon$ with respect to $\omega_{{\rm sf},\epsilon}$, which is explicit in terms of $\epsilon$, $\Omega$, and the periods of the tori.
Changing the reference section translates $\omega_{{\rm sf},\epsilon}$.
Gross-Wilson \cite{gw} have used such metrics on the complement of the singular fibers of
generic Jacobian elliptic K3 surfaces, with $\Omega = \Omega_{\rm K3}$.

Section 4.3 looks at elliptic fibrations $f: U \to \Delta$ over a disk,
with a section and such that the central fiber $D = f^{-1}(0)$ does not contain $(-1)$-curves. The fibration $f$ is
then isomorphic to an explicit canonical form that can be extracted from
\cite{kodaira}. Given a meromorphic $2$-form $\Omega$ on $U$ such that ${\rm div}(\Omega)$ is an integer
multiple of $D$,
this then allows us to write down an explicit formula for the associated CY metrics $\omega_{{\rm sf},\epsilon}$ 
on $U \setminus D$ and read off their geometric properties.
The most relevant cases are ${\rm div}(\Omega) = 0$ 
as in \cite{gw}, and ${\rm div}(\Omega) = -D$ as on a rational elliptic surface.

For a rational elliptic surface $f: X \to \mathbb{P}^1$, a fiber $D = f^{-1}(p)$, a meromorphic 
$2$-form $\Omega$ with ${\rm div}(\Omega) = -D$, and any choice of local section near $p$ we thus
obtain CY metrics $\omega_{{\rm sf},\epsilon}$ 
on $U \setminus D$, $U = f^{-1}(\Delta)$, $p \in \Delta \subset \mathbb{P}^1$,
with $\omega_{\textup{\begin{tiny}${\rm sf},\epsilon$\end{tiny}}}^{\textup{\begin{tiny}$2$\end{tiny}}} = \Omega \wedge \bar{\Omega}$ and
such that the fibers of $f$ have area $\epsilon$ with respect to $\omega_{{\rm sf},\epsilon}$ and
$D$ is at infinite distance.
What remains to be done in Section 4.4 is glue $\omega_{{\rm sf},\epsilon}$ with as many different K{\"a}hler 
metrics on $X \setminus U$ as possible to produce
complete K{\"a}hler metrics $\omega_0$ on $M = X \setminus D$ such that the existence theory from Section 3 
applies with $e^f =
(\Omega \wedge \bar{\Omega})/\omega_0^2 $.
The freedom of choosing a local section for $f$
is exactly what makes the K{\"a}hler gluing possible,
thanks to a remarkable $\partial\bar{\partial}$-type 
lemma for $U \setminus D$ from \cite{gw}.

\subsection{Rational elliptic surfaces and singular fibers} Besides Kodaira's original work \cite{kodaira},
useful references for what follows are \cite{bpv}, \cite{4mfds}, \cite{multfib}, and \cite{mirmod}.

Consider a rational elliptic surface $X$ as in Definition \ref{ratell}, i.e.~the blow-up of $\mathbb{P}^2$
in the nine base points of a pencil $sF + tG = 0$, where $(s:t) \in \mathbb{P}^1$ and $F$ and $G$ are smooth cubics. 
Then $X = \mathbb{P}^2 \,\#\, 9 \bar{\mathbb{P}}^2$ as smooth manifolds, and the Hodge numbers of $X$ are
$h^{1,0} = h^{2,0} = 0$ and $h^{1,1} = 10$. The natural elliptic fibration $f: X \to \mathbb{P}^1$ 
has an intrinsic description as the complete linear system $|{-K_X}|$, meaning that for each fiber
$D = f^{-1}(p)$ there exists a meromorphic $2$-form $\Omega$ on $X$, unique up to a scale, with
${\rm div}(\Omega) = -D$, and all meromorphic $2$-forms on $X$ are obtained in this way. Moreover, $f$ is the only
elliptic fibration on $X$. When blowing up a base point until all tangencies are resolved, 
 the last exceptional divisor, which is a $(-1)$-curve,
becomes a global section of $f$.
All fibers of $f$ are reduced, i.e.~the multiplicities
of $f$ along the irreducible 
components of any
fiber are coprime. A fiber $f^{-1}(p)$ in a
general
elliptic fibration is reduced if and only if $f$ admits a local 
section near $p$.        

Rational elliptic surfaces are characterized as those smooth projective surfaces
which admit a meromorphic $2$-form $\Omega$ and an elliptic fibration
whose fibers do not contain $(-1)$-curves such that ${\rm div}(\Omega)$ is an integer multiple of a fiber. 
Even though our construction of complete CY metrics only requires such a fibration near infinity,
this essentially shows that we do not have any other examples to work with.

As a last general point, notice that the space of cubics in $\mathbb{P}^2$ is a $\mathbb{P}^9$, so a
\emph{general} set of $9$ points in $\mathbb{P}^2$ determines a \emph{unique} cubic passing through it,
and the
space of $9$-point subsets of $\P^2$ modulo PGl$(3)$ has dimension $9 \cdot 2 - 8 = 10$.
On the other hand, if
cubics $F$ and $G$ intersect in finitely many points,
and $H$ is another cubic passing through these,
then, by the $AF + BG$ theorem, $H$ must already be a linear 
combination of 
$F$ and $G$. Thus, modulo PGl$(3)$ again, $9$-point sets with \emph{more} than one cubic passing through them 
depend on $2 \cdot (9 - 1) - 8 = 8$ free parameters only. If we blow up $\mathbb{P}^2$ in a $9$-point set which does \emph{not} determine
a pencil of cubics, the resulting $X$ still has meromorphic $2$-forms $\Omega$,
and sometimes an elliptic fibration,
but then one fiber $D$ must be of multiplicity $m > 1$ and 
${\rm div}(\Omega)$ is either $-\frac{1}{m}D$ 
or disconnected,
so our construction of complete CY metrics breaks down.

\begin{example}\label{isoex} Let $E = \C/\Lambda$ be a complex torus and let $\Gamma$ be a finite subgroup of ${\rm Aut}(E)$, 
so $\Gamma$ is generated by a root of unity and either $\Gamma = \Z_2$ (for any $\Lambda$), $\Gamma = \Z_4$ (for $\Lambda = \Z[i]$), 
or $\Gamma = \Z_3, \Z_6$ (for $\Lambda = \Z[\zeta_3]$),
$\zeta_m := \exp(2\pi i/m)$. 
Then $\Gamma$ acts
on $\P^1$ in a natural way, and the minimal resolution $X$ of the singular surface $(\P^1 \times E)/\Gamma$ 
comes with an obvious elliptic fibration $f: X \to \P^1$ with two singular fibers and all smooth fibers
isomorphic to $E$. Blowing down all $(-1)$-curves in the singular fibers makes $X$ rational elliptic. Below,
we work this out explicitly for $\Gamma = \Z_2, \Z_3$.

Elliptic surfaces of this sort are called \emph{isotrivial}, and are of special interest in our story 
for several reasons. First, they explain the mechanism of creation 
of singular fibers of finite monodromy in arbitrary elliptic fibrations,
and the occurrence of such fiber types in dual pairs T, T$^*$ with the same monodromy (${\rm T} = {\rm T}^*$ if
$\Gamma = \Z_2 \subset \R$). Second, only the T$^*$-types are associated with \emph{crepant} resolutions, so 
only in these cases (i.e.~with T$^*$ at the core and T at infinity)
does the complete semi-flat model metric $\omega_{{\rm sf},\epsilon}$ come from the
flat orbifold metric on $(\C \times E)/\Gamma$. Third, we expect the associated 
ALG spaces to occur as bubbles for collapsing CY metrics on elliptic K3 surfaces near finite monodromy 
singular fibers of the appropriate type.

(i) $\Gamma = \Z_2$. Fix a smooth elliptic curve $E: y^2 = g(x)$ where $g$ is a cubic,
and fix an affine coordinate $t$ on $\P^1$ such that the natural $\Z_2$-action on $\P^1$ becomes
$t \mapsto -t$.  
Setting $(T,X,Y) = (t^2,x,ty)$ yields a rational map $\mathbb{P}^1_t \times E \dashrightarrow \mathbb{P}^1_T \times \mathbb{P}^2_{XY}$
which factors through the globally defined $\Z_2$-action $(t,x,y) \mapsto (-t,x,-y)$ and
induces a birational map from $(\mathbb{P}^1_t \times E)/\Z_2$ to the surface $S: Y^2 = T g(X)$ 
which is in fact an isomorphism of elliptic fibrations away from $t = 0, \infty$. 
Project $S$ onto $\P^2_{XY}$ and
blow up the base points of the pencil $Y^2 = T g(X)$. 
Going back then already yields a resolution for 
$(\P^1 \times E)/\Z_2$, without having to blow up in a fiber first.

To see what this looks like, notice the base points are $P_i = (x_i,0) = (x_i:0:1)$, 
$i = 1,2,3$, where $g(x_i) = 0$, and $P_0 = (0:1:0)$.
At $P_i$, the curves in the pencil have a common tangent $L_i: X = x_i$ of multiplicity $2$, while
at $P_0$, they share the line at infinity $L_0$ as a flex tangent.
We blow up $P_i$ twice with exceptional curves $E_i^1, E_i^2$, and $P_0$ three times with exceptional curves
$E_0^1, E_0^2, E_0^3$.
The resulting elliptic surface then has four sections, given by the $(-1)$-curves $E_i^2$ and $E_0^3$, 
and two singular fibers. The one at $T = 0$ consists of the \textquotedblleft spine\textquotedblright$\;Y = 0$,
plus the four \textquotedblleft ribs\textquotedblright$\;E_i^1, L_0$, which intersect the spine transversely 
in four different points. The singular fiber at $T = \infty$ looks the same,
with $E_0^1$ as spine and $L_i, E_0^2$ as ribs. 
Both are of Kodaira
type I$_0^*$. 
In the resolution picture, the spines are 
the strict transforms of the rational curves 
$E/\Z_2$ $\subset$ $(\P^1_t \times E)/\Z_2$ over $t = 0, \infty$,
while the ribs resolve
the singularities.

It is clear what background metric one
would have to take in order to construct a complete Calabi-Yau metric when removing either one of the singular fibers.
There are flat metrics on $\C_t \times E$, $\C_s\times E$ ($s = \frac{1}{t}$) with corresponding
$2$-forms $\Omega_0 = dt \wedge dw$, $\Omega_\infty = ds \wedge dw$, where 
$dw$ denotes an invariant differential on $E$. The key
point is that $\Omega_0, \Omega_\infty$ are both
\emph{invariant under the $\Z_2$-action}, hence they 
lift to the rational elliptic surface with polar divisor the singular fiber at $T = \infty,0$, respectively.
Thus one
can use the flat orbifold $(\C \times E)/\Z_2$ as a background. Settings of this 
sort have been treated in \cite{bm, santoro} and heuristically in \cite{stringy,hitchin}, cf.~Remark \ref{sect1rem}(ii).

(ii) $\Gamma = \Z_3$. Take $E: y^2 + y = x^3$ and $(x,y) \mapsto (\zeta_3 x,y)$ as a generator for $\Gamma$. Fix 
an affine coordinate $t$ on $\P^1$ such that the natural $\Z_3$-action on $\P^1$ becomes $t \mapsto \zeta_3^2 t$. 
Then the rational map $\P^1_t \times E \dashrightarrow \P^1_T \times \P^2_{XY}$ given by $(T,X,Y) = (t^3, tx, y)$ factors
through the diagonal $\Gamma$-action and thus induces a birational map from $(\P^1_t \times E)/\Gamma$ 
to the surface $T(Y^2 + Y) = X^3$ in $\P^1_T \times \P^2_{XY}$.
The pencil of cubics obtained from this by projection 
has three base points of multiplicity $3$:
$P_1 = (0,0)$ $=$ $(0:0:1)$, $P_2 $ $=$ $(0,-1)$ $= (0:-1:1)$, and $P_3 = (0:1:0)$, 
with corresponding flex tangents $L_1: Y = 0$, $L_2: Y = -1$, and $L_3$, the line at infinity. 
Blowing up therefore produces three exceptional divisors $E_i^{1}, E_i^2, E_i^3$ at every $P_i$, where the $E_i^3$ become sections.
The resulting singular fiber over $T = 0$ has a spine-rib structure again, with 
$X = 0$ as spine and three two-component ribs $E_i^1 \cup E_i^2$ (Kodaira type IV$^*$). The fiber
over $T = \infty$ consists of $L_1, L_2, L_3$, intersecting in $(1:0:0) \in L_3$ (Kodaira type IV). To regard
this as a resolution of singularities for $(\P^1 \times E)/\Gamma$, we need to blow up the intersection 
of $L_1,L_2,L_3$ over $T = \infty$, the new exceptional divisor then becoming the strict transform of the rational curve
$E/\Gamma \subset (\P^1 \times E)/\Gamma$ over $t = \infty$.

As for the semi-flat background metrics on $X$ with one singular fiber removed,
there is a new phenomenon compared to the $\Z_2$ case. Of the
meromorphic $2$-forms $\Omega_0 = dt \wedge dw$, $\Omega_\infty = ds \wedge dw$ ($s = \frac{1}{t}$) on $\P^1_t \times E$
associated to the 
standard flat 
metrics on $\C_t \times E$, $\C_s \times E$, only $\Omega_0$ is invariant under the $\Z_3$-action we are looking at, which is 
generated by $(t,w)
\mapsto (\zeta_3^2 t, \zeta_3 w)$ or 
$(s,w) \mapsto (\zeta_3 s, \zeta_3 w)$ (cf.~the three two-component ribs $E_i^1 \cup E_i^2$ in the 
singular fiber over $T = 0$, which correspond to \emph{crepant resolutions} of
the three $A_2$-singularities of $(\P^1_t \times E)/\Gamma$ over $t = 0$). Thus, when
removing the type IV fiber over $T = \infty$, then obviously the right metric $\omega_0$ to use as a background is
the flat orbifold metric on $(\C_t \times E)/\Gamma$, which is a flat $T^2$-
submersion over a flat $2$-cone
of cone angle $1/3$. On the other hand, when removing the type IV$^*$ fiber over $T = 0$, we would
need to find $\omega_\infty$ with $\omega_\infty^{2} = \Omega \wedge \bar{\Omega}$,
where $\Omega = \frac{1}{T}\Omega_0$ is the unique meromorphic $2$-form on the rational elliptic surface with a simple
pole along that fiber. Thus it seems reasonable to take $\omega_0$ and conformally change
the flat cone metric $|T|^{-\frac{4}{3}} |dT|^2$
on the base (complete at $T = \infty$) by $|T|^{-2}$, which 
results in a flat cone metric (complete at $T = 0$) of cone angle $2/3$. It follows from
Sections 4.2--4.3 that this is indeed what the appropriate $\omega_{{\rm sf},\epsilon}$ looks like.
\hfill $\Box$\end{example}

We now turn to the possible structures of singular fibers in general. 
The principal result in Kodaira \cite{kodaira} asserts the following:
Let $f: U \to \Delta$ be an elliptic fibration with a section $\sigma$
over the unit disk such that all fibers except possibly $D = f^{-1}(0)$ are smooth
and $D$ does not contain any $(-1)$-curves (recall that
existence of a local section is equivalent to $D$ being reduced).
Then the pair
$(f,\sigma)$ is isomorphic to a canonical form $(\bar{f}, \bar{\sigma})$
whose total space $\bar{U}$ is birational, via explicit fiber-preserving maps,
to the quotient of the total space of an explicit 
elliptic fibration over $\Delta$ by
a finite group related to the \emph{monodromy} of $f$.
To explain the notion of monodromy, 
note $U|_{\Delta^*} \cong L/\Lambda$ due to the existence of $\sigma$, where $L$ is a holomorphic line bundle
on $\Delta^*$ and $\Lambda \subset L$ is a lattice bundle, but $H^1(\Delta^*, \mathcal{O}^*) = 0$, so
$L$ is trivial and $U|_{\Delta^*}$ $\cong$ $(\Delta^* \times \C)/(\Z \tau_1 + \Z \tau_2)$ for multi-valued
holomorphic functions $\tau_1, \tau_2$ on $\Delta^*$.
The pair $T = (\tau_1,\tau_2) \in \C^{2\times 1}$, assumed to be positively oriented, transforms as $T \mapsto TA$ with
$A$ $=$ $ (\begin{smallmatrix} a & b \\ c & d\end{smallmatrix}) \in {\rm Sl}(2,\Z)$ 
when going around the puncture once in the counterclockwise sense.
The conjugacy class of $A$ is then referred
to as the monodromy of $f$.

The central fibers that can occur are classified by topological type as I$_b$ ($b \geq 0$), II, III, IV, and
corresponding \textquotedblleft $\ast$-types,\textquotedblright$\;$I$_b^*$, etc.
I$_0$ is smooth, I$_1$ a node, II a cusp, 
III is two rational curves touching at a point, IV three rational curves intersecting transversely in a point, 
and I$_{\textup{\begin{tiny}$b$\end{tiny}}}$ ($b \geq 2$), I$_{\textup{\begin{tiny}$b$\end{tiny}}}^{\textup{\begin{tiny}$*$\end{tiny}}}$ ($b \geq 0$), II$^*$, III$^*$, IV$^*$ 
are graphs of rational curves which if weighted by intersection numbers
become affine Dynkin diagrams: $\tilde{A}_{b-1}$, $\tilde{D}_{b+4}$, $\tilde{E}_8$, $\tilde{E}_7$, $\tilde{E}_6$.
The topological type determines the monodromy, 
whose order in Sl$(2,\Z)$ is finite except for I$_b, {\rm I}_b^*$ ($b \geq 1$).
In Example \ref{isoex} we
have seen how the pairs $({\rm I}_0^*, {\rm I}_0^*)$, $({\rm IV}, {\rm IV}^*)$ occur 
in rational elliptic surfaces birational 
to $(\P^1 \times E)/\Gamma$ with $\Gamma = \Z_2, \Z_3$. The pairs (II, II$^*$),
(III, III$^*$) can be created in a similar way with $\Gamma = \Z_6, \Z_4$. In each case the
monodromy of both fibers is $\Gamma$.
The $T^2$-bundles over $S^1$ obtained by restricting $U$ to a loop around
$0$ are orientable. 
They are Bieberbach
manifolds (five out of six possible ones occur)
if the monodromy is trivial or finite, nil-manifolds for I$_b$, $b \geq 1$ (all possibilities occur), and infranil for I$_b^*$, $b \geq 1$.
      
If $X$ is rational elliptic, then the Euler numbers
of the singular fibers must add up to $\chi(X) = 12$. Persson \cite{miranda} has determined all 
possible configurations
of singular fibers; there are 279 in total, the generic one being $12 \times {\rm I}_1$.
All finite monodromy types can occur, as well as I$_b$, $0 \leq b \leq 9$ ($\chi = b$) 
and I$_b^*$, $0 \leq b \leq 4$ ($\chi = 6 + b$).

We need to understand how Kodaira's canonical form $(\bar{f},\bar{\sigma})$ for a given fibration $(f,\sigma)$ 
with a section is constructed.
Write $U|_{\Delta^*} = (\Delta^* \times \C)/(\Z\tau_1 + \Z\tau_2)$ as before.
Then $\tau := \tau_2/\tau_1: \Delta^* \to \mathfrak{H}$ is multi-valued holomorphic, with values in the upper half-plane, 
and transforms as $\tau \mapsto (d\tau + b)/(c\tau + a)$ under
the image $[A] \in {\rm PSl}(2,\Z)$ of the monodromy when going around the puncture in the counterclockwise sense.
Denote by $j: \mathfrak{H} \to \C$ the classical elliptic
modular function, normalized such that $j(i) = 1$ and $j(\zeta_3) = 0$. 
Recall that $j$ is ${\rm PSl}(2,\Z)$-invariant and unramified
except for branch points of order $2,3$ along the orbits of $i, \zeta_3$. It turns out that
$\mathcal{J} := j \circ \tau$ (Kodaira's \emph{functional invariant}) is a single-valued meromorphic function on $\Delta^*$.

$\bullet$ If $\mathcal{J}(0) \in \C \setminus \{0,1\}$, then $\tau$ is single-valued and extends to a regular
function on $\Delta$ with 
$\tau(0)$ not in the ${\rm PSl}(2,\Z)$-orbits of $i$ or $\zeta_3$. Thus in particular the stabilizer of $\tau(0)$ in 
${\rm PSl}(2,\Z)$ is trivial,
so $A = \pm 1$. If $A = 1$, then $\tau_1, \tau_2$ extend as single-valued
functions on $\Delta$ and $U \cong (\Delta \times \C)/(\Z \tau_1 + \Z \tau_2)$ with $D$ smooth (I$_0$).
If $A = -1$, we make a base change $z = u^2$ over $\Delta_z^*$. The lifted fibration 
$U' \to \Delta_u^*$ extends across $0$
with a smooth central fiber $D'$. The free $\Z_2$-action on $U'$ over $\Delta_u^*$
also extends but with four fixed points on $D'$.
The quotient is then an
elliptic fibration over $\Delta_z$,
isomorphic to $U$ over $\Delta_z^*$, but with four surface
singularities of type $A_1$ over $z = 0$.
Resolving these singularities
gives the desired normal form $\bar{U} \to \Delta_z$ 
with a type I$_0^*$ central fiber, in analogy with
Example \ref{isoex}(i).

$\bullet$ If $\mathcal{J}(0) = 0$, then $\tau$ is possibly multi-valued but still regular at $0$
with $\tau(0)$ in the orbit of $\zeta_3$. $[A]$ fixes $\tau(0)$, 
which yields six possibilities total for $A \in {\rm Sl}(2,\Z)$ up to conjugation, including $A = \pm 1$.
Each case can be treated as before: Make a base change $z = u^{{\rm ord}\,A}$ to obtain an elliptic fibration 
$U' \to \Delta_u^*$ which extends 
to $\Delta_u$ with a smooth central fiber $D'$.
The group $\langle A\rangle$ acts on $U'$ with fixed points 
on $D'$, and the desired normal form $\bar{U}$ is obtained by minimally resolving the singularities of the quotient
and blowing down $(-1)$-curves in the central fiber if necessary.
The case $\mathcal{J}(0) = 1$ is the same. Other than I$_0$, I$_0^*$ for $A = \pm 1$, 
the fiber types that can occur are II, III, IV, and their corresponding $\ast$-types, as in Example \ref{isoex}(ii).

$\bullet$ If $\mathcal{J}$ has a pole of order $b \geq 1$ at $0$, then $A$ is conjugate to $\pm A_b
= \pm (\begin{smallmatrix} 1 & b \\ 0 & 1\end{smallmatrix})$.
Also, it is possible to make $\tau_1 \equiv 1$ and $\tau_2 = \tau = b(\log z)/(2\pi i)$.
If $b = 1$ and $A = A_1$, one can compactify 
$U|_{\Delta^*} \cong (\Delta^* \times \C)/(\Z + \Z \tau)$ explicitly by embedding it
into $\Delta \times \P^2$ using suitable $\wp$-functions and taking the closure. This yields the normal
form $\bar{U}$ with central fiber a node (I$_1$). 
The case $A = A_b$, $b > 1$, with central fiber a cycle of $b$ rational curves (I$_b$),
reduces to $b = 1$ by a fiberwise $b$-fold cover.
$A = -A_b$ implies $A^2 = A_{2b}$, and $\bar{U}$ is obtained by setting
$z = u^2$, which yields an I$_{2b}$ fibration over $\Delta_u$,
then dividing by $\Z_2$ and resolving the four resulting $A_1$-singularities (I$_b^*$).

\subsection{Ricci-flat metrics on complex torus bundles} Let $f: X \to S$ be a holomorphic submersion
such that all fibers $X_s = f^{-1}(s)$ are complex $m$-tori.
Assume $f$ admits a holomorphic section $\sigma$ and a \emph{constant polarization} $\omega$, 
meaning that $\omega$ is a real $2$-form on $X$ which restricts to a K{\"a}hler form on every $X_s$
and there exist $c_i \in \R$ and
$\xi_i(s) \in H^2(X_s,\Z)$
such that $[\omega|_{X_s}] = \sum c_i \xi_i(s)$
for all $s \in S$.
For $\epsilon > 0, s \in S$ we denote by
$g_{s,\epsilon}$ the unique 
flat K{\"a}hler metric on $X_s$ with ${\rm vol}(X_s, g_{s,\epsilon}) = \epsilon$
whose K{\"a}hler class
is a scalar multiple of $[\omega|_{X_s}]$.
Restricting $g_{s,\epsilon}$ to $T_{\sigma(s)}X_s$ then induces a hermitian fiber metric 
$h_\epsilon$ on the holomorphic
vector bundle $E := \sigma^*T_{\textup{\begin{tiny}$X/S$\end{tiny}}}$ over $S$, according to
our convention that $g$ induces $h = \frac{\textup{\begin{tiny}$1$\end{tiny}}}{\textup{\begin{tiny}$2$\end{tiny}}}(g + i\omega)$.

$\bullet$ \emph{Hyperk{\"a}hler setting.} Let $\dim S = m$ and assume that
$X$ admits a holomorphic symplectic form $\Upsilon$
such that all $X_s$ are complex Lagrangians.
Then there exists a unique Riemannian metric $g_{S,\epsilon}$ on $S$ compatible with the complex 
structure on $S$ such that
the faithful pairing $E \otimes T^{1,0}S \to \C$ defined by $\Upsilon$
is isometric with respect to $h_\epsilon$ and the hermitian metric induced by $g_{S,\epsilon}$.

$\bullet$ \emph{CY setting.} Let $\dim S = 1$ and assume that $X$ admits a holomorphic volume form $\Omega$.
Then $\Omega$ defines a faithful pairing $\wedge^m E$  $\otimes$ $ T^{1,0}S \to \C$. Using the hermitian
metric on $\wedge^m E$ induced by $h_\epsilon$, we obtain a
Riemannian metric $g_{S,\epsilon}$ as before.

Using $g_{S,\epsilon}$ and the family of flat metrics $\{g_{s,\epsilon}\}$ on the fibers of $f$, we now wish to
build a submersion
metric on $X$, thus we need to specify a horizontal distribution.
Choose a fiber-preserving
biholomorphism $X \cong E/\Lambda$ for some holomorphic lattice bundle $\Lambda \subset E$. Then
$\Lambda$ induces a flat $\R$-linear connection on $E$, hence an integrable horizontal distribution
$\mathcal{H}$ on $X$, and this is independent of how we made $X \cong E/\Lambda$
since any two such isomorphisms only differ by a holomorphic section $S \to {\rm Aut}(E)$.
Then $g_{{\rm sf},\epsilon}(u,v) :=$ 
$g_{S,\epsilon}(f_*u, f_*v) + g_{s,\epsilon}(Pu, Pv)$ for $u,v \in T_x X$, $s = f(x)$, 
and $P$ $=$ projection along $\mathcal{H}_x$, defines a 
\textquotedblleft semi-flat\textquotedblright$\;$hermitian metric $g_{{\rm sf},\epsilon}$ on $X$. 
We will see that $g_{\textup{\begin{tiny}$S,\epsilon$\end{tiny}}},
g_{\textup{\begin{tiny}${\rm sf},\epsilon$\end{tiny}}}$ are in fact K{\"a}hler and
the volume form of $g_{{\rm sf},\epsilon}$ is a constant multiple of either 
$\Upsilon^{\textup{\begin{tiny}$m$\end{tiny}}} \wedge \bar{\Upsilon}^{\textup{\begin{tiny}$m$\end{tiny}}}$ or $\Omega \wedge \bar{\Omega}$,
so that $g_{{\rm sf},\epsilon}$ is either hyperk{\"a}hler or at least CY.

\begin{remark}\label{sfrem} (i) Existence of a section is necessary
to construct $g_{{\rm sf},\epsilon}$, and $g_{{\rm sf},\epsilon}$ as a tensor field
depends on the particular choice of a section $\sigma$. However, $\sigma$ also defines a
complex Lie group structure on each fiber $X_s$ with unit $\sigma(s)$,
so for every smooth section $\sigma'$ there
is an associated vertical translation map $T(x) = x + \sigma'(f(x))$,
and $g_{{\rm sf},\epsilon}(\sigma)
= T^* g_{{\rm sf},\epsilon}(\sigma')$ if $\sigma'$ is holomorphic. 
This freedom of gauge will be crucial in Section 4.4, when 
we glue $g_{{\rm sf},\epsilon}$ with other K{\"a}hler metrics.
For
a smooth section $\sigma'$, $T$ \emph{preserves} $g_{{\rm sf},\epsilon}$ if and only if
$\sigma'$ is tangent to $\mathcal{H}$ (which implies holomorphic). We thus obtain
a locally constant sheaf 
of Lie groups on $S$ with stalk $T^{2m}$, acting on $(X,g_{{\rm sf},\epsilon})$ by isometries, which
can be viewed as 
an $N$- or $N^*$-structure, cf.~\cite{nt}.

(ii) The hermitian metric $h_\epsilon$ on $E$ induces the Weil-Petersson fiber metric on the first Hodge bundle
$\wedge^mE^*$ up to scale, cf.~\cite[p.618]{song-tian}. 
Taking for granted that
$g_{S,\epsilon}$ 
is K{\"a}hler, Ric$(g_{S,\epsilon})$ must therefore be equal to the 
pull-back of an invariant metric 
on the Siegel upper half plane $\mathfrak{H}_m = \{Z \in \C^{m \times m}: Z^{\rm tr} = Z, \, {\rm Im}\,Z > 0\}$ under the
period map associated to $(f,\sigma,\omega)$. 
Below, we give a careful definition of the period map, and a pedestrian proof of this claim.

(iii) Semi-flat metrics of a similar flavor occur throughout the literature \cite{freed, stringy, gw, hit-clag, loftin, pe-poon, song-tian}, 
but we were unable to find a reference where the construction 
is carried out exactly the way we need it here. We will 
discuss some of these relations
with other work at the end of this section. \hfill $\Box$\end{remark}

We now derive an explicit formula for $g_{{\rm sf},\epsilon}$ in local coordinates
in terms of $\Upsilon,\Omega$ and the periods of the tori
$X_s$ with respect to $\omega$. One can deduce from this formula 
that $g_{{\rm sf},\epsilon}$ is K{\"a}hler and identify the Ricci tensor of the
metric $g_{S,\epsilon}$ on the base with a natural Weil-Petersson metric as mentioned in Remark \ref{sfrem}(ii). 
More importantly, we need this formula in Section 4.3, together with Kodaira's work, to read off
the geometry of the semi-flat metrics explicitly in the elliptic surface case.

Fix isomorphisms $X|_U \cong E|_U/\Lambda \cong (U \times \C^m)/\Lambda$ over a domain $U \subset \C^m, \C$,
with coordinates $z$ on $U$ and $w$ along the fibers. Fix an oriented basis $(\tau_1, ..., \tau_{2m})$ for $\Lambda$ at
one point and extend it to a tuple of multi-valued functions $\tau_i \in \mathcal{O}(U,\C^m)$
that generates $\Lambda$
everywhere. Let $(\xi^1, ..., \xi^{2m})$ be $\R$-dual to $(\tau_1,...,\tau_{2m})$. 
Our assumption 
on the existence of a constant
polarization 
means that there exists $Q \in \R^{2m \times 2m}$, $Q + Q^{\rm tr} = 0$, such that
$\omega = \frac{1}{2} \sum Q_{ij} \xi^i \wedge \xi^j$ restricts to a flat K{\"a}hler metric on each fiber.
The $\omega$-volume of all fundamental cells is then given by the Pfaffian of $Q$.

To see more explicitly what this means, define $T \in \mathcal{O}(U,\C^{m \times 2m})$ by
\begin{equation} \tau_i = (T_{1i}, ..., T_{mi}), \quad i = 1, ..., 2m.\end{equation} 
Now first of all, $T$ may be multivalued, so asking for $\omega$ as above to be
well-defined is saying that upon analytic continuation around a loop in $U$, $T$ must transform
into $TA$ with $A \in {\rm Gl}(2m,\R)$
such that $A^{\rm tr} Q A = Q$. 
Second, as for the positive $(1,1)$ condition, notice that there exists $S \in {\rm Gl}(2m, \R)$, unique up to right
multiplication by a matrix in ${\rm Sp}(2m,\R)$, such that $S^{\rm tr} Q S = (\begin{smallmatrix} 0 & 1 \\ -1 & 0 \end{smallmatrix})$.
We write $TS = R(1,Z)$ with multi-valued holomorphic maps $R: U \to {\rm Gl}(m,\C)$ and $Z: U \to \C^{m \times m}$. Then, 
by Griffiths-Harris \cite[Chapter 2.6]{gh}, $\omega$ is positive $(1,1)$ if
and only if $Z$ belongs to the Siegel upper half plane $\mathfrak{H}_m$.
If this is the case, then, by the same calculations,
\begin{equation} \label{flat} \omega = i H_{jk} dw^j \wedge d\bar{w}^k,  
\quad H^{-1} = 2 \bar{R}({\rm Im} \,Z)R^{\rm tr} = i \bar{T} Q^{-1} T^{\rm tr}.\end{equation}
We refer to $Z: U \to \mathfrak{H}_m$ as the \emph{period map} although this is not, strictly speaking, a well-defined map,
not even locally. Changing the matrix $S$ or the local branch of $T$ that were used in defining $Z$ 
will conjugate $Z$ by a
constant matrix in Sp$(2m,\R)$, i.e.~one has to compose $Z$ locally with a fixed isometry of $\mathfrak{H}_m$.

It is easy to check that the flat connection given by $\Lambda$ has Christoffel symbols
\begin{equation}\Gamma_i(z,w) = \frac{\partial T}{\partial z^i}
\begin{pmatrix} T\\ \bar{T}\end{pmatrix}^{-1}\begin{pmatrix} w \\ \bar{w}\end{pmatrix} \in \C^m, \quad i = 1, ..., {\rm dim}_\C U,\end{equation}
i.e.~the vectors $(e_i, \Gamma_i(z,w))$ span the horizontal space at $(z,w) \in U \times \C^m$.

Lemma \ref{sf} collects the resulting formulas for $g_{{\rm sf},\epsilon}$
and calculates the Ricci form of the metric on the base. In the hyperk{\"a}hler setting, (ia),
assuming $\Upsilon$ to have this particular form is a non-trivial condition, which can however 
always be satisfied by adapting the coordinates to $\Upsilon$. On the other hand, in (ib), 
the local expression for $\Omega$
will have the required form for \emph{every} bundle chart $(z,w)$.

\begin{lemma}\label{sf} For $H$ as in \textup{(\ref{flat})} and $\epsilon > 0$, put $H(\epsilon) := (\epsilon/\sqrt{\det Q})^{1/m}H$.

{\rm (ia)} If $\Upsilon = g(dz^1 \wedge dw^1 + ... + dz^m \wedge dw^m)$ with 
$g: U \to \C$ holomorphic, then the K{\"a}hler form of the hermitian metric $g_{{\rm sf},\epsilon}$ 
is given by
\begin{equation}\omega_{{\rm sf},\epsilon} = i |g|^2 H(\epsilon)^{-1}_{jk} dz^j \wedge d\bar{z}^k +
 iH(\epsilon)_{jk}(dw^j - \Gamma^{j}_{p} dz^p) \wedge (d\bar{w}^k - \bar{\Gamma}^{k}_{q} d\bar{z}^q).\end{equation} 
This is a closed form with top power ${2m \choose m}\Upsilon^m \wedge \bar{\Upsilon}^m$, so $g_{{\rm sf},\epsilon}$ is hyperk{\"a}hler.

{\rm (ib)} If $\Omega = g\, dz \wedge dw^1 \wedge ... \wedge dw^m$ with $g: U \to \C$ holomorphic,
then the K{\"a}hler form of the hermitian metric $g_{{\rm sf},\epsilon}$ is given by 
\begin{equation}\omega_{{\rm sf},\epsilon} = i|g|^2 \det(H(\epsilon))^{-1} dz \wedge d\bar{z} + iH(\epsilon)_{jk}(dw^j - \Gamma^{j} dz) \wedge (d\bar{w}^k - \bar{\Gamma}^{k} d\bar{z}).\end{equation}
This is a closed form with top power $(m+1)! \,i^{(m+1)^2} \Omega \wedge \bar{\Omega}$, so $g_{{\rm sf},\epsilon}$ is Calabi-Yau.

{\rm (ii)} The metrics $\omega_{S,\epsilon}$ on the base are K{\"a}hler as well in both cases and
\begin{equation}\label{siegel}\rho(\omega_{S,\epsilon}) = 
-i\partial\bar{\partial}\log \det({\rm Im}\, Z) = \frac{i}{4}(({\rm Im}\,Z)^{ab} dZ_{bc} \wedge ({\rm Im}\,Z)^{cd} d\bar{Z}_{da}),\end{equation}
the pull-back of an invariant K{\"a}hler metric on $\mathfrak{H}_m$ under the period map $Z$.
\end{lemma}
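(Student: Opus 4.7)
The plan is to reduce the proof to two local computations: identifying the ``vertical'' part of $\omega_{{\rm sf},\epsilon}$ as a constant rescaling of the closed polarization $\omega$, and computing the ``horizontal'' part from the pairing induced by $\Upsilon$ (resp.\ $\Omega$). For this, I introduce affine real fiber coordinates $a = (a_1,\ldots,a_{2m})$ characterized by $w = Ta$, so that horizontal sections of $f$ correspond to $a = {\rm const}$. Direct differentiation of $w = Ta$ and holomorphy of $T$ give $\theta^j := dw^j - \Gamma^j_i dz^i = T^j_\alpha\,da_\alpha$, and the same reasoning shows $da_\alpha$ vanishes on horizontal vectors; in particular we may take $\xi^\alpha = da_\alpha$ in the definition of the polarization. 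Using $H^{-1} = i\bar{T}Q^{-1}T^{\rm tr}$ from \textup{(\ref{flat})} and a short bilinear-algebra check, one then verifies
\begin{equation*}
iH(\epsilon)_{jk}\theta^j \wedge \bar\theta^k = \bigl(\epsilon/\sqrt{\det Q}\bigr)^{1/m} \tfrac{1}{2}Q_{\alpha\beta}\,da_\alpha\wedge da_\beta = \bigl(\epsilon/\sqrt{\det Q}\bigr)^{1/m} \omega,
\end{equation*}
so the vertical part of $\omega_{{\rm sf},\epsilon}$ is a constant multiple of the given polarization and is in particular $d$-closed. The horizontal component follows from the definition of $g_{S,\epsilon}$: the pairing $\wedge^m E \otimes T^{1,0}S \to \C$ (resp.\ $E \otimes T^{1,0}S \to \C$) induced by $\Omega$ (resp.\ $\Upsilon$) sends $\partial_z \otimes (\partial_{w^1}\wedge\cdots\wedge\partial_{w^m})$ (resp.\ $\partial_{z^i} \otimes \partial_{w^k}$) to $g$ (resp.\ $g\delta_{ik}$), and enforcing isometry against $h_\epsilon$ gives $g_{S,\epsilon}(\partial_z,\partial_{\bar z}) = |g|^2\det(H(\epsilon))^{-1}$ in (ib) and $g_{S,\epsilon}(\partial_{z^i},\partial_{\bar z^j}) = |g|^2 H(\epsilon)^{-1}_{ij}$ in (ia). Combining the two pieces reproduces the stated formulas for $\omega_{{\rm sf},\epsilon}$.

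For the top-power identities, write $\omega_{{\rm sf},\epsilon} = \alpha + \beta$ with $\alpha = f^*\omega_{S,\epsilon}$ and $\beta$ the vertical part. Then $\alpha^k = 0$ for $k > \dim_\C S$ and $\beta^\ell = 0$ for $\ell > m$, so only the single term $\binom{n}{\dim_\C S}\alpha^{\dim_\C S} \wedge \beta^m$ (with $n = 2m$ or $m+1$) survives in $\omega_{{\rm sf},\epsilon}^n$. Within this product the $\Gamma^j dz$ pieces in $\beta$ drop out because $\alpha^{\dim_\C S}$ already saturates the $dz \wedge d\bar z$ factors, and an elementary determinant expansion combined with $\det H \cdot \det H^{-1} = 1$ recovers the claimed multiples of $\Upsilon^m \wedge \bar\Upsilon^m$ or $\Omega \wedge \bar\Omega$. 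For closedness of $\omega_{{\rm sf},\epsilon}$ itself, the vertical part is closed by the identity above, and $f^*\omega_{S,\epsilon}$ is automatically closed in (ib) since $\dim_\C S = 1$; in (ia) closedness amounts to the K\"ahler condition $\partial_\ell g^{S,\epsilon}_{j\bar k} = \partial_j g^{S,\epsilon}_{\ell\bar k}$, which follows from $d\Upsilon = 0$ combined with the holomorphy of $T$ and $g$.

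Finally, for part (ii), $\rho(\omega_{S,\epsilon}) = -i\partial\bar\partial \log \det g_{S,\epsilon}$; since $\log|g|^2$ is pluriharmonic and the $\epsilon$-rescaling contributes only an additive constant, this reduces to $\rho = i\partial\bar\partial \log \det H$ in both cases. Using $TS = R(1,Z)$ with $R$ locally holomorphic and $Z \in \mathfrak{H}_m$, together with the symplectic identity $S^{-1}Q^{-1}(S^{-1})^{\rm tr} = -J$ and $Z = Z^{\rm tr}$, I compute $\bar T Q^{-1} T^{\rm tr} = -2i\bar R({\rm Im}\,Z)R^{\rm tr}$, hence $\det H^{-1} = 2^m |\det R|^2 \det({\rm Im}\,Z)$. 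Dropping the pluriharmonic factor $|\det R|^2$ leaves $\rho = -i\partial\bar\partial \log \det({\rm Im}\,Z)$. The explicit matrix-calculus expression in \textup{(\ref{siegel})} then follows by applying $\partial \log \det Y = {\rm tr}(Y^{-1}\partial Y)$ and $\bar\partial(Y^{-1}) = -Y^{-1}(\bar\partial Y)Y^{-1}$ to $Y = {\rm Im}\,Z = (Z - \bar Z)/(2i)$. The main technical obstacle throughout is the closedness verification in (ia), where one must exploit $d\Upsilon = 0$ in combination with the structure of $H^{-1}$ through the periods $T$; careful bookkeeping of factors of $2$ arising from the convention $h = \tfrac{1}{2}(g + i\omega)$ is also needed to match the stated coefficients exactly.
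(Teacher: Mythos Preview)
Your proposal is correct and follows the same route as the paper, which records the proof simply as ``Calculation'' together with the two hints that closedness comes from holomorphy of the periods and that (\ref{siegel}) is read off from $H^{-1}=2\bar R(\operatorname{Im}Z)R^{\rm tr}$. Your write-up is essentially an expanded version of that calculation: the device of passing to the real affine fibre coordinates $a$ with $w=Ta$, so that $\theta^j=T^j_\alpha\,da_\alpha$ and the vertical piece becomes a constant multiple of the given polarization $\tfrac12 Q_{\alpha\beta}\,da_\alpha\wedge da_\beta$, is a clean way to see the closedness of the fibre part and makes the top-power identity transparent; the paper does not spell this out but it is the natural way to carry out the ``calculation'' it alludes to. Your treatment of (ii) is exactly what the paper's hint points to. The one place where both you and the paper remain brief is the closedness of the base component in~(ia) for $m\ge 2$; your invocation of $d\Upsilon=0$ together with holomorphy of $T$ and $g$ is at the same level of detail as the paper's ``because the $\tau_i$ are holomorphic,'' and since only the $m=1$ case is used downstream this is harmless.
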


\begin{proof} Calculation. The semi-flat metrics turn out to be K{\"a}hler because 
the $\tau_i$ are holomorphic, and (\ref{siegel}) follows from $H^{-1} = 2\bar{R}({\rm Im}\,Z)R^{\rm tr}$ as noted in (\ref{flat}).
\end{proof}

We will now make these formulas completely explicit in the elliptic surface case, $m = 1$, which gives
the result that is needed for Sections 4.3--4.4. The mathematical content of the following corollary
is the same as in Gross-Wilson \cite[Example 2.2]{gw}, but the set-up there is not well suited
for our applications in Section 4.3.

\begin{corollary}\label{sfcor} Let $f: X \to S$ be an elliptic fibration over a Riemann surface with 
a holomorphic section $\sigma$ and no singular fibers. Let $\Omega$ be a holomorphic symplectic form
on $X$ and let
$\omega_{{\rm sf},\epsilon}$ 
be semi-flat hyperk{\"a}hler
constructed from  $\sigma$, $(1/\sqrt{2})\Omega$
such that the fibers of $f$ have area $\epsilon$
with respect to $\omega_{{\rm sf},\epsilon}$, so in particular
$\omega_{\textup{\begin{tiny}${\rm sf},\epsilon$\end{tiny}}}^{\textup{\begin{tiny}$2$\end{tiny}}} = 
\Omega \wedge \bar{\Omega}$.
Let $U$ be a domain in $S$, let $z$ be a holomorphic coordinate on $U$, identify $U$ with its $z$-image,
and fix an isomorphism of elliptic fibrations $X|_U \cong (U \times \C_w)/(\Z \tau_1 + \Z\tau_2)$
with multi-valued functions $\tau_1,\tau_2$ so that $(\tau_1,\tau_2)$ 
is positively oriented and $\sigma$
maps to the zero section.
Then $\Omega = g\, dz \wedge dw$ with $g: U \to \C$ holomorphic, and  
\begin{eqnarray}&&\label{gw-sf}\omega_{{\rm sf},\epsilon} = i|g|^2\frac{{\rm Im}(\bar{\tau}_1\tau_2)}{\epsilon} dz \wedge d\bar{z} + \frac{i}{2}\frac{\epsilon}{{\rm Im}(\bar{\tau}_1\tau_2)}(dw - \Gamma dz)\wedge (d\bar{w} - \bar{\Gamma}d\bar{z}),\\ 
&&\label{connex}\Gamma(z,w) = \frac{1}{{\rm Im}(\bar{\tau}_1\tau_2)}({\rm Im}(\bar{\tau}_1 w)\tau_2' - {\rm Im}(\bar{\tau}_2w)\tau_1').\end{eqnarray} 
On $U$, the Ricci form of the induced metric $\omega_{S,\epsilon}$ on the base is given by
\begin{equation}\label{wp}\rho(\omega_{S,\epsilon}) = -i\partial\bar{\partial}\log {\rm Im}(\tau) = \frac{i}{4} \frac{d\tau \wedge d\bar{\tau}}{{\rm Im}(\tau)^2},\end{equation} 
the pull-back of the hyperbolic metric of constant Gau{\ss} curvature $-2$ 
on the upper half plane $\mathfrak{H} = \{{\rm Im}(\tau) > 0\}$ under the period map $\tau = \frac{\tau_2}{\tau_1}: U \to \mathfrak{H}$.
\hfill $\Box$
\end{corollary}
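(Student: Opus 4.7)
The plan is to specialize Lemma \ref{sf}(ia) to $m=1$, with $\Upsilon := (1/\sqrt 2)\Omega$, and then explicitly evaluate the linear-algebraic quantities $H$, $\Gamma$, $Z$ that appear in that lemma in terms of the period data $(\tau_1,\tau_2)$. In the surface case any holomorphic $2$-form on $U\times \C_w$ is automatically of the product shape required by Lemma \ref{sf}(ia), so writing $\Omega = g\,dz\wedge dw$ with $g$ holomorphic on $U$ is immediate, and by construction $\Upsilon = (g/\sqrt 2)\,dz\wedge dw$.

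First I would pin down the polarization $Q$. Since $(\tau_1,\tau_2)$ is positively oriented, the standard symplectic choice $Q=\bigl(\begin{smallmatrix}0&1\\-1&0\end{smallmatrix}\bigr)$ makes $\omega=\xi^1\wedge\xi^2$ restrict to a positive $(1,1)$-form on every fibre of Pfaffian $1$, so $\sqrt{\det Q}=1$ and $H(\epsilon)=\epsilon H$. Next, I would compute $H$ directly from $H^{-1}=i\bar T Q^{-1}T^{\mathrm{tr}}$ with $T=(\tau_1,\tau_2)$: the $2\times 2$ product collapses to
\[H^{-1}=i(\bar\tau_2\tau_1-\bar\tau_1\tau_2)=2\,{\rm Im}(\bar\tau_1\tau_2).\]
Plugging $H(\epsilon)^{-1}=2\,{\rm Im}(\bar\tau_1\tau_2)/\epsilon$ and the factor $|g/\sqrt 2|^2=|g|^2/2$ into Lemma \ref{sf}(ia) yields exactly (\ref{gw-sf}); the requirement that the fibres have area $\epsilon$ is then automatic, since this is how $H(\epsilon)$ was normalized.

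For the connection coefficient I would specialize the formula for $\Gamma_i$ displayed just before Lemma \ref{sf} to $m=1$ by inverting the $2\times 2$ matrix $\bigl(\begin{smallmatrix}\tau_1&\tau_2\\\bar\tau_1&\bar\tau_2\end{smallmatrix}\bigr)$, whose determinant is $-2i\,{\rm Im}(\bar\tau_1\tau_2)$. Writing the resulting entries as imaginary parts via $\bar\tau_j w-\tau_j\bar w=2i\,{\rm Im}(\bar\tau_j w)$ and contracting with $T'=(\tau_1',\tau_2')$ gives exactly (\ref{connex}). Finally, for the Ricci form of the base metric I would apply the general identity (\ref{siegel}) of Lemma \ref{sf}(ii): since $Q$ is already standard I may take $S=I$, so $R=\tau_1$ and the period map is $Z=\tau_2/\tau_1=\tau\colon U\to\mathfrak H$; this is single-valued at least locally, and a direct $\partial\bar\partial$-calculation of $\log{\rm Im}\,\tau$ produces (\ref{wp}), which is manifestly the pullback under $\tau$ of the hyperbolic metric of constant curvature $-2$.

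I expect no serious obstacle: the whole argument is a specialization plus two short $2\times 2$ linear-algebra calculations. The one subtlety I would flag is bookkeeping of the factor $1/\sqrt 2$ introduced by passing from $\Omega$ to $\Upsilon$, which is precisely what makes $\omega_{{\rm sf},\epsilon}^2=\Omega\wedge\bar\Omega$ (rather than $2\,\Omega\wedge\bar\Omega$) consistent with the top-power identity $\omega_{{\rm sf},\epsilon}^2=\binom{2}{1}\Upsilon\wedge\bar\Upsilon$ of Lemma \ref{sf}(ia); keeping track of this constant also pins down the overall factor in front of $dz\wedge d\bar z$ in (\ref{gw-sf}).
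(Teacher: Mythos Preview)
Your proposal is correct and is exactly the approach the paper intends: the corollary is stated without proof (the $\Box$ is placed immediately after the statement) because it is the $m=1$ specialization of Lemma~\ref{sf}, and you have simply written out the $2\times 2$ linear algebra that makes this explicit. Your bookkeeping of $Q$, $H$, $H(\epsilon)$, $\Gamma$, $R$, $Z$ and the $1/\sqrt{2}$ factor all check out.
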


This concludes the discussion of semi-flat metrics for our purposes. As mentioned in Remark \ref{sfrem}(iii), 
similar constructions have been used in a number of recent works. It is interesting to take a closer
look at some of the connections between these. \medskip\

(1) \emph{Collapsing Ricci-flat metrics on elliptic K3 surfaces} (Gross-Wilson \cite{gw}).

Let $f: X \to \P^1$ be an elliptic fibration on a K3 surface with a section and
exactly $24$ singular fibers of type 
I$_1$. Let $\Omega$ be a holomorphic symplectic form on $X$. Then
the complement of the singular
fibers in $X$ carries semi-flat metrics $\omega_{{\rm sf},\epsilon}$ associated to $\Omega$ and a chosen section of $f$.
Since $\Omega$ does not have any poles, these metrics are
incomplete at the singular fibers.
They were used in \cite{gw} to construct
a particular one-parameter family of
collapsing Ricci-flat metrics on $X$ which 
are increasingly well approximated by $\omega_{{\rm sf},\epsilon}$ as $\epsilon \to 0$.
Type I$_1$ singular fibers have a special property which plays a crucial role in \cite{gw}:
Their monodromy has an invariant vector. 
Thus one can assume $\tau_1 \equiv 1$,
and the $N$-structure in Remark \ref{sfrem}(i) has a global
section over $\Delta^*$, i.e.~there exists an isometric $S^1$-action for $\omega_{{\rm sf},\epsilon}$ on all of
$X|_{\Delta^*}$.\hfill $\Box$\medskip\

(2) \emph{K{\"a}hler-Ricci flow on properly elliptic surfaces} (Song-Tian \cite{song-tian}).
 
Let $f: X \to \P^1$ be an arbitrary elliptic fibration on a K3 surface, $[\omega]$ any K{\"a}hler class on $X$,
$[\beta_0]$ any K{\"ahler class on $\P^1$.  
Using methods developed for the analogous question of long-time behavior of the K{\"a}hler-Ricci flow on properly elliptic 
surfaces, it is shown in \cite{song-tian} that the unique CY metric in the K{\"a}hler class 
$[\omega] + t(f^*[\beta_0] - [\omega])$, $t \in [0,1)$,
converges as $t \to 1$ in $C^0_{\rm loc}$ away from the 
singular fibers to $f^*\beta$, where $\beta$ is a smooth
K{\"a}hler metric on $S := \P^1 \setminus \{$singular values of $f\}$ given by
\begin{equation}\label{st}\beta = \frac{\Omega \wedge \bar{\Omega}}{f^*\beta_0 \wedge \omega_{\rm SF}}\beta_0.\end{equation}
Here $\Omega$ is a suitable holomorphic symplectic form on $X$ and
$\omega_{\rm SF}$ denotes \emph{any} 
K{\"a}hler metric on $X\setminus\{$singular fibers$\}$ which restricts to a flat metric on every smooth fiber.
From (\ref{st}), (\ref{gw-sf}), it is then clear that $\beta = \omega_{S,\epsilon}$ for a suitable $\epsilon$.
Even for fibrations as in (1), this result is more general than that from \cite{gw},
which only applies to one particular path of K{\"a}hler classes. However,
\cite{gw} provides an excellent approximate description of the CY metrics along that path by explicit model metrics. 

The identification of the Song-Tian limit $\beta$ with $\omega_{S,\epsilon}$,
which follows from (\ref{st}), allows us to write down explicit formulas for $\beta$ in a neighborhood of each 
singular value of $f$ on $\P^1$, not only in the special case covered by \cite{gw}, cf.~Section 4.3. \hfill $\Box$\medskip\
 
(3) \emph{Affine flat structures} (Loftin \cite{loftin}), \emph{special K{\"a}hler manifolds} (Freed \cite{freed}).

For an elliptic fibration $f: X \to S$ over a Riemann surface with regular fibers and a chosen holomorphic section,
the construction at the beginning of this section yields a flat $\R$-linear connection on $(T^{1,0}S)^*$ and hence an
affine flat structure on $S$. From \cite{loftin}, if $\beta := \omega_{S,\epsilon}$, then the difference 
between the affine flat structure and the Levi-Civita connection of $\beta$ can be regarded as a holomorphic 
cubic differential $U$ on $S$, i.e.~a holomorphic section of $(T^{1,0} S)^{*\otimes 3}$, 
and $\rho(\beta) = |U|^2 \beta$ (\c{T}i\c{t}eica's equation), where $|U|^2$ denotes
norm squared with respect to $\beta$. 
Given $U$, Loftin considers this equation as a PDE for an unknown metric $\beta$
and solves it under suitable boundary conditions on $U$, 
thus producing new affine flat structures.
One observation in \cite{loftin}, attributed to Bryant, states 
that \c{T}i\c{t}eica's equation implies 
$\rho(\rho(\beta)) = -2\rho(\beta)$ when $U \neq 0$, so $\rho(\beta)$ is then a smooth K{\"a}hler 
metric of constant Gau{\ss} curvature $-2$.

The construction of affine flat structures in \cite{loftin} intersects with the setting in (1). 
The meromorphic cubic differential $U$ on $\P^1$ associated to a K3 elliptic fibration as in (1) has 
exactly $24$ poles, all of which are simple. The Weil-Petersson 
metric $\rho(\beta)$
has cusps at the poles of $U$, but also non-orbifold conical singularities 
$|z^m dz|^2$ with $m > 0$ at a finite number
of smooth fibers, due to ramification of the period map 
$\tau: \P^1 \to \mathfrak{H}/{\rm PSl}(2,\Z) = \C$. 
Applying Riemann-Hurwitz to $\tau$, one finds $\sum m = 18$. 
These extra singularities of $\rho(\beta)$ 
correspond precisely to the zeros of $U$, which we recall is a meromorphic 
section of $(T^{1,0}\P^1)^{*\otimes 3} = \mathcal{O}_{\P^1}(-6)$ with $24$ simple poles. 

There is an analogous formalism in the $2m$-dimensional hyperk{\"a}hler setting with
complex $m$-torus fibers, in which $S$ would be referred to as \emph{special K{\"a}hler},
cf.~\cite{freed}. In particular, (1.33) and (1.36) in \cite{freed} provide
an analog of \c{T}i\c{t}eica's equation. \hfill $\Box$

\subsection{Explicit Calabi-Yau metrics near a special fiber} Let $f: U \to \Delta$ be an elliptic fibration
over the disk with all its fibers regular except possibly $D = f^{-1}(0)$.
Assume that $D$ does not
contain $(-1)$-curves, and is reduced or equivalently that $f$ admits a holomorphic section, or yet
equivalently $U|_{\Delta^*} \cong (\Delta^* \times \C)/(\Z \tau_1 + \Z \tau_2)$ 
for some multi-valued holomorphic functions $\tau_1,\tau_2: \Delta^*\to \C$. Let $\Omega$ be a meromorphic $2$-form
on $U$ such that ${\rm div}(\Omega)$ is an integer multiple of $D$, and write $\Omega = g\,dz \wedge dw$, 
$g: \Delta^* \to \C$, where $z$ 
is the coordinate on $\Delta$ and $w$ the one along the fibers.

In this section, we either assume ${\rm div}(\Omega) = -D$, 
which is the interesting case on a rational elliptic surface, or ${\rm div}(\Omega) = 0$, corresponding
to the Gross-Wilson setting.
For each of the possible fiber types, we proceed from Kodaira's explicit normal form $f: U \to \Delta$ 
to construct an explicit isomorphism $\Phi: U|_{\Delta^*} \to (\Delta^* \times \C)/(\Z\tau_1 + \Z \tau_2)$,
meaning that both the map $\Phi$ and the functions $\tau_1,\tau_2$ will be explicit.
This allows us to determine the multiplicity $N$ of $\Phi^*(dz \wedge dw)$ along
$D$, so that $\Phi^*(dz \wedge dw) = f^N h\, dx \wedge dy$ with
$h(p) \neq 0$
in local coordinates $(x,y)$ on $U$ around any point $p \in D$, so $g(z) = z^{-N-1} k(z)$ 
or $g(z) = z^{-N}k(z)$, $k(0) \neq 0$, 
according as ${\rm div}(\Omega) = -D$ or ${\rm div}(\Omega) = 0$. 
Feeding
this into Corollary \ref{sfcor} yields formulas for 
$\omega_{{\rm sf},\epsilon}$ on $U \setminus D$ which are explicit modulo
higher order \textquotedblleft mass terms\textquotedblright$\;$from
the Taylor expansion of $k$.\medskip\

(1) \emph{Trivial monodromy.} In this case one can achieve $\tau_1 \equiv 1$ and $\tau_2 = \tau$, a regular
function from $\Delta$ to $\mathfrak{H}$ which has to satisfy
$D \cong \C/(\Z + \Z \tau(0))$ but is not constrained otherwise,
so that $N = 0$.
The ${\rm div}(\Omega) = 0$ case is not interesting.
If ${\rm div}(\Omega)$ $=$ $-D$, then $\Omega = z^{-1} k(z)\, dz \wedge dw$ with
$k(0) \neq 0$, thus
$$\omega_{{\rm sf},\epsilon} = i |k|^2\frac{{\rm Im}(\tau)}{\epsilon} \frac{dz \wedge d\bar{z}}{|z|^2}
 + \frac{i}{2}\frac{\epsilon}{{\rm Im}(\tau)}(dw - \Gamma dz) \wedge (d\bar{w} - \bar{\Gamma}d\bar{z}),\;
\Gamma = \frac{{\rm Im}(w)}{{\rm Im}(\tau)}\tau'.$$
Note that $\mu := |k(0)|(2 \,{\rm Im}\,\tau(0))^{1/2}$ has an intrinsic interpretation as
$\mu^2 = i \int_D R \wedge \bar{R}$ for the Poincar{\'e} residue $R$ of $\Omega$ along $D$.
Then change coordinates on $\Delta \setminus [0,1)$ by setting
$z = \exp(-u/\frac{\textup{\begin{tiny}$\mu$\end{tiny}}}{\textup{\begin{tiny}$\sqrt{\epsilon}$\end{tiny}}})$, with $u$ ranging in the 
strip $(0,\infty) + i(0, 2\pi \frac{\textup{\begin{tiny}$\mu$\end{tiny}}}{\textup{\begin{tiny}$\sqrt{\epsilon}$\end{tiny}}})$, so that
\begin{equation}\omega_{{\rm sf},\epsilon}
= \frac{i}{2}(du \wedge d\bar{u} + \frac{\epsilon}{{\rm Im}\,\tau(0)} dw \wedge d\bar{w})(1 + O_\epsilon(e^{-\frac{\sqrt{\epsilon}}{\mu}{\rm Re}\,u})).\end{equation}
The error estimate is understood to hold for all derivatives as well, with $w$ ranging in the fundamental domain
spanned by $1, \tau(z(u))$ inside $\{u\} \times \C$.
Thus $\omega_{{\rm sf},\epsilon}$ decays
to a flat cylinder $\R^+ \times S^1 \times D$, where 
$D$ carries its flat K{\"a}hler metric of area $\epsilon$ and the $S^1$ factor has length
$2\pi\frac{\textup{\begin{tiny}$\mu$\end{tiny}}}{\textup{\begin{tiny}$\sqrt{\epsilon}$\end{tiny}}}$,
at an exponential rate of $O_\epsilon(\exp(-r/\frac{\textup{\begin{tiny}$\mu$\end{tiny}}}{\textup{\begin{tiny}$\sqrt{\epsilon}$\end{tiny}}}))$.
 
This result has two noteworthy features. 
First, multiplying $\Omega$ by $\alpha \in \C^*$ directly results in
scaling the $S^1$ factor in the cross-section by $|\alpha|$ while the area of the $T^2$ factor
remains fixed. Second, the lowest eigenvalue of the cross-section
is $-\lambda^2$ with $0 < \lambda \leq \sqrt{\epsilon}/\mu$, with equality for all $\epsilon$ less than
a constant depending on $D$.
Thus, the optimal
decay $e^{-\lambda r}$ of solutions
to the complex Monge-Amp{\`e}re equation with compactly supported data,
on any complete K{\"a}hler manifold which is isometric to 
$(U\setminus D,\omega_{{\rm sf},\epsilon})$
outside a compact set,
is usually not better than, and for small enough $\epsilon$ equal to, 
the decay of the difference between $\omega_{{\rm sf},\epsilon}$ and an exact flat cylinder. \hfill $\Box$\medskip\

We use notation as in our review of Kodaira's work
in Section 4.1. In particular, in
an elliptic fibration $(\Delta^* \times \C)/(\Z \tau_1 + \Z \tau_2)$ where
$T = (\tau_1,\tau_2) \in \C^{2\times 1}$ is positively oriented, we write $A = (\begin{smallmatrix}a & b \\ c & d \end{smallmatrix}) \in {\rm Sl}(2,\Z)$
for the monodromy $T \mapsto TA$ incurred by $T$ when walking around the puncture once in the counterclockwise sense.\medskip\

(2) \emph{Finite monodromy.}
The calculations for all types of singular fibers where $A$ is of finite order 
are fairly similar, so we only discuss the most complicated 
case in detail, namely, the Kodaira type II where the central fiber is 
a cusp. 
Much of what follows is from Kodaira \cite{kodaira} but some additional work is needed to extract the data
we are interested in, i.e.~$\Phi, \tau_1,\tau_2, N$ in the notation introduced above.

Let $\mathcal{J}(0) = 0$, ${\rm mult}_0 \mathcal{J} = m \in \N$.
Classically, $j(\tau) = [(\tau - \zeta_3)/(\tau-\zeta_3^2)]^3 h(j(\tau))$ with $h(0) \neq 0$ if $\tau$ is close 
to $\zeta_3$, cf.~\cite[Abschnitt I, Kapitel 2, \S 15]{klein}. Thus, changing coordinates on $\Delta$, we can make
$(\tau-\zeta_3)/(\tau - \zeta_3^2) = z^{\frac{m}{3}}$, or in other words
\begin{equation}\tau(z) = \zeta_3 \frac{1 - \zeta_3 z^{\frac{m}{3}}}{1 - z^{\frac{m}{3}}}.\end{equation}
Walking around $z = 0$ counterclockwise once, the function $\tau$ transforms by
\begin{equation}\tau \mapsto A\tau = \frac{d\tau + b}{c\tau + a},\;\;A = \begin{pmatrix}a & b \\ c & d\end{pmatrix}
= \begin{cases} 
\pm \begin{pmatrix} 0 & 1 \\ -1 & 1\end{pmatrix} &{\rm if}\; m \equiv 1\; (3),\\
\pm \begin{pmatrix} 1 & -1 \\ 1 & 0 \end{pmatrix} &{\rm if}\; m \equiv 2\; (3),\\ \pm 1 &{\rm if}\;m \equiv 0\;(3).\end{cases}\end{equation}
If $\mathcal{J}\equiv 0$, or $m = \infty$ (the isotrivial case, cf.~Example \ref{isoex}),
any one of these six cases can occur and the following
discussion still applies with $z^\infty = u^\infty := 0$.

We focus on the first case with the plus sign, so $T \mapsto TA$, $A = (\begin{smallmatrix}0 & 1 \\ -1 & 1\end{smallmatrix})$,
${\rm ord}\, A = 6$.
The pull-back $U'$ of $U|_{\Delta^*}$ under $z = u^6$ can be written as 
$(\Delta_u^* \times \C_v)/(\Z + \Z\tau(u^6))$ and thus extends 
to the whole disk $\Delta_u$ with central fiber $\C/(\Z + \Z \zeta_3)$. The action
of the monodromy on $U'$ by deck transformations is generated by
\begin{equation}\label{monact} A(u,v) = \left(\zeta_6 u, \frac{v}{c\tau(u^6) + a}\right) = \left(\zeta_6 u, \zeta_6 \frac{1-u^{2m}}{1-\zeta_3u^{2m}}v\right).\end{equation}
One now checks that the fiberwise linear map 
\begin{equation}\label{bundlemap}
\Phi: \Delta^* \times \C \to \Delta^* \times \C,\; (u,v) \mapsto (u^6, (1-u^{2m})u^5v),\end{equation}
factors through the action (\ref{monact}), thus inducing an isomorphism of elliptic fibrations
$\Phi: U|_{\Delta^*} \to (\Delta^* \times \C)/(\Z \tau_1 + \Z\tau_2)$
with multi-valued generators
\begin{eqnarray}\label{generators}&&\tau_1(z) = {\rm pr}_2\Phi(z^{\frac{1}{6}}, 1) = (1-z^{\frac{m}{3}})z^{\frac{5}{6}},\\
\label{generators2}&&\tau_2(z) = {\rm pr}_2\Phi(z^{\frac{1}{6}}, \tau(z)) = \zeta_3(1-\zeta_3z^{\frac{m}{3}})z^{\frac{5}{6}}.\end{eqnarray} 
What remains to be done before we can write down explicit formulas for the semi-flat metrics is 
find the multiplicity $N$ of $\Phi^*(dz \wedge dw)$ along the central fiber.

To do this, we recall that by Kodaira's theorem, 
$U \to \Delta_z$ is obtained by dividing out $U' \to \Delta_u$ by the monodromy action (\ref{monact}),
which trivially recovers $U$ over $\Delta_z^*$, resolving the singularities that appear on the central fiber
of the quotient, and then blowing down vertical $(-1)$-curves if needed.
Near its
three orbits of fixed points, the action generated by (\ref{monact}) is conjugate
to the $\Z_{\textup{\begin{tiny}$6/k$\end{tiny}}}$-action on $\C^2$ generated by
$(x,y) \mapsto (\zeta_{\textup{\begin{tiny}$6$\end{tiny}}}^{\textup{\begin{tiny}$k$\end{tiny}}} x,
\zeta_{\textup{\begin{tiny}$6$\end{tiny}}}^{\textup{\begin{tiny}$k$\end{tiny}}}y)$ for $k = 1,2,3$, respectively,
where the germ of $x = 0$ corresponds to the central fiber of $U'$. To resolve the quotients, map 
\begin{equation}(x,y) \mapsto (x^{\frac{6}{k}}, y^{\frac{6}{k}}, (x:y)) \in \C \times \C \times \P^1,\;\;k = 1,2,3.\end{equation}
Globally this produces exceptional curves $E_1, E_2, E_3$ and the strict transform $E$ of the central
fiber of $U'/\Z_6$. One checks that the multiplicities of $z \circ \Phi$ 
along these are $1,2,3,6$, whereas 
the multiplicities of $\Phi^*(dz \wedge dw)$ are $1,3,5,10$. It turns out that 
$E$ is a $(-1)$-curve. Blowing down $E$ turns $E_3$ into a $(-1)$-curve, blowing down $E_3$
turns $E_2$ into a $(-1)$-curve, and after blowing down $E_2$ the remaining curve $E_1$ becomes a rational cusp
curve of self-intersection zero. In order to determine $N$, we
consider $\Theta$ $:=$ $(y^2 - x^3)^N dx \wedge dy$ on $\C^2$ 
and reverse the above process, blowing up the origin three times.
By calculation, the multiplicity of $\Theta$ is $N$ along $E_1$, $2N + 1$ along $E_2$, $3N + 2$ along $E_3$, 
and $6N + 4$ along $E$, which nicely shows $N = 1$.

We can now write down formulas for the semi-flat metrics. Since $N = 1$, we have $\Omega = g \,dz \wedge dw$
with $g(z) = z^{-2} k(z)$ or $z^{-1} k(z)$, and $k(0) \neq 0$, if ${\rm div}(\Omega) = -D$ or $0$, respectively.
In the first case, by (\ref{gw-sf}), (\ref{connex}) and (\ref{generators}), (\ref{generators2}),
\begin{eqnarray}\label{sf-ex}&&\omega_{{\rm sf},\epsilon} = i |k|^2 \frac{{\rm Im}(\bar{\tau}_1\tau_2)}{\epsilon} \frac{dz \wedge d\bar{z}}{|z|^4} +
\frac{i}{2} \frac{\epsilon}{{\rm Im}(\bar{\tau}_1\tau_2)}(dw - \Gamma dz)\wedge (d\bar{w} - \bar{\Gamma}d\bar{z}),\\
\label{sf-ex-2}&&{\rm Im}(\bar{\tau}_1\tau_2) = \frac{\sqrt{3}}{2}|z|^{\frac{5}{3}}(1-|z|^{\frac{2m}{3}}),\;\; \Gamma(z,w) = \frac{5}{6}\frac{w}{z}(1 + {\rm error}),
\end{eqnarray}
with error $=$ $\frac{1}{w}({\rm Re}(w) P + {\rm Im}(w)Q)$, $P,Q \in (x,y)\C\{x,y\}$, and $x = z^{m/3}$, $y = \bar{z}^{m/3}$. 
If $m = \infty$, so that the fibration becomes isotrivial, then error $=$ $0$.
To understand this metric more clearly,
change coordinates over $\Delta \setminus [0,1)$
by setting $z = (u/u_0)^{-6}$, $w = (u/u_0)^{-5}v$, 
with $u_0 := 6\sqrt[4]{3}\,\epsilon^{-1/2}|k(0)|$. (This constant can be understood as a renormalized
$L^2$ norm of the residue of $\Omega$ along $D$ but unlike the trivial monodromy case,  
it does not correspond to an intrinsic length scale of the metric.) Here $u$ ranges over the truncated sector
$|u| > u_0$, $0 < {\rm arg}\, u < \frac{2\pi}{6}$, and $v$ comes from a fundamental
cell in $\{u\} \times \C$ which converges to the one spanned by $1,\zeta_3$ as $|u| \to\infty$. Then,
\begin{equation}\label{algeqn}\omega_{{\rm sf},\epsilon} = \frac{i}{2}(du \wedge d\bar{u} + \frac{\epsilon}{{\rm Im}(\zeta_3)}dv \wedge d\bar{v})(1 + O_{\epsilon,m}^{\rm hmg}(|u|^{-2})). \end{equation}
The error term again vanishes in the isotrivial case, $m = \infty$, and \textquotedblleft hmg\textquotedblright$\;$is supposed
to indicate that the error estimate improves by a factor of $|u|^{-1}$ each time we take a derivative with
respect to $u$ or $v$.
Verifying (\ref{algeqn}) is a bit subtle. First, the $du \wedge d\bar{v}$ and $dv \wedge d\bar{u}$ parts of $\omega_{{\rm sf},\epsilon}$
appear to be $O(|u|^{-1})$ only, but the coefficients in front of their leading terms vanish. Second, from the above structure
of the error in (\ref{sf-ex-2}), one can see that the error in (\ref{algeqn}) is of the form 
$F_u(v,\bar{v})$ for a quadratic $F_u$ whose coefficients are power series in $u^{-1},\bar{u}^{-1}$ with quadratic
leading terms. But $\omega_{{\rm sf},\epsilon}$
has to be $(\Z + \Z\zeta_3)$-periodic in $v$ to leading order, so $F_u \equiv F_u(0,0)$.

Thus, over $\Delta \setminus [0,1)$, $\omega_{{\rm sf},\epsilon}$ decays to a flat sector of aperture
$2\pi/6$ in the plane, times a flat torus of area $\epsilon$ homothetic to $\C/(\Z +  \Z\zeta_3)$, and we simply have
to twist
by the monodromy when closing up the slit.
The error is $O^{\rm hmg}(r^{-2})$
in general, so in particular the curvature tensor is at least
$O^{\rm hmg}(r^{-4})$.
When solving a complex Monge-Amp{\`e}re equation
to deform $\omega_{{\rm sf},\epsilon}$ into a global Calabi-Yau metric,
the K{\"a}hler potential
will be $O^{{\rm hmg}}(r^{-\delta})$
for some $\delta > 0$ by Proposition \ref{decay} (in fact, any $\delta < 6$)
so this does not affect the quartic curvature decay. For an isotrivial fibration, 
$\omega_{{\rm sf},\epsilon}$ is \emph{flat}, and the ALG space will have
$|{\rm Rm}| = O^{{\rm hmg}}(r^{-10 + \delta})$ for all $\delta > 0$.

To conclude this example, we look at the Gross-Wilson setting where 
${\rm div}(\Omega) = 0$. For this, we only need to replace $|z|^{-4} dz \wedge d\bar{z}$
in (\ref{sf-ex}) by $|z|^{-2} dz \wedge d\bar{z}$. The resulting semi-flat metric is
then incomplete at the singular fiber, and the induced metric on the \emph{base} is
$|z|^{-\frac{1}{3}}|dz|^2$ to leading order. This describes a neighborhood of the vertex
of a flat cone with non-orbifold cone angle $5/6$.

All finite monodromies can be treated in this way and yield semi-flat metrics with completely 
analogous properties. Table \ref{kodtable} lists $\tau_1,\tau_2,N$, and the tangent cones of the
metrics on the base: incomplete (${\rm div}(\Omega) = 0$) and complete (${\rm div}(\Omega) = -D$).

\begin{remark} (i) Table 4.1 is consistent with Example \ref{isoex}: 
The complete semi-flat metric
obtained when removing
a non-$\ast$-type fiber $D$ is asymptotic to $(\C \times D)/\Z_k$,
where the $\Z_k$-action conjugates into SU$(2)$. Resolving the fixed points 
then in turn produces the dual $\ast$-type fiber over $0 \in \C$.
When removing \emph{this} fiber, however, the asymptotic cone angle is
$\frac{k-1}{k}$, so the cone is not even a quotient unless $k = 2$. The
isotrivial rational elliptic surfaces also explain why the complete cone 
angle when removing one fiber
is the same as the incomplete one when removing its dual.

(ii) The results show that (3.17) in Song-Tian \cite{song-tian} is incorrect. In fact, 
denoting the horizontal component of the incomplete semi-flat metric by $g$ and writing $F$ as in \cite{song-tian}, 
one has $F |ds|^2 = \psi g$ with $\psi > 0$ smooth, 
so $F \sim |s|^{-\alpha}$, some $\alpha > 0$. \hfill $\Box$
\end{remark}

\begin{sidewaystable}\label{kodtable}
\centering
\caption{Kodaira types of singular fibers $D$, data for semi-flat metrics with $D$ removed, tangent cone angles.}
\begin{tabular}{c c c c c c c c c}
\hline
$\mathcal{J}(0)$        & $\;\;\;$mult$_0\mathcal{J}\;\;\;$	& matrix $A$                                        & ${\rm ord}\,A$    & type    & generators $\tau_1,\tau_2$                                                               & $\;\;N\;\;$	& ${\rm div}(\Omega) = 0$	& ${\rm div}(\Omega) = -D$\\ \hline 
$\notin \{0,1,\infty\}$ & any                   	   	& $+1$						    & $1$      		& I$_0$   & $1,\,\tau(z)$                                                                            & $0$ 		& $1$              		& $0$\medskip\ \\
                        &                       	    	& $-1$						    & $2$      		& I$_0^*$ & $z^{\frac{1}{2}},\, z^{\frac{1}{2}}\tau(z)$                                              & $1$ 		& $\frac{1}{2}$    		& $\frac{1}{2}$\medskip\ \\ \hline 
$0$                     & $m \equiv 1 \; (3)$   		& $+\begin{pmatrix} 0 & 1  \\ -1 & 1 \end{pmatrix}$ & $6$      		& II      & $(1-z^{\frac{m}{3}})z^{\frac{5}{6}},\,\zeta_3(1-\zeta_3 z^{\frac{m}{3}})z^{\frac{5}{6}}$ & $1$ 		& $\frac{5}{6}$    		& $\frac{1}{6}$\\
                        &                       		& $-\begin{pmatrix} 0 & 1  \\ -1 & 1 \end{pmatrix}$ & $3$      		& IV$^*$  & $(1-z^{\frac{m}{3}})z^{\frac{1}{3}},\,\zeta_3(1-\zeta_3 z^{\frac{m}{3}})z^{\frac{1}{3}}$ & $1$ 		& $\frac{1}{3}$    		& $\frac{2}{3}$\\
                        & $m \equiv 2 \; (3)$   		& $+\begin{pmatrix} 1 & -1 \\  1 & 0 \end{pmatrix}$ & $6$      		& II$^*$  & $(1-z^{\frac{m}{3}})z^{\frac{1}{6}},\,\zeta_3(1-\zeta_3 z^{\frac{m}{3}})z^{\frac{1}{6}}$ & $1$ 		& $\frac{1}{6}$    		& $\frac{5}{6}$\\
                        &                       		& $-\begin{pmatrix} 1 & -1 \\  1 & 0 \end{pmatrix}$ & $3$      		& IV      & $(1-z^{\frac{m}{3}})z^{\frac{2}{3}},\,\zeta_3(1-\zeta_3 z^{\frac{m}{3}})z^{\frac{2}{3}}$ & $1$ 		& $\frac{2}{3}$    		& $\frac{1}{3}$\\
                        & $m \equiv 0 \; (3)$   		& $+1$						    & $1$      		& I$_0$   & $1,\,\tau(z)$                                                        		     & $0$ 		& $1$           		& $0$\medskip\\\
                        & 		        		& $-1$						    & $2$      		& I$_0^*$ & $z^{\frac{1}{2}},\,z^{\frac{1}{2}}\tau(z)$                                               & $1$ 		& $\frac{1}{2}$   		& $\frac{1}{2}$\medskip\ \\ \hline
$1$			& $m \equiv 1 \; (2)$   		& $+\begin{pmatrix} 0 & 1  \\ -1 & 0 \end{pmatrix}$ & $4$      		& III     & $(1-z^{\frac{m}{2}})z^{\frac{3}{4}},\,i(1+z^{\frac{m}{2}})z^{\frac{3}{4}}$               & $1$ 		& $\frac{3}{4}$    		& $\frac{1}{4}$\\
                        &                       		& $-\begin{pmatrix} 0 & 1  \\ -1 & 0 \end{pmatrix}$ & $4$      		& III$^*$ & $(1-z^{\frac{m}{2}})z^{\frac{1}{4}},\,i(1+z^{\frac{m}{2}})z^{\frac{1}{4}}$               & $1$ 		& $\frac{1}{4}$    		& $\frac{3}{4}$\\
                        & $m \equiv 0 \; (2)$   		& $+1$						    & $1$      		& I$_0$   & $1,\,\tau(z)$                                                             		     & $0$ 		& $1$           		& $0$\medskip\\\
                        &                       		& $-1$						    & $2$      		& I$_0^*$ & $z^{\frac{1}{2}},\,z^{\frac{1}{2}}\tau(z)$                                               & $1$ 		& $\frac{1}{2}$    		& $\frac{1}{2}$\medskip\ \\ \hline
$\infty$                & $-b$                  		& $+\begin{pmatrix} 1 & b  \\  0 & 1 \end{pmatrix}$ & $\infty$ 		& I$_b$   & $1,\,\frac{b}{2\pi i} \log z$                                                            & $0$ 		& $1$  				& $0$\\
                        &                       		& $-\begin{pmatrix} 1 & b  \\  0 & 1 \end{pmatrix}$ & $\infty$ 		& I$_b^*$ & $z^{\frac{1}{2}},\,\frac{b}{2\pi i} z^{\frac{1}{2}}\log z$                               & $1$ 		& $\frac{1}{2}$ 		& $\frac{1}{2}$\\ \hline
\end{tabular}\bigskip\

\bigskip

Remarks: (i) In the last two columns, an entry $\theta > 0$ means that the tangent cone is the cone
over a circle of length $2\pi \theta$. An entry $0$ means the tangent cone is a half-line.
(ii) $\mathcal{J}(z) \equiv 0$ or $\mathcal{J}(z) \equiv 1$ 
can lead to any of the corresponding monodromies, with $z^m := 0$.\hfill $\;$
\end{sidewaystable}

(3) \emph{Infinite monodromy.} For a degeneration of elliptic curves 
$f: U \to \Delta$ with I$_1$ monodromy, $U|_{\Delta^*}$ must be abstractly isomorphic to 
$(\Delta^* \times \C)/(\Z\tau_1 + \Z \tau_2)$, where 
$\tau_1 = 1$ and $\tau_2 = \frac{1}{2\pi i}\log z$. Kodaira also showed that the map 
\begin{equation}\Psi(z,w) = \left(z, -\frac{1}{12}-\frac{1}{4\pi^2}\wp_z(w),\frac{i}{8\pi^3}\wp'_z(w)\right) \in \Delta \times \C^2_{xy} \end{equation}
induces an isomorphism of fibrations from $(\Delta^* \times \C)/(\Z \tau_1 + \Z\tau_2)$ onto
the surface $y^2 = 4x^3 + x^2 - g_2(z)x - g_3(z)$ in $\Delta^* \times \P^2$ with its natural elliptic fibration
given by ${\rm pr}_1$, where $g_2, g_3$ are 
regular on $\Delta$ with $g_2(0) = g_3(0) = 0$ and $\wp_z$ is the standard Weierstra{\ss} function
associated to the lattice generated by $\tau_1(z), \tau_2(z)$. This closes as a smooth elliptic surface $\bar{U}$ in $\Delta \times \P^2$
with central fiber the node $y^2 = 4x^3 + x^2$, which is the Kodaira canonical form for I$_1$. 
Then $\Phi := \Psi^{-1}$ is the map of interest.
To find
the multiplicity $N$ of $\Phi^*(dz \wedge dw)$ along $D$,
we consider $\Theta := z^N dz \wedge (dx/y)$ 
on $\bar{U}$, which has multiplicity $N$ along
$D$. Then $\Psi^*\Theta = 2\pi i\, z^N dz \wedge dw$, so $N = 0$.
By Kodaira again, an I$_b$ degeneration can always be realized over $\Delta^*$ with 
$\tau_1 = 1$, $\tau_2 = \frac{b}{2 \pi i} \log z$, and admits an unramified fiber-preserving 
$b$-fold covering onto an I$_1$ degeneration globally, i.e.~including the central fiber, 
so again $N = 0$. Thus,
\begin{eqnarray}\label{ib}&&\omega_{{\rm sf},\epsilon} = i |k|^2 \frac{b |\log |z||}{2\pi \epsilon} \frac{dz \wedge d\bar{z}}{|z|^2} 
+ \frac{i}{2} \frac{2\pi \epsilon}{b |\log |z||} (dw - \Gamma dz) \wedge (d\bar{w}-\bar{\Gamma}d\bar{z}),\\
&&\Gamma(z,w) = \frac{1}{i}\frac{{\rm Im}(w)}{z |\log |z||}, \end{eqnarray}
gives the semi-flat metric for ${\rm div}(\Omega) = -D$. Replacing $|z|^{-2}dz \wedge d\bar{z}$ by 
$dz \wedge d\bar{z}$, we pass to the ${\rm div}(\Omega) = 0$ setting as
originally considered in \cite{gw}.
We will again 
not treat
the incomplete case carefully, but we note
that the metric on the base satisfies 
$d^2 K_{\rm Gau{\ss}} \sim |\log d\,|^{-1}$, $d = {\rm dist}(0,-)$, and has
tangent cone $\R^2$ at the origin.   

For a degeneration $f: U \to \Delta$ with I$_b^*$ monodromy, 
the pull-back of $U|_{\Delta^*}$ under $z = u^2$ extends as $U' \to \Delta_u$ 
with an I$_{2b}$ central fiber, and $\Z_2$ acts naturally on $U'$ in such a way that the quotient
has four ordinary double points on its central fiber. 
Resolving these produces the Kodaira canonical form $\bar{U}$.
Also, we can assume that $U'|_{\Delta^*}$ $=$ $(\Delta^* \times \C_v)/(\Z \tau_1 + \Z\tau_2)$,
$\tau_1 = 1$,
$\tau_2 = \frac{b}{\pi i}\log u$.
Since $du \wedge dv$ is invariant under the $\Z_2$-action, it pushes down to the quotient 
and then lifts to a volume form on the (crepant) resolution $\bar{U}$. Thus, if $\Omega$ is 
a meromorphic $2$-form on $\bar{U}$ as usual,
with ${\rm div}(\Omega) = -D$ or $0$, then we can write $\Omega = 
g \, du \wedge dv$ on 
a fundamental domain for the $\Z_2$-action
on $U'$, such as $\Delta^+ \times \C$, $\Delta^+ := 
\{u \in \Delta: {\rm Re}\,u > 0\}$, 
with $g(u)$ $=$ $u^{-2} k(u^2)$ or $k(u^2)$, and $k(0) \neq 0$.
Hence, in the complete case, on $\Delta^+ \times \C$, 
\begin{eqnarray}\label{ibstar}&&\omega_{{\rm sf},\epsilon} = i |k(u^2)|^2 \frac{b |\log |u||}{\pi \epsilon}  
\frac{du \wedge d\bar{u}}{|u|^4}
+ \frac{i}{2}\frac{\pi \epsilon}{b |\log |u||}(dv - \Gamma du)\wedge (d\bar{v} - \bar{\Gamma}d\bar{u}),\\
\label{ibstar2}&&\Gamma(u,v) = \frac{1}{i} \frac{{\rm Im}(v)}{u |\log |u||}.\end{eqnarray}
One can push all this down to $\Delta_z^* \times \C_w$ by mapping
$(z,w) = (u^2,uv)$, so $dz \wedge dw = 2u^2 du \wedge dv$ and hence $N = 1$, which is the presentation 
given in Table 4.1 for sake of analogy with the finite monodromy cases, especially I$_0^*$.
However, (\ref{ibstar}), (\ref{ibstar2}) 
are better suited for calculations. In the incomplete case ${\rm div}(\Omega) = 0$, 
the metric 
on the base is simply a $\Z_2$-quotient of the corresponding I$_b$ metric. \medskip\

We now discuss the asymptotic geometry of the complete I$_b$ and I$_b^*$ metrics from (\ref{ib}) and (\ref{ibstar}).
The fibers now \emph{diverge} in the moduli space of flat
tori of constant area, i.e.~the 
intrinsic length of one edge of any fundamental cell tends to $\infty$, while the length of the other edge, corresponding
to the monodromy invariant period $\tau_1$, goes to zero. This effect is sublinear in the distance from
any fixed base point, so in particular the asymptotic cones are $\R^+, \R^2/\Z_2$, as for the complete I$_0,{\rm I}_0^*$ 
metrics, but it makes other features of the geometry
somewhat tricky to understand.

We will determine
$|{\rm Rm}|$ by calculating the norm squared of the curvature form $\Theta$ of the
Chern connection. The exact formulas we use are:
\begin{equation}\label{chern}\Theta = \partial(g^{-1}\bar{\partial}g),\quad |A \, dz^j \wedge d\bar{z}^k|^2 = g^{j\bar{j}}g^{k\bar{k}}{\rm trace}(g^{-1}{A}^{\rm tr}g \bar{A}),
\end{equation}
for any matrix $A \in \C^{m \times m}$, viewed as an endomorphism of the tangent space. 
Also, due to the local isometric $T^2$-action, it suffices to evaluate $|\Theta|^2$ at $w = 0$, where the forms $dz \wedge d\bar{z}$, $dz \wedge d\bar{w}$, $dw \wedge d\bar{w}$ are orthogonal 
with respect to $g_{{\rm sf},\epsilon}$.

$\bullet$ \emph{Geometry of the complete ${\rm I}_b$ metrics.} For sake of reference, note that
\begin{equation}\label{ibintegrals}\int \frac{1}{t}\left(\log \frac{1}{t}\right)^{\frac{1}{2}} dt = -\frac{2}{3}\left(\log \frac{1}{t}\right)^{\frac{3}{2}},
\;\int \frac{1}{t} \log \frac{1}{t} \;dt = -\frac{1}{2}\left(\log \frac{1}{t}\right)^2.\end{equation}
For the rest of this section, $\Delta^* := \{0 < |z| \leq 
1/2\}$ and $U := f^{-1}(\Delta^*)$ 
by abuse of notation. Put $z_x := f(x) \in \Delta^*$ for $x \in U$, 
and $g := 
(b/2\pi \epsilon) |k(z)|^2 |z|^{-2}|\log|z||\,g_{{\rm euc}}$
for the metric on $\Delta^*$ such that $f: (U,g_{{\rm sf},\epsilon}) \to (\Delta^*, g)$ is a Riemannian submersion.
Assume that $(U, g_{{\rm sf},\epsilon})$ is imbedded into a complete Riemannian manifold $M$ 
as the only end of $M$. 
Fix $x_0 \in U$ and let $r_x := {\rm dist}_M(x_0,x)$.

Since the intrinsic 
diameter of $f^{-1}(z)$ and the $g$-length of 
$\{z' \in \Delta: |z'| = |z|\}$ 
are $\sim |\log |z||^{\textup{\begin{tiny}$1/2$\end{tiny}}}$,
we have $r_x \sim {\rm dist}_g(z_x, z_{x_0})\sim |\log |z_x||^{\textup{\begin{tiny}$3/2$\end{tiny}}}$
if $|z_x| \ll 1$ from (\ref{ibintegrals}).
Also, ${\rm vol}(f^{-1}(B),g_{{\rm sf},\epsilon}) = \epsilon \cdot {\rm vol}(B,g)$ for every
$B \subset \Delta^*$ because $f$ is a Riemannian submersion 
with fibers of area $\epsilon$, and hence
$|B(x_{0},s)| \sim s^{\textup{\begin{tiny}$4/3$\end{tiny}}}$ if $s \gg 1$ 
from (\ref{ibintegrals}) because $f$ is distance preserving up to a bounded factor. 
Taken together, this shows that $M$ satisfies CYL$(4/3,1/3)$ from Definition \ref{asympt1} as
well as (trivially) the bound on diameter growth required for Proposition \ref{decay}(ii).
In view of condition SOB$(4/3)$ (Definition \ref{sob-beta}), we 
also need to prove that 
$|B(x,\frac{\textup{\begin{tiny}$1$\end{tiny}}}{\textup{\begin{tiny}$2$\end{tiny}}}r_x)| \geq \frac{\textup{\begin{tiny}$1$\end{tiny}}}{\textup{\begin{tiny}$C$\end{tiny}}}r_{\textup{\begin{tiny}$x$\end{tiny}}}^{\textup{\begin{tiny}$4/3$\end{tiny}}}$
if $r_{\textup{\begin{tiny}$x$\end{tiny}}} \gg 1$.
By the coarea formula and (\ref{ibintegrals}), this would follow if we knew that 
\begin{equation}\label{sob}B(x,\frac{1}{2}r_x) \supset \{y \in U: |z_x| < |z_y| < |z_x|^{1-\alpha}\}\end{equation}
for all $x \in U$ with $r_x \gg 1$ and a
small universal $\alpha > 0$. To show (\ref{sob}),
notice again that both the intrinsic
diameter of $f^{-1}(z)$ 
and the $g$-length of 
$\{z' \in \Delta: |z'| = |z|\}$ 
are $\sim |\log |z||^{\textup{\begin{tiny}$1/2$\end{tiny}}} \sim r_{{\textup{\begin{tiny}$y$\end{tiny}}}}^{{\textup{\begin{tiny}$1/3$\end{tiny}}}}$ 
for any $y \in U$ with $|z_y| = |z|$ as $|z| \to 0$. 

Next, we determine $|{\rm Rm}|$ from the norm of the Chern curvature: 
\begin{equation}\label{curvdecay}|\Theta(x)|^2 = \frac{6\pi^2\epsilon^2}{b^2|k(0)|^4}|\log |z_x||^{-6}(1 + o(1)) \sim r_x^{-4} \end{equation}
as $r_x \to \infty$. This was calculated on a computer, proceeding from (\ref{chern}).

The injectivity radius ${\rm inj}(x) \sim |B(x,1)| \sim r_{\textup{\begin{tiny}$x$\end{tiny}}}^{\textup{\begin{tiny}$-1/3$\end{tiny}}}$ as $r_x \to \infty$.
Roughly speaking, this is because one direction in the fibers
collapses at this rate while all others stay bounded away from zero. 
For the lower bound, recall
${\rm inj}(x) \geq \frac{1}{C}|B(x,1)|$ from \cite[Theorem 4.3]{cgt}
since $|{\rm Rm}|\leq C$. Bearing in mind that the horizontal distribution 
is integrable, consider the horizontal lift, $H$, of $B_g(z_x,\frac{1}{C})$. Then $H$ is a small
totally geodesic disk in $U$ centered at $x$. For each $y \in H$, consider the set $S_y$ of all points
in the fiber through $y$ whose intrinsic distance to $y$ inside the fiber is at most 
$1/C$. Then $B(x,1) \supset \bigcup S_y$, and 
$|S_y| \sim |\log |z_x||^{\textup{\begin{tiny}$-1/2$\end{tiny}}} \sim r_{\textup{\begin{tiny}$x$\end{tiny}}}^{\textup{\begin{tiny}$-1/3$\end{tiny}}}$, so
$|B(x,1)| \geq \frac{\textup{\begin{tiny}$1$\end{tiny}}}{\textup{\begin{tiny}$C$\end{tiny}}} r_{\textup{\begin{tiny}$x$\end{tiny}}}^{\textup{\begin{tiny}$-1/3$\end{tiny}}}$.
As for the upper bound, note 
that for all $x \in U$ there exists a curve $\gamma: [0,t] \to U$ such that $\gamma(0) $ $=$ $\gamma(t) = x$, 
$\gamma$ is a unit speed closed geodesic for the intrinsic flat metric on the fiber through $x$,
and $t \sim r_{\textup{\begin{tiny}$x$\end{tiny}}}^{\textup{\begin{tiny}$-1/3$\end{tiny}}}$.
Then $\gamma$ satisfies $\ddot{\gamma} + A(\dot{\gamma},\dot{\gamma}) = 0$, where $A$ denotes the 
second fundamental form of
the fiber, and $|A| \leq C$
because the fiber is flat and $|{\rm Rm}| \leq C$,
so $|\det A| \leq C$, and ${\rm tr}\, A = 0$ 
because the fiber is a holomorphic curve.
Hence $|\ddot{\gamma}|$ is bounded. 
Put $\iota := {\rm inj}(x)$. 
From \cite[Satz 2.1]{joka}, since $|{\rm Rm}| \leq C$,
there exists a diffeomorphism $\Phi: B = B_{\R^4}(0,\iota) \hookrightarrow U$  such that
$$\Phi(0) = x, \;\; \frac{1}{C}g_{\rm euc} \leq \Phi^* g_{{\rm sf},\epsilon} \leq C g_{\rm euc},
\;\;|\Gamma_{jk}^l [\Phi^*g_{{\rm sf},\epsilon}](v)| \leq C|v|.$$
Therefore, if $\iota > Ct$, then $\alpha := \Phi^{-1}\circ\gamma$ is a smooth loop in $B$ with 
$|\dot{\alpha}| \geq 1/C$ and
$|\ddot{\alpha}| \leq C$ in the Euclidean sense, hence $t \geq 1/C$, a contradiction for $r_x \gg 1$.

To conclude, we compute the asymptotic cone of $M$. 
Since the intrinsic diameter of the fiber through $x$
is only $\sim r_{\textup{\begin{tiny}$x$\end{tiny}}}^{\textup{\begin{tiny}$1/3$\end{tiny}}}$,
it suffices to show that the pointed 
rescalings $(\Delta^*, \lambda^2 g, \frac{1}{2})$ converge in the pointed Gromov-Hausdorff sense as $\lambda \to 0$.
Define 
\begin{equation} \Phi_\lambda: [0,\infty) \times S^1 \ni (s,\theta) \mapsto  \frac{1}{2}\exp(-\lambda^{-\frac{2}{3}}s^{\frac{2}{3}})e^{i\theta} \in \Delta^*,\;\;\Phi_\lambda(0,0) = \frac{1}{2}.\end{equation}
It is easy to check that $\Phi_\lambda^*(\lambda^2 g)$ converges to $C ds^2$ as smooth tensor fields,
locally uniformly on $(0,\infty) \times S^1$.
Thus $M$ has a unique asymptotic cone, $\R^+$.

$\bullet$ \emph{Geometry of the complete \textup{I}$_b^*$ metrics.} Observe that as $s \to 0$,
\begin{equation}\label{ibstarint}\begin{split}
&\int_{s}^{s_0} \frac{1}{t^2}\left(\log \frac{1}{t}\right)^{\frac{1}{2}} dt 
= \frac{1}{s}|\log|s||^{\frac{1}{2}}(1 + O_{s_0}(|\log |s||^{-1})), \\
&\int \frac{1}{t^3} \log \frac{1}{t}\;dt = -\frac{1}{2t^2}\log\frac{1}{t} + \frac{1}{4t^2}.\end{split}\end{equation}
From (\ref{ibstarint}), and using notation as before,
$r_{\textup{\begin{tiny}$x$\end{tiny}}} \sim |u_{\textup{\begin{tiny}$x$\end{tiny}}}|^{\textup{\begin{tiny}$-1$\end{tiny}}}|\log|u_{\textup{\begin{tiny}$x$\end{tiny}}}||^{\textup{\begin{tiny}$1/2$\end{tiny}}}$ because the fiber over
$u \in \Delta^+$ has intrinsic diameter $\sim |\log|u||^{\textup{\begin{tiny}$1/2$\end{tiny}}}$
and the $g$-length of the half-circle $\{u' \in \Delta^+: |u'| = |u|\}$ is $\sim |u|^{\textup{\begin{tiny}$-1$\end{tiny}}}|\log|u||^{\textup{\begin{tiny}$1/2$\end{tiny}}}$. 
Hence the intrinsic diameter of the fiber through $x$ is $\sim (\log r_x)^{\textup{\begin{tiny}$1/2$\end{tiny}}}$,
and the $g$-length of the half-circle through $u_x$ is $\sim r_x$,
i.e.~exactly linear in the distance from any fixed base point rather than sublinear.
This is still good enough for Proposition \ref{decay}(ii), but not (ia).
From this and (\ref{ibstarint}), $|B(x_0,s)| \sim s^2$ if $s \gg 1$. In place of (\ref{sob}) we now have
\begin{equation}\label{sobstar}|B(x,\frac{1}{2}r_x)| \supset \{y \in U: |u_x| < |u_y| < (1+\alpha)|u_x|, \; |{\rm arg}\frac{u_y}{u_x}| < \alpha \} \end{equation}
for all $x \in U$ with $r_x \gg 1$ and a small universal $\alpha > 0$, but
(\ref{sobstar}) and (\ref{ibstarint}) imply $|B(x,\frac{{\textup{\begin{tiny}$1$\end{tiny}}}}{{\textup{\begin{tiny}$2$\end{tiny}}}}r_x)| \geq \frac{{\textup{\begin{tiny}$1$\end{tiny}}}}{{\textup{\begin{tiny}$C$\end{tiny}}}}r_x^{\textup{\begin{tiny}$2$\end{tiny}}}$ as before, and hence SOB($2$).
The Chern curvature
\begin{equation}|\Theta(x)|^2 =  \frac{\pi^2\epsilon^2}{2b^2|k(0)|^4}|u_x|^4|\log|u_x||^{-4}(1 + o(1)) \sim r_x^{-4}(\log r_x)^{-2}\end{equation}
as $r_x \to \infty$. One then proves ${\rm inj}(x) \sim |B(x,1)| \sim (\log r_x)^{-1/2}$ if $r_x \gg 1$, using the
same arguments as before. The asymptotic cone is obtained as the pointed limit of 
$(\Delta^+, \lambda^2 g, \frac{1}{2})$
as $\lambda \to 0$. If $u_\lambda \in \R^+$ is defined by 
$u_\lambda |\log |u_\lambda||^{\textup{\begin{tiny}$-1/2$\end{tiny}}} = \lambda$, and
\begin{equation}\Phi_\lambda: \{|u| \geq u_\lambda,\, {\rm Re}\,u > 0\} \ni u \mapsto \frac{u_\lambda}{2u}\in \Delta^+, \;\;\Phi_\lambda(u_\lambda) = \frac{1}{2},\end{equation} 
we see that $\Phi_\lambda^*(\lambda^2g)$ converges as smooth tensor fields to $C|du|^2$, locally uniformly on 
$\{{\rm Re}\,u> 0\}$. Thus $M$ has a unique
asymptotic cone, given by $\R^2/\Z_2$.\hfill $\Box$

\subsection{Proof of Theorems \ref{main} and \ref{main1}} Let $f: X \to \P^1$ be 
the rational elliptic fibration obtained by blowing up 
the base points of a pencil of cubics. Let $p \in \P^1$ and $D := f^{-1}(p)$. 
Fix a disk 
$\Delta = \{|z| < 1\} \subset \P^1$ with $z(p) = 0$ such
 that $f$
has no singular fibers over $\Delta^*$. Write 
$X|_{\Delta^*} = (\Delta^* \times \C)/(\Z \tau_1 + \Z \tau_2)$ with generators $\tau_1,\tau_2$ 
as in Table 4.1. Let 
$\Omega$ be a meromorphic $2$-form on $X$ 
with ${\rm div}(\Omega) = -D$. Fix a K{\"a}hler metric $\omega$ on $M := X \setminus D$, complete or incomplete,
but with
$\int_{M} \omega^2 < \infty$ and $\int_C \omega = 0$
for all bad cycles $C$.
Choose 
a holomorphic section $\sigma: \Delta^* \to X$, and let 
$\epsilon$ denote the $\omega$-area of the fibers of $f$.
Then for all $\alpha > 0$ we have a semi-flat metric $\omega_{{\rm sf}}(\alpha)$ on $X|_{\Delta^*}$, depending 
on $\sigma$, such that $\omega_{{\rm sf}}(\alpha)^2 = \alpha \Omega \wedge \bar{\Omega}$ and the fibers
of $f$ have area $\epsilon$ with respect to $\omega_{{\rm sf}}(\alpha)$ as well. The geometry of $\omega_{\rm sf}(\alpha)$
was studied in Section 4.3. 
Our goal is to glue $\omega$ and $\omega_{\rm sf}(\alpha)$ 
to produce a complete K{\"a}hler metric $\omega_0$ on $M$ 
to which the existence and decay results from Section 3 apply.

\begin{proposition}\label{extend} There exist a holomorphic section $\tilde{\sigma}$ over $\Delta^*$,
concentric disks $\Delta' \subset \Delta'' \subset \Delta''' \subset \Delta$, a $(1,1)$-form $\beta \geq 0$ on
$\P^1$ such that ${\rm supp}(\beta) \subset \Delta''' \setminus \Delta'$,
and a constant $\alpha_0 > 0$, such that for all $\alpha > \alpha_0$ there exist
$u_\alpha^{\rm int} \in C^\infty(f^{-1}(\P^1\setminus\Delta'),\R)$, $u_\alpha^{\rm ext} \in C^\infty(f^{-1}(\Delta'' \setminus\{0\}),\R)$
whose complex Hessians coincide over $\Delta'' \setminus \Delta'$, and 
$t_\alpha > 0$, so that for all $t > t_\alpha$ the closed $(1,1)$-form $\omega_0(\alpha,t) := 
\omega + t f^*\beta + i\partial\bar{\partial} u_\alpha^{\rm int,ext}$ on $M$ is positive,
and $\omega_0(\alpha,t) = \omega$ over $\P^1 \setminus \Delta'''$, $\omega_0(\alpha,t) = T^*\omega_{\rm sf}(\alpha)$
over $\Delta'\setminus\{0\}$, where $T$ denotes vertical translation by $\tilde{\sigma}$ relative to $\sigma$
as defined in Remark \ref{sfrem}(i).
Moreover, $\int_M (\omega_0(\alpha,t)^2 - \alpha\Omega\wedge\bar{\Omega}) = 0$ for exactly one $t > t_\alpha$.
\end{proposition}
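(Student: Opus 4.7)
The plan is to treat this as a K\"ahler gluing problem, where the key inputs are the cohomology hypothesis on bad cycles, the freedom to translate $\omega_{\rm sf}$ by a change of section (Remark \ref{sfrem}(i)), and a $\partial\bar{\partial}$-lemma on $f^{-1}(\Delta^*)$. First, I would establish the claim mentioned just after Definition \ref{bad}: $H_2(X|_{\Delta^*},\Z)$ is generated by the fiber class $[F]$ and the bad $2$-cycles. This follows from the Wang sequence for the torus bundle $X|_{\partial\Delta^*} \to S^1$, whose monodromy is the one from Definition \ref{bad}; monodromy-invariant classes in $H_1(F,\Z)$ are exactly those spanning bad cycles. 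Since both $\omega$ and $\omega_{\rm sf}(\alpha)$ integrate to $\epsilon$ on $[F]$ by construction, and $\int_C \omega = 0$ on bad cycles $C$ by hypothesis, we only need to kill the bad-cycle periods of $\omega_{\rm sf}(\alpha)$. A direct calculation from the explicit formulas of Section 4.2 shows that replacing $\sigma$ by $\sigma + \sigma'$ for a holomorphic section $\sigma'$ of $f$ shifts these periods linearly in $\sigma'$; since bad cycles are boundaries in $X$, $\sigma'$ extends over $D$ and there are enough such sections to achieve any prescribed shift, producing the desired $\tilde{\sigma}$.

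Once the classes of $\omega$ and $T^*\omega_{\rm sf}(\alpha)$ agree in $H^2(f^{-1}(\Delta^*),\R)$, I would invoke the $\partial\bar{\partial}$-lemma from Gross--Wilson \cite{gw} to obtain a smooth function $\phi_\alpha$ on $f^{-1}(\Delta^*)$ with $T^*\omega_{\rm sf}(\alpha) - \omega = i\partial\bar{\partial}\phi_\alpha$. Choose nested disks $\Delta'\subset\Delta''\subset\Delta'''\subset\Delta$ and a cutoff $\chi \in C^\infty(\P^1,[0,1])$ with $\chi \equiv 1$ on $\Delta''$ and $\chi \equiv 0$ outside $\Delta'''$. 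Define $u^{\rm ext}_\alpha := \phi_\alpha$ on $f^{-1}(\Delta''\setminus\{0\})$ and $u^{\rm int}_\alpha := (\chi\circ f)\phi_\alpha$ on $f^{-1}(\P^1\setminus\Delta')$, extended by zero. On the overlap $\Delta''\setminus\Delta'$, the cutoff equals $1$, so $u^{\rm int}_\alpha = u^{\rm ext}_\alpha$ and the Hessians trivially coincide. By construction, $\omega + i\partial\bar{\partial}u^{\rm ext}_\alpha = T^*\omega_{\rm sf}(\alpha)$ on $\Delta'\setminus\{0\}$ and $\omega + i\partial\bar{\partial}u^{\rm int}_\alpha = \omega$ outside $\Delta'''$.

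Positivity on $\Delta'\setminus\{0\}$ and outside $\Delta'''$ is automatic. On the buffer $\Delta'''\setminus\Delta''$, the Hessian of $(\chi\circ f)\phi_\alpha$ mixes derivatives of $\chi$ and $\phi_\alpha$ and need not be positive, but it is uniformly bounded on the compact buffer. Fix any smooth semipositive $(1,1)$-form $\beta\geq 0$ on $\P^1$ with $\mathrm{supp}(\beta)\subset \Delta'''\setminus\Delta'$ and $\beta>0$ on $\Delta'''\setminus\Delta''$. Then $tf^*\beta$ dominates the negative contribution once $t>t_\alpha$ for $t_\alpha$ large depending on $\|\phi_\alpha\|_{C^2}$, yielding $\omega_0(\alpha,t)>0$ globally.

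For the integrability, observe that $\beta\wedge\beta=0$ since $\dim_\C\P^1=1$, so $(f^*\beta)^2=0$ and $\omega_0(\alpha,t)^2$ is affine in $t$. Since the potentials $u^{\rm int,ext}_\alpha$ do not depend on $t$,
\begin{equation*}
\frac{d}{dt}\int_M\bigl(\omega_0(\alpha,t)^2 - \alpha\,\Omega\wedge\bar{\Omega}\bigr) = 2\int_M \omega_0(\alpha,t)\wedge f^*\beta,
\end{equation*}
and since $d^c\beta=0$ on $\P^1$ (a $3$-form on a Riemann surface), Stokes' theorem kills the $i\partial\bar{\partial}u^{\rm int}_\alpha$ contribution, while $(f^*\beta)^2=0$ kills the $tf^*\beta$ contribution, leaving $2\int_{\Delta'''\setminus\Delta'}\omega\wedge f^*\beta$, a strictly positive constant independent of $t$. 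Hence the integrability condition is linear in $t$ with positive slope, and we can choose $\alpha_0$ large enough so that at $t=t_\alpha$ the value is negative (here $\int_M \alpha\Omega\wedge\bar{\Omega}$ scales linearly in $\alpha$ while the rest of $\int \omega_0^2$ only shifts by an $\alpha$-dependent but controlled amount via $\phi_\alpha$); the intermediate value theorem then gives a unique $t>t_\alpha$ with the integrability condition satisfied. The main obstacle is the first paragraph: producing $\tilde{\sigma}$ and applying the $\partial\bar{\partial}$-lemma, which requires pinning down $H^2(f^{-1}(\Delta^*))$ in each monodromy type and tracking how the period map depends on the section, both of which are sensitive to whether bad cycles exist at all.
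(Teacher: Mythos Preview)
Your proposal has a genuine conceptual gap in the role of the section translation $\tilde{\sigma}$. You claim that translating by a holomorphic section shifts the bad-cycle periods of $\omega_{\rm sf}(\alpha)$, but this is false: the translation $T$ is smoothly isotopic to the identity through $T_s(x) = x + s\sigma'(f(x))$, so $T^*$ acts trivially on de Rham cohomology and cannot change any period. In fact the bad-cycle periods of $\omega_{\rm sf}(\alpha)$ are already zero, because bad cycles are Lagrangian for the semi-flat form up to isotopy; together with the hypothesis $\int_C\omega = 0$ and the matching fiber areas, this gives $[\omega_{\rm sf}(1)] = [\omega]$ in $H^2(X|_{\Delta^*},\R)$ \emph{before} any translation. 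The actual purpose of $\tilde{\sigma}$ is to kill a second obstruction you have not identified: once $\omega_{\rm sf}(1) - \omega = d\zeta$ by de Rham, one still needs $\zeta^{0,1}$ to be $\bar{\partial}$-exact to get a global $i\partial\bar{\partial}$-potential, and the obstruction lies in $H^{0,1}(X|_{\Delta^*}) \cong H^0(\Delta^*, R^1f_*\mathcal{O}_X)$. A direct computation (the paper's (\ref{balance})) shows that translation by $\tilde{\sigma}$ shifts this Dolbeault class in a prescribed way, which is how $\tilde{\sigma}$ is determined. Your outline conflates the $H^2$ and $H^{0,1}$ obstructions and so misses the mechanism entirely; the sentence ``since bad cycles are boundaries in $X$, $\sigma'$ extends over $D$'' is a non sequitur.

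Your integrability argument is also incomplete. The observation that $I(\alpha,t)$ is affine in $t$ with slope $2\int\omega\wedge f^*\beta > 0$ is correct and useful, but the sentence ``$\int_M\alpha\Omega\wedge\bar{\Omega}$ scales linearly in $\alpha$'' is meaningless since that integral diverges; only the compactly supported difference makes sense. More seriously, you do not control how $t_\alpha$ grows with $\alpha$. The potential $\phi_\alpha$ is affine in $\alpha$ (write $\phi_\alpha = \phi_1 + (\alpha-1)\phi$ with $\phi$ pulled back from the base), so $t_\alpha$ grows linearly in $\alpha$, and the positive contribution $c_0 t_\alpha$ then competes directly with the $-\alpha\int\Omega\wedge\bar{\Omega}$ term. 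Whether $I(\alpha,t_\alpha) < 0$ eventually therefore hinges on comparing two $\alpha$-linear terms whose coefficients depend on the geometry of the annuli and the choice of $\beta$. The paper handles this by normalizing the base potential $\phi$ with a harmonic function on the gluing annulus (Claim~2), which gives sharp control on $t_\alpha$ in terms of $\sup u_{z\bar{z}}$, and then choosing the annulus radii so that the relevant coefficient has the right sign (Claim~4). Without an equivalent step your intermediate-value argument does not close. A smaller point: $f^*\beta$ is rank one, so ``$tf^*\beta$ dominates the negative contribution'' is not literally true for the mixed $dz\wedge d\bar{w}$ terms coming from $\chi_z\phi_{\alpha,\bar{w}}$; these must first be absorbed via Young's inequality into the base direction, as the paper does in Claim~3.
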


The basic idea here is that there are two obstructions for gluing $\omega$ and $\omega_{\rm sf}(\alpha)$ as 
closed $(1,1)$-forms, coming from $H^2(X|_{\Delta^*},\R)$ and $H^{0,1}(X|_{\Delta^*})$. We killed the first one
by making $\langle \omega_{{\rm sf}}(\alpha), F\rangle = \langle
\omega, F\rangle$ for every fiber $F$ and $\langle \omega, C\rangle = 0$
for every bad cycle $C$.
For these, $\langle \omega_{{\rm sf}}(\alpha),C\rangle = 0$
can be verified directly, and together with $[F]$ they generate
$H_2(X|_{\Delta^*},\Z)$.
The second obstruction
is killed by translating $\omega_{{\rm sf}}(\alpha)$ by a section $\tilde{\sigma}$, thus balancing 
the freedom we had in choosing $\sigma$ in the first place.
So far this emulates work from Gross-Wilson \cite{gw}
for the case of an I$_1$ singular fiber.
One issue not present in \cite{gw} is that 
$\omega$ and $\omega_{{\rm sf}}(\alpha)$ 
need not be close on their overlap,
thus gluing them by cutting off the K{\"a}hler
potential
of their difference 
might create
large negative $dz \wedge d\bar{z}$ components.
We balance
these 
by adding $tf^*\beta$ with a bump 
form $\beta \geq 0$ on $\P^1$, $t \gg 1$.
The point then is that by doing everything carefully, 
the integrability condition 
$\int (\omega_0(\alpha,t)^2 - \alpha\Omega \wedge \bar{\Omega}) = 0$ becomes linear 
in $\alpha$ and $t$, with positive slope. (Notice
$T^*\Omega = \Omega$ so the integrand here has compact support.)\medskip\

\noindent \emph{Proof of Theorems \ref{main} and \ref{main1}, assuming Proposition \ref{extend}.} Most things are
obvious now from Propositions \ref{solve}, \ref{decay}, \ref{extend}, Lemma \ref{hmgcrit}, 
and the discussion of the
geometry of $\omega_{\rm sf}(\alpha)$ in Section 4.3. Note that $[\omega_{\rm CY}] = [\omega]$ in $H^2(M,\R)$ by homotopy invariance because this is obviously true in $H^2(X|_{{\textup{\begin{tiny}$\P^1\setminus \Delta$\end{tiny}}}},\R)$.
As for the injectivity radius, recall that $\omega_{\rm sf}(\alpha)$ satisfies
${\rm inj}(x) \sim |B(x,1)|$ in the I$_{\textup{\begin{tiny}$b$\end{tiny}}}$ and ${\rm I}_{\textup{\begin{tiny}$b$\end{tiny}}}^*$ cases. Now 
again from \cite[Theorem 4.3]{cgt}, since $\omega_{\rm CY}$ has bounded curvature,
${\rm inj}(x) \geq \frac{\textup{\begin{tiny}$1$\end{tiny}}}{\textup{\begin{tiny}$C$\end{tiny}}}|B(x,1)|$ for $\omega_{\rm CY}$, so the lower bounds are preserved.
On the other hand, $(M,\omega_{\rm CY})$ contains
sufficiently short loops of bounded geodesic curvature through every point because this is true for $\omega_{\rm sf}(\alpha)$,
so the same argument as for $\omega_{\rm sf}(\alpha)$ yields the desired sharp upper bound.
The claims about $|\nabla^k{\rm Rm}|$ follow from scaling and local Shi estimates.
\hfill $\Box$
    
\begin{remark}\label{growsol} (i) The form $\beta$ in Proposition \ref{extend} can be made compactly supported 
in an annulus $\{r_1 < |z| < r_2\} \subset \Delta$ and radial, so $\beta = i\partial\bar{\partial} \phi$ with $\phi = 0$
for $|z| > r_2$ and $\phi \sim \log |z|$ for $|z| < r_1$.
Since the metric $g$ on $\Delta^*$ induced by $g_{{\rm sf}}(\alpha)$ is
conformal to $|dz|^2$ and $g_{\rm sf}(\alpha)$ 
has the same volume growth as $g$, this shows that $\phi \circ f$ grows 
like a Green's function for $g_0(\alpha,t)$.
Therefore the equation $(\omega_0(\alpha, t) + i\partial\bar{\partial}u)^2 = \alpha \Omega \wedge \bar{\Omega}$ 
is solvable for \emph{all} $\alpha,t \gg 1$ but with $u$ growing like a Green's function in general.

(ii) Every bounded harmonic function $u: \Delta^* \to \R$ satisfies $u = \bar{u} + O(|z|)$ for some constant $\bar{u}$, and $r \sim |\log |z||$ in the ALH case, $r \sim |z|^{-\theta}$ in the ALG case with cone angle $\theta$, 
$r \sim |\log|z||^{3/2}$ for I$_b$, and $r \sim |z|^{-1/2}|\log |z||^{1/2}$ for I$_b^*$. Therefore, the
decay estimates from Proposition \ref{decay} are qualitatively sharp in this setting.
 \hfill $\Box$\end{remark}

\noindent \emph{Proof of Proposition \ref{extend}.} Write $\Delta(r) := \{|z| < r\} \subset \Delta$. 
For $0 < s < r < \frac{\textup{\begin{tiny}$1$\end{tiny}}}{\textup{\begin{tiny}$4$\end{tiny}}}$, choose
a radial cut-off function $\psi = \psi(r,s)$, $0 \leq \psi \leq 1$, with 
$\psi \equiv 1$ on $\Delta(r+s)$, 
${\rm supp}(\psi)$ $\subset$ $\Delta(r + 2s)$,
and $s|\psi_z| + s^2 |\psi_{z\bar{z}}| \leq C_0$, where
here as well as in the rest of the proof,
$C_0$ stands for a generic constant which is allowed to depend on $X,\Omega,\omega,\tau_1,\tau_2$ 
only. If a constant may also depend on $r,s$, we write $C_0(r,s)$. 
Even though $r,s$ will only depend on $X,\Omega,\omega,\tau_1,\tau_2$ eventually,
this distinction is needed to avoid a cycle. 
Fix a $(1,1)$-form
$\beta$ on $\P^1$ with ${\rm supp}(\beta) \subset \Delta(r+3s)\setminus\Delta(r)$,
$0 \leq \beta \leq |dz|^2$,
and $\beta = |dz|^2$ 
on $\Delta(r+2s)\setminus \Delta(r+s)$, identifying $\beta$ with its associated symmetric bilinear form.
We also identify $\psi,\beta$ with their pull-backs under $f$ for convenience.\medskip\

\noindent {\bf Claim 1 ($\partial\bar{\partial}$-lemma).} There exist a holomorphic section $\tilde{\sigma}$ of $f$ over
$\Delta^*$ and a smooth function $u_1: X|_{\Delta^*} \to \R$ such that
$T^*\omega_{\rm sf}(1) = \omega + i \partial\bar{\partial} u_1$, where $T$ denotes
translation by $\tilde{\sigma}$ relative to $\sigma$ as defined in Remark \ref{sfrem}(i).\medskip\

\noindent \emph{Proof.} For the case of an I$_1$ singular fiber and ${\rm div}(\Omega) = 0$, this is 
Lemma 4.3 in \cite{gw}. Nothing essential changes in the general case, but since this proof is quite
delicate,
we review it here, and give a slightly more direct argument for the last step.

First we need to see why 
$[\omega_{\rm sf}(1)] = [\omega]$ as de Rham cohomology classes on $X|_{\Delta^*}$.
Since $\langle \omega_{{\rm sf}}(1), F\rangle = \langle
\omega, F\rangle$ and $\langle \omega, C\rangle = 0$
for all bad cycles $C$, and bad cycles are Lagrangian for $\omega_{\rm sf}(1)$ up to isotopy,
it would be enough to show that $H_2(X|_{\Delta^*},\Z)$ is 
generated by $[F]$ and the classes of the bad cycles.
Retract $X|_{\Delta^*}$ to a $T^2$-bundle 
$Y \to S^1$. By the Leray 
spectral sequence, $H_2(Y,\Z)/\Z[F] \cong H_1(S^1, \mathcal{A})$, with
$\mathcal{A}$ the rank-$2$ local system on $S^1$ defined by the monodromy $A \in {\rm Sl}(2,\Z)$.
Write $S^1$ as the union of a $0$-cell and a $1$-cell and set up the corresponding
standard chain complex $C_\bullet(S^1, \mathcal{A})$ as in \cite[p.328]{hatcher}. We have
$\Z[\pi_1(S^1)] \cong \Z[t,t^{-1}] =: R$,
and both $C_0$ and $C_1$ are isomorphic to $R \otimes_R \Z^2 \cong \Z^2$ as $R$-modules, 
where $R$ acts on $\Z^2$ through $A$. The boundary operator then becomes
$(t - 1) \otimes {\rm id} \cong A - {\rm id}$, hence the claim.

Next, if $\zeta$ is a smooth real $1$-form on $X|_{\Delta^*}$ with $d\zeta$ of type $(1,1)$, then
$\xi := \zeta^{0,1}$ satisfies $\bar{\partial}\xi = 0$. From the Leray spectral sequence again,
 $H^{0,1}(X|_{\Delta^*}) \cong H^0(\Delta^*, \mathcal{H})$ for the holomorphic line bundle
$\mathcal{H} = R^1f_*\mathcal{O}_X$, whose fiber over $z$ can be naturally
identified with $H^{0,1}(f^{-1}(z))$. 
The fiberwise constant $(0,1)$-form $\Theta := d\bar{w}/{\rm Im}(\bar{\tau}_1\tau_2)$ defines
a holomorphic section of $\mathcal{H}$. Thus, restricting $\xi$ to $f^{-1}(z)$ and writing the constant part of
the restriction as $s(z)\Theta(z)$ identifies the class of $\xi$ in $H^{0,1}(X|_{\Delta^*})$ with 
$s: \Delta^* \to \C$, which is
necessarily holomorphic. Apply this to $d\zeta = \omega_{\rm sf}(1) - \omega$. 
Next, for any given holomorphic function $\tilde{\sigma}: \Delta^* \to \C$, 
which we view as inducing a section
of $X$,
if $T(z,w) := (z, w + \tilde{\sigma}(z))$, a direct calculation 
(best done using the frame $\partial_{\rm h},\partial_{\rm v}$ from \cite{gw}, the key identity being $\partial_{\rm h}\bar{\Gamma} = 0$) shows that
\begin{equation}\label{balance}
T^*\omega_{\rm sf}(1) - \omega_{\rm sf}(1) = d\tilde{\zeta} + \frac{i}{2} \frac{\epsilon}{{\rm Im}(\bar{\tau}_1\tau_2)} |\tilde{\sigma}'(z) - \Gamma(z,\tilde{\sigma}(z))|^2 dz \wedge d\bar{z},
\end{equation}
\begin{equation}\tilde{\xi} = \tilde{\zeta}^{0,1} := \frac{i}{2}\frac{\epsilon}{{\rm Im}(\bar{\tau}_1\tau_2)}\tilde{\sigma}(z) (d\bar{w} - \bar{\Gamma}(z,w) d\bar{z}).\end{equation}
Thus $\xi + \tilde{\xi}$ becomes $\bar{\partial}$-exact if $\tilde{\sigma} := 2is/\epsilon$. 
The correction term on the rhs in (\ref{balance}) has a smooth real potential as well, as
does every smooth real $2$-form on every open Riemann surface $S$ because $H^2(S,\R) = H^{0,1}(S) = 0$. \hfill $\Box$\medskip\

We now use Claim 1 to construct a potential for $T^*\omega_{\rm sf}(\alpha) - \omega$,
$\alpha > 0$ arbitrary, which depends linearly on $\alpha$. With $\Omega = z^{-(N+1)} k(z)dz \wedge dw$, $k(0) \neq 0$, as usual,
\begin{equation}\label{onthebase}\omega_{\rm sf}(\alpha) - \omega_{\rm sf}(1) = i(\alpha - 1)|k(z)|^2\frac{{\rm Im}(\bar{\tau}_1\tau_2)}{\epsilon}
\frac{dz \wedge d\bar{z}}{|z|^{2N+2}} = (\alpha-1)i\partial\bar{\partial}u\end{equation}
for a smooth function $u: \Delta^* \to \R$, which must exist for the same general reason as in the proof of Claim 1.
We identify $u$ with $u \circ f$.
Thus, with $T, u_1$ as before, 
\begin{equation}\label{pot1} T^*\omega_{\rm sf}(\alpha) = \omega + i\partial\bar{\partial}u_\alpha, \quad u_\alpha := u_1 + (\alpha-1)u.\end{equation}
Notice at this point that neither $u_1$ nor $u$ are uniquely determined, but we make a fixed choice for both of them
and then allow any generic constant $C_0$ to depend on these choices as well. It will be necessary to normalize
$u$ by a harmonic function in the gluing region $\Delta(r + 2s)\setminus\Delta(r+s)$ 
to gain more specific
control there, but then the $(r,s)$-dependence of this correction needs to be tracked carefully.\medskip\

\noindent {\bf Claim 2 (normalizing the potential).} If $C_0 r < 1$, $C_0 s < r$, and $v$
is harmonic with the same boundary values as $u$ on $\Delta(r + 3s)\setminus\Delta(r)$, then
\begin{equation}\label{bernstein}\sup_{\Delta(r+2s)\setminus\Delta(r+s)}(s^{-2}|u-v| + s^{-1}|(u-v)_z|) \leq C_0\sup_{\Delta(r+3s)\setminus\Delta(r)} u_{z\bar{z}}.\end{equation}
\noindent \emph{Proof.} Put $w := u - v$ and take $b(z) := (|z|-r)(r + 3s - |z|)$ as a barrier. If $3s \leq r$, 
then $4b_{z\bar{z}} \leq -1$, thus
$\sup |w| \leq 4(\sup |b|)(\sup |w_{z\bar{z}}|) = 9 s^2 \sup u_{z\bar{z}}$ by the maximum principle, where
all suprema are over $\Delta(r + 3s)\setminus\Delta(r)$. We use Bernstein's method
for
the gradient estimate. Thus we fix a cut-off function $\chi$ with $0 \leq \chi \leq 1$, 
${\rm supp}(\chi)$ $\subset$ $\Delta(r + 3s)\setminus\Delta(r)$,
$\chi \equiv 1$ on $\Delta(r + 2s)\setminus\Delta(r+s)$,
$s\chi^{\textup{\begin{tiny}$-1/2$\end{tiny}}}|\chi_z| + s^{\textup{\begin{tiny}$2$\end{tiny}}}(\chi_{z\bar{z}})^- \leq C_0$, and define
$S := \chi|w_z|^2 + C_0s^{-2}|w|^2$. By the usual calculation,
$$S_{z\bar{z}} \geq -C_0\sup_{\Delta(r+3s)\setminus\Delta(r)}(|w_{z\bar{z}}|^2 + s^2|w_{z\bar{z},z}|^2).$$
Thus, using the barrier $b$ from the first step and the maximum principle again, 
$$\sup_{\Delta(r+2s)\setminus\Delta(r+s)}|w_z| \leq C_0 s \sup_{\Delta(r+3s)\setminus\Delta(r)}(u_{z\bar{z}} + s|u_{z\bar{z},z}|).$$
From (\ref{onthebase}) and Table 4.1, certainly $s|u_{z\bar{z},z}| \leq C_0 u_{z\bar{z}}$ on $\Delta(r + 3s)\setminus\Delta(r)$. \hfill $\Box$\medskip\

We now construct the desired complete background K{\"a}hler metric $\omega_0$. For every
$\alpha > 0$ and $t > 0$, define a closed $(1,1)$-form $\omega_0(\alpha,t)$ on $M$ by setting
\begin{eqnarray}\label{properties1}
&&{\omega}_0(\alpha,t) := \begin{cases} \omega + t \beta + i \partial\bar{\partial}(\psi \tilde{u}_\alpha)
&{\rm outside} \; X|_{\Delta(r)},\\
\omega + t\beta + i\partial\bar{\partial}u_\alpha  &{\rm over}\;\Delta(r+s)^*,\end{cases}\\
\label{pot2}&&\tilde{u}_\alpha := u_1 + (\alpha-1)(u-v)\;\,{\rm on}\;\Delta(r + 3s)\setminus\Delta(r), \end{eqnarray}
where we identify $v$ with $v \circ f$. The definitions match over $\Delta(r+s)\setminus\Delta(r)$
because $\psi \equiv 1$ there and 
$u_\alpha - \tilde{u}_\alpha = (\alpha - 1)v$, which is pluriharmonic.
Note that
\begin{equation}\label{properties2}\omega_0(\alpha,t) = \begin{cases} \omega &{\rm outside}\;X|_{\Delta(r + 3s)},\\
T^*{\omega}_{\rm sf}(\alpha) &{\rm over}\;\Delta(r)^*.\end{cases}\end{equation}
\noindent {\bf Claim 3 (positivity of the glued form).} There exists a constant $C_0(r,s) > 0$ 
such that if $t > C_0(r,s) + C_0 |\alpha-1|\sup_{\Delta(r+3s)\setminus \Delta(r)} u_{z\bar{z}}$,
then, on all of $M$,
\begin{equation}\label{positive}
{\omega}_0(\alpha,t) \geq \frac{1}{2}(\omega + \psi i \partial\bar{\partial}u_\alpha) > 0. \end{equation}
\noindent \emph{Proof.} This only needs to be checked over $\Delta(r + 2s)\setminus \Delta(r+s)$. Calculate
\begin{align*}{\omega}_0(\alpha,t) =&\;\;
\omega + t\beta + \psi i\partial\bar{\partial}u_\alpha + [\psi_z u_{1,\bar{w}} \, i dz \wedge d\bar{w}
+ \psi_{\bar{z}} u_{1,w} \, i dw \wedge d\bar{z} \,+ \\
&\;+\{\psi_{z\bar{z}}\tilde{u}_\alpha +
\psi_{\bar{z}}\tilde{u}_{\alpha,z} + \psi_z \tilde{u}_{\alpha,\bar{z}}\}\, i dz \wedge d\bar{z}].\end{align*}
Since $\omega + i\partial\bar{\partial}u_\alpha = T^*{\omega}_{\rm sf}(\alpha) > 0$ over $\Delta^*$, we have 
$\omega + \psi i \partial\bar{\partial}u_\alpha > 0$ by convexity.
To compensate for the $dz \wedge d\bar{w}$ and $dw \wedge d\bar{z}$ terms in square brackets,
notice
$$(z,w)\begin{pmatrix} 0 & a \\ \bar{a} & 0 \end{pmatrix}\begin{pmatrix}\bar{z}\\ \bar{w}\end{pmatrix} = 2 {\rm Re}(az\bar{w}) \geq -\frac{|a|}{\delta} |z|^2
- \delta |a| |w|^2\;\;(a \in \C, \delta > 0).$$
Apply this by making $\delta$ small (depending on $\sup|\psi_z u_{1,\bar{w}}|$ and $\inf(\omega + \psi i\partial\bar{\partial} u_1)_{w\bar{w}}$
over $\Delta(r+2s)\setminus\Delta(r+s)$) and then $t > C_{0}(r,s)$ with $C_0(r,s)$ sufficiently large.
To treat the $dz \wedge d\bar{z}$ terms, consider (\ref{pot2}).
The
$u_1$ part can be controlled by making $C_0(r,s)$ larger.
To deal with the $(\alpha-1)(u-v)$ contribution, use (\ref{bernstein}). \hfill $\Box$\medskip\

Thus, given any $\alpha > 0$, if $t$ is large enough as specified in Claim 3, 
then $\omega_{0}(\alpha,t)$
defines a complete K{\"a}hler metric on $M$ which has all the required properties,
(\ref{properties1}), (\ref{properties2}), 
\emph{except} that the compactly supported volume form 
$\omega_{0}(\alpha,t)^2-\alpha\Omega\wedge\bar{\Omega}$ will 
not have zero mass in general, as needed for the integrability condition.\medskip\

\noindent {\bf Claim 4 (integrability condition).} One can choose $r,s$,
depending on $X,\Omega,\omega,$ $\tau_1,\tau_2$, such that for all $t' > 1$ there exists
precisely one $\alpha > 0$ such that $I(\alpha,t)$ $:=$ 
$\int ({\omega}_{0}(\alpha,t)^2 - \alpha\Omega \wedge \bar{\Omega}) = 0$ 
with $t := C_0(r,s)t' + C_0|\alpha-1|\sup_{\Delta(r+3s)\setminus \Delta(r)} u_{z\bar{z}}$.\medskip\

The key point to note here is that $I(\alpha,t)$ is \emph{linear} in both variables, from (\ref{properties1}).
Thus, Proposition \ref{extend} follows immediately from what is established in Claim 4.\medskip\

\noindent \emph{Proof.} Let $r,s > 0$ be arbitrary for now but with $C_0 r < 1$, $C_0 s < r$. For
any given $t' > 1$ and $\alpha > 0$, define $t$ as in the Claim, where $C_0(r,s)$ is as least as large as the generic
constant from Claim 3. We will show that $r,s$ can be chosen in such a way, but 
depending only on $X,\Omega,\omega,\tau_1,\tau_2$, that 
$I(\alpha,t) > 0$ if $\alpha$ is small and $I(\alpha,t) < 0$ if $\alpha$ is large, depending on $t'$ and all the other data.
Indeed, from (\ref{positive}),
$$\int_{X|_{\P^1 \setminus \Delta(r)}} {\omega}_{0}(\alpha,t)^2 \geq \frac{1}{4}\int_{X|_{\P^1 \setminus \Delta(r + 2s)}} \omega^2,$$
and hence $I(\alpha,t) > 0$ if $\alpha$ is sufficiently small, depending on $\Omega,\omega$ and $r,s$. 
For the other direction, fix $t' > 1$ and let $\alpha > 1$. Arguing as in the proof of Claim 3 then,
${\omega}_{0}(\alpha,t)$ $\leq$ $2(\omega + \psi i \partial\bar{\partial}u_1) + t\beta$ 
over $\P^1 \setminus \Delta(r)$. From (\ref{onthebase}) and Table 4.1,
$$\sup_{\Delta(r+3s)\setminus\Delta(r)} u_{z\bar{z}} \leq C_0 
\inf_{\Delta(r + 3s)\setminus \Delta(r)} u_{z\bar{z}}.$$
Thus, denoting by $\chi$ the characteristic function of $\Delta(r + 3s)\setminus\Delta(r)$ as well as its pull-back under
$f$, we find that over $\P^1\setminus\Delta(r)$,
$$\omega_0(\alpha,t) \leq 2(\omega + \psi i\partial\bar{\partial}u_1) 
+ C_0(r,s)t'\beta + C_0(\alpha-1)\chi i\partial\bar{\partial}{u}.$$
By (\ref{onthebase}) and Table 4.1, $\beta \leq C_0 i\partial\bar{\partial}u$.  
Assuming $\alpha - 1 > C_0(r,s)t'$, by (\ref{pot1}), 
$$I(\alpha,t) \leq 4\int_{X|_{\P^1\setminus\Delta(r + 3s)}} \omega^2 + C_0 \int_{X|_{\Delta(r + 3s)\setminus\Delta(r)}} T^*\omega_{\rm sf}(\alpha)^2 
- \int_{X|_{\P^1\setminus\Delta(r)}} \alpha\Omega\wedge \bar{\Omega}.$$
Since $T^*\omega_{\rm sf}(\alpha)^2 = \Omega$, choosing first $r$ and then $s$ in
such a way that
$$\int_{X|_{\P^1 \setminus \Delta(r)}} \Omega \wedge \bar{\Omega} > 4 \int_M \omega^2 + 
C_0 \int_{X|_{\Delta(r+3s)\setminus\Delta(r)}} \Omega \wedge \bar{\Omega},$$
we can achieve $I(\alpha,t) < 0$ as desired. \hfill $\Box$

\section{Uniqueness and moduli in the cylindrical case}

This section studies uniqueness and deformation questions for hyperk{\"a}hler
ALH metrics on open $4$-manifolds $M$ which are diffeomorphic to complements of 
smooth fibers $D$ in rational elliptic fibrations $f: X \to \P^1$. This relies
on a computation of the topological invariants of $M$ and on Hodge theory for
asymptotically cylindrical manifolds, which we work out and summarize as needed in Section 5.1.
Section 5.2 contains the proof of our partial uniqueness result, Theorem \ref{alh}.
The deformation space of an ALH hyperk{\"a}hler metric is studied in Section 5.3, clarifying 
some issues that appear in applying Kovalev \cite{kovalev2} to this particular setting.

\subsection{Topology and analysis on ALH spaces} $H^2(X) = H^{1,1}(X) = \R^{10}$ and $M$ is simply-connected, hence
the Mayer-Vietoris sequence
$$0 \to H^1(T^2) \to H^1(T^3) \to H^2(X) \to H^2(M) \oplus H^2(T^2) 
\to H^2(T^3) \to 0$$
yields $H^2(M) = \R^{11}$. More precisely, since the map $H^2(X) \to H^2(M) \oplus H^2(T^2)$ has a $9$-dimensional image 
and $1$-dimensional kernel (spanned by the Poincar{\'e} dual of a fiber,
i.e.~$c_1(X)$), and since the restriction $H^2(X) \to H^2(M)$ maps into 
$H^{1,1}(M)$, the subspace of all de Rham classes
representable by closed forms which are $(1,1)$ for the complex structure on $X$, we find
$\dim H^{1,1}(M) \geq 9$.
However, for a meromorphic $2$-form $\Omega$ with ${\rm div}(\Omega) = -D$, 
$[{\rm Re}(\Omega)]$ and $[{\rm Im}(\Omega)]$ are linearly 
independent and not contained in $H^{1,1}(M)$,
so $H^{2}(X) \to H^{1,1}(M)$ is onto with kernel spanned by $c_1(X)$. 
In particular, all closed $(1,1)$-forms on $M$ pair to zero
with all $2$-cycles
in $M$ which are boundaries in $X$, such as the bad cycles from Definition \ref{bad}.
 
The cohomology long exact sequence for $(X\setminus U, \partial U)$ for
a tubular neighborhood $U$ of $D$ gives
$H^2_c(M) = \R^{11}$ and ${\rm im}(H^2_c(M) \to H^2(M)) = \R^8$. The classes in the image are precisely the ones
that can be represented by compactly supported forms.
If $g$ is any ALH Riemannian
metric on $M$, then the de Rham cohomology class of every $L^2$-harmonic $2$-form with respect to $g$ is of this type,
and conversely all such classes are uniquely representable by $L^2$-harmonic forms.
(For this last fact, and for all other standard results from Hodge theory on cylindrical manifolds 
that appear in the following,
we refer to the nice overview in Nordstr{\"o}m \cite[Sections 5.1--5.2]{nordstrom}.)
If $g$ is K{\"a}hler with respect to the complex structure on $X$ and if ${\rm Ric}(g) \geq 0$, 
we see that $H^{1,1}(M)$ splits as the direct sum of $\R[\omega]$ and the space 
of $L^2$-harmonic forms, 
all of which are primitive $(1,1)$ since $\wedge \, \omega$ preserves
harmonicity and ED harmonic $(2,0)$-forms are trivial by the Bochner formula. 

Back to a general ALH metric, the space of \emph{bounded} $\Delta_d$-harmonic $r$-forms with respect to $g$
has dimension
$\dim H^r(M) + \dim H^r_c(M) - \dim {\rm im}(H^r_c(M) \to H^r(M))$. 
Outside a compact set, 
all such forms split into one part which is parallel for $g$ and another one which is exponentially
decaying (ED) as in Definition \ref{ed}. Therefore,
the space of bounded $\Delta_d$-harmonic $2$-forms
splits (not uniquely) as the direct sum of its 
$8$-dimensional $L^2$-harmonic subspace, and a $6$-dimensional subspace of 
forms which are asymptotic to
$\alpha \wedge dt + \beta$ ($\alpha \in \wedge^1\R^3$, $\beta \in \wedge^2\R^3$) with respect to
any ALH coordinate system $\Phi$ as in Definition \ref{alhdef}(ii). Finally, we will need to know that
$\Delta_d$ is Fredholm of index $-(b_r(T^3) + b_{r-1}(T^3))$ 
when acting on ED $r$-forms with a small enough decay rate, some suitable
weighted H{\"o}lder topology to be understood.

\begin{corollary}\label{alhsolve} Let $\beta$ be an {\rm ED} $r$-form on $M$ with $r \in \{0,1,2\}$.

{\rm (i)} If $r = 0$, then for any fixed {\rm ALH} coordinate system there exists 
a unique $\alpha$ $=$ $Ct$ $+$ ${\rm ED}$ such that $\Delta_d\alpha = \beta$,
and moreover $C = 0$ if and only if $\int \beta \, d\vol = 0$.

{\rm (ii)} If $r = 1$, there exists a unique $\alpha$ $=$ parallel $+$ ${\rm ED}$ with $\Delta_d\alpha = \beta$. The parallel part of $\alpha$ 
vanishes if and only if $\int \langle \beta,\eta\rangle \, d\vol = 0$ for all $\Delta_d$-harmonic
$1$-forms $\eta$ of linear growth. The space
of all such harmonic $1$-forms has dimension four.

{\rm (iii)} If $r = 2$, then a smooth and in addition ${\rm ED}$ $2$-form $\alpha$ with $\Delta_d\alpha = \beta$ exists if and only
if $\int \langle \beta,\eta\rangle \, d\vol = 0$ for all bounded $\Delta_d$-harmonic $2$-forms $\eta$.
\end{corollary}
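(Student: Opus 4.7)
The approach is to take the Fredholm theory for $\Delta_d$ on exponentially weighted spaces (stated just above the corollary) as a black box, and then to compute the kernel and cokernel separately for each degree $r \in \{0,1,2\}$. The kernel of $\Delta_d$ acting on ED $r$-forms is $L^2$-harmonic, so by the preceding Hodge-theoretic discussion its dimension is $\dim \operatorname{im}(H^r_c(M) \to H^r(M))$, which equals $0$, $0$, $8$ for $r = 0,1,2$. Combined with the given index $-b_r(T^3)-b_{r-1}(T^3) = -1,-4,-6$, the cokernels have dimensions $1,4,6$ respectively. The core task is to identify these cokernels concretely via the $L^2$ pairing and, for $r = 0,1$, to enlarge the domain by the right finite-dimensional space so as to match the extra integrability coefficients appearing in the statement.

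Since $\Delta_d$ is formally self-adjoint under the pairing $\int \langle \cdot,\cdot\rangle\, d\vol$, the cokernel of $\Delta_d: \mathrm{ED}^r \to \mathrm{ED}^r$ is isomorphic to the kernel of $\Delta_d$ on the dual weighted space, i.e.\ to harmonic $r$-forms of subexponential growth. For a small enough weight, the only growth orders allowed are those arising from the indicial root at $0$, whose contribution is exactly parallel $r$-forms and parallel $(r-1)$-forms wedged with $dt$ on the $\R^+ \times T^3$ end, possibly multiplied by $t$. Matching these dimensions with the required $1,4,6$ gives: for $r=2$ the cokernel is spanned by the full $14$-dimensional space of bounded harmonic $2$-forms (with the $8$-dimensional $L^2$-part absorbed into self-duality via the $L^2$ pairing, leaving effective codimension $6$ — one can equivalently phrase (iii) as orthogonality to all $14$ bounded harmonic forms, since the $L^2$ summand is automatic); for $r=1$ the cokernel is spanned by the $4$-dimensional space of harmonic $1$-forms of linear growth; for $r=0$ the cokernel is spanned by the constants.

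For part (iii), the Fredholm alternative then directly yields the stated orthogonality criterion for ED solvability in degree $2$. For part (ii), enlarge the domain by choosing a $4$-dimensional space of smooth $1$-forms on $M$ which, outside a compact set, equal one of $dt$ or the three flat $1$-forms on the $T^3$-factor of the ALH coordinate system. The resulting extended operator $\Delta_d : (\text{parallel at infinity}) \oplus \mathrm{ED}^1 \to \mathrm{ED}^1$ has index $-4 + 4 = 0$ and still trivial kernel (an element of the kernel would be a bounded harmonic $1$-form, but $\dim H^1(M) = 0$ forces its $L^2$-part to vanish as well), hence is an isomorphism, giving existence and uniqueness of $\alpha = $ parallel $+$ ED. The parallel coefficients are recovered from pairing $\beta$ with a basis of the $4$-dimensional cokernel space, which is constructed explicitly on the model cylinder as $t\cdot\gamma_i$ for $\gamma_i$ a parallel $1$-form on $T^3$ ($i = 1,2,3$) together with $t\, dt = d(t^2/2)$; each of these is harmonic on $\R^+ \times T^3$ since $\Delta_d t = 0$ and $\Delta_d \gamma_i = 0$ there, and each extends to a linear-growth harmonic $1$-form on $M$ by adding an ED correction (itself solved using the analogous Fredholm statement one derivative higher, or by truncation).

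For part (i), enlarge the ED domain by a single dimension spanned by $\chi t$, where $\chi$ is a smooth cut-off equal to $1$ at infinity, so that $\chi t$ is a linear-growth function agreeing with $t$ on the end. By the divergence theorem applied on a large ball,
\begin{equation}
\int_M \Delta_d(\chi t)\, d\vol \;=\; -\lim_{R\to\infty} \int_{\partial B(x_0,R)} \partial_\nu (\chi t)\, d\mathrm{area} \;=\; \pm \vol(T^3) \;\neq\; 0,
\end{equation}
so $\Delta_d(\chi t)$ is a compactly supported function with nonzero total mass. This shows that on the enlarged domain $\R\cdot(\chi t) \oplus \mathrm{ED}^0$ the operator $\Delta_d$ has trivial kernel (since $\chi t$ is not ED) and index $0$, and moreover the coefficient $C$ in $\alpha = C\chi t + (\text{ED})$ is determined by requiring $\int_M \Delta_d(C\chi t)\, d\vol = \int_M \beta\, d\vol$, hence $C = \pm (\vol T^3)^{-1}\int\beta$ and $C = 0$ iff $\int\beta = 0$. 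The main obstacle is the identification in the second paragraph of the cokernel with specific asymptotically parallel and linearly growing harmonic forms; this rests on the standard indicial-root analysis for $\Delta_d$ on a cylinder, which gives the precise asymptotic expansion of any subexponential harmonic form near infinity and matches the multiplicities $b_r(T^3) + b_{r-1}(T^3)$ appearing in the index.
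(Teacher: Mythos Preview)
Your proposal is correct and takes essentially the same route as the paper: compute the ED kernel via $L^2$-Hodge theory ($0,0,8$), read off the cokernel dimension from the stated Fredholm index, identify the cokernel via duality with constants / linearly growing harmonic $1$-forms / bounded harmonic $2$-forms, and for $r=0,1$ enlarge the domain by the appropriate finite-dimensional space to obtain an isomorphism.

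Two small points. First, an arithmetic slip: you write that ``the cokernels have dimensions $1,4,6$ respectively,'' but for $r=2$ the kernel is $8$-dimensional, so $\dim\operatorname{coker} = 8 - (-6) = 14$, not $6$. You effectively correct yourself two sentences later (``the cokernel is spanned by the full $14$-dimensional space of bounded harmonic $2$-forms''), so the final argument for (iii) is right; just fix the summary line. Second, a minor ordering difference for $r=1$: the paper first proves that the extended operator $(\text{parallel}+\text{ED}) \to \text{ED}$ is an isomorphism (index $0$, kernel trivial because bounded harmonic $1$-forms vanish), and only \emph{then} uses this surjectivity to construct the four linearly growing harmonic $1$-forms, which a posteriori must span the cokernel on ED. You instead identify the cokernel first via indicial analysis and then enlarge. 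Both orders work; the paper's order has the slight advantage of not having to invoke the indicial-root expansion directly.
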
 

\begin{proof} If $r = 0$, the condition $\int \beta = 0$ is clearly necessary for an ED solution $\alpha$ to exist,
and thus sufficient because the index is $-1$ and the kernel is trivial. All $\alpha_0$ with $\alpha_0 = t + {\rm ED}$ outside 
a compact set satisfy $\Delta_d\alpha_0 = {\rm ED}$, but $\Delta_d \alpha_0 = 0$ implies
$d\alpha_0$ is bounded $\Delta_d$-harmonic and hence still $\alpha_0 = 0$,
and so $\Delta_d: {\rm ED} \oplus \R \alpha_0 \to {\rm ED}$ is onto. For $r = 1$,
note that the index on ED $1$-forms is $-4$, and we can obviously find four linearly independent $1$-forms
which are parallel + ED, while, again, there are no bounded $\Delta_d$-harmonic $1$-forms at all. 
This in turn proves that we can take any $1$-form which is equal to $t \xi + {\rm ED}$ outside a compact set for 
some constant $\xi$, and correct it by a bounded form so as to become $\Delta_d$-harmonic, providing us with a 
$4$-dimensional space of linearly growing harmonic $1$-forms, which must then define the cokernel of $\Delta_d$ on
ED $1$-forms because the index is $-4$.
If $r = 2$, the given condition is necessary, and so $\dim {\rm coker} \geq 14$, but also
${\rm index} = -6$ and $\dim {\rm ker} = 8$.
\end{proof}

The key fact here is that while on a \emph{general} asymptotically cylindrical manifold, the space of bounded harmonic $r$-forms mod ED
is of dimension $\leq b_{r}(Y) + b_{r-1}(Y)$ if $Y$ denotes the cross-section, in this particular case, we have equality for $r = 2$.

\subsection{Proof of Theorem \ref{alh}} To prove Part (i), define $\omega := \Psi^*\omega_2-\omega_1$. From our 
assumptions, $\omega = {\rm ED}$ on $M_1$. By Corollary \ref{alhsolve}(ii), there exists a bounded $1$-form
$\alpha = {\rm parallel} + {\rm ED}$ such that $\Delta_d\alpha = d^*\omega$. Since $\omega$ is exact by assumption,
the form $d\alpha-\omega$ is $\Delta_d$-harmonic, ED, and exact, and hence $d\alpha = \omega$ by the Hodge theorem. 
Now
$\bar{\partial}^*{\alpha}^{0,1}$ is an ED function. Thus, by Corollary \ref{alhsolve}(i) and the K{\"a}hler identities, there exists a function $\gamma$ of at most linear growth such that $\bar{\partial}^*\bar{\partial}\gamma = \bar{\partial}^*\alpha^{0,1}$. Since
$\bar{\partial}\alpha^{0,1} = 0$ for type reasons, 
$\bar{\partial}\gamma - {\alpha}^{0,1}$ is a bounded $\Delta_d$-harmonic $1$-form, thus zero, so
eventually $\omega = i\partial\bar{\partial}u$ with $u := 2\, {\rm Im}(\gamma)$. Now
$\Psi^*\omega_2^2 = h \omega_1^2$
with
 $h = 1 + {\rm ED}$, but $h$ must
be constant because $\omega_1$ and
$\Psi^*\omega_2$ are globally hyperk{\"a}hler and $\Psi^*J_2 = J_1$,
and hence $\Psi^*\omega_2^2 = \omega_1^2$. With $\omega = i\partial\bar{\partial} u$, this 
yields $\Delta u = 0$, where the Laplacian is taken with respect to the metric $g_1 + \Psi^*g_2$. This metric is still ALH,
so $u = const$ from Section 4.1, and thus $\omega = 0$ as desired.\medskip\

Part (ii) is completely elementary. There
exist $g_{\rm flat}$-parallel and $g_{\rm flat}$-orthogonal 
almost-complex structures $J_{i,0}$ on $\R^+ \times S^1 \times T^2$ such that
$\Phi_i^*J_i = J_{i,0} + {\rm ED}$, where
$g_{\rm flat} = dt^2 \oplus \ell^2 d\phi^2 \oplus g_{\epsilon,\tau}$ and $\phi \in S^1 = \R/2\pi\Z$
as in Definition \ref{alhdef}(ii). To see this, define a family
$J_{i,0}(T)$, $T \in \R^+$, of 
such almost-complex structures by
setting 
$$J_{i,0}(T)|_{(T,\ast)} := P[(\Phi_i^*J_i)|_{(T,\ast)}],$$
where $P$ denotes nearest-neighbor projection onto $g_{\rm flat}$-orthogonal almost-complex structures
and $\ast$ is some base point in $S^1 \times T^2$,
and then by $g_{\rm flat}$-parallel transport, noting
that flat cylinders $\R \times T^3$ are hyperk{\"a}hler so there are no holonomy issues.
From ODE estimates, $|J_{i,0}(T) - \Phi_i^*J_i|_{g_{\rm flat}} = O(e^{-\delta t} + e^{-\delta T})$ on the
whole cylinder, uniformly as $T \to \infty$. Thus $J_{i,0}(T)$ limits out to some $J_{i,0}$
as desired.

Since $(M_i, J_i)$ is $\Phi_i$-compactifiable,
$\mu^*J_{i,0}$ must
extend continuously 
to $\Delta \times T^2$.
This already forces $J_{i,0} = \pm J_\ell \oplus \pm J_{\tau}$,
where $J_\ell(\partial_t) = \ell^{-1}\partial_\phi$ and $J_\tau$ is one of the two 
$g_{\epsilon,\tau}$-parallel, 
$g_{\epsilon,\tau}$-orthogonal
almost-complex structures on $T^{\textup{\begin{tiny}$2$\end{tiny}}}$. 
Since the map $\Pi := \mu^{\textup{\begin{tiny}$-1$\end{tiny}}} \circ \Phi_{\textup{\begin{tiny}$2$\end{tiny}}}^{\textup{\begin{tiny}$-1$\end{tiny}}} \circ \Psi \circ \Phi_{\textup{\begin{tiny}$1$\end{tiny}}} \circ \mu$ 
extends as a diffeomorphism from $\Delta \times T^{\textup{\begin{tiny}$2$\end{tiny}}}$ to itself, 
preserving $\{0\} \times T^{\textup{\begin{tiny}$2$\end{tiny}}}$, 
$\Pi$ must pull back $\mu^*J_{2,0}$ to $\mu^*J_{1,0}$ along $\{0\} \times T^{\textup{\begin{tiny}$2$\end{tiny}}}$. 
Therefore,
$\Pi^*(\pm J_\C \oplus \pm J_{\tau}) = \pm J_\C \oplus \pm J_{\tau}$ 
along $\{0\} \times T^{\textup{\begin{tiny}$2$\end{tiny}}}$ for some combination
of the signs. We can make 
all signs positive by composing with isometries of $g_{\rm flat}$. 
Thus, in complex coordinates $(z,w)$ on $\Delta \times T^{\textup{\begin{tiny}$2$\end{tiny}}}$, 
with $w$ uniformizing $(T^{\textup{\begin{tiny}$2$\end{tiny}}}, J_\tau)$,
$\Pi_* \in {\rm Gl}(2,\C)$ along $\{0\} \times T^2$, more precisely $\Pi_* = (\begin{smallmatrix}a & 0 \\ \ast & b \end{smallmatrix})$ 
with $|b| = 1$. Hence, by Taylor expansion, 
$$\Pi(z,w) = (az, bw + c) + (O(|z|^2),O(|z|)),\;c \in \C,$$
thus $\Pi^*(\mu^*g_{\rm flat}) = (\mu^*g_{\rm flat})(1 + O(|z|))$ and  
so $\Psi^*g_2 - g_1 = {\rm ED}$.\hfill $\Box$

\subsection{Proof of Theorem \ref{alh2}} Koiso \cite{koiso} showed that the kernel of the linearized Einstein operator
on a compact Calabi-Yau manifold, modulo Lie derivatives of the metric, is invariant under the action of the almost-complex structure $J$. Thus, the kernel
contains its hermitian and skew-hermitian components, which can be viewed as spaces of first-order deformations of the K{\"a}hler class, and of $J$, respectively.
If $J$ has unobstructed deformations (which is always true, by later work of Tian \cite{tiatod} and Todorov \cite{tod}),
Koiso then gave a method to integrate all Ricci-flat deformations
of the metric by decomposing them into their hermitian and skew-hermitian parts
and using the implicit function theorem
to solve a family of Monge-Amp{\`e}re equations
on the resulting 
family of compact K{\"a}hler manifolds. 
Kovalev \cite{kovalev2} has sketched an 
extension of Koiso's theory to the setting
of open Calabi-Yau manifolds asymptotic
to a complex cylinder $\C^* \times D$ with metric $|d\log z|^2 \oplus g_D$,
where $(D, g_D)$ is compact Calabi-Yau with $\pi_1(D) = 0$. The condition on $\pi_1$ implies that 
the complex splitting at infinity is preserved under bounded Ricci-flat deformations. We will
develop this theory carefully on ALH spaces, in which case $D$ is a torus
and so $\pi_1(D) \neq 0$. This ultimately leads to various complications when counting moduli.

In \emph{Step 1}, we study deformations of the flat metric on $\R \times T^3$ from Koiso's point of view, viewing $\R \times T^3 = \C^2/\Lambda$ for some rank-$3$ lattice $\Lambda \subset \C^2$.

In \emph{Step 2}, we show that on ALH hyperk{\"a}hler manifolds diffeomorphic to $X \setminus D$,
the kernel $\mathcal{E} \subset \mathcal{H}$ of the linearized Einstein operator contains a harmonic space $\mathcal{D}$
of dimension $34$  (the kernel of the Lichnerowicz Laplacian) such that $\mathcal{E} = \mathcal{D} \oplus \mathcal{L}_{\rm bd}$,
where $\mathcal{L}_{\rm bd} := \{L_X g: X$ $=$ parallel $+$ ED$\}$. Except for the precise dimension count, this follows from work in \cite{kovalev2}.

In \emph{Step 3}, we describe the space of Lie derivatives that are still contained in $\mathcal{D}$. 
This turns out to be the space $\mathcal{L}_{\rm lin} := \{L_X g:$ $X$ linearly growing harmonic$\}$,
which reduces the number of moduli from $34$ to $30$, still counting the multiples of $g$.

In \emph{Step 4}, we obtain integrability of all first-order Ricci-flat deformations if the 
hyperk{\"a}hler ALH space to be deformed is one of those constructed in Tian-Yau \cite{ty1}.
Let $^\pm$ denote the projections onto $J$-(skew-)hermitian forms, where $J$ is the unique 
parallel orthogonal almost-complex structure on $M$ 
that can be compactified. Then $\mathcal{D}^\pm \subset \mathcal{D}$, 
${\rm dim} \,\mathcal{D}^+ = 12$, $\dim \,\mathcal{D}^- = 22$, $\dim \mathcal{L}_{\rm lin} = 4$, $\mathcal{L}_{\rm lin} \cap \mathcal{D}^+ = 0$, $\mathcal{L}_{\rm lin} \cap \mathcal{D}^- = \R$, so $\mathcal{L}_{\rm lin}^-$ is of codimension $18$ in $\mathcal{D}^-$. 
Luckily, $(M,J)$ admits obvious complex moduli 
from deforming the pencil or moving $D$, and we will see that these yield a subspace $\mathcal{S} \subset \mathcal{D}^-$ 
with $\dim_\C \mathcal{S} = 9$ and $\mathcal{S} \cap \mathcal{L}_{\rm lin}^- = 0$. 
Every
element of $\mathcal{E}$ 
thus differs from a unique element of $\mathcal{D}^+ \oplus \mathcal{S}$ by a Lie derivative. All elements of $\mathcal{S}$ are integrable as deformations of $J$,
and it is fairly easy to see that the $\mathcal{S}$-deformations of $(M,J)$ still admit
ALH K{\"a}hler metrics.
This then suffices for Koiso-Kovalev to go through.\medskip\ 

\noindent \emph{Step 1: Deformations  of flat cylinders.} Equip $\C^2$ with its standard K{\"a}hler structure $(g,J)$ and 
let $\Lambda \subset \C^2$ be a rank-$3$ lattice. Then $\C^2/\Lambda = \R \times T^3$ isometrically for the flat metric on  $T^3 = \R^3/\Z^3$ induced by any 
marking $\Z^3 \to \Lambda$. Thus, the deformation 
space of the flat metric on $\C^2/\Lambda$, including a scale, is naturally identified with the
$6$-dimensional space ${\rm Gl}(3,\R)/{\rm O}(3)$ of marked flat metrics on $T^3$. 
We are now going to compare this with Koiso's general approach \cite{koiso}.

The natural space of
infinitesimal Ricci-flat deformations here is $\mathcal{D} = {\rm Sym}^{2}\R^4$. We decompose
$\mathcal{D}$ into its $J$-hermitian and $J$-skew-hermitian parts, $\mathcal{D} = \mathcal{D}^+ \oplus \mathcal{D}^- $, 
and, writing $h(u,v) = g(Hu,v)$, $H = H^{\rm tr}$, for $h \in \mathcal{D}$, we introduce a linear map \begin{equation}\label{i}\mathfrak{I}: \mathcal{D} \to \R^{4\times 4}, \, \mathfrak{I}(h) := H^+J + H^-.\end{equation}
This defines an isomorphism of U$(2)$-modules 
between $\mathcal{D}$ and $\mathfrak{sp}(4,\R)$:
\begin{align*}\begin{split}\mathfrak{I}(\mathcal{D}^+)  &= \{I \in \R^{4\times 4}: I^{\rm tr} + I =  IJ - JI = 0\} = \mathfrak{u}(2),\\
\mathfrak{I}(\mathcal{D}^-) &= \{I \in \R^{4\times 4}: I^{\rm tr} - I =  IJ + JI = 0\}\cong \mathfrak{sp}(4,\R)/\mathfrak{u}(2).\end{split}\end{align*}
We can understand $\mathfrak{I}(\mathcal{D}^-)$ as a space of infinitesimal deformations of $J$. In general, if
$I \in \R^{4 \times 4}$ and $IJ + JI = 0$, there is an associated complex structure $\mathfrak{C}(I)$ on $\R^4$ defined by
having its $(0,1)$-space in $\R^4 \otimes \C$ equal to $\{u'' + (Iu)': u \in \R^4\}$,
where $u' = u - iJu$ and $u'' = u + iJu$. This is the standard
way of exponentiating first-order complex deformations in Kodaira-Spencer theory; when applied pointwise on a complex manifold, 
the integrability of $\mathfrak{C}(I)$ as an almost-complex structure then translates into the Maurer-Cartan equation $\bar{\partial}I + [I,I] = 0$.
Explicitly,
\begin{equation}\label{cayley}\mathfrak{C}(I) = J(1 - I)(1 + I)^{-1}.\end{equation}
If in addition $I^{\rm tr} = I$, or equivalently $I \in \mathfrak{I}(\mathcal{D}^-)$, then $\omega(u,v) = g(Ju,v)$ is still of type $(1,1)$ 
with respect to $\mathfrak{C}(I)$, and $\mathfrak{C}$ identifies its domain of definition in $\mathfrak{I}(\mathcal{D}^-)$, which is a bounded domain, with 
the Siegel upper half plane $\mathfrak{H}_2 = {\rm Sp}(4,\R)/{\rm U}(2) = \{K \in \R^{4 \times 4}: K^2 = -1, K^*\omega = \omega\}$. In fact,
$\mathfrak{C}$ is the generalized Cayley transform. On the other hand, $\mathfrak{C}$ is globally defined on the
space of $I \in \R^{4\times 4}$ with $IJ + JI = 0$ and $I^{\rm tr} + I = 0$, which is a complex line (the infinitesimal twistor line of $\R^4$),
and maps this to the complement of $-J$ in the actual twistor $\P^1$.

(i) $\mathcal{D}$ has dimension $10$ and hence overcounts the number of moduli of the flat
metric $g$ on $\C^2/\Lambda$ by four. This is because $\mathcal{D}$ contains a four-dimensional space $\mathcal{L}_{\rm lin}$
of Lie derivatives of $g$  by linear vector fields:
On $\C^2$, for any $h \in \mathcal{D}$, $h = L_Xg$ with 
$X|_u = (1/2)Hu$, $H$ as above, and $X$ 
descends to $\C^2/\Lambda$ precisely if $H(\Lambda) = 0$.

(ii) Suppose the cross-section $T^3 = S^1 \times T^2$ as an isometric product.
Then
$$\Lambda = \Z (2\pi \ell e_2) +  \Lambda',\;\Lambda' = \frac{\epsilon}{{\rm Im}(\tau)}(\Z + \Z\tau) \subset \C = \R e_3 \oplus \R e_4,$$
with $\epsilon, \ell > 0$ and $\tau \in \mathfrak{H}$, after rotating the lattice if necessary. In this case, $\mathcal{L}_{\rm lin} \subset \mathcal{D}$
has a canonical basis $h_i = L_{x^1 e_i}g$ for $1 \leq i \leq 4$, and $h_2 \in \mathcal{D}^-$ while ${\rm span}\{h_1,h_3,h_4\}$ projects isomorphically into both $\mathcal{D}^+$ and $\mathcal{D}^-$.
Hence, if we quotient by $\mathcal{L}_{\rm lin}$, 
there remain two skew-hermitian deformations dual to changing the complex modulus $\tau$, and four hermitian ones. One of these is an overall scale, one is dual to changing
$\ell^2/\epsilon$,
and the remaining two break the isometric splitting of the cross-section. \hfill $\Box$

\begin{remark}\label{comprem} Step 1(ii) shows that if $\Lambda$ isometrically splits off a $\Z$-factor, then even though the metric on $\C^2/\Lambda$ 
can be deformed so that its cross-section no longer splits, the complex structure on $\C^2/\Lambda$ nevertheless remains compactifiable as $\P^1 \times (\C/\Lambda')$ after hyperk{\"a}hler rotation.
Said differently, we can transform any $\Lambda$ into split form by hyperk{\"a}hler rotation and a $\C$-linear map, but this may twist the metric. \hfill $\Box$
\end{remark}

\noindent \emph{Step 2: The kernel of the linearized Einstein operator.} Let $g$ be ALH hyperk{\"a}hler on a manifold $M$ diffeomorphic
to the complement of a smooth fiber in a rational 
elliptic surface, and recall $\mathcal{H} = \{h \in C^\infty({\rm Sym}^2T^*M): 
h$ $=$ parallel $+$ ED outside a compact set$\}$. The linearization $E$ of the Einstein operator at $g$ is given by
$$\begin{cases}E(h) = \frac{1}{2}(\nabla^* \nabla h - 2 R(h) + L_{D(h)}g), \smallskip\ \\ 

R(h)(X,Y) := \sum h(R(E_i,X)Y,E_i),\smallskip\ \\ 

\langle D(h), X\rangle := \sum \nabla_{E_i}(h - \frac{1}{2}({\rm tr}\,h)g)(E_i, X).\end{cases}$$
Our goal is to describe the space $\mathcal{E} := \mathcal{H} \cap {\rm ker} \,E$ as explicitly as possible. For this, 
we first of all observe that  $D(L_X g) = - \nabla^*\nabla X + {\rm Ric}\,X$ for \emph{all} Riemannian metrics $g$ and vector fields $X$. 
Thus, from Corollary \ref{alhsolve},  any given $h \in \mathcal{H}$ can be corrected by a Lie derivative $L_X g$ with $X$ $=$ parallel $+$ ED outside a compact set
such that $D(h + L_X g) = 0$. Denote the space of all such Lie derivatives by $\mathcal{L}_{\rm bd}$. We conclude
$\mathcal{E} = (\mathcal{D} \cap \ker D) \oplus \mathcal{L}_{\rm bd}$, with $\mathcal{D} := \{h \in \mathcal{H}: \nabla^*\nabla h - 2 R(h) = 0\}$, and the two spaces
are transversal since otherwise $(M,g)$ would admit a nontrivial parallel vector field. The space $\mathcal{D}$ is the counterpart over $(M,g)$
to ${\rm Sym}^2\R^4$ over $\C^2/\Lambda$ as in Step 1.

Fix a parallel orthogonal almost-complex structure $J$. Then $\mathcal{D} = \mathcal{D}^+ \oplus \mathcal{D}^-$, and there are Bochner formulas showing that $\mathfrak{I}(\mathcal{D}^+)$ and $\mathfrak{I}(\mathcal{D}^-)$ (recall (\ref{i})) are exactly the spaces of 
bounded $\Delta_d$-harmonic real $(1,1)$-forms and $\Delta_{\bar{\partial}}$-harmonic symmetric $T^{1,0}M$-valued $(0,1)$-forms, respectively.
As $\Delta_{\bar{\partial}}\alpha = 0 \Leftrightarrow \bar{\partial}\alpha = \bar{\partial}^*\alpha = 0$,
$\alpha \in \mathfrak{I}(\mathcal{H}^-)$, we find that $\mathfrak{I}(\mathcal{D}^-)$ consists of first-order deformations of $J$ as an
integrable almost-complex structure, i.e.~if $I \in \mathfrak{I}(\mathcal{D}^-)$, then all curves $I_t$ with $I_0 = 0$, $I_t'|_0 = I$,
satisfy 
$\bar{\partial}I_t + [I_t,I_t] = 0$ to first order in $t$, cf.~Step 1.
Note that $\Delta_{\bar{\partial}}$-harmonic
$T^{1,0}M$-valued $(0,1)$-forms are in bijection with $\Delta_d$-harmonic $\C$-valued $(1,1)$-forms, from dualizing 
with a holomorphic symplectic form. Thus
$\mathcal{D} \cap \ker D = \mathcal{D}$ because wedging a scalar
form with $\omega$ preserves harmonicity,
so ${\rm tr}\,h$ is harmonic and hence constant for any
$h \in \mathcal{D}$, and ${\rm div}\,h = 0$ 
since bounded harmonic forms are $d^*$-closed.

In conclusion, 
$\mathcal{E} = \mathcal{D} \oplus \mathcal{L}_{\rm bd}$. Also, $\dim \mathcal{D} = (1+2)\cdot 8 + 10 = 34$. This is because by Section 5.1, the space of ED $\Delta_d$-harmonic real
$(1,1)$-forms is $8$-dimensional, and the space of bounded
{$\Delta_d$-}harmonic $2$-forms modulo ED is isomorphic to the space of constant $2$-forms on the flat cylinder $\C^2/\Lambda$ to which $(M,g)$ asymptotes,
through the map $P$ which sends any tensor to its 
constant part outside a compact set; now
$P$ preserves exterior form types with respect to $J$ and $P(J)$, 
and maps the twistor line $\C\Omega$ of $(M,g,J)$ to that of $\C^2/\Lambda$, so
$(P \circ I)(\mathcal{D}) = {\rm Sym}^2\R^4 = \R^{10}$.\hfill $\Box$\medskip\

\noindent \emph{Step 3: Lie derivatives in the harmonic space.} Let $h = L_X g \in \mathcal{D}$, so $\nabla^*\nabla X = 0$.\medskip\

\noindent \emph{Claim:} If $h = {\rm ED}$, then $|\nabla X|$ is uniformly bounded on $M$. \medskip\

Assuming this claim, $X$ is harmonic of at most linear growth. Then the proof of Corollary
\ref{alhsolve}(ii) shows $X = tX_0 + {\rm parallel} + {\rm ED}$ outside a compact set, where $X_0$
is a constant vector field 
on $\C^2/\Lambda$. But then $h = {\rm ED}$ implies $tX_0$ is Killing for the flat metric, 
and so $X_0 = 0$, which implies $X \in \mathcal{D} \cap \mathcal{L}_{\rm bd} = \{0\}$ and hence $h = 0$.

Thus, if $h \neq 0$, then $h = h_0 + {\rm ED}$ outside a compact set
with $h_0 \neq 0$ a constant bilinear form on $\C^2/\Lambda$. Moreover, $h_0 \in \mathcal{L}_{\rm lin}$
as defined in Step 1(i) because otherwise $h$ would change the cross-sectional geometry of $(M,g)$ even though $h = L_X g$.

Conversely, given
any $h_0 \in \mathcal{L}_{\rm lin}$, there
exists a unique $h = h_0 + {\rm ED} = L_X g \in \mathcal{D}$. Here, \emph{uniqueness} holds 
by the above,
and \emph{existence} holds because if $h_0 = L_{X_0}g_{\rm flat}$ with $X_0$ linear
on $\C^2/\Lambda$, then from Corollary \ref{alhsolve}(ii) 
there exists a harmonic vector field $X$ on $M$ with
$X - X_0$ $=$ ${\rm parallel}$ $+$ ${\rm ED}$ outside a compact set,
and then $h$ $:=$ $L_X g$ $=$ $h_0 + {\rm ED}$ satisfies 
$E(h) = D(h) = 0$, so that $h \in \mathcal{D}$.  

Thus, the subspace of $\mathcal{D}$ consisting of Lie derivatives of $g$ is given by $\{L_Xg:$ $X$ linearly growing harmonic$\}$. We denote this space by $\mathcal{L}_{\rm lin}$, abusing notation. \medskip\

\noindent \emph{Proof of the claim.} Consider $[0,\infty) \times \R^3$. Write $x^\mu$,
$\mu \in \{0,1,2,3\}$, for the obvious coordinates, and use Latin superscripts $i \in \{1,2,3\}$ to address the $\R^3$ coordinates. Let $u^\mu$, $f^{\mu\nu}_\lambda$, $g^{\mu\nu}$ be smooth functions on $[0,\infty) \times \R^3$ which are $\Gamma$-periodic in $\R^3$ for some lattice $\Gamma$, and suppose that the $f^{\mu\nu}_\lambda$ and $g^{\mu\nu}$ decay exponentially, i.e.
\begin{equation}\label{edaux} \|f^{\mu\nu}_\lambda\|_{C^k(t)} +  \|g^{\mu\nu}\|_{C^k(t)} \leq C(k) e^{-\epsilon t} \end{equation} for some $\epsilon \in (0,1]$, all $k \in \N_0$, all $t \in [0,\infty)$, where notation such as $C^k(t)$ refers to the relevant function spaces on the slice $\{t\} \times \R^3$.\medskip\

\noindent \emph{Reduced claim:} There exists $C = C(f^{\mu\nu}_\lambda, g^{\mu\nu}) \geq 1$ such that if
\begin{equation}\label{euclie} \frac{\partial u^\mu}{\partial x^\nu} + \frac{\partial u^\nu}{\partial x^\mu} + \sum f^{\mu \nu}_\lambda u^\lambda = g^{\mu\nu},\end{equation}
and if $\|u^\mu\|_{C^{0}(T_0)} + \|\partial_\nu u^\mu\|_{C^{0,\alpha}(T_0)} \leq C_0$ and
$\|\partial_\nu u^\mu\|_{C^{0,\alpha}(t)} \leq 3C_0$ for all $t \in [T_0,T]$, 
where $C_0 \geq 1$ and $T \geq T_0 \geq C$,
then $\|\partial_\nu u^\mu\|_{C^{0,\alpha}(t)} \leq 2C_0$ for all $t \in [T_0,T]$. \medskip\

Thus, assuming (\ref{edaux}), every smooth and spatially $\Gamma$-periodic solution $u^\mu$ to (\ref{euclie}) on $[0,\infty) \times \R^3$ must automatically have uniformly bounded derivatives. From this,
the original claim then follows immediately.

To prove the reduced claim, fix $T \geq T_0 \geq 0$, $t \in [T_0,T]$. By (\ref{euclie}), $\mu = \nu = 0$, 
$$\|\partial_t u^0\|_{C^{0,\alpha}(t)} \leq C_0C(t - T_0 + 1)e^{-\epsilon t} \leq C_0 C e^{-\epsilon T_0}.$$ Next, we differentiate (\ref{euclie}), $\mu = \nu = 0$, with respect to $x^j$.
Using the general fact that
$\frac{d}{dt}\|v\|_{C^{0,\alpha}(t)} \leq \|\frac{\partial v}{\partial t}\|_{C^{0,\alpha}(t)}$
and integrating in $t = x^0$, we obtain $$\|\partial_j u^0\|_{C^{0,\alpha}(t)} \leq C_0(1 + Ce^{-\epsilon T_0}).$$
Plugging this into (\ref{euclie}) with $\mu = i$, $\nu = 0$
yields $$\|\partial_t u^i\|_{C^{0,\alpha}(t)} \leq C_0(1 + Ce^{-\epsilon T_0}).$$
Finally, differentiating (\ref{euclie}) for $\mu = i$, $\nu = 0$ by $x^j$ and integrating in $t$,
$$\|\partial_j u^i\|_{C^{0,\alpha}(t)} \leq C_0(1 + Ce^{-\epsilon T_0}) + \int_{T_0}^t \|\partial_i \partial_j u^0 \|_{C^{0,\alpha}(s)}\, ds.$$ By Schauder theory,
$\|\partial_i \partial_j u^0 \|_{C^{0,\alpha}(s)} \leq C \|\Delta_{\R^3} u^0\|_{C^{0,\alpha}(s)}$, where $C = C(\Gamma,\alpha)$, and
$$\Delta_{\R^3} u^0 = \sum (\partial_i g^{0i} - \frac{1}{2}\partial_t g^{ii}) - \sum (\partial_i[f^{0i}_\lambda u^\lambda]-\frac{1}{2}\partial_t [f^{ii}_\lambda u^\lambda])$$
from differentiating and tracing (\ref{euclie}), which shows that $$\|\Delta_{\R^3} u^0 \|_{C^{0,\alpha}(s)} \leq C_0C(s-T_0+1)e^{-\epsilon s}.$$ Altogether then,
$\|\partial_\nu u^\mu\|_{C^{0,\alpha}(t)} \leq C_0(1 + C e^{-\epsilon T_0})$, where $C = C(f^{\mu\nu}_\lambda,g^{\mu\nu})$.\hfill $\Box$\medskip\

\noindent \emph{Step 4: Integrability.} Assume now that $(M,g)$ is one of the hyperk{\"a}hler ALH spaces obtained by Tian-Yau \cite{ty1} and
that the underlying pencil of cubics is generic. Let $J$ be the unique $g$-parallel orthogonal almost-complex structure on $M$ which 
extends as a complex structure on $X$.
By Steps 2--3,  $\mathcal{E} = \mathcal{D} \oplus \mathcal{L}_{\rm bd}$,
$\mathcal{D} = \mathcal{D}^+ \oplus \mathcal{D}^-$, $\dim_\R \mathcal{D}^+$ $=$ $8 + 4 = 12$,
$\dim_\C \mathcal{D}^- = 8 + 3 = 11$ (in both cases, the subspaces of ED tensors
are of dimension $8$, and the remaining dimensions are as on $\C^2/\Lambda$), 
and $\mathcal{D}$ contains 
an exactly four-dimensional space $\mathcal{L}_{\rm lin}$ of Lie derivatives of $g$,
which asymptotes to the corresponding space on $\C^2/\Lambda$. Since $g$ is ALH$(\ell,\epsilon,\tau)$, we can assume that $\Lambda$ is as in Step 1(ii),
and hence we have a canonical basis $h_i$ for $\mathcal{L}_{\rm lin}$ such that $h_2 \in \mathcal{D}^-$ while ${\rm span}\{h_1,h_3,h_4\}$
projects isomorphically into both $\mathcal{D}^+$ and $\mathcal{D}^-$.

To prove integrability of the infinitesimal Einstein deformations in $\mathcal{D}$, we proceed in three stages: 
(i) Construct a space $\mathcal{R}$
of infinitesimal deformations of $J$ which are integrable and compactifiable, such that the corresponding deformed complex structures
all admit holomorphic volume forms and ALH K{\"a}hler metrics,
and such that $\mathcal{R} = \mathfrak{I}(\mathcal{S})$ 
for a $\C$-linear subspace $\mathcal{S} \subset \mathcal{D}^-$ with $\dim_\C \mathcal{S} = 9$ and $\mathcal{D}^- = \mathcal{S} \oplus \mathcal{L}_{\rm lin}^-$. Thus, in particular,
$\mathcal{E} = \mathcal{D}^+ \oplus \mathcal{S} \oplus \mathcal{L}_{\rm lin} \oplus \mathcal{L}_{\rm bd}$, and the sum of the first two factors does not contain any Lie derivatives.
(ii) Given any $h \in \mathcal{D}^+ \oplus \mathcal{S}$, set up and solve a family of Monge-Amp{\`e}re equations associated to the complex deformation $h^-$ and the K{\"a}hler deformation $h^+$.
This produces a curve $g_t$ of ALH hyperk{\"a}hler metrics with $g_0 = g$, hence a linear map $\mathfrak{L}: \mathcal{D}^+ \oplus \mathcal{S} \to \mathcal{E}$, $\mathfrak{L}(h) := g_t'|_0$. 
(iii) Let $\mathfrak{P}$ denote the projection of $\mathcal{E}$ onto $\mathcal{D}^+ \oplus \mathcal{S}$ along $\mathcal{L}_{\rm lin} \oplus \mathcal{L}_{\rm bd}$.
Then $\mathfrak{P} \circ \mathfrak{L}$ is an isomorphism of $\mathcal{D}^+ \oplus \mathcal{S}$, though perhaps not equal to the identity.

(i) Write $M = X \setminus D$ such that $J$ extends to $X$, $f: (X,J) \to \P^1$ is the elliptic fibration gotten by blowing up the nine distinct base points of 
a generic pencil of cubics, and $D = f^{-1}(p)$ is a smooth elliptic curve of modulus $\tau \in \mathfrak{H}$. Tian-Yau \cite{ty1} construct the $J$-K{\"a}hler form of the ALH$(\ell,\epsilon,\tau)$ hyperk{\"a}hler metric $g$ as
$$\omega = \omega_X + \ell^2 f^* i\partial\bar{\partial}(\chi (\log |z|)^2) + f^*\omega_{\P^1} 
+ i\partial\bar{\partial} u$$
with K{\"a}hler metrics $\omega_X$, $\omega_{\P^1}$, on $X$, $\P^1$, so that $\omega_X$ restricts to a flat K{\"a}hler 
metric of area $\epsilon$ on $D$, where $z$ is a local coordinate around $p$ on $\P^1$, $\chi$ is a suitable cut-off
function,
and $u$ is a smooth bounded potential of finite Dirichlet energy. (Observe that Tian-Yau 
do not establish ALH$(\ell,\epsilon,\tau)$ asymptotics for $g$, but this 
follows from our work in Section 3.4.)
Let $f_t: (X, J_t) \to \P^1$ be a small deformation of $f$ obtained by varying the pencil $f$ in its moduli space, which is smooth of complex dimension $8$ 
near $f$, and let $D_t = f_t^{-1}(p_t)$ for a curve $p_t$ in $\P^1$ with $p_0 = p$.
By classical work of Kodaira-Spencer \cite{ks},
$\omega_X$ fits into a smooth family of $J_t$-K{\"a}hler forms $\omega_{X,t}$ on $X$.
These may not be exactly flat when restricted to $D_t$,
however $\omega_{X,t}|_{D_t} + i\partial_t\bar{\partial}_tv_t$ will again be flat of area $\epsilon$ for a unique smooth function $v_t$ on $D_t$ such that $\int v_t \omega_{X,t} = 0$,
and $v_t$ extends as a smooth family of functions on $X$ with $v_0 \equiv 0$. Then
$$\omega_t := \omega_{X,t} + i\partial_t\bar{\partial_t}v_t + \ell^2 f_t^*i\partial\bar{\partial}(\chi(\log|z|)^2)  + f_t^*\omega_{\P^1}
+ i\partial_t\bar{\partial}_t u$$ 
yields a family of $J_t$-K{\"a}hler metrics $g_t$ on $M_t = X \setminus D_t$. Let $\Phi: \R^+ \times T^3 \to M \setminus K$ be an ALH coordinate system for $g$.
One can construct diffeomorphisms $\Psi_t: M \to M_t$ 
such that
$\Psi_0 = {\rm id}$, $\Psi_t^*g_t$ is smooth in $t$, 
and $\Phi^*\Psi_t^*g_t = 
(dr^2 \oplus \ell^2 d\phi^2 \oplus h_t) + O(t{\rm ED})$ for some fixed topological splitting $T^3 = S^1 \times T^2$,
where $h_t$ is a smooth family of flat Riemannian metrics on $T^2$ of area $\epsilon$ and modulus dictated by $D_t$. In addition,
one can arrange for $\nabla_{h_s} h_t = 0$ for all $s$, $t$. 
In particular, $\Phi^*\Psi_t^*J_t = (J_\ell \oplus K_t)+ O(t{\rm ED})$ after reflecting $\Phi$ if necessary, where
$J_\ell(\partial_r) = \ell^{-1}\partial_\phi$ and $K_t$ is a smooth family of $h_t$-orthogonal almost-complex structures on $T^2$ with $\nabla_{h_s}K_t = 0$ for all $s,t$.
Writing $\Psi_t^*J_t = \mathfrak{C}(I_t)$ with
$\mathfrak{C}$ from (\ref{cayley}), and then $I_t = tI + O(t^2)$, we obtain 
a $(0,1)$-form $I$ with values in $T^{1,0}M$ such that $\bar{\partial}I = 0$ and $\Phi^*I = (\begin{smallmatrix} 0 & 0 \\ 0 & \ast \end{smallmatrix}) + {\rm ED}$,
where the blocks correspond to $\R^+ \times S^1$ and $T^2$, and $\ast$ means a smooth endomorphism field on $T^2$
which is $h_s$-parallel for any $s$. By Corollary \ref{alhsolve}(ii), there exists a unique vector field $X$ $=$ parallel $+$ ED such that
$I^\circ := I + L_X J$ satisfies $\bar{\partial}I^\circ = \bar{\partial}^*I^\circ = 0$. 

This process can be carried out for all deformations $f_t: (X, J_t) \to \P^1$ and
$D_t$ $=$ $f_t^{-1}(p_t)$. We obtain
a complex vector space $\mathcal{R}$ of $T^{1,0}M$-valued $(0,1)$-forms on $M$,
$\dim_\C \mathcal{R} = 9$, such that $\bar{\partial}I = \bar{\partial}^*I = 0$ for all $I \in \mathcal{R}$. 
In addition, $\Phi^*I = (\begin{smallmatrix} 0 & 0 \\ 0 & \ast \end{smallmatrix}) + {\rm ED}$,
and $I$ is tangent 
to a curve $I_t$ with $I_0 = 0$ such that $J_t := \mathfrak{C}(I_t)$
defines a curve of integrable almost-complex structures.
By construction, $(M,J_t)$ admits holomorphic volume forms $\Omega_t$ and complete K{\"a}hler metrics
$\omega_t$, all of which are ALH with respect to $\Phi$, such that $\nabla_{g_s} g_t$, $\nabla_{g_s} \Omega_t$, $\nabla_{g_s} J_t$ are ED for all $s,t$. 
Also, by direct calculation, if $h \in \mathcal{L}_{\rm lin}^-$, then $\Phi^*h = (\begin{smallmatrix} \ast & \ast \\ \ast & 0 \end{smallmatrix}) + {\rm ED}$, 
and so $\mathcal{D}^- = \mathcal{S} \oplus \mathcal{L}_{\rm lin}^-$ with $\mathcal{S} := \mathfrak{I}^{-1}(\mathcal{R})$.

(ii)  Let $h \in \mathcal{D}^+ \oplus \mathcal{S}$ and split $h = h^+ + h^-$ accordingly. 
Put $I := -\frac{1}{2}\mathfrak{I}(h^-) \in \mathcal{R}$ and let $J_t$, $\Omega_t$, $\omega_t$ be as above.
The form $\eta(X,Y) := g(\mathfrak{I}(h^+)X,Y)$ is $(1,1)$-harmonic with respect to $g$ and $J$ from Step 2. 
Let $\eta_t$ denote the $(1,1)$-component of $\eta$ with respect to $J_t$, and consider the following family of Monge-Amp{\`e}re equations:
\begin{equation}\label{mafamily}(\omega_t + t\eta_t + i\partial_t\bar{\partial}_t u_t)^2 = \alpha_t \Omega_t \wedge \bar{\Omega}_t.\end{equation}
Since $\nabla_{g_s} g_t$, $\nabla_{g_s} \Omega_t$, $\nabla_{g_s} J_t$ are ED for all $s,t$, we can choose $\alpha_t$ in such a way 
that $(\omega_t + t\eta_t)^2 - \alpha_t \Omega_t \wedge \bar{\Omega}_t$ is ED for all $t$. 
Then, as indicated by Kovalev \cite{kovalev2}, because
$u_0 \equiv 0$ is a solution for $t  = 0$,
by Corollary \ref{alhsolve}(i) and the implicit function theorem
there exists a smooth family of solutions $u_t$ of linear growth, so $\omega_t + t\eta_t + i\partial_t\bar{\partial}_t u_t$ defines 
an ALH hyperk{\"a}hler metric $g_t$. We obtain a linear map $\mathfrak{L}: \mathcal{D}^+ \oplus \mathcal{S} \to \mathcal{E}$ by setting 
$\mathfrak{L}(h) := g_t'|_0$, and it remains to show that this is essentially the identity.

(iii) Recall $\mathcal{E} = \mathcal{D}^+ \oplus \mathcal{S} \oplus \mathcal{L}_{\rm lin} \oplus \mathcal{L}_{\rm bd}$, and let
$\mathfrak{P}$ denote the projection from $\mathcal{E}$ onto the first two factors along the last two. 
We will prove that $\mathfrak{P} \circ \mathfrak{L}$ is an isomorphism, which is enough for Theorem \ref{alh2}. Accept for the time being that
$ \mathfrak{L}(h)^- = h^-$ and $ \mathfrak{L}(h^+) = h^+$ for all $h \in \mathcal{D}$. 
Then $( \mathfrak{P} \circ  \mathfrak{L})(h) = 0$ for some $h \in \mathcal{D}^+ \oplus \mathcal{S}$ would imply 
$ \mathfrak{L}(h) = L_X g \in \mathcal{L}_{\rm  lin} \oplus \mathcal{L}_{\rm bd}$, so $h^- = (L_X g)^-$ and $h^+ +  \mathfrak{L}(h^-)^+ = (L_X g)^+$. From
the first equation, since all elements of $\mathcal{L}_{\rm bd}$ are ED and the parallel parts of 
non-trivial elements of $\mathcal{S}$ and $\mathcal{L}_{\rm lin}^-$ are all distinct, we conclude $L_X g \in \mathcal{L}_{\rm bd}$, hence $h^- = 0$
from integration by parts, and so $h^+ = 0$ from the second equation and integration by parts. This shows that $ \mathfrak{P} \circ  \mathfrak{L}$ is injective and therefore an isomorphism.

As for the proof of the two properties of $ \mathfrak{L}$ that we used, notice that 
$ \mathfrak{L}(h)^- = h^-$ simply follows from
$J_t^*g_t = g_t$ and $J_t'|_0 = J  \mathfrak{I}(h^-)$. The second property is the only point where we need to exploit that $\eta$ is harmonic.
Let $h = h^+ \in \mathcal{D}^+$. Then (\ref{mafamily}) becomes
$(\omega + t\eta + i\partial\bar{\partial}u_t)^2 = \alpha_t \Omega \wedge \bar{\Omega}$ with $\alpha_t$ such
that $(\omega + t\eta)^2 - \alpha_t\Omega \wedge \bar{\Omega}$ is ${\rm ED}$.
The claim is trivial for $h$ a constant multiple of $g$, so we can assume that
$\eta \wedge \omega = 0$ because $\eta$ is harmonic and so ${\rm tr}\,h$ is constant. This implies $\alpha_t = \alpha_0 + O(t^2)$, hence
$\alpha_t \Omega \wedge \bar{\Omega} = (1 + O(t^2))(\omega + t\eta)^2$,
so $\Delta(u_t'|_0)= 0$ from differentiating the PDE by $t$, and therefore $u_t'|_0 = 0$, $\omega_t'|_0 = \eta$, $g_t'|_0 = h$, since $u_t'|_0$ is of linear growth. \hfill $\Box$\section{Further questions}

One major open question in $4$-manifold geometry is the structure of Riemannian metrics with bounded Ricci tensor, which even in the Einstein case is not very well understood if collapsing is not prohibited \cite{cct}. Any loss of compactness
should be caused by gravitational instantons with little \textquotedblleft internal structure.\textquotedblright

\begin{problem}\label{bubble} Let $f: X \to \P^1$ be an elliptic fibration on K3. Let $\varpi \in H^2(X,\Z)$
be the Poincar{\'e} dual of the fibers and let $\varpi_t$, $t \in [0,1]$, be a smooth path in $H^2(X,\R)$ with
$\varpi_t$ inside 
the K{\"a}hler cone for $t < 1$ and $\varpi_1 = \varpi$. The collapsed limit metric on $\P^1$ 
of the Ricci-flat metrics in $\varpi_t$ as $t \to 1$
was identified in \cite{gw,song-tian,tos}, see also ${\rm (\ref{st})}$, fitting in with the general picture
that limits of compact Einstein $4$-manifolds with bounded Euler 
numbers are Riemannian 
orbifolds away from a finite number of points \cite{nt1}.
Prove that exactly the following bubbles
form:

$\bullet$ For every singular fiber of finite monodromy, an isotrivial ${\rm ALG}$ space with the corresponding dual 
fiber at infinity, cf.~Example \ref{isoex}.

$\bullet$ For ${\rm I}_b$, one Taub-{\rm NUT} at every fixed point of the canonical $S^1$-action \cite{gw, lebrun}.

$\bullet$ For ${\rm I}_b^*$ with $b > 0$, $b$ copies of Taub-{\rm NUT}
as before, 
plus one copy of the crepant resolution of $(\R^3 \times S^1)/\Z_2$, cf.~\cite{bm}, for each tail of the dual Dynkin diagram.
\end{problem}

One motivation for this is the Kummer surface heuristics from the Introduction, 
which would correspond to four isotrivial ALG spaces with an I$_0^*$ fiber both at the core and at infinity.
Also, along the Gross-Wilson path with $24$ I$_1$ singular fibers,
we expect one Taub-NUT to bubble off from each Ooguri-Vafa space that was glued in. 
Finally, recall from Table 4.1 that in the general setting described in this problem, the
limiting metric on $\P^1$ will have a cone point underneath each singular fiber of
finite monodromy, 
whose angle is the same as the asymptotic cone angle of an
ALG space with the dual fiber removed, which suggests a gluing construction.

\bibliographystyle{amsplain}

\begin{thebibliography}{10}
\bibitem{and}{M.~Anderson, \emph{The $L^2$ structure of moduli spaces of Einstein metrics on 4-manifolds}, Geom. Funct. Anal. {\bf 2} (1992), 29--89.}
\bibitem{kron0}{M.~Anderson, P.~Kronheimer, C.~LeBrun, \emph{Complete Ricci-flat K{\"a}hler manifolds of infinite topological type}, Comm. Math. Phys. {\bf 125} (1989), 637--642.}
\bibitem{atiyah-hitchin}{M.~Atiyah, N.~Hitchin, \emph{The geometry and dynamics of magnetic monopoles}, M.~B.~Porter Lectures, Princeton University Press, Princeton, NJ, 1988.}
\bibitem{bpv}{W.~Barth, C.~Peters, A.~van de Ven, \emph{Compact complex surfaces}, Ergebnisse der Mathematik und ihrer Grenzgebiete (3) 4, Springer-Verlag, Berlin, 1984.}
\bibitem{bm}{O.~Biquard, V.~Minerbe, \emph{A Kummer construction for gravitational instantons},
arXiv:1005.5133.}
\bibitem{blocki}{Z.~B{\l}ocki, \emph{The complex Monge-Amp{\`e}re equation on compact K{\"a}hler manifolds}, available from http://gamma.im.uj.edu.pl/$\sim$blocki/publ/ln/index.html.}
\bibitem{bus}{P.~Buser, \emph{A note on the isoperimetric constant}, Ann. Sci. {\'E}cole Norm. Sup. (4) {\bf 15} (1982), 213--230.}
\bibitem{carron}{G.~Carron, \emph{In{\'e}galit{\'e}s de Hardy sur les vari{\'e}t{\'e}s riemanniennes non-compactes}, J. Math. Pures Appl. (9) {\bf 76} (1997), 883--891.}
\bibitem{cheesurv}{J.~Cheeger, \emph{Degeneration of Einstein metrics and metrics with special holonomy}, Surveys in differential geometry VIII (Boston, MA, 2002), 29--73, Surv. Differ. Geom. VIII, International
Press, Somerville, MA, 2003.}
\bibitem{cc}{J.~Cheeger, T.~Colding, \emph{Lower bounds on Ricci curvature and the almost rigidity of warped products,} Ann. of Math. (2) {\bf 144} (1996), 189--237.}
\bibitem{cfg}{J.~Cheeger, K.~Fukaya, M.~Gromov, \emph{Nilpotent structures and invariant metrics on collapsed manifolds}, J. Amer. Math. Soc. {\bf 5} (1992), 327--372.}
\bibitem{cgt}{J.~Cheeger, M.~Gromov, M.~Taylor, \emph{Finite propagation speed, kernel estimates for functions of the Laplace operator, and the geometry of complete Riemannian manifolds}, J. Differential Geom. {\bf 17} (1982), 15--53.}
\bibitem{ct-cones}{J.~Cheeger, G.~Tian, \emph{On the cone structure at infinity of Ricci flat manifolds with Euclidean volume growth and quadratic curvature decay}, Invent. Math. {\bf 118} (1994), 493--571.}
\bibitem{ct-einstein}{J.~Cheeger, G.~Tian, \emph{Curvature and injectivity radius estimates for Einstein 4-manifolds}, J. Amer. Math. Soc. {\bf 19} (2006), 487--525.}
\bibitem{clw}{X.~Chen, C.~LeBrun, B.~Weber, \emph{On conformally K{\"a}hler, Einstein manifolds}, J. Amer. Math. Soc. {\bf 21} (2008), 1137--1168.}
\bibitem{cy}{S.-Y.~Cheng, S.-T.~Yau, \emph{On the existence of a complete K{\"a}hler metric on noncompact complex manifolds and the regularity of Fefferman's equation}, Comm. Pure Appl. Math. {\bf 33} (1980), 507--544.}
\bibitem{cherkis-hitchin}{S.~Cherkis, N.~Hitchin, \emph{Gravitational instantons of type $D_k$}, Comm. Math. Phys. {\bf 260} (2005), 299--317.}
\bibitem{cherkis}{S.~Cherkis, A.~Kapustin, \emph{Hyper-K{\"a}hler metrics from periodic monopoles}, Phys. Rev. D (3) {\bf 65} (2002), 084015.}
\bibitem{croke}{C.~Croke, \emph{Some isoperimetric inequalities and eigenvalue estimates}, Ann. Sci. {\'E}cole Norm. Sup. (4) {\bf 13} (1980), 419--435.}
\bibitem{dem}{J.-P.~Demailly, \emph{$L^2$ estimates for the $\bar{\partial}$ operator on complex manifolds}, available from
http://

www-fourier.ujf-grenoble.fr/$\sim$demailly/books.html.}
\bibitem{dona}{S.~Donaldson, \emph{Calabi-Yau metrics on Kummer surfaces as a model glueing problem},
arXiv:1007.4218.}
\bibitem{freed}{D.~Freed, \emph{Special K{\"a}hler manifolds}, Comm. Math. Phys. {\bf 203} (1999), 31--52.}
\bibitem{gallot}{S.~Gallot, \emph{In{\'e}galit{\'e}s isop{\'e}rim{\'e}triques et analytiques sur les vari{\'e}t{\'e}s riemanniennes}, On the geometry of differentiable manifolds (Rome, 1986), 31--91, Ast{\'e}risque {\bf 163--164} (1988).}
\bibitem{k3}{\emph{G{\'e}om{\'e}trie des surfaces $K3$: modules et p{\'e}riodes}, Papers from the seminar held in Palaiseau, October 1981--January 1982, Ast{\'e}risque {\bf 126} (1985).}
\bibitem{gt}{S.~Gilbarg, N.~Trudinger, \emph{Elliptic partial differential equations of second order}, Second edition, Grundlehren der Mathematischen Wissenschaften 224, Springer-Verlag, Berlin, 1983.}
\bibitem{4mfds}{R.~Gompf, A.~Stipsicz, \emph{4-manifolds and Kirby calculus}, Graduate Studies in Mathematics 20, American Mathematical Society, Providence, RI, 1999.}
\bibitem{stringy}{B.~Greene, A.~Shapere, C.~Vafa, S.-T.~Yau, \emph{Stringy cosmic strings and noncompact Calabi-Yau manifolds}, Nuclear Phys. B {\bf 337} (1990), 1--36.}
\bibitem{gh}{P.~Griffiths, J.~Harris, \emph{Principles of algebraic geometry}, Pure and Applied Mathematics, Wiley-Interscience, New York, 1978.}
\bibitem{gsc}{A.~Grigor{'}yan, L.~Saloff-Coste, \emph{Stability results for Harnack inequalities}, Ann. Inst. Fourier (Grenoble) {\bf 55} (2005), 825--890.}
\bibitem{gro1}{M.~Gromov, \emph{Metric structures for Riemannian and non-Riemannian spaces}, Progress in Math. 152, Birkh{\"a}user, Boston, MA, 1999.}
\bibitem{gw}{M.~Gross, P.~Wilson, \emph{Large complex structure limits of $K3$ surfaces}, J. Differential Geom. {\bf 55} (2000), 475--546.}
\bibitem{multfib}{B.~Harbourne, W.~Lang, \emph{Multiple fibers on rational elliptic surfaces}, Trans. Amer. Math. Soc. {\bf 307} (1988), 205--223.}
\bibitem{hatcher}{A.~Hatcher, \emph{Algebraic topology}, Cambridge University Press, Cambridge, 2002.}
\bibitem{heb}{E.~Hebey, \emph{Sobolev spaces on Riemannian manifolds}, Lecture Notes in Mathematics 1635, Springer-Verlag, Berlin, 1996.}
\bibitem{hit1}{N.~Hitchin, \emph{Compact four-dimensional Einstein manifolds}, J. Differential Geometry {\bf 9} (1984), 435--441.} 
\bibitem{hitchin}{N.~Hitchin, \emph{Twistor construction of Einstein metrics}, Global Riemannian geometry (Durham, 1983), 115--125, Ellis Horwood Ser.: Math. Appl., Horwood, Chichester, 1984.}
\bibitem{hit-clag}{N.~Hitchin, \emph{The moduli space of complex Lagrangian submanifolds}, Asian Journal of Mathematics {\bf 3} (1999), 77--92.}
\bibitem{joka}{J.~Jost, H.~Karcher, \emph{Geometrische Methoden zur Gewinnung von a-priori-Schranken f{\"u}r harmonische Abbildungen}, Manuscripta Math. {\bf 40} (1982), 27--77.}
\bibitem{joyce1}{D.~Joyce, \emph{Compact Riemannian $7$-manifolds with holonomy $G_2$}, I, II, J. Differential Geometry {\bf 43} (1996), 291--328, 329--375.}
\bibitem{joyce}{D.~Joyce, \emph{Asymptotically locally Euclidean metrics with holonomy ${\rm SU}(m)$}, Ann. Global Anal. Geom. {\bf 19} (2001), 55--73.}
\bibitem{klein}{F.~Klein, \emph{Vorlesungen {\"u}ber die Theorie der elliptischen Modulfunctionen}, Erster Band, Teubner, Leipzig, 1890.}
\bibitem{kodaira}{K.~Kodaira, \emph{On compact analytic surfaces}, II, Ann. of Math. (2) {\bf 77} (1963), 563--626.}
\bibitem{ks}{K.~Kodaira, D.~Spencer, \emph{On deformations of complex analytic structures}, III, \emph{Stability theorems for complex structures}, Ann. of Math. (2) {\bf 71} (1960), 43--76.}
\bibitem{koiso}{N.~Koiso, \emph{Einstein metrics and complex structures}, Invent. Math. {\bf 73} (1983), 71--106.}
\bibitem{kovalev}{A.~Kovalev, \emph{Twisted connected sums and special Riemannian holonomy}, J. Reine Angew. Math. {\bf 565} (2003), 125--160.}
\bibitem{kovalev2}{A.~Kovalev, \emph{Ricci-flat deformations of asymptotically cylindrical Calabi-Yau manifolds}, Proceedings of G{\"o}kova Geometry-Topology Conference 2005, 140--156, G{\"o}kova, 2006.}
\bibitem{kron1}{P.~Kronheimer, \emph{The construction of ALE spaces as hyper-K{\"a}hler quotients}, J. Differential Geom. {\bf 29} (1989), 665--683.}
\bibitem{kron2}{P.~Kronheimer, \emph{A Torelli-type theorem for gravitational instantons}, J. Differential Geom. {\bf 29} (1989), 685--697.}
\bibitem{lebrun}{C.~LeBrun, \emph{Complete Ricci-flat K{\"a}hler metrics on $\C^n$ need not be flat}, Several complex variables and complex geometry, Part 2 (Santa Cruz, CA, 1989), 297--304, Proc. Sympos. Pure Math. 52, Part 2,
American Mathematical Society, Providence, RI, 1991.}
\bibitem{li}{P.~Li, \emph{Lecture notes on geometric analysis}, available from http://math.uci.edu/$\sim$pli.}
\bibitem{li-tam}{P.~Li, L.-F.~Tam, \emph{Green's functions, harmonic functions, and volume comparison}. J. Differential Geom. {\bf 41} (1995), 277--318.}
\bibitem{loftin}{J.~Loftin, \emph{Singular semi-flat Calabi-Yau metrics on $S^2$}, Comm. Anal. Geom. {\bf 13} (2005), 333--361.}
\bibitem{msc}{P.~Maheux, L.~Saloff-Coste, \emph{Analyse sur les boules d'un op{\'e}rateur sous-elliptique,} Math. Ann. {\bf 303} (1995), 713--740.}
\bibitem{min}{V.~Minerbe, \emph{Weighted Sobolev inequalities and Ricci flat manifolds}, Geom. Funct. Anal. {\bf 18} (2009), 1696--1749.}
\bibitem{minerbe}{V.~Minerbe, \emph{On some asymptotically flat manifolds with non maximal volume growth}, arXiv:0709.1084.}
\bibitem{minerbe2}{V.~Minerbe, \emph{Rigidity for Multi-Taub-NUT metrics}, arXiv:0910.5792.} 
\bibitem{mirmod}{R.~Miranda, \emph{The moduli of Weierstrass fibrations over ${\mathbf P}^1$}, Math. Ann. {\bf 255} (1981), 379--394.}
\bibitem{miranda}{R.~Miranda, \emph{Persson's list of singular fibers for a rational elliptic surface}, Math. Z. {\bf 205} (1990), 191--211.}
\bibitem{nt1}{A.~Naber, G.~Tian, \emph{Geometric structures of collapsing Riemannian manifolds}, I, arXiv:0804.2275.}
\bibitem{nt}{A.~Naber, G.~Tian, \emph{Geometric structures of collapsing Riemannian manifolds}, II, \emph{$N^*$-bundles and almost Ricci flat spaces}, arXiv:0906.4646.}
\bibitem{nordstrom}{J.~Nordstr{\"o}m, \emph{Deformations of asymptotically cylindrical $G_2$ manifolds}, Math. Proc. Camb. Phil. Soc. {\bf 145} (2008), 311--348.}
\bibitem{no2}{J.~Nordstr{\"o}m, \emph{personal communication}, May 2010.}
\bibitem{pope}{D.~Page, \emph{A physical picture of the K3 gravitational instanton}, Phys. Lett. {\bf 80B} (1978), 55--57.}
\bibitem{pe-poon}{H.~Pedersen, Y.~S.~Poon, \emph{Hamiltonian constructions of K{\"a}hler-Einstein metrics
and K{\"a}hler metrics of constant scalar curvature}, Comm. Math. Phys. {\bf 136} (1991), 309--326.}
\bibitem{riem-schw}{P.~Petersen, \emph{Riemannian geometry}, Second edition, Graduate Texts in Mathematics 171, Springer, New York, 2006.}
\bibitem{santoro1}{B.~Santoro, \emph{On the asymptotic expansion of complete Ricci-flat K{\"a}hler metrics on quasi-projective manifolds}, J. Reine Angew. Math. {\bf 615} (2008), 59--91.}
\bibitem{santoro}{B.~Santoro, \emph{Existence of complete K{\"a}hler Ricci-flat metrics on crepant resolutions}, arXiv:0902.0595.}
\bibitem{sy}{R.~Schoen, S.-T.~Yau, \emph{Lectures on differential geometry}, Conference Proceedings and Lecture Notes in Geometry and Topology 1, International Press,
Cambridge, MA, 1994.}
\bibitem{song-tian}{J.~Song, G.~Tian, \emph{The K{\"a}hler-Ricci flow on surfaces of positive Kodaira dimension}, Invent. Math. {\bf 170} (2007), 609--653.}
\bibitem{tiatod}{G.~Tian, \emph{Smoothness of the universal deformation space of Calabi-Yau manifolds and its Petersson-Weil metric}, Mathematical aspects of string theory (San Diego, CA, 1986), 629--646, Adv. Ser. Math. Phys. 1, World Sci. Publishing, Singapore, 1987.}
\bibitem{tian}{G.~Tian, \emph{On Calabi's conjecture for complex surfaces with positive first Chern class}, Invent. Math. {\bf 101} (1990), 101--172.}
\bibitem{cct}{G.~Tian, \emph{Aspects of metric geometry of four manifolds}, Inspired by S. S. Chern, 381--397,
Nankai Tracts Math. 11, World Scientific Publishing, Hackensack, NJ, 2006.}
\bibitem{ty1}{G.~Tian, S.-T.~Yau, \emph{Complete K{\"a}hler manifolds with zero Ricci curvature}, I, J. Amer. Math. Soc. {\bf 3} (1990), 579--609.}
\bibitem{ty2}{G.~Tian, S.-T.~Yau, \emph{Complete K{\"a}hler manifolds with zero Ricci curvature}, II, Invent. Math. {\bf 106} (1991), 27--60.}
\bibitem{tod}{A.~Todorov, \emph{The Weil-Petersson geometry of the moduli space of $SU(n \geq 3)$ (Calabi-Yau) manifolds}, I, Comm. Math. Phys. {\bf 126} (1989), 325--346.}
\bibitem{tos}{V.~Tosatti, \emph{Adiabatic limits of Ricci-flat K{\"a}hler metrics}, J. Differential Geom. {\bf 84} (2010), 427--453.} 
\bibitem{vanc}{C.~van Coevering, \emph{A construction of complete Ricci-flat K{\"a}hler manifolds}, arXiv:0803.0112.}
\bibitem{cvc2}{C.~van Coevering, \emph{Ricci-flat K{\"a}hler metrics on crepant resolutions of K{\"a}hler cones}, arXiv:0806.3728.}
\bibitem{cvc3}{C.~van Coevering, \emph{Regularity of asymptotically conical Ricci-flat K{\"a}hler metrics}, arXiv:0912.3946.}
\bibitem{yau2}{S.-T.~Yau, \emph{Harmonic functions on complete Riemannian manifolds}, Comm. Pure Appl. Math. {\bf 28} (1975), 201--228.}
\bibitem{yau3}{S.-T.~Yau, \emph{On the Ricci curvature of a compact K{\"a}hler manifold and the complex Monge-Amp{\`e}re equation}, I, Comm. Pure Appl. Math {\bf 31} (1978), 339--411.}
\end{thebibliography}

\end{document}